\setlist[enumerate,1]{font=\normalfont, label=(\roman*)}
\renewcommand{\glsglossarymark}[1]{}
\newtheorem{theorem}{Theorem}[section]
\newtheorem{lemma}[theorem]{Lemma}
\newtheorem{proposition}[theorem]{Proposition}
\newtheorem{corollary}[theorem]{Corollary}
\theoremstyle{definition}
\newtheorem{definition}[theorem]{Definition}
\newtheorem{example}[theorem]{Example}
\newtheorem{remark}[theorem]{Remark}
\DeclareMathOperator{\cone}{cone}
\DeclareMathOperator{\Img}{Im}
\DeclareMathOperator{\Hom}{Hom}
\DeclareMathOperator{\Ext}{Ext}
\DeclareMathOperator{\id}{id}
\DeclareMathOperator{\Coh}{Coh}
\DeclareMathOperator{\qco}{qco}
\DeclareMathOperator{\num}{num}
\DeclareMathOperator{\cok}{cok}
\DeclareMathOperator{\supp}{supp}
\DeclareMathOperator{\Slice}{Slice}
\DeclareMathOperator{\ch}{ch}
\DeclareMathOperator{\NS}{NS}
\DeclareMathOperator{\Per}{Per}
\DeclareMathOperator{\Stab}{Stab}
\DeclareMathOperator{\pr}{pr}
\newglossaryentry{crepant-resolution}{
    name={\ensuremath{\pi \colon \widetilde{X} \longrightarrow X}},
    description={the crepant resolution of a surface $X$ with an ADE singularity $x_0$}
}
\newglossaryentry{exceptional-curves}{
    name={\ensuremath{C_i, \, i=1,\dots,n}},
    description={the exceptional $(-2)$-curves of $\pi$}
}
\newglossaryentry{scheme-preimage}{
    name={\ensuremath{\Pi := \pi^{-1}(x_0) = \bigcup_i C_i}},
    description={the scheme-theoretic preimage of the singular point $x_0$}
}
\newglossaryentry{projection-map}{
    name={\ensuremath{\operatorname{pr} \colon {\rm K}_{\num}(\widetilde{X}) \longrightarrow {\rm K}_{\num}(\widetilde{X})/\ker \pi_\ast}},
    description={the projection map from the numerical $K$-group of $\widetilde{X}$}
}
\newglossaryentry{cohomology-functor}{
    name={\ensuremath{H^i_{\mathcal{A}}}},
    description={the $i$-th cohomology with respect to $\mathcal{A}$. If $\mathcal{A} = \operatorname{Coh}(\widetilde{X})$, we omit the subscript $\mathcal{A}$; if $\mathcal{A} = {^{-1}\Per(\widetilde{X}/X)}$, we write $H^i_P$},
}
\newglossaryentry{derived-pullback}{
    name={\ensuremath{\mathbf{L}^i_{\mathcal{A}}\pi^\ast := H^i_{\mathcal{A}} \circ \mathbf{L}\pi^\ast}},
    description={if $\mathcal{A} = \operatorname{Coh}(\widetilde{X})$, we omit the subscript $\mathcal{A}$; if $\mathcal{A} = {^{-1}\Per(\widetilde{X}/X)}$, we write $\mathbf{L}^i_P\pi^\ast$}
}
\newglossaryentry{slope-stability}{
    name={\ensuremath{Z_H := -H^{n-1} \cdot \operatorname{ch}_1 + iH^n \operatorname{ch}_0}},
    description={the slope stability with respect to a nef divisor $H$}
}
\newglossaryentry{slope-function}{
    name={\ensuremath{\mu_{\pi^\ast H}(E)}},
    description={the slope of $E$ with respect to $\pi^\ast H$ on $\operatorname{Coh}(X)$ and ${^{-1}\operatorname{Per}}(\widetilde{X}/X)$}
}
\newglossaryentry{stability-function}{
    name={\ensuremath{Z_{\pi^\ast H,\beta,z} := -\operatorname{ch}_2 + \beta \cdot \operatorname{ch}_1 + z\operatorname{ch}_0 + i(\pi^\ast H)\cdot \operatorname{ch}_1}},
    description={the stability function on $\mathcal{B}^0$}
}
\newglossaryentry{quadratic-form}{
    name={\ensuremath{Q_{A,B} := \Delta + A(\Im Z_{\pi^\ast H,\beta,z})^2 + B (\Re Z_{\pi^\ast H,\beta,z})^2}},
    description={the quadratic form on ${\rm K}_{\num}(\widetilde{X})$ for the support property of $(Z_{\pi^\ast H,\beta,z},\mathcal{B}^0)$}
}
\newglossaryentry{sigma-epsilon}{
    name={\ensuremath{\sigma_\epsilon := (Z_{\pi^\ast H,\epsilon \beta,z}, \mathcal{B}^0)}},
    description={Bridgeland stability conditions on $\widetilde{X}$ obtained from deformation}
}
\newglossaryentry{weak-stability-function}{
    name={\ensuremath{Z_{\widetilde{X}} := Z_{\pi^\ast H,0,z}}},
    description={the weak stability function at the end point of the path in $\Stab(\widetilde{X})$}
}
\newglossaryentry{weak-stability-condition}{
    name={\ensuremath{\sigma_{\widetilde{X}} := (Z_{\widetilde{X}},\mathcal{B}^0)}},
    description={the weak stability condition lying on the boundary of $\Stab(\widetilde{X})$ }
}
\newglossaryentry{stability-function-X}{
    name={\ensuremath{Z_X}},
    description={the stability function on $\operatorname{Coh}^0_H(X)$ with the compatibility $Z_X \circ \pi_\ast = Z_{\widetilde{X}}$}
}
\newglossaryentry{sigma-X}{
    name={\ensuremath{\sigma_X := (Z_X,\operatorname{Coh}^0_H(X))}},
    description={the induced Bridgeland stability condition on $X$}
}
\newglossaryentry{quadratic-form-X}{
    name={\ensuremath{Q_{\widetilde{X}} := \Delta + A(\Im Z_{\widetilde{X}})^2}},
    description={the quadratic form on ${\rm K}_{\num}(\widetilde{X})/\ker \pi_\ast$ for the support property of $\sigma_{\widetilde{X}}$}
}
\newglossaryentry{T_P}{
    name={\ensuremath{\mathcal{T}_P := \{ T \in \operatorname{Coh}(\widetilde{X}) \mid \mathbf{R}^1\pi_\ast T = 0, \Hom(T,\ker \mathbf{R}\pi_\ast)= 0 \}}},
    description={the torsion part defining perverse coherent sheaves} 
}
\newglossaryentry{F_P}{
    name={\ensuremath{\mathcal{F}_P  := \{ F \in \operatorname{Coh}(\widetilde{X}) \mid \mathbf{R}^0\pi_\ast F = 0 \}}},
    description={the torsion-free part defining perverse coherent sheaves}
}
\newglossaryentry{T-H}{
    name={\ensuremath{\mathcal{T}^{>\beta}_{H} := \{ T \in \operatorname{Coh}(X) \mid \mu_{H}(S) > \beta \textnormal{ for all } T \twoheadrightarrow S \}}},
    description={the torsion part with respect to $\mu_H$}
}
\newglossaryentry{F-H}{
    name={\ensuremath{\mathcal{F}^{\leq \beta}_{H} := \{ F \in \operatorname{Coh}(X) \mid \mu_{H}(G) \leq \beta \textnormal{ for all } 0 \neq G \hookrightarrow F \}}},
    description={the torsion-free part with respect to $\mu_H$}
}
\newglossaryentry{tilted-heart}{
    name={\ensuremath{\operatorname{Coh}^\beta_H(X)}},
    description={the heart obtained by tilting $\operatorname{Coh}(X)$ with respect to $\mu_{H}$ at the slope $\beta$}
}
\newglossaryentry{F_0}{
    name={\ensuremath{\mathcal{F}_0 := \mathcal{F}_P}},
    description={viewed as a subcategory of $\Coh^0_{\pi^\ast H}(\widetilde{X})$}
}
\newglossaryentry{T_0}{
    name={\ensuremath{\mathcal{T}_0 := \{ T \in \operatorname{Coh}^0(\widetilde{X}) \mid \Hom(T,\mathcal{F}_0) = 0 \}}},
    description={the left orthogonal complement of $\mathcal{F}_0$ in $\operatorname{Coh}^0_{\pi^\ast H}(\widetilde{X})$}
}
\newglossaryentry{perverse-coherent-sheaves}{
    name={\ensuremath{{}^{-1}\Per(\widetilde{X}/X)}},
    description={the category of perverse coherent sheaves on $\widetilde{X}$ over $X$}
}
\newglossaryentry{T-pi-H-P}{
    name={\ensuremath{\mathcal{T}^{>0}_{\pi^\ast H,P} := \{ T \in {}^{-1}\Per(\widetilde{X}/X) \mid \mu_{\pi^\ast H}(S) > 0 \textnormal{ for all } T \twoheadrightarrow S \}}},
    description={the torsion part on ${^{-1}\Per(\widetilde{X}/X)}$ defining $\mathcal{B}^0$}
}
\newglossaryentry{F-pi-H-P}{
    name={\ensuremath{\mathcal{F}^{\leq 0}_{\pi^\ast H,P} := \{ F \in {}^{-1}\Per(\widetilde{X}/X) \mid \mu_{\pi^\ast H}(G) \leq 0 \textnormal{ for all } 0 \neq G \hookrightarrow F \}}},
    description={the torsion-free part on ${^{-1}\Per(\widetilde{X}/X)}$ defining $\mathcal{B}^0$}
}
\newglossaryentry{heart-B0}{
    name={\ensuremath{\mathcal{B}^0 = \mathcal{B}^0_{\pi^\ast H,-1} = \mathcal{B}^0_{\pi^\ast H,P} }},
    description={the heart \ensuremath{\langle \mathcal{F}^{\leq 0}_{\pi^\ast H,P}[1], \mathcal{T}^{>0}_{\pi^\ast H,P} \rangle = \langle \mathcal{F}_0[1], \mathcal{T}_0 \rangle}}
}
\newglossaryentry{torsion-part-B0}{
    name={\ensuremath{\mathcal{T}_{\mathcal{B}^0} := \operatorname{Coh}(\widetilde{X}) \cap \mathcal{B}^0}},
    description={the torsion part on $\Coh(\widetilde{X})$ defining $\mathcal{B}^0$}
}
\newglossaryentry{torsion-free-part-B0}{
    name={\ensuremath{\mathcal{F}_{\mathcal{B}^0} := \operatorname{Coh}(\widetilde{X}) \cap \mathcal{B}^0[-1]}},
    description={the torsion-free part on $\Coh(\widetilde{X})$ defining $\mathcal{B}^0$}
}
\title[Stability condition on a singular surface and its resolution]{Stability condition on a singular surface and its resolution}  
\author[Tzu-Yang Chou]{Tzu-Yang Chou}
\begin{document}

\begin{abstract} Let $X$ be a surface with an ADE-singularity and let $\widetilde{X}$ be its crepant resolution.
In this paper, we show that there exists a Bridgeland stability condition $\sigma_X$ on ${\rm D}^b(X)$ and a weak stability condition $\sigma_{\widetilde{X}}$ on the derived category of the desingularisation ${\rm D}^b(\widetilde{X})$,
such that pushforward of $\sigma_{\widetilde{X}}$-semistable objects are $\sigma_X$-semistable

We first construct Bridgeland stability conditions on  ${\rm D}^b(\widetilde{X})$ associated to the contraction $\widetilde{X} \longrightarrow X$, generalizing the results of Tramel and Xia in \cite{TX22},
Then we deform them to a weak stability condition $\sigma_{\widetilde{X}}$ and show that it descends to ${\rm D}^b(X)$, producing the stability condition $\sigma_X$.

Finally, we study the moduli spaces of $\sigma_{\pi^\ast H,\beta,z}$, of $\sigma_{\widetilde{X}}$, and of $\sigma_X$-semistable objects,
and we show that the moduli spaces satisfy boundedness and openness, and hence are all Artin stacks of finite type over $\mathbb{C}$.
\end{abstract}

\maketitle

\section{Introduction}
Bridgeland stability conditions have been constructed on curves, surfaces, and a number of smooth threefolds, including all Fano threefolds (see, for example, \cite{Bri08,BMMS12,Li19}).
Recently, Langer constructed stability conditions on normal surfaces \cite{Lan24}.

In this article,  we construct a stability condition on a singular surface with an ADE singularity which is   compatible with a weak stability condition on resolution. 

Our main result is the following:
\begin{theorem}\label{main1}
Let $X$ be a projective surface with a single ADE singularity and $\pi \colon \widetilde{X} \longrightarrow X$ be its crepant resolution. 

Then there exist a stability condition $\sigma_X=(Z_X, \mathcal{A})$ on ${\rm D}^b(X)$,
and a weak stability condition $\sigma_{\widetilde{X}} = (Z_{\widetilde{X}},\mathcal{B}^0)$ ${\rm D}^b(\widetilde{X})$
such that they are related as follows.
\begin{enumerate}
    \item[(1)] $Z_{\widetilde{X}} = Z_X \circ \pi_\ast$ 
    \item[(2)] $\pi_\ast \colon \mathcal{B}^0 \longrightarrow \mathcal{A}$ is an exact functor and for any $Z_{\widetilde{X}}$-semistable object $E$, its pushforward $\pi_\ast E$ is $Z_X$-semistable.
    \item[(3)] In particular, $\pi_\ast$ induces a morphism between the moduli spaces $\mathcal{M}_{\sigma_{\widetilde{X}}}(v) \longrightarrow \mathcal{M}_{\sigma_X}(\pi_\ast v)$.
\end{enumerate}
\end{theorem}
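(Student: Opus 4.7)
The plan follows the roadmap in the abstract. I would first construct the family of Bridgeland stability conditions $\sigma_{\pi^\ast H,\beta,z}=(Z_{\pi^\ast H,\beta,z},\mathcal{B}^0)$ on $\widetilde{X}$, generalising Tramel--Xia, by tilting the perverse heart ${}^{-1}\Per(\widetilde{X}/X)$ with respect to $\mu_{\pi^\ast H}$ at slope $0$; the support property is controlled by the quadratic form $Q_{A,B}$. Then I would deform along the path $\beta\to 0$ in $\Stab(\widetilde{X})$. The heart $\mathcal{B}^0$ is preserved and the limit stability function $Z_{\widetilde{X}}=Z_{\pi^\ast H,0,z}$ becomes degenerate precisely on $\ker\pi_\ast$, so $\sigma_{\widetilde{X}}=(Z_{\widetilde{X}},\mathcal{B}^0)$ is a weak stability condition on the boundary of $\Stab(\widetilde{X})$, with support property witnessed by $Q_{\widetilde{X}}$.

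Statement (1) then amounts to checking that $Z_{\widetilde{X}}$ kills every class in $\ker\pi_\ast$. For an object $E$ representing such a class, one has $\ch_0(E)=0$ (support in the exceptional locus), and the projection formula gives $\pi^\ast H\cdot\ch_1(E)=H\cdot\pi_\ast\ch_1(E)=0$ since $\pi_\ast\ch_1(E)$ is a zero-cycle; crepancy $K_{\widetilde{X}}=\pi^\ast K_X$ together with Hirzebruch--Riemann--Roch yields $\ch_2(E)=\chi(E)=\chi(\pi_\ast E)=0$. Hence $Z_{\widetilde{X}}$ descends uniquely to a linear form $Z_X$ on $K_{\num}(\widetilde{X})/\ker\pi_\ast$ with $Z_X\circ\pi_\ast=Z_{\widetilde{X}}$.

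For the heart on $X$ I take $\mathcal{A}=\Coh^0_H(X)$, the tilt of $\Coh(X)$ at $\mu_H$-slope $0$. To show $\pi_\ast\colon\mathcal{B}^0\to\mathcal{A}$ is exact I proceed in two steps. The perverse heart is designed so that $\pi_\ast\colon {}^{-1}\Per(\widetilde{X}/X)\to\Coh(X)$ is $t$-exact; moreover, the slope $\mu_{\pi^\ast H}$ on the left pushes forward to $\mu_H$ on the right, so the torsion pair $(\mathcal{T}^{>0}_{\pi^\ast H,P},\mathcal{F}^{\leq 0}_{\pi^\ast H,P})$ maps to $(\mathcal{T}^{>0}_H,\mathcal{F}^{\leq 0}_H)$ and tilting is preserved. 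Putting this together gives $\pi_\ast(\mathcal{B}^0)\subseteq\mathcal{A}$ with the restricted functor exact. The HN property and support property of $(Z_X,\mathcal{A})$ are inherited from $\sigma_{\widetilde{X}}$ via the quotient $K_{\num}(\widetilde{X})/\ker\pi_\ast$, turning the weak condition upstairs into a genuine stability condition downstairs.

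Statement (2) is where I expect the main obstacle. Given a $\sigma_{\widetilde{X}}$-semistable $E\in\mathcal{B}^0$ and a hypothetical destabilising sequence $0\to A\to\pi_\ast E\to B\to 0$ in $\mathcal{A}$, I would lift it to a sequence $0\to E'\to E\to E''\to 0$ in $\mathcal{B}^0$ using the counit $\mathbf{L}\pi^\ast\pi_\ast E\to E$ from adjunction and taking $\mathcal{B}^0$-cohomology of the induced map $\mathbf{L}\pi^\ast A\to E$; ambiguities in the lift lie in $\ker\pi_\ast$, on which $Z_{\widetilde{X}}$ vanishes, so the phase of $E'$ matches that of $A$ and the destabilisation transports upstairs, contradicting semistability of $E$. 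The technical heart of the argument is verifying that this lift really sits inside $\mathcal{B}^0$ as an honest short exact sequence, not merely as a triangle in $\mathrm{D}^b(\widetilde{X})$. Finally, (3) is then formal: $\pi_\ast$ preserves flatness of families and, by (2), preserves semistability, hence descends to the desired morphism of moduli stacks.
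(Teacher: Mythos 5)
Your proposal follows essentially the same route as the paper: construct $\mathcal{B}^0$, build the family $\sigma_{\pi^\ast H,\beta,z}$ with support property from $Q_{A,B}$, scale $\beta\to 0$ to reach the weak stability condition $\sigma_{\widetilde{X}}$, descend the central charge through $\ker\pi_\ast$, and transport (un)stability via adjunction. The one place where your sketch is slightly out of order concerns the exactness of $\pi_\ast\colon\mathcal{B}^0\to\Coh^0_H(X)$. You present this as following directly from ``$\mu_{\pi^\ast H}$ pushes forward to $\mu_H$, so the torsion pair maps to the torsion pair'', but slope compatibility alone does not give that $\mathcal{T}^{>0}_{\pi^\ast H,P}$ lands in $\mathcal{T}^{>0}_H$: an object with all quotients of positive slope upstairs could a priori push forward to something with a bad quotient downstairs, since $\pi_\ast$ is not full. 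What is actually needed is the very ``lift a destabiliser via $\mathbf{L}^0\pi^\ast$ and the counit'' argument that you outline for statement~(2), applied first at the level of $\mu_{\pi^\ast H}$-stability on ${}^{-1}\Per(\widetilde{X}/X)$ versus $\mu_H$-stability on $\Coh(X)$ (using t-exactness of $\mathbf{R}\pi_\ast$ on the perverse heart, which is Bridgeland's lemma). The paper handles this cleanly by stating one general preservation-of-semistability proposition and invoking it twice, once for the slope torsion pairs (to conclude exactness on $\mathcal{B}^0$) and once for the Bridgeland stability conditions. Your sketch contains the right idea in both places but presents the second application as if it did not already require the first; when you write this out, you should state the lifting lemma in a form general enough to cover both levels. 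A minor point in your argument for (1): an arbitrary class in $\ker\pi_\ast$ need not be represented by an object supported on $\Pi$, so it is cleaner either to argue purely numerically (use that $\ch_0$, $\pi_\ast\ch_1$, and $\chi$ all vanish on $\ker\pi_\ast$, then invoke crepancy to get $\ch_2=\chi$), or to first identify $\ker\pi_\ast$ as the span of the classes $[\mathcal{O}_{C_i}(-1)]$ as the paper does.
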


It is quite natural that we first study the derived category of the resolution $\widetilde{X}$.
The construction will be realized by deforming the central charge of a stability condition on ${\rm D}^b(\widetilde{X})$ to a weak stability condition $\sigma_{\widetilde{X}}$,
and then the construction of $\sigma_X$ will be based on that of $\sigma_{\widetilde{X}}$.

By generalizing \cite[Sect. 5,6]{TX22},
we will construct $\sigma_{\widetilde{X}}$ as the limit of stability conditions $\sigma_{\epsilon}$,
which are associated to the contractions $\pi \colon \widetilde{X} \longrightarrow X$,
in the sense that the skyscraper sheaf $\mathcal{O}_x$ is strictly $\sigma_{\epsilon}$-semistable for $x$ in the exceptional locus $\Pi$,
and is $\sigma_{\epsilon}$-stable for any $x \in\widetilde{X} \setminus \Pi$.

In order to do so, we need to construct a heart of bounded t-structure,
using the technique of torsion pair and tilting in the sense of \cite{HRS96}.
Combining arguments in \cite{Tod13} and in \cite[Sect. 3]{TX22}, we have a double-tilted heart $\mathcal{B}^0$ on ${\rm D}^b(\widetilde{X})$, with two different descriptions,
which are useful for constructing the central charges on this heart and the support property then after.
In \cite[Sect. 3]{Tod13}, there is an assumption that $C^2=-1$. 
This is used to prove a Bogomolov-Gieseker type result and to construct a Bridgeland stability condition,
but our argument doesn't use this assumption.

In summary, we have the following result:
\begin{theorem}
Let $X$ be a projective surface with a single ADE singularity and $\pi \colon \widetilde{X} \longrightarrow X$ be its crepant resolution. 
Then there exists a path $\sigma_\epsilon = (Z_\epsilon, \mathcal{B}^0)$ of Bridgeland stability conditions on ${\rm D}^b(\widetilde{X})$ with the same heart,
such that the end point of the path $\sigma_\epsilon$ is the weak stability condition $\sigma_{\widetilde{X}} = (Z_{\widetilde{X}},\mathcal{B}^0)$.
\end{theorem}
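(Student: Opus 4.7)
The plan is to build the path as a one-parameter family $\{\sigma_\epsilon\}_{\epsilon \in (0, \epsilon_0]}$ of Bridgeland stability conditions with the fixed heart $\mathcal{B}^0$ and central charges obtained by scaling $\beta \mapsto \epsilon\beta$ in $Z_{\pi^\ast H,\beta,z}$, and then to pass to the limit $\epsilon \to 0^+$. First, I would fix $\epsilon_0 > 0$ and verify that $\sigma_{\epsilon_0} = (Z_{\pi^\ast H, \epsilon_0 \beta, z}, \mathcal{B}^0)$ is a Bridgeland stability condition. This requires (a) checking that the central charge is a stability function on $\mathcal{B}^0$, exploiting the two descriptions $\mathcal{B}^0 = \langle \mathcal{F}^{\leq 0}_{\pi^\ast H,P}[1], \mathcal{T}^{>0}_{\pi^\ast H,P}\rangle = \langle \mathcal{F}_0[1], \mathcal{T}_0\rangle$ to control the imaginary and real parts; (b) the HN property, via Noetherianity of $\mathcal{B}^0$ together with local finiteness; and (c) the support property, phrased as the non-negativity of $Q_{A,B}$ on $\sigma_{\epsilon_0}$-semistable classes for suitable $A,B > 0$. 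Step (c) is a Bogomolov--Gieseker type inequality on the double-tilted heart, generalising \cite[Sect.~5,6]{TX22} but without the hypothesis $C^2 = -1$ used in \cite[Sect.~3]{Tod13}.

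Second, I would extend $\sigma_{\epsilon_0}$ along the path by applying Bridgeland's deformation theorem: since the support property is an open condition and $Z_{\pi^\ast H, \epsilon \beta, z}$ depends continuously on $\epsilon$, a neighborhood of $\sigma_{\epsilon_0}$ in $\Stab(\widetilde{X})$ consists of stability conditions with the same heart $\mathcal{B}^0$, so $\sigma_\epsilon$ is defined on an open interval. To cover the whole half-open interval $(0, \epsilon_0]$, the deformation must be continued with controlled constants $A,B$ in $Q_{A,B}$; I would verify this by showing that the inequality obtained in step (c) can be taken with $\epsilon$-independent (or controllably varying) constants down to the boundary, so that the path does not terminate away from $0$.

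Finally, to reach the limit $\epsilon \to 0^+$: because $\pi^\ast H \cdot C_i = 0$ for every exceptional curve, $Z_{\widetilde{X}}$ vanishes on $\ker \pi_\ast$ and therefore descends to ${\rm K}_{\num}(\widetilde{X})/\ker \pi_\ast$, which is the natural target for a weak stability condition. Positivity $Z_{\widetilde{X}}(E) \in \mathbb{H} \cup \mathbb{R}_{\leq 0}$ on $\mathcal{B}^0$ is inherited by continuity from the strict positivity along the path, HN filtrations on $\mathcal{B}^0$ with respect to $Z_{\widetilde{X}}$ arise as limits of the HN data of the $\sigma_\epsilon$, and the support property becomes the non-negativity of the reduced quadratic form $Q_{\widetilde{X}} = \Delta + A(\Im Z_{\widetilde{X}})^2$; the $B(\Re Z)^2$ term of $Q_{A,B}$ is dropped because the real part becomes degenerate in the limit. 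The hard part will be the uniform support property along $(0,\epsilon_0]$: the constants $A,B$ must be chosen so that $Q_{\widetilde{X}}$ still controls the $\sigma_{\widetilde{X}}$-semistable classes on the quotient, and establishing a Bogomolov--Gieseker inequality which is sensitive to the exceptional classes $[C_i]$ but independent of the identity $C_i^2=-1$ used in \cite[Sect.~3]{Tod13} is the technical crux of the argument.
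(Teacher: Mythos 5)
Your broad strategy — scale $\beta \mapsto \epsilon\beta$ in the central charge, verify that $\sigma_\epsilon$ is a stability condition for $\epsilon>0$, and pass to the limit at $\epsilon=0$ — is the same as the paper's, but your second and third steps diverge from the paper and contain gaps.

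For the path $\epsilon\in(0,1]$, you propose to seed from $\sigma_{\epsilon_0}$ via Bridgeland's deformation theorem, arguing that ``a neighborhood of $\sigma_{\epsilon_0}$ in $\Stab(\widetilde{X})$ consists of stability conditions with the same heart $\mathcal{B}^0$.'' That claim is not automatic and in general is false: Bridgeland's deformation theorem produces a nearby stability condition whose heart is $\mathcal{P}'(0,1]$, which need not coincide with $\mathcal{P}(0,1]=\mathcal{B}^0$ for the original slicing $\mathcal{P}$. Making the heart track along the deformation requires a separate argument. The paper sidesteps this entirely: the hypotheses of Lemma \ref{stabfunc}, Lemma \ref{HNprop}, and Theorem \ref{mainSect5} are checked to hold \emph{verbatim} for $\epsilon\beta$ in place of $\beta$ whenever $0<\epsilon\leq 1$ (since $\epsilon\beta\cdot C_i>0$, $\epsilon\beta\cdot \ch_1(\mathcal{O}_\Pi)<1$, and $z>\max\{0,-\beta^2/2\}\geq -\epsilon^2\beta^2/2$), so \emph{every} $\sigma_\epsilon=(Z_{\pi^\ast H,\epsilon\beta,z},\mathcal{B}^0)$ is a Bridgeland stability condition with the fixed heart $\mathcal{B}^0$, with no appeal to deformation theory. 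Your own first step already contains everything needed; the deformation-theoretic step is both unnecessary and incompletely justified.

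For the endpoint $\epsilon=0$, you assert that ``HN filtrations on $\mathcal{B}^0$ with respect to $Z_{\widetilde{X}}$ arise as limits of the HN data of the $\sigma_\epsilon$.'' This is an unsubstantiated idea: even when all the $\sigma_\epsilon$ share a heart, HN filtrations do not straightforwardly pass to a limit where the central charge becomes degenerate on a sublattice (walls can accumulate, factors can merge, and the mass of some factors vanishes). The paper's argument (Proposition \ref{endpoint}) is instead direct and entirely avoids any limiting process for HN filtrations: $\Im Z_{\widetilde{X}}=\Im Z_{\pi^\ast H,\beta,z}$ is unchanged, so the chain condition of Lemma \ref{HNprop} applies as-is and, combined with the discreteness of the image and the weak analogue of \cite[Prop.~B.2]{BM11}, yields HN property directly for $(Z_{\widetilde{X}},\mathcal{B}^0)$. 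Only the weak-positivity of $\Re Z_{\widetilde{X}}$ on massless objects is obtained by a continuity argument. You should replace your ``limit of HN data'' idea by this observation about the imaginary part being constant along the path, which is both correct and concrete.
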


We then prove that the end point $\sigma_{\widetilde{X}}$ descends to a pre-stability condition $\sigma_X$ on ${\rm D}^b(X)$, viewed as the quotient of ${\rm D}^b(\widetilde{X})$, 
and show that there exists a compatability between $\sigma_{\widetilde{X}}$ and $\sigma_X$ given by the pushforward $\pi_\ast$, which also gives a surjection between the set of semistable objects.

\begin{theorem}\label{main3}
For any class $v \in {\rm K}_{\num}(X)$, there exists a class $\widetilde{v} \in {\rm K}_{\num}(\widetilde{X})$ such that there is a surjective map $\pi_\ast \colon \mathcal{M}_{\sigma_{\widetilde{X}}}(\widetilde{v}) \longrightarrow \mathcal{M}_{\sigma_X}(v)$.
\end{theorem}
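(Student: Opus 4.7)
The plan is to exhibit, for each $\sigma_X$-semistable object $F$ representing a point of $\mathcal{M}_{\sigma_X}(v)$, a $\sigma_{\widetilde{X}}$-semistable lift $\widetilde{F} \in \mathcal{B}^0$ with $\pi_\ast \widetilde{F} \cong F$. The class is then forced: take $\widetilde{v} := \mathbf{L}\pi^\ast v \in {\rm K}_{\num}(\widetilde{X})$, which satisfies $\pi_\ast \widetilde{v} = v$ by the projection formula combined with $\mathbf{R}\pi_\ast\mathcal{O}_{\widetilde{X}} = \mathcal{O}_X$ (using that ADE singularities are rational).

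My candidate lift is $\widetilde{F} := \mathbf{L}\pi^\ast F$; the identity $\pi_\ast \widetilde{F} \cong F$ is immediate from the same vanishing and the projection formula. To verify $\widetilde{F} \in \mathcal{B}^0$, I would decompose $F \in \mathcal{A} = \Coh^0_H(X)$ via its defining short exact sequence
\begin{equation*}
0 \longrightarrow F^0[1] \longrightarrow F \longrightarrow F^1 \longrightarrow 0
\end{equation*}
with $F^1 \in \mathcal{T}^{>0}_H$ and $F^0 \in \mathcal{F}^{\leq 0}_H$, apply $\mathbf{L}\pi^\ast$ to obtain a distinguished triangle $\mathbf{L}\pi^\ast F^0[1] \to \widetilde{F} \to \mathbf{L}\pi^\ast F^1 \to$, and then check that the outer terms lie in $\mathcal{F}^{\leq 0}_{\pi^\ast H, P}[1]$ and $\mathcal{T}^{>0}_{\pi^\ast H, P}$ respectively. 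This relies on (i) that $\mathbf{L}\pi^\ast$ sends $\Coh(X)$ into ${}^{-1}\Per(\widetilde{X}/X)$, a consequence of the definition of perverse coherent sheaves for a crepant resolution of rational singularities, and (ii) the slope compatibility $\mu_{\pi^\ast H} = \mu_H \circ \pi_\ast$ from the projection formula.

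Semistability of $\widetilde{F}$ then follows by a diagram chase. For any short exact sequence $0 \to G \to \widetilde{F} \to Q \to 0$ in $\mathcal{B}^0$, the exactness of $\pi_\ast \colon \mathcal{B}^0 \to \mathcal{A}$ from Theorem \ref{main1}(2) yields $0 \to \pi_\ast G \to F \to \pi_\ast Q \to 0$ in $\mathcal{A}$. If $Z_{\widetilde{X}}(G) \neq 0$, then $Z_X(\pi_\ast G) = Z_{\widetilde{X}}(G) \neq 0$ by the compatibility $Z_{\widetilde{X}} = Z_X \circ \pi_\ast$, so $\pi_\ast G$ is a nonzero subobject of $F$ in $\mathcal{A}$ with the same phase as $G$; the $\sigma_X$-semistability of $F$ then gives $\phi(G) = \phi(\pi_\ast G) \leq \phi(F) = \phi(\widetilde{F})$. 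Subobjects $G$ with $Z_{\widetilde{X}}(G) = 0$ lie in $\ker \pi_\ast$ and do not obstruct semistability by the convention for weak stability conditions.

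The main obstacle is step (ii) in the construction: showing that $\mathbf{L}\pi^\ast F^1 \in \mathcal{T}^{>0}_{\pi^\ast H, P}$ and $\mathbf{L}\pi^\ast F^0 \in \mathcal{F}^{\leq 0}_{\pi^\ast H, P}$. This requires understanding how the slope filtration on $\Coh(X)$ transfers through pullback to subobjects inside the perverse heart; in particular, one must rule out that a proper subobject of $\mathbf{L}\pi^\ast F^1$ in ${}^{-1}\Per(\widetilde{X}/X)$, which need not itself be a pullback, could have slope $\leq 0$. If a direct approach proves intractable, an alternative is to exploit the description of $\mathcal{A}$ as the Serre quotient $\mathcal{B}^0 / (\mathcal{B}^0 \cap \ker \pi_\ast)$ to lift $F$ abstractly, and then replace the lift by an HN piece of phase equal to that of $F$ to recover semistability.
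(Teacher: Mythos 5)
Your proposed lift $\widetilde{F} := \mathbf{L}\pi^\ast F$ does not exist as an object of ${\rm D}^b(\widetilde{X})$. The singularity $x_0$ is a rational double point, hence $\mathcal{O}_{X,x_0}$ has infinite global dimension, and the paper notes explicitly (just after Lemma \ref{1}) that $\mathbf{L}\pi^\ast\mathcal{O}_{x_0}$ is an unbounded complex. The same is true for any $F$ whose torsion part touches $x_0$. For the same reason $\mathbf{L}\pi^\ast v \in {\rm K}_{\num}(\widetilde{X})$ is not defined. This is precisely why the paper works with $\mathbf{L}^0_{\mathcal{B}^0}\pi^\ast$ or $H^0_{\mathcal{B}^0}\circ\mathbf{L}\pi^\ast$ (Proposition \ref{esurj}) rather than $\mathbf{L}\pi^\ast$ itself.

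Your semistability argument also contains an error. You claim that a subobject $G\subset\widetilde{F}$ with $Z_{\widetilde{X}}(G)=0$ ``does not obstruct semistability by the convention for weak stability conditions.'' This is the opposite of what Definition \ref{stability} says: such $G$ has $\mu_{\sigma_{\widetilde{X}}}(G)=+\infty$, so it \emph{does} destabilize $\widetilde{F}$ whenever $\Im Z_{\widetilde{X}}(\widetilde{F})>0$. Handling exactly these massless subobjects is the content of Lemma \ref{converse}, which shows the HN filtration of a generic preimage has only two steps and one can pass to the relevant quotient.

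Your fallback ``replace the lift by an HN piece of phase equal to that of $F$'' is essentially Lemma \ref{converse}, and that part is correct. But it is not enough to prove the stated theorem, because the resulting object's class is only determined modulo $\ker\pi_\ast$, while the theorem asserts a surjection onto $\mathcal{M}_{\sigma_{\widetilde{X}}}(\widetilde{v})$ for a \emph{single fixed} $\widetilde{v}$. The missing content is the normalization argument of Lemma \ref{widetildev}: one fixes the lift $\widetilde{v}$ with $\pr_{\ker\pi_\ast}(\widetilde{v})=\sum_i a_iC_i$ and $0\leq a_i<1$, and then modifies a given semistable preimage by successive extensions by, or quotients onto, $\mathcal{O}_{C_i}(-1)[1]$, using the Cartan-matrix estimate of Lemma \ref{cartan} and the massless-extension Lemma \ref{massless} to preserve semistability and to guarantee the process lands on the normalized class. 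Without this step there is no way to control the exceptional part of the class, and the theorem as stated is not proved.
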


Furthermore, we use the compatibility with $\pi_\ast$ to naturally induces a quadratic form on the quotient lattice ${\rm K}_{\num}(X) \simeq {\rm K}_{\num}(\widetilde{X})/ \ker \pi_\ast$ which provides the support property of $\sigma_X$.

Finally, we work out the generic flatness for the heart $\mathcal{B}^0$ 
and show that $\sigma_\epsilon$ lies in the closure of the geometric chamber.
Therefore, the moduli spaces satisfy boundedness and openness, and hence are all Artin stacks of
finite type over $\mathbb{C}$.

\subsection{Related works}
There are some recent related works.
In \cite{LR22}, the authors constructed stability conditions on the derived category of coherent sheaves on the algebraic stack associated to the projective surface with an ADE singularity.

Also, Vilches \cite{Vil25} consider stability conditions associated to more general birational contractions of surfaces.
In particular, for an ADE configuration,
he also recovers the stability conditions
$\sigma_\epsilon$ on the smooth surface $\widetilde{X}$.

By \cite{BPPW22}, the stability manifold $\Stab(\widetilde{X})$ admits a partial compactification in the space $\Hom(\Lambda, \mathbb{C})  \times  \Slice(\widetilde{X})$ by massless semistable objects.
Then, in terms of \cite{BPPW22}, $\sigma_{\widetilde{X}}$ can be thought of as a lax stability condition lying in that stratum of $\partial\Stab(\widetilde{X})$.

In \cite{CLSY24}, the authors work in the case of a curve with negative self-intersection on a K3 surface. They consider a different lax pre-stability condition (in the language of \cite{BPPW22}) with the same central charge as $\sigma_{\widetilde {X}}$,  and also study how it behaves under the spherical twist associated to the curve.

In \cite{Bol23}, Bolognese gives another local compactification of the stability manifold $\Stab(\widetilde{X})$.
The stability condition $\sigma_X$ we constructed can also be viewed as a generalized stability condition on $\widetilde{X}$ in Bolognese's sense.

Our result possibly provides a model of what could happen on higher dimensional singular varieties.
One of the applications can be to study the derived category of a smoothing of 1-nodal Fano threefolds and its Kuznetsov component (cf. \cite{KS24}) as we need fiberwise stability conditions, using the theory of stability conditions in families developed 
in \cite{BLM+}.

On the other hand, one may also want to consider the same family with the singular fiber replaced with its resolution.
We want to relate the notion of (semi)stability on ${\rm D^b}(X)$ and that on its resolution ${\rm D}^b(\widetilde{X})$,
and then our result may be applied.

\subsection{Plan of the paper}
This paper is organized as follows. Section 2 contains some preliminaries of stability conditions and the theory of tilting.

In Section 3, we first construct the heart by the double tilting of coherent sheaves.
We will moreover show that two double tilted hearts actually coincide and study the heart $\mathcal{B}^0$.
Particularly, we are interested in some simple objects which will appear in the Jordan--Hölder filtrations.

In Section 4 and 5, we respectively define the central charge and the quadratic form for support property on ${\rm D}^b(\widetilde{X})$.
Also, we verify the Harder--Narasimhan property on $\mathcal{B}^0$ with respect to $Z_{\pi^\ast H,\beta,z}$.

In Section 6, we deform the central charge $Z_\epsilon$ and get a path $\sigma_\epsilon = (Z_\epsilon, \mathcal{B}^0)$ of stability conditions.
We will show that the end point $\sigma_{\widetilde{X}} = (Z_{\widetilde{X}}, \mathcal{B}^0)$ is a weak stability condition;
we also find a tilted heart $\Coh^\beta_H(X) \subseteq {\rm D}^b(X)$ which forms a Bridgeland stability condition together with $Z_X$.
Moreover, $\sigma_X=(Z_X,\Coh^\beta_H(X))$ is compatible with $\sigma_{\widetilde{X}}$ in the sense of Theorem \ref{main1}.
We also prove that for each $Z_X$-semistable object $E$ with $\ch(E)=v$,
we can find a class $\widetilde{v}$ and 
a $Z_{\widetilde{X}}$-semistable object $\widetilde{E}$ of class $\widetilde{v}$.

In Section 7, we conclude that the induced Bridgeland stability condition on the singular surface also satisfies the support property, whose quadratic form is also induced by that of the end point $\sigma_{\widetilde{X}}$.

In Section 8, we first prove the generic flatness for the heart $\mathcal{B}^0$, 
and then we show that $\sigma_\epsilon$ is in the closure of geometric chamber,
which implies the boundedness of its moduli stacks.
We finally conclude that the moduli stacks of $\sigma_\epsilon, \sigma_{\widetilde{X}},$ and $\sigma_X$ are all Artin stacks of finite type over $\mathbb{C}$.

\subsection{Acknowledgement}
I would like to express my deep gratitude to Arend Bayer for his very valuable help and long discussions throughout the development of this work.
Thanks to Antony Maciocia and Chunyi Li for their helpful comments and suggestions.
I am also grateful to Nicolás Vilches for pointing out a mistake in the first arXiv version of this paper. 

The author was supported by the ERC Consolidator Grant Wall-CrossAG, no.\ 819864.

\printglossary[title= List of Notations]

\section{Preliminaries}
In this section, we review the notions of weak and Bridgeland stability conditions and bounded t-structures.

Let $\mathcal{D}$ be a triangulated category. We fix a finite rank lattice $\Lambda$ and a surjective group homomorphism $v\colon {\rm K}(\mathcal{D}) \longrightarrow \Lambda$.

\begin{definition}
Let $\mathcal{A}$ be an abelian category. A group homomorphism $Z\colon {\rm K}(\mathcal{A}) \longrightarrow \mathbb{C}$ is said to be a weak stability function (resp. stability function) if for any nonzero object $E\in \mathcal{A}$, we have $\Im Z(E) \geq 0$ with $\Im Z(E) = 0 \Rightarrow \Re Z(E) \leq 0$ (resp. $ \Re Z(E) < 0$)
\end{definition}

\begin{definition}\label{stability}
A weak stability condition (resp. Bridgeland stability condition) on $\mathcal{D}$ is a pair $\sigma = (Z,\mathcal{A})$ consisting of a group homomorphism (called the central charge of $\sigma$) $Z \colon \Lambda \longrightarrow \mathbb{C}$ and a heart $\mathcal{A}$ of a bounded t-structure on $\mathcal{D}$, such that the following conditions hold:
\begin{enumerate}
    \item[(a)] The composition $Z \circ v \colon {\rm K}(\mathcal{A})={\rm K}(\mathcal{D}) \longrightarrow \Lambda \longrightarrow \mathbb{C}$ is a weak stability function (resp. stability function) on $\mathcal{A}$. This gives a notion of slope: for any $E \in \mathcal{A}$, we set
    \begin{equation}
    \mu_\sigma(E)=\mu_Z(E):=
    \begin{cases}
    \frac{-\Re Z(E)}{\Im Z(E)}, & \text{if } \Im Z(E)>0, \\
    +\infty, & \text{if } \Im Z(E)=0.
    \end{cases}
    \end{equation}
    We say that an object $0\neq E \in \mathcal{A}$ is $\sigma$-semistable (resp. $\sigma$-stable) if for every nonzero proper subobject $F \subset E$, we have $\mu_\sigma (F) \leq \mu_\sigma (E)$ (resp. $\mu_\sigma (F) < \mu_\sigma (E)$).
    
    \item[(b)] Any object $0\neq E \in \mathcal{A}$ admits a Harder--Narasimhan filtration in $\sigma$-semistable ones; for any $0\neq E \in \mathcal{A}$, there is a filtration $0=E_0 \subset E_1\subset \dots \subset E_{n-1} \subset E_n=E$, such that the quotients $A_i:=E_i/E_{i-1}$ are $\sigma$-semistable (called the HN factors) and we have an inequality of their slopes $\mu_\sigma(A_1)> \dots >\mu_\sigma(A_n)$.
    \item[(c)] (Support property) There exists a quadratic form $Q$ on $\Lambda \otimes \mathbb{R}$ such that $Q|_{\ker Z}$ is negative definite, and $Q(v(E))\geq 0$, for all $\sigma$-semistable $E \in \mathcal{A}$.
\end{enumerate}
Any pair $\sigma=(Z,\mathcal{A})$ satisfying the conditions (a) and (b) is said to be a weak pre-stability condition (resp. pre-stability condition).
\end{definition}

We now briefly review the theory of torsion pairs and tilting introduced in \cite{HRS96}.

\begin{definition} 
Given an abelian category $\mathcal{A}$. A torsion pair in $\mathcal{A}$ is a pair $(\mathcal{T} , \mathcal{F})$ of full additive subcategories satisfying the following conditions:
\begin{enumerate}
    \item[(1)] $\Hom(\mathcal{T} , \mathcal{F})  = 0$
    \item[(2)] For every $A \in \mathcal{A}$, there exists a short exact sequence
    
$$ 0 \longrightarrow T \longrightarrow  A \longrightarrow  F \longrightarrow 0$$
with $ T \in \mathcal{T}$ and $F \in \mathcal{F}$.
\end{enumerate}
\end{definition}

If we have a torsion pair in an abelian category, we can obtain a tilted heart. 
More precisely, we have the following result.
\begin{theorem}[{\normalfont\cite{HRS96}}]\label{HRStilt}
Let $\mathcal{A}$ be the heart of bounded t-structure in $\mathcal{D}$ with a torsion pair $(\mathcal{T}, \mathcal{F})$. 
The category
$$\mathcal{A}^\sharp := \{ E \in \mathcal{D} \ | \ H^i(E) = 0 \ {\rm if} \ i \neq 0, -1, H^0(E) \in \mathcal{T} , H^{-1}(E) \in  \mathcal{F} \}$$ 
is the heart of a bounded t-structure on $\mathcal{D}$.
\end{theorem}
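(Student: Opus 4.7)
The plan is to exhibit the tilted t-structure directly. I would define
\begin{align*}
\mathcal{D}^{\leq 0}_\sharp &:= \{E \in \mathcal{D} \mid H^i_{\mathcal{A}}(E) = 0 \text{ for } i > 0, \; H^0_{\mathcal{A}}(E) \in \mathcal{T}\}, \\
\mathcal{D}^{\geq 0}_\sharp &:= \{E \in \mathcal{D} \mid H^i_{\mathcal{A}}(E) = 0 \text{ for } i < -1, \; H^{-1}_{\mathcal{A}}(E) \in \mathcal{F}\}.
\end{align*}
By construction, their intersection is exactly $\mathcal{A}^\sharp$, and the inclusion $\mathcal{D}^{\leq 0}_\sharp[1] \subseteq \mathcal{D}^{\leq 0}_\sharp$ is immediate from the cohomological description since a shift translates the degree. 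It then suffices to check the Hom-vanishing between the aisles and the existence of truncation triangles; once this is done, the heart of the resulting t-structure automatically coincides with $\mathcal{A}^\sharp$. Boundedness is inherited from that of the original t-structure, since any $E \in \mathcal{D}$ has $H^i_{\mathcal{A}}(E) = 0$ outside a bounded range, and the same range controls the tilted cohomology.

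For the Hom-vanishing $\Hom(\mathcal{D}^{\leq 0}_\sharp, \mathcal{D}^{\geq 1}_\sharp) = 0$, given $X \in \mathcal{D}^{\leq 0}_\sharp$ and $Y \in \mathcal{D}^{\geq 1}_\sharp$, I would dévissage using the original truncation triangles $\tau^{\leq -1}_{\mathcal{A}} X \to X \to H^0_{\mathcal{A}}(X) \to$ and $H^0_{\mathcal{A}}(Y) \to Y \to \tau^{\geq 1}_{\mathcal{A}} Y \to$. Three of the four resulting Hom groups vanish immediately by the axioms of the original t-structure, and the surviving term reduces to $\Hom_{\mathcal{A}}(H^0_{\mathcal{A}}(X), H^0_{\mathcal{A}}(Y)) \subseteq \Hom_{\mathcal{A}}(\mathcal{T}, \mathcal{F}) = 0$ by the torsion-pair hypothesis.

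For the truncation triangle attached to $E \in \mathcal{D}$, apply the torsion-pair decomposition to $H^0_{\mathcal{A}}(E)$ to obtain a short exact sequence $0 \to T \to H^0_{\mathcal{A}}(E) \to F \to 0$ in $\mathcal{A}$ with $T \in \mathcal{T}$ and $F \in \mathcal{F}$. Compose $\tau^{\leq 0}_{\mathcal{A}} E \to H^0_{\mathcal{A}}(E) \to F$ and let $A$ be the fiber of this composite, yielding a triangle $A \to \tau^{\leq 0}_{\mathcal{A}} E \to F \to A[1]$. The long exact sequence in $\mathcal{A}$-cohomology gives $H^0_{\mathcal{A}}(A) = T \in \mathcal{T}$ and $H^i_{\mathcal{A}}(A) = H^i_{\mathcal{A}}(E)$ for $i < 0$, so $A \in \mathcal{D}^{\leq 0}_\sharp$. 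Applying the octahedral axiom to the composition $A \to \tau^{\leq 0}_{\mathcal{A}} E \to E$ then produces a triangle $A \to E \to B \to A[1]$ in which $B$ sits in a triangle $F \to B \to \tau^{\geq 1}_{\mathcal{A}} E \to F[1]$; another long-exact-sequence computation shows $H^0_{\mathcal{A}}(B) = F \in \mathcal{F}$ and $H^i_{\mathcal{A}}(B) = 0$ for $i < 0$, so $B \in \mathcal{D}^{\geq 1}_\sharp$ as required.

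The only delicate point in the argument is the careful bookkeeping with the octahedral axiom in the last step, where one must identify the cone $B$ with the correct extension of $\tau^{\geq 1}_{\mathcal{A}} E$ by $F$ and track its cohomology; the remaining verifications reduce cleanly to the axioms of the given t-structure and the defining properties of the torsion pair.
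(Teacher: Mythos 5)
The paper states Theorem \ref{HRStilt} with a citation to \cite{HRS96} and supplies no proof of its own, so there is nothing in the paper to compare against. Your argument is the standard construction of the tilted t-structure via the explicit aisles $\mathcal{D}^{\leq 0}_\sharp$ and $\mathcal{D}^{\geq 0}_\sharp$, and it is correct: the dévissage establishing $\Hom(\mathcal{D}^{\leq 0}_\sharp,\mathcal{D}^{\geq 1}_\sharp)=0$ and the octahedral construction of the truncation triangle $A\to E\to B$ both check out.
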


In particular, given a notion of stability and a slope, we can construct a torsion pair by the existence of Harder--Narasimhan filtration.
\begin{proposition}[{\normalfont\cite{HRS96}}]\label{HRS}
Given a weak stability condition $\sigma = (Z,\mathcal{A})$, and a real number $\mu \in \mathbb{R}$, we obtain the following two subcategories of $\mathcal{A}$:
$$ \mathcal{T}^{>\mu}_\sigma := \{E \in \mathcal{A} \ | \ {\rm All \ HN \ factors} \  F \  {\rm of} \ E \ {\rm have \ slopes} \ \mu_\sigma(F) > \mu \} $$
$$\mathcal{F}^{\leq\mu}_\sigma := \{E \in \mathcal{A} \ | \ {\rm All \ HN \ factors} \  F \  {\rm of} \ E \ {\rm have \ slopes} \ \mu_\sigma(F) \leq \mu \}$$
Then the pair $(\mathcal{T}^{>\mu}_\sigma,\mathcal{F}^{\leq \mu}_\sigma)$ gives a torsion pair.
\end{proposition}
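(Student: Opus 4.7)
The plan is to verify the two defining axioms of a torsion pair--Hom-vanishing and existence of the short exact sequence--separately, in both cases by reducing the problem to the case of $\sigma$-semistable objects via the Harder--Narasimhan filtrations granted by axiom (b) of Definition \ref{stability}.

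First, I would establish $\Hom(\mathcal{T}^{>\mu}_\sigma, \mathcal{F}^{\leq\mu}_\sigma) = 0$. The key input is the standard fact that, for $\sigma$-semistable $A, B \in \mathcal{A}$ with $\mu_\sigma(A) > \mu_\sigma(B)$, one has $\Hom(A, B) = 0$: given a nonzero morphism $f$, the image $I := \Img(f)$ is simultaneously a nonzero quotient of $A$ and a nonzero subobject of $B$, so semistability forces $\mu_\sigma(A) \leq \mu_\sigma(I) \leq \mu_\sigma(B)$, contradicting the strict inequality. Given $T \in \mathcal{T}^{>\mu}_\sigma$ and $F \in \mathcal{F}^{\leq\mu}_\sigma$, I would then apply this vanishing inductively along the HN filtrations of $T$ and $F$: the long exact sequences obtained by applying $\Hom(-,F)$ to the HN filtration of $T$ and $\Hom(A_i,-)$ to that of $F$ reduce $\Hom(T,F)$ to a collection of Homs between pairs of HN factors, each of which vanishes by the preceding observation since every HN factor of $T$ has slope strictly greater than every HN factor of $F$.

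Second, I would produce the torsion short exact sequence. Given a nonzero $E \in \mathcal{A}$, let $0 = E_0 \subset E_1 \subset \cdots \subset E_n = E$ be its HN filtration, with factors $A_i = E_i/E_{i-1}$ of strictly decreasing slopes $\mu_\sigma(A_1) > \cdots > \mu_\sigma(A_n)$. Let $k$ be the largest index with $\mu_\sigma(A_k) > \mu$ (taking $k = 0$ if no such index exists and $k = n$ if every index qualifies), and set $T := E_k$ and $F := E/E_k$. Using uniqueness of HN filtrations, one checks that $0 \subset E_1 \subset \cdots \subset E_k$ and $0 \subset E_{k+1}/E_k \subset \cdots \subset E/E_k$ are the HN filtrations of $T$ and $F$, with factors $A_1,\ldots,A_k$ and $A_{k+1},\ldots,A_n$ respectively, so $T \in \mathcal{T}^{>\mu}_\sigma$ and $F \in \mathcal{F}^{\leq\mu}_\sigma$, and $0 \to T \to E \to F \to 0$ is the desired torsion sequence.

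The main delicate point I anticipate is the edge case native to the weak-stability setting, where the slope can take the value $+\infty$, i.e.\ $\Im Z = 0$. In verifying the Hom-vanishing between semistables of different slopes when one object lies on this boundary, one needs the sign condition $\Im Z(E) = 0 \Rightarrow \Re Z(E) \leq 0$ from the definition of a weak stability function, together with a case distinction on whether $\Im Z(I) = 0$ or $\Im Z(I) > 0$, to recover the contradiction $\mu_\sigma(A) \leq \mu_\sigma(I) \leq \mu_\sigma(B)$. Once this is secured, the remainder of the argument is essentially formal.
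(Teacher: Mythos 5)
The paper states this proposition by citing \cite{HRS96} without giving a proof, so there is no in-paper argument to compare against. Your proof is the standard and correct one: establish $\Hom$-vanishing between semistable objects of strictly decreasing slope via the image argument, extend to all of $\mathcal{T}^{>\mu}_\sigma$ and $\mathcal{F}^{\leq\mu}_\sigma$ by induction along HN filtrations, and obtain the torsion exact sequence by truncating the HN filtration of an arbitrary object at the last factor of slope $> \mu$ and using uniqueness of HN filtrations to identify the two pieces as lying in the torsion and torsion-free parts.

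One small clarification on your final paragraph: the sign condition $\Im Z(E) = 0 \Rightarrow \Re Z(E) \leq 0$ is not actually what rescues the edge case. What you really use is the following: if $A$ is semistable with $\mu_\sigma(A) > \mu_\sigma(B)$, then $\mu_\sigma(B) < +\infty$, hence $\Im Z(B) > 0$; any nonzero subobject $I$ of $B$ then has $\Im Z(I) > 0$ (otherwise $\mu_\sigma(I) = +\infty$ would contradict semistability of $B$), and then the ordinary see-saw inequality and the fact that $\Im Z(A) = \Im Z(\ker f) + \Im Z(I)$ handle both cases $\Im Z(A) > 0$ and $\Im Z(A) = 0$; in the latter case one finds $\Im Z(I) = 0$ immediately, forcing $I = 0$. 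The weak sign condition on $\Re Z$ plays no role in the torsion-pair argument itself; it matters elsewhere (e.g., for the HN property and the slope being well-behaved). This does not affect the validity of your proof — the cases you distinguish are the right ones — but the mechanism is nonnegativity of $\Im Z$ together with the definition of semistability, not the constraint on $\Re Z$.
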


\begin{example}

Let $X$ be a projective variety of dimension $n$ and $H$ be a nef divisor. 
Consider the lattice $\Lambda_H$ generated by vectors of the form $(H^{n} \ch_0(E), H^{n-1} \ch_1(E))$ and the natural surjection $v_H \colon {\rm K}(X) \longrightarrow \Lambda_H$.
The pair $(Z_H,\Coh(X))$ given by 
\[ \gls{slope-stability}\]
defines a weak stability condition on ${\rm D}^b(X)$ with respect to $\Lambda_H$; the quadratic form $Q$ can be chosen to be $0$.

This is called the slope stability and we write $\mu_H := \mu_{Z_H} $ for its slope function.
\end{example}

The most common example of heart constructed by tilting is the following:

\begin{definition}\label{cohbeta}
We write \gls{tilted-heart}$ \subseteq \rm{D^b}(X)$ for the tilt of coherent sheaves on a projective variety $X$ with respect to the slope stability $Z_H$ at the slope $\beta \in \mathbb{R}$.
The torsion pair defining this tilt is denoted by 
\[ \gls{T-H}\]
\[ \gls{F-H}\]
\end{definition}

\section{Construction of hearts}
Let $X$ be a projective surface with a single ADE singularity $x_0$ and \gls{crepant-resolution} be its crepant resolution.
Denote the exceptional $(-2)$-curves by \gls{exceptional-curves}, and let \gls{scheme-preimage} be the exceptional locus.

We now start to construct our Bridgeland stability conditions on $\widetilde{X}$. 
In this section, we first focus on the heart of a bounded t-structure. The construction is a generalization of \cite[Sect. 3, 4]{TX22}.

We will construct a heart $\mathcal{B}^0$ of bounded t-structure by tilting the category $^{-1}\Per(\widetilde{X}/X)$ of perverse coherent sheaves.
It is shown to coincides with a tilt of the heart $\Coh^0(\widetilde{X})$ (see Proposition \ref{HRS}).
We also classify simple objects in $\mathcal{B}^0$ supported on the exceptional locus.

Let $H$ be an ample divisor on $X$,
and we can then define the slope stability $\mu_{\pi^\ast H}$ on $\Coh(\widetilde{X})$ with respect to the nef divisor $\pi^\ast H$.
We first recall from \cite{VdB04} that \gls{perverse-coherent-sheaves} is a tilt of $\Coh(\widetilde{X})$ with respect to the torsion pair $(\mathcal{T}_P,\mathcal{F}_P)$, where
\[ \gls{T_P}\]
\[ \gls{F_P}\]

We then want to tilt the perverse coherent sheaves once more.
By abuse of notation, we define the slope stability  with respect to $\pi^\ast H$ as \[\gls{slope-function}:= \begin{cases}
    \frac{\ch_1(E)\cdot (\pi^\ast H)}{\ch_0(E)}, & \text{when } \ch_0(E) \neq 0\\
    +\infty , & \text{when } \ch_0(E)=0 
\end{cases}
\]
We need to check that $^{-1}\Per(\widetilde{X}/X)$ admits Harder--Narasimhan property with respect to $\mu_{\pi^\ast H}$.

\begin{lemma}\textnormal{(cf. \cite[Lemma 3.6]{Tod13})}
Any object in $^{-1}\Per(\widetilde{X}/X)$ admits a Harder--Narasimhan
filtration with respect to $\mu_{\pi^\ast H}$-stability.
\end{lemma}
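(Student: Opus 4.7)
The plan is to apply the standard criterion for existence of HN filtrations in a Noetherian abelian category equipped with a numerical slope: it suffices that for every fixed object $E$, the slopes $\mu(F)$ over all subobjects $F \subseteq E$ are uniformly bounded above. The argument will follow Toda's proof \cite[Lemma 3.6]{Tod13} closely, with modifications needed to drop his assumption $C^2=-1$.

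First, I would recall that Noetherianity of $^{-1}\Per(\widetilde{X}/X)$ follows from Van den Bergh's identification of this heart as coherent modules over a sheaf of Noetherian algebras on $X$ \cite{VdB04}. For well-definedness of $\mu_{\pi^\ast H}$ on the heart, any $E \in {}^{-1}\Per(\widetilde{X}/X)$ fits into $0 \to H^{-1}(E)[1] \to E \to H^0(E) \to 0$ with $H^{-1}(E) \in \mathcal{F}_P$ and $H^0(E) \in \mathcal{T}_P$. Sheaves in $\mathcal{F}_P$ satisfy $\mathbf{R}^0\pi_\ast = 0$, so they are set-theoretically supported on $\Pi$, giving $\ch_0(H^{-1}(E)) = 0$; moreover, $\ch_1$ of any such sheaf is a combination of the classes $[C_i]$, each of which pairs to zero with $\pi^\ast H$ (as $\pi^\ast H \cdot C_i = H \cdot \pi_\ast C_i = 0$). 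Hence $\ch_0(E) = \ch_0(H^0(E)) \geq 0$, and $\mu_{\pi^\ast H}(E) = \mu_{\pi^\ast H}(H^0(E))$ whenever $\ch_0(E) > 0$.

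For the slope bound, given $F \subseteq E$ in $^{-1}\Per(\widetilde{X}/X)$, the long exact cohomology sequence of $0 \to F \to E \to E/F \to 0$ produces the coherent subsheaf $G := \Img(H^0(F) \to H^0(E)) \subseteq H^0(E)$. The kernel of $H^0(F) \twoheadrightarrow G$ is a quotient of $H^{-1}(E/F) \in \mathcal{F}_P$, hence is set-theoretically supported on $\Pi$; by the previous paragraph it contributes zero to both $\ch_0$ and to $\ch_1 \cdot \pi^\ast H$. Consequently $(\ch_0(F), \ch_1(F) \cdot \pi^\ast H) = (\ch_0(G), \ch_1(G) \cdot \pi^\ast H)$, so $\mu_{\pi^\ast H}(F) = \mu_{\pi^\ast H}(G)$ whenever $\ch_0(F) > 0$, and this is bounded above by the maximal HN slope of $H^0(E)$ under classical slope stability on $\Coh(\widetilde{X})$ with respect to the nef divisor $\pi^\ast H$ (HN for this slope is standard on a smooth projective surface). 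Subobjects with $\ch_0 = 0$ have slope $+\infty$ and form a Serre subcategory; by Noetherianity they admit a unique maximal member in $E$. The standard inductive construction then yields the HN filtration and terminates by Noetherianity.

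The main obstacle is the cohomology chase: one must confirm that the discrepancy between the numerical invariants of $F$ and of the auxiliary subsheaf $G$ is absorbed by an object set-theoretically supported on $\Pi$, making the weak stability of the nef class $\pi^\ast H$ sufficient. This is precisely the step in \cite[Lemma 3.6]{Tod13} where the assumption $C^2 = -1$ is invoked, and the point of the argument is that only the weaker identity $\pi^\ast H \cdot C_i = 0$ is actually needed for HN existence, while the sharper self-intersection identity will be reserved for Bogomolov–Gieseker-type statements appearing later.
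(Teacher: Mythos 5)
Your proof is correct, but it takes a noticeably different route from the paper's. The paper also begins with Van den Bergh's identification to get Noetherianity, but then appeals directly to the descending-chain criterion for HN filtrations (the analogue of \cite[Prop.\ 2.4]{Bri07} for weak stability functions): assuming an infinite chain $\cdots \subseteq E_{i+1} \subseteq E_i \subseteq \cdots \subseteq E_1$ with $\mu_{\pi^\ast H}(E_{i+1}) > \mu_{\pi^\ast H}(E_i/E_{i+1})$, it observes that $\ch_0 \geq 0$ on the heart is integer-valued, so $\ch_0(E_i)$ must stabilize; at that point the quotients $E_i/E_{i+1}$ have $\ch_0 = 0$, hence slope $+\infty$, an immediate contradiction. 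This dispenses with the cohomology chase entirely. Your approach instead verifies the "slopes of subobjects uniformly bounded above" criterion by pushing the numerical invariants of a subobject $F \subseteq E$ through the long exact sequence to a coherent subsheaf $G \subseteq H^0(E)$, invoking HN for $\mu_{\pi^\ast H}$ on $\Coh(\widetilde{X})$, and treating the $\ch_0 = 0$ Serre subcategory separately. Both routes are valid: yours gives an explicit bound on the slopes that appear and stays closer to Toda's original argument, while the paper's is shorter and exploits the positivity of $\ch_0$ on the heart more directly. One small caveat: your final remark locates the $C^2 = -1$ hypothesis of \cite{Tod13} inside the HN argument, whereas the paper's introduction indicates that Toda uses $C^2 = -1$ for the Bogomolov--Gieseker inequality and the construction of the central charge, not for HN; this does not affect the correctness of your proof, only the attribution.
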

\begin{proof}
First, by \cite[Prop. 3.2.7, 3.3.1]{VdB04}, 
we see that the category of perverse coherent sheaves is equivalent to a category $\Coh(\mathcal{M})$ of $\mathcal{M}$-modules for a certain sheaf of algebra $\mathcal{M}$ on $X$,
and in particular is Noetherian.

We will use the analogue of \cite[Prop. 2.4]{Bri07} for weak stability conditions.
Assume the contrary that there exists an infinite sequence $\cdots \subseteq E_i \subseteq \cdots \subseteq E_2 \subseteq E_1$ 
such that for all $i$, $\mu_{\pi^\ast H}(E_{i+1}) > \mu_{\pi^\ast H}(E_i/E_{i+1})$.

Since every perverse coherent sheaf has non-negative $\ch_0$,
we may assume $\ch_0(E_i)$ is constant.
But then $\mu_{\pi^\ast H}(E_i/E_{i+1}) = \infty$, which is a contradiction.
\end{proof}

The Harder--Narasimhan property guarantees that the pair of subcategories defined below is a torsion pair in  $^{-1}\Per(\widetilde{X}/X)$:
\[\gls{T-pi-H-P}\]
\[\gls{F-pi-H-P}\]

Therefore, we obtain a double tilted category $\mathcal{B}^0_{\pi^\ast H,P} =  \langle \, \mathcal{F}^{\leq 0}_{\pi^\ast H,P}[1], \mathcal{T}^{>0}_{\pi^\ast H,P} \, \rangle$.

On the other hand, note that every sheaf in $\mathcal{F}_P$ is a torsion sheaf and hence lies in $\Coh^0(\widetilde{X}):=\Coh^0_{\pi^\ast H}(\widetilde{X})$ (see Definition \ref{cohbeta}).
We can then consider the category \gls{F_0} and its left orthogonal complement \gls{T_0}.

\begin{lemma}
The pair $(\mathcal{T}_0,\mathcal{F}_0)$ is a torsion pair in $\Coh^0(\widetilde{X})$.
\end{lemma}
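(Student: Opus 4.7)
The Hom-vanishing $\Hom(\mathcal{T}_0, \mathcal{F}_0) = 0$ is immediate from the definition of $\mathcal{T}_0$ as the left orthogonal of $\mathcal{F}_0$ in $\Coh^0(\widetilde{X})$. The real content is to construct, for every $A \in \Coh^0(\widetilde{X})$, a short exact sequence $0 \to T \to A \to F \to 0$ in $\Coh^0(\widetilde{X})$ with $F \in \mathcal{F}_0$ and $T \in \mathcal{T}_0$. Note first that $\mathcal{F}_0 = \mathcal{F}_P$ consists of torsion sheaves supported on the exceptional locus $\Pi$, which therefore have $\mu_{\pi^\ast H}$-slope $+\infty$ and in particular lie in $\mathcal{T}^{>0}_{\pi^\ast H} \subseteq \Coh^0(\widetilde{X})$, so the definition is sensible.

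The plan is to produce $F$ by applying the torsion pair $(\mathcal{T}_P, \mathcal{F}_P)$ on $\Coh(\widetilde{X})$ to the top cohomology sheaf of $A$. Consider the canonical short exact sequence in $\Coh^0(\widetilde{X})$
$$0 \to H^{-1}(A)[1] \to A \to H^0(A) \to 0,$$
where $H^{-1}(A) \in \mathcal{F}^{\leq 0}_{\pi^\ast H}$ and $H^0(A) \in \mathcal{T}^{>0}_{\pi^\ast H}$. Applying the $(\mathcal{T}_P, \mathcal{F}_P)$ decomposition to the sheaf $H^0(A)$ gives $0 \to T_P \to H^0(A) \to F_P \to 0$ in $\Coh(\widetilde{X})$, with $T_P \in \mathcal{T}_P$ and $F_P \in \mathcal{F}_P = \mathcal{F}_0$. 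I then set $F := F_P$ and define $T$ as the kernel, in $\Coh^0(\widetilde{X})$, of the composite $A \twoheadrightarrow H^0(A) \twoheadrightarrow F_P$.

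The main obstacle is showing that $T_P$ itself belongs to $\Coh^0(\widetilde{X})$, equivalently that every sheaf quotient $T_P \twoheadrightarrow S$ satisfies $\mu_{\pi^\ast H}(S) > 0$. The key point is that $F_P$ is supported on the contracted curves $C_i$, so $\ch_0(F_P) = 0$ and $\ch_1(F_P)\cdot \pi^\ast H = 0$ by the projection formula. If some quotient $S$ had $\mu_{\pi^\ast H}(S) \leq 0$, then forming the pushout $Q$ of $T_P \hookrightarrow H^0(A)$ along $T_P \twoheadrightarrow S$ would produce a short exact sequence $0 \to S \to Q \to F_P \to 0$ in which $Q$ is a quotient of $H^0(A)$; since $F_P$ contributes nothing to $(\ch_0, \ch_1 \cdot \pi^\ast H)$, we would get $\mu_{\pi^\ast H}(Q) = \mu_{\pi^\ast H}(S) \leq 0$, contradicting $H^0(A) \in \mathcal{T}^{>0}_{\pi^\ast H}$. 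Once $T_P \in \Coh^0(\widetilde{X})$, the sequence $0 \to T_P \to H^0(A) \to F_P \to 0$ is exact in $\Coh^0(\widetilde{X})$, and the kernel $T$ has cohomology sheaves $H^{-1}(T) = H^{-1}(A)$ and $H^0(T) = T_P \in \mathcal{T}_P$. To conclude $T \in \mathcal{T}_0$, I use the triangle $H^{-1}(T)[1] \to T \to H^0(T)$ together with the vanishing of $\Ext^{-1}$ and $\Ext^{-2}$ in $\Coh(\widetilde{X})$ to obtain $\Hom(T, F') = \Hom(T_P, F') = 0$ for every $F' \in \mathcal{F}_0 = \mathcal{F}_P$, by the defining Hom-orthogonality of the torsion pair $(\mathcal{T}_P, \mathcal{F}_P)$.
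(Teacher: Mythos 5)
Your argument is correct and follows essentially the same route as the paper's: applying the torsion pair $(\mathcal{T}_P,\mathcal{F}_P)$ to $H^0(A)$, showing $T_P\in\mathcal{T}^{>0}_{\pi^\ast H}$ by contradiction via a quotient of $H^0(A)$ on which $F_P$ contributes nothing to $(\ch_0,\pi^\ast H\cdot\ch_1)$, and then taking the kernel of $A\to F_P$ in $\Coh^0(\widetilde{X})$. The only cosmetic difference is that you phrase the contradiction step via a pushout while the paper invokes the octahedral axiom on $A\subseteq T_P\subseteq H^0(A)$, and you spell out the $\Ext^{\leq 0}$ vanishing behind $\Hom(T,\mathcal{F}_0)=0$, which the paper simply calls trivial.
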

\begin{proof}
The semiorthogonality is clear. 
Now consider an object $E \in \Coh^0(\widetilde{X})$. 
It fits into the distinguished triangle $H^{-1}(E)[1]\longrightarrow E\longrightarrow H^0(E)$.

As $(\mathcal{T}_P,\mathcal{F}_P)$ is a torsion pair in $\Coh(\widetilde{X})$ we obtain a short exact sequence $$ 0 \longrightarrow T_0 \longrightarrow H^0(E) \longrightarrow F_0  \longrightarrow 0$$ with $T_0 \in \mathcal{T}_P$ and $F_0\in \mathcal{F}_P$. 
Note that although this triangle arises from a short exact sequence in $\Coh(\widetilde{X})$,
it also gives short exact sequence in $\Coh^0(\widetilde{X})$.
This can be seen as follows:

It suffices to show that $T_0 \in \mathcal{T}^{> 0}_{\pi^\ast H}$ as $\mathcal{T}^{> 0}_{\pi^\ast H} \subseteq \Coh^0(\widetilde{X})$.
Assume the contrary that there is a subsheaf $A \subseteq T_0$ in $\Coh(\widetilde{X})$ such that the quotient $T_0/A$ has non-positive slope.
Then the octahedral axiom gives a short exact sequence $ 0 \longrightarrow T_0/A \longrightarrow H^0(E)/A \longrightarrow F_0  \longrightarrow 0$.

As $\mathbf{R}^0\pi_\ast F_0 = 0$, $F_0$ does not contribute to the slope of $H^0(E)/A$, we see that 
\[\mu_{\pi^\ast H}(H^0(E)/A) = \mu_{\pi^\ast H}(T_0/A)\leq0,\]
which is a contradiction as $H^0(E) \in \mathcal{T}^{> 0}_{\pi^\ast H}$.

In particular, $H^0(E) \longrightarrow F_0$ is surjective in $\Coh^0(\widetilde{X})$.
By the octahedral axiom we have a triangle $C \longrightarrow E \longrightarrow F_0$. As $E \longrightarrow H^0(E) \longrightarrow F_0$ is a surjection in $\Coh^0(\widetilde{X})$, we see that $C$ lies in $\Coh^0(\widetilde{X})$.
Finally, it is trivial that $\Hom(C,\mathcal{F}_0) = 0$ and so $C \in \mathcal{T}_0$.
\end{proof}

Denote the tilted heart of $\Coh^0(\widetilde{X})$ with respect to this pair by $ \mathcal{B}^0_{\pi^\ast H,-1}$,
and denote the $i$-th cohomology with respect to a heart $\mathcal{A}$ by \gls{cohomology-functor}.
For simplicity, we write $H^i$ for $H^i_{\Coh(\widetilde{X})}$ and $H^i_P$ for $H^i_{^{-1}\Per(\widetilde{X}/X)}$.

We then compare two double tilted hearts above using the following lemma.

\begin{lemma}\label{doubletilt}
Let $\mathcal{A}$ be an abelian category  with a torsion pair $(\mathcal{T},\mathcal{F})$.
Assume that on the tilted category $\mathcal{A}^\sharp = \langle \mathcal{F}[1],\mathcal{T}\rangle$ we also have another torsion pair $(\mathcal{T}^\sharp,\mathcal{F}^\sharp)$ with $\mathcal{F}^\sharp \subseteq \mathcal{A}$.

Then the double tilted category $(\mathcal{A}^\sharp)^\sharp$ is the tilt of $\mathcal{A}$ with respect to the torsion pair $(\mathcal{T}\cap \mathcal{T}^\sharp,  [\mathcal{F}, \mathcal{F}^\sharp])$, where the bracket $[ \ ]$ denotes the extension closure.
\end{lemma}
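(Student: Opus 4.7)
The plan is to introduce the candidate double tilt $\mathcal{B}:=\langle [\mathcal{F},\mathcal{F}^\sharp][1],\,\mathcal{T}\cap\mathcal{T}^\sharp\rangle$ inside the ambient triangulated category $\mathcal{D}$, prove the equality $\mathcal{B}=(\mathcal{A}^\sharp)^\sharp$ directly, and then deduce from this that $(\mathcal{T}\cap\mathcal{T}^\sharp,\,[\mathcal{F},\mathcal{F}^\sharp])$ is a torsion pair on $\mathcal{A}$ whose HRS tilt realises $\mathcal{B}$.

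Two preliminary containments drive everything. Since $\mathcal{F}^\sharp\subseteq\mathcal{A}$ and $\mathcal{A}\cap\mathcal{A}^\sharp=\mathcal{T}$, we immediately get $\mathcal{F}^\sharp\subseteq\mathcal{T}$. In the other direction, for any $F\in\mathcal{F}$ the shift $F[1]\in\mathcal{A}^\sharp$ satisfies
\[\Hom_{\mathcal{A}^\sharp}(F[1],F^\sharp)=\Hom_{\mathcal{D}}(F[1],F^\sharp)=\Ext^{-1}_{\mathcal{A}}(F,F^\sharp)=0\]
for every $F^\sharp\in\mathcal{F}^\sharp$, so the standard characterization $\mathcal{T}^\sharp={}^\perp\mathcal{F}^\sharp$ inside $\mathcal{A}^\sharp$ forces $\mathcal{F}[1]\subseteq\mathcal{T}^\sharp$. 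The inclusion $\mathcal{B}\subseteq(\mathcal{A}^\sharp)^\sharp$ is then immediate: $\mathcal{T}\cap\mathcal{T}^\sharp\subseteq\mathcal{T}^\sharp$, $\mathcal{F}[1]\subseteq\mathcal{T}^\sharp$, and $\mathcal{F}^\sharp[1]$ all lie in the extension-closed heart $(\mathcal{A}^\sharp)^\sharp$.

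For the reverse inclusion I would unfold $E\in(\mathcal{A}^\sharp)^\sharp$ via its torsion pair triangle $F^\sharp[1]\to E\to T^\sharp$ in $\mathcal{A}^\sharp$ with $F^\sharp\in\mathcal{F}^\sharp$ and $T^\sharp\in\mathcal{T}^\sharp\subseteq\mathcal{A}^\sharp$, and further split $T^\sharp$ using the triangle $H^{-1}(T^\sharp)[1]\to T^\sharp\to H^0(T^\sharp)$ coming from the original torsion pair $(\mathcal{T},\mathcal{F})$. The crucial observation is that $H^0(T^\sharp)$, being an $\mathcal{A}^\sharp$-quotient of $T^\sharp\in\mathcal{T}^\sharp$, sits again in $\mathcal{T}^\sharp$ and hence in $\mathcal{T}\cap\mathcal{T}^\sharp$, while $H^{-1}(T^\sharp)[1]\in\mathcal{F}[1]\subseteq[\mathcal{F},\mathcal{F}^\sharp][1]$; stitching the three pieces together places $E$ in $\mathcal{B}$.

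Finally, having $\mathcal{B}=(\mathcal{A}^\sharp)^\sharp$ I would explicitly exhibit the torsion pair on $\mathcal{A}$. The $\Hom_{\mathcal{A}}$-vanishing between $\mathcal{T}\cap\mathcal{T}^\sharp$ and $[\mathcal{F},\mathcal{F}^\sharp]$ reduces by induction on extensions to the known vanishings $\Hom(\mathcal{T},\mathcal{F})=0$ and $\Hom_{\mathcal{A}^\sharp}(\mathcal{T}^\sharp,\mathcal{F}^\sharp)=0$, matched via $\mathcal{A}\cap\mathcal{A}^\sharp=\mathcal{T}$. For the torsion decomposition of $E\in\mathcal{A}$, first apply $(\mathcal{T},\mathcal{F})$ to obtain $0\to T_0\to E\to F_0\to 0$, then decompose $T_0\in\mathcal{T}\subseteq\mathcal{A}^\sharp$ via $(\mathcal{T}^\sharp,\mathcal{F}^\sharp)$ to get $0\to T'\to T_0\to F'\to 0$; the filtration $0\subseteq T'\subseteq T_0\subseteq E$ will then exhibit $T'$ as the torsion part and $E/T'$, an extension of $F_0\in\mathcal{F}$ by $F'\in\mathcal{F}^\sharp$, as an object of $[\mathcal{F},\mathcal{F}^\sharp]$. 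The main delicate step — and the one I expect to be the main obstacle — is an $\mathcal{A}$-cohomology long exact sequence argument verifying that this second decomposition remains exact inside $\mathcal{A}$, i.e.\ that $T'\in\mathcal{A}$ and not merely in $\mathcal{A}^\sharp$; this hinges on both $T_0\in\mathcal{A}$ and $F'\in\mathcal{F}^\sharp\subseteq\mathcal{A}$ forcing the degree $-1$ cohomology of $T'$ to vanish.
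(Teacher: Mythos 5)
Your proof is correct and takes a genuinely different route from the paper. The paper invokes \cite[Lemma~1.1.2]{Pol07} as a black box to conclude that $(\mathcal{A}^\sharp)^\sharp$ is a tilt of $\mathcal{A}$, identifies the torsion part as $(\mathcal{A}^\sharp)^\sharp\cap\mathcal{A} = \mathcal{T}^\sharp\cap\mathcal{T}$, and then computes the torsion-free part $(\mathcal{T}\cap\mathcal{T}^\sharp)^\perp$ by decomposing an element $E$ of $(\mathcal{A}^\sharp)^\sharp[-1]\cap\mathcal{A}$: it applies the second torsion pair to $E[1]\in(\mathcal{A}^\sharp)^\sharp$ and then takes $\mathcal{A}$-cohomology to exhibit $E$ as an extension of a piece of $\mathcal{F}$ by a piece of $\mathcal{F}^\sharp$. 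You instead build the candidate heart $\mathcal{B}=\langle[\mathcal{F},\mathcal{F}^\sharp][1],\,\mathcal{T}\cap\mathcal{T}^\sharp\rangle$ and prove $\mathcal{B}=(\mathcal{A}^\sharp)^\sharp$ by double inclusion, then verify the torsion pair axioms on $\mathcal{A}$ from scratch via the nested decomposition of $E\in\mathcal{A}$. Your argument is longer but self-contained (it effectively reproves the needed special case of Polishchuk's lemma) and isolates the two structural containments $\mathcal{F}^\sharp\subseteq\mathcal{T}$ and $\mathcal{F}[1]\subseteq\mathcal{T}^\sharp$, which are genuinely illuminating: they make transparent why the torsion and torsion-free parts combine the way they do. The step you flag as delicate — verifying $H^{-1}_{\mathcal{A}}(T')=0$ so that $T'\in\mathcal{A}$ — indeed goes through by the long exact sequence argument you sketch, using that both $T_0$ and $F'$ lie in $\mathcal{A}$.
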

\begin{proof}
It is clear that $(\mathcal{A}^\sharp)^\sharp \subseteq \langle \mathcal{A}, \mathcal{A}[1] \rangle$.
By \cite[Lemma 1.1.2]{Pol07} the heart $(\mathcal{A}^\sharp)^\sharp$ is a tilt of $\mathcal{A}$, and moreover the torsion part of this tilting is $(\mathcal{A}^\sharp)^\sharp \cap \mathcal{A} =\mathcal{T}^\sharp\cap \mathcal{A} = \mathcal{T}^\sharp \cap\mathcal{T}$.

It remains the show that $(\mathcal{T}\cap \mathcal{T}^\sharp)^\perp = [\mathcal{F}, \mathcal{F}^\sharp]$. The inclusion $(\mathcal{T}\cap \mathcal{T}^\sharp)^\perp \supseteq [\mathcal{F}, \mathcal{F}^\sharp]$ is clear.

To show that $(\mathcal{T}\cap \mathcal{T}^\sharp)^\perp \subseteq [\mathcal{F}, \mathcal{F}^\sharp]$, we consider an object $E \in (\mathcal{T}\cap \mathcal{T}^\sharp)^\perp = (\mathcal{A}^\sharp)^\sharp[-1] \cap \mathcal{A}$.
As $E[1] \in (\mathcal{A}^\sharp)^\sharp$, there is a distinguished triangle $$ H^{-1}_{\mathcal{A}^\sharp}(E[1])[1]\longrightarrow E[1] \longrightarrow H^0_{\mathcal{A}^\sharp}(E[1]).$$

Now we look at the long exact cohomology sequence with respect to the heart $\mathcal{A}$. It turns out to be the following exact sequence:
$$ 0 \longrightarrow H^0_{\mathcal{A}}(H^{-1}_{\mathcal{A}^\sharp}(E[1])) \longrightarrow E \longrightarrow H^{-1}_{\mathcal{A}}(H^0_{\mathcal{A}^\sharp}(E[1])) \longrightarrow 0. $$

It is clear that we have $H^0_{\mathcal{A}}(H^{-1}_{\mathcal{A}^\sharp}(E[1]))=H^{-1}_{\mathcal{A}^\sharp}(E[1]) \in \mathcal{F}^\sharp$ and $H^{-1}_{\mathcal{A}}(H^0_{\mathcal{A}^\sharp}(E[1])) \in \mathcal{F}$ and the assertion follows.
\end{proof}

\begin{proposition}\label{B0}
These two hearts coincide, that is, $\mathcal{B}^0_{\pi^\ast H,P}= \mathcal{B}^0_{\pi^\ast H,-1}$. 
\end{proposition}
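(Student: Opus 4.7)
\emph{Proof plan.} The strategy is to realise both candidate hearts as single tilts of $\Coh(\widetilde{X})$ via Lemma~\ref{doubletilt}, and then identify the resulting torsion pairs on $\Coh(\widetilde{X})$.

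First, the hypothesis of Lemma~\ref{doubletilt} must be verified for both double tilts. For $\mathcal{B}^0_{\pi^\ast H,-1}$ this is immediate, since by construction $\mathcal{F}_0 = \mathcal{F}_P \subseteq \Coh(\widetilde{X})$. For $\mathcal{B}^0_{\pi^\ast H,P}$, I would show $\mathcal{F}^{\leq 0}_{\pi^\ast H,P}\subseteq \Coh(\widetilde{X})$ as follows: for any $F \in \mathcal{F}^{\leq 0}_{\pi^\ast H,P}$, the triangle $H^{-1}(F)[1]\to F \to H^0(F)$ is a short exact sequence in ${}^{-1}\Per(\widetilde{X}/X)$, so $H^{-1}(F)[1]$ is a perverse subobject of $F$. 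Since $H^{-1}(F) \in \mathcal{F}_P$ is supported on $\Pi$, it satisfies $\ch_0 = 0$, and hence $\mu_{\pi^\ast H}(H^{-1}(F)[1]) = +\infty$, which contradicts the defining condition of $\mathcal{F}^{\leq 0}_{\pi^\ast H,P}$ unless $H^{-1}(F) = 0$.

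With these hypotheses in hand, Lemma~\ref{doubletilt} presents both $\mathcal{B}^0_{\pi^\ast H,P}$ and $\mathcal{B}^0_{\pi^\ast H,-1}$ as tilts of $\Coh(\widetilde{X})$, with torsion parts on $\Coh(\widetilde{X})$ equal to $\mathcal{T}_P \cap \mathcal{T}^{>0}_{\pi^\ast H,P}$ and $\mathcal{T}^{>0}_{\pi^\ast H} \cap \mathcal{T}_0$ respectively. Since a tilt of $\Coh(\widetilde{X})$ inside $\langle \Coh(\widetilde{X}), \Coh(\widetilde{X})[1] \rangle$ is determined by its torsion part, the problem reduces to proving the equality of these two subcategories of $\Coh(\widetilde{X})$.

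For the final step I expect both torsion parts to coincide with the subcategory of sheaves $T$ satisfying (i) $T \in \mathcal{T}_P$, (ii) every $\Coh(\widetilde{X})$-quotient of $T$ has $\mu_{\pi^\ast H} > 0$, and (iii) $\Hom(T, \mathcal{F}_P) = 0$. Condition (iii) strengthens the $\Hom(T, \ker \mathbf{R}\pi_\ast)=0$ built into $\mathcal{T}_P$, and together with (ii) rules out $\mathcal{F}_P$-valued quotients. The main obstacle is translating between $\Coh$- and ${}^{-1}\Per$-subobject data of a sheaf in $\mathcal{T}_P$: a $\Coh$-epi $T \twoheadrightarrow Q$ need not be a ${}^{-1}\Per$-epi, and the discrepancy is controlled by the $\mathcal{F}_P$-component of the kernel, which carries slope $+\infty$. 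I would handle this by applying the perverse-cohomology long exact sequences associated to the torsion pair $(\mathcal{T}_P,\mathcal{F}_P)$; once condition (iii) is imposed, the offending $\mathcal{F}_P$-components vanish, and the slope conditions on $\Coh$- and on ${}^{-1}\Per$-quotients become equivalent, yielding equality of the torsion parts and hence of the two hearts.
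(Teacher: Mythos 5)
Your plan is correct and follows essentially the same route as the paper: verify the hypothesis of Lemma~\ref{doubletilt}, realise both hearts as single tilts of $\Coh(\widetilde{X})$, and show the torsion parts agree by using the perverse long exact cohomology sequence to translate slope conditions on $\Coh$-quotients into slope conditions on ${}^{-1}\Per$-quotients (and vice versa), with the $\mathcal{F}_P$-terms contributing nothing to $\ch_0$ or $\pi^\ast H\cdot\ch_1$. The paper carries this out as a two-way inclusion rather than identifying a common explicit description, but the underlying argument is the same.
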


\begin{proof}
Note first that $\mathcal{F}^{\leq 0}_{\pi^\ast H,P} \subset \Coh(\widetilde{X})$.
Indeed, for $F \in \mathcal{F}^{\leq 0}_{\pi^\ast H,P} \subseteq ^{-1}\Per(\widetilde{X}/X)$, 
we consider the distinguished triangle $H^{-1}(F)[1] \longrightarrow F \longrightarrow H^0(F)$.
As $H^{-1}(F) \in \mathcal{F}_P$, it is either $0$ or a torsion coherent sheaf and hence has slope $+\infty$, but as $F \in \mathcal{F}^{\leq 0}_{\pi^\ast H,P}$ there cannot be such a torsion subobject.

We then see that $H^{-1}(F) = 0$, and hence the assumptions of Lemma \ref{doubletilt} are all satisfied.
Applying Lemma \ref{doubletilt} twice, we see that both $\mathcal{B}^0_{\pi^\ast H,P}$ and $\mathcal{B}^0_{\pi^\ast H,-1}$ are obtained from $\Coh(\widetilde{X})$ by a single tilt. 

As a torsion pair is determined by its torsion part,
it suffices to prove that the torsion parts of $\mathcal{B}^0_{\pi^\ast H,P}$ and $\mathcal{B}^0_{\pi^\ast H,-1}$ as tilts of $\Coh(\widetilde{X})$ are the same, that is, 
\[\mathcal{B}^0_{\pi^\ast H,P} \cap \Coh(\widetilde{X}) = \mathcal{B}^0_{\pi^\ast H,-1} \cap \Coh(\widetilde{X}).\]

Given $E \in \mathcal{B}^0_{\pi^\ast H,P} \cap \Coh(\widetilde{X})$, we first consider the exact triangle $F[1] \longrightarrow E \longrightarrow T$ with $F \in \mathcal{F}^{\leq 0}_{\pi^\ast H,P}$ and $T \in \mathcal{T}^{>0}_{\pi^\ast H,P}$ given by the definition of $\mathcal{B}^0_{\pi^\ast H,P}$. 
Taking the long exact cohomology sequence with respect to $\Coh(\widetilde{X})$,
we see that $E = H^0(E) = H^0(T)=T$  and $F=0$.

By Lemma \ref{doubletilt}, we know that $\mathcal{B}^0_{\pi^\ast H,-1} \cap \Coh(\widetilde{X}) = \mathcal{T}_0 \cap \mathcal{T}^0_{\pi_\ast H}$.
We first claim that $E \in \mathcal{T}^{>0}_{\pi^\ast H}$, that is, there does not exist any quotient of $E$ in $\Coh(\widetilde{X})$ with non-positive slope.

Assume the contrary that there is a quotient $f \colon E \twoheadrightarrow Q$ in $\Coh(\widetilde{X})$ with $\mu_{\pi^\ast H}(Q) \leq 0$.
Note that the torsion part $\mathcal{T}_P$ is closed under quotient so $Q \in \mathcal{T}_P \subseteq {^{-1}\Per(\widetilde{X}/X)}$.

Let $K:= \ker f$ and consider the long exact cohomology sequence of $K \longrightarrow E \longrightarrow Q$ with respect to $^{-1}\Per(\widetilde{X}/X)$.
This gives us an exact sequence
$$ 0 \longrightarrow H^0_P(K) \longrightarrow E \longrightarrow Q
\longrightarrow H^1_P(K)
\longrightarrow 0. $$

As $H^1_P(K) \in \mathcal{F}_P[1]$, it has no contribution to the slope, and therefore, we see that $\mu_{\pi^\ast H}(\Img H^0_P(f))=\mu_{\pi^\ast H}(Q) \leq 0$,
which is a contradiction as $T$ cannot have a quotient in $^{-1}\Per(\widetilde{X}/X)$ with non-positive slope.
Now, $E$ is contained in $\mathcal{T}^{>0}_{\pi^\ast H}\subseteq \Coh^0(\widetilde{X})$ and we have $\Hom(E,\mathcal{F}_0)$,that is, $E \in \mathcal{T}_0$.

Conversely, given $E \in \mathcal{B}^0_{\pi^\ast H,-1} \cap \Coh(\widetilde{X})$,
we consider similarly the distinguished triangle $F[1] \longrightarrow E \longrightarrow T$ with $F \in \mathcal{F}^{\leq 0}_{\pi^\ast H}$ and $T \in \mathcal{T}^{> 0}_{\pi^\ast H}$ given by the definition of $\mathcal{B}^0_{\pi^\ast H,-1}$. 
Taking the long exact cohomology sequence with respect to $\Coh(\widetilde{X
})$ tells us again that $E=T$ and $F=0$.
It then suffices to show that there does not exist a quotient of $E$ in $^{-1}\Per(\widetilde{X}/X)$ with non-positive slope.

Assume the contrary that there is a quotient $f \colon E \twoheadrightarrow Q$ in $^{-1}\Per(\widetilde{X}/X)$ with $\mu_{\pi^\ast H}(Q) \leq 0$.
Let $K:= \ker f$ and consider the long exact cohomology sequence of $K \longrightarrow E \longrightarrow Q$ with respect to $\Coh(\widetilde{X})$.
This gives us an exact sequence
$$ 0 \longrightarrow H^{-1}(Q) \longrightarrow K \longrightarrow E
\longrightarrow H^0(Q)
\longrightarrow 0. $$
 
Let $Z$ be the central charge of slope stability on $\Coh(\widetilde{X})$.
Then $$Z(H^0(Q))=Z(E)-[Z(K)-Z(H^{-1}(Q))]=Z(E)-Z(K),$$
and hence $\mu_{\pi^\ast H}(H^0(Q))=\mu_{\pi^\ast H}(Q)\leq0$, which is a contradiction as $E$ cannot have such a quotient in $\Coh(\widetilde{X})$.
\end{proof}
 
The result above can be understood via the picture below.

\begin{figure}[H]
\centering
\begin{tikzpicture}
\def\rectangleWidth{3.5} 
\def\rectangleHeight{2.5} 
\def\ratioA{4/7} 
\def\ratioC{1/2} 
\def\diagramColumnSep{4} 
\def\diagramRowSep{4} 
\tikzset{
  otherDiagram/.style={every coordinate/.try}
}

\coordinate (O) at (0, 0);
\coordinate (B) at (\rectangleWidth, 0);
\coordinate (E) at (0, \rectangleHeight);
\coordinate (D) at (B |- E);
\coordinate (A) at ($(O)!{\ratioA}!(B)$);
\coordinate (C) at ($(D)!{\ratioC}!(B)$);
\coordinate (F) at ($(E)!{\ratioA}!(C)$);
\coordinate (labelUR) at (barycentric cs:E=1,D=1,C=1);
\coordinate (labelLL) at (barycentric cs:O=1,A=1,F=1,E=1);
\coordinate (labelLR) at (barycentric cs:A=1,B=1,C=1,F=1);
\draw[name path=contourA] (O) rectangle (D);
\draw (C) -- (E);
\draw (A) -- (F);
\node at (labelUR) {$\mathcal{F}^{\leq 0}_{\pi^\ast H}$}; 
\node at ($(A)!.5!(F)$) {$\mathcal{T}^{> 0}_{\pi^\ast H}$}; 
\node at (labelLR) {$\mathcal{F}_P$}; 
\node[above] at ($(E)!.5!(D)$) (labelDiagramA) {$\Coh(\widetilde{X})$}; 

\begin{scope}[every coordinate/.style={shift={({-\diagramColumnSep+\ratioA*\rectangleWidth/2},-\diagramRowSep)}}]
\draw[name path global=contourB] ([otherDiagram]O) -- ([otherDiagram]A) -- ([otherDiagram]F) -- ([otherDiagram]C) -- ([otherDiagram]D) -- ([otherDiagram]E) -- ([otherDiagram]$(C)-(B)$) -- ([otherDiagram]$(F)-(B)$) -- ([otherDiagram]$(A)-(B)$) -- cycle;
\draw ([otherDiagram]F) -- ([otherDiagram]E);
\draw ([otherDiagram]$(C)-(B)$) -- ([otherDiagram]O);
\node at ([otherDiagram]$(labelLR)-(B)$) {$\mathcal{F}_P[1]$}; 
\node at ([otherDiagram]$(E)!.5!(C)$) {$\mathcal{T}_P$}; 
\node[above] at ([otherDiagram]$(E)!{\ratioA/2}!(D)$) (labelDiagramB) {$^{-1}\Per(\widetilde{X}/X)$}; 
\end{scope}

\begin{scope}[every coordinate/.style={shift={({-\diagramColumnSep+(\ratioA+1)*\rectangleWidth/2},{-2*\diagramRowSep})}}]
\draw ([otherDiagram]O) -- ([otherDiagram]A) -- ([otherDiagram]F) -- ([otherDiagram]E) -- ([otherDiagram]$(E)-(B)$) -- ([otherDiagram]$(F)-(B)$) -- ([otherDiagram]$(A)-(B)$) -- cycle;
\draw ([otherDiagram]E) -- ([otherDiagram]O);
\draw ([otherDiagram]$(F)-(B)$) -- ([otherDiagram]$(C)-(B)$);
\node at ([otherDiagram]$(O)!{\ratioC/2}!(E)$) {$\mathcal{T}^{>0}_{\pi^\ast H,P}$}; 
\node at ([otherDiagram]$(labelUR)-(B)$) {$\mathcal{F}^{\leq 0}_{\pi^\ast H,P}[1]$}; 
\node[above] at ([otherDiagram]$(E)!{(\ratioA-1)/2}!(D)$) (labelDiagramC) {$\mathcal{B}^0_{\pi^\ast H,P}$}; 
\end{scope}

\begin{scope}[every coordinate/.style={shift={({\diagramColumnSep+(\ratioA+1)*\rectangleWidth/2},{-2*\diagramRowSep})}}]
\draw ([otherDiagram]O) -- ([otherDiagram]A) -- ([otherDiagram]F) -- ([otherDiagram]E) -- ([otherDiagram]$(E)-(B)$) -- ([otherDiagram]$(F)-(B)$) -- ([otherDiagram]$(A)-(B)$) -- cycle;
\draw ([otherDiagram]E) -- ([otherDiagram]O);
\draw ([otherDiagram]$(F)-(B)$) -- ([otherDiagram]$(C)-(B)$);
\node at ([otherDiagram]$(labelLR)-(B)$) {$\mathcal{F}_0[1]$}; 
\node at ([otherDiagram]$(O)!{(1+\ratioC)/2}!(E)$) {$\mathcal{T}_0$};
\node[above] at ([otherDiagram]$(E)!{(\ratioA-1)/2}!(D)$) (labelDiagramD) {$\mathcal{B}^0_{\pi^\ast H,-1}$}; 
\end{scope}

\begin{scope}[every coordinate/.style={shift={({\diagramColumnSep+\ratioA*\rectangleWidth},-\diagramRowSep)}}]
\draw[name path global=contourE] ([otherDiagram]O) -- ([otherDiagram]B) -- ([otherDiagram]C) -- ([otherDiagram]E) -- ([otherDiagram]$(E)-(B)$) -- ([otherDiagram]$(C)-(B)$) -- cycle;
\draw ([otherDiagram]A) -- ([otherDiagram]F);
\draw ([otherDiagram]E) -- ([otherDiagram]$(C)-(B)$);
\node at ([otherDiagram]$(labelUR)-(B)$) {$\mathcal{F}^{\leq 0}_{\pi^\ast H}[1]$}; 
\node at ([otherDiagram]labelLR) {$\mathcal{F}_0=\mathcal{F}_P$};
\node at ([otherDiagram]$(O)!{(1+\ratioC)/2}!(E)$) {$\mathcal{T}_0$};
\node[above] at ([otherDiagram]E) (labelDiagramE) {$\Coh^0(\widetilde{X})$};
\end{scope}

\path[name path=arrowAB] (labelDiagramA) -- (labelDiagramB);
\path[name intersections={of=arrowAB and contourA}];
\node at (intersection-1) (arrowABStart) {};

\path[name path=arrowBC] (labelDiagramB) -- (labelDiagramC);
\path[name intersections={of=arrowBC and contourB}];
\node at (intersection-1) (arrowBCStart) {};

\path[name path=arrowAE] (labelDiagramA) -- (labelDiagramE);
\path[name intersections={of=arrowAE and contourA}];
\node at (intersection-2) (arrowAEStart) {};

\path[name path=arrowED] (labelDiagramE) -- (labelDiagramD);
\path[name intersections={of=arrowED and contourE}];
\node at (intersection-1) (arrowDEStart) {};

\draw[dashed, ->] (arrowABStart) -- (labelDiagramB);
\draw[dashed, ->] (arrowBCStart) -- (labelDiagramC);
\draw[dashed, ->] (arrowAEStart) -- (labelDiagramE);
\draw[dashed, ->] (arrowDEStart) -- (labelDiagramD);

\node at ($(labelDiagramC)!.5!(labelDiagramD)$) {$=$};
\end{tikzpicture}
\end{figure}

From now on we write \gls{heart-B0} for this heart, and \gls{torsion-part-B0} and \gls{torsion-free-part-B0} for its torsion part and torsion-free part respectively.

The same argument in Lemma \ref{B0} proves the following fact:

\begin{lemma}\label{FF}
$\mathcal{F}^{\leq \alpha}_{\pi^\ast H,P} \subseteq \mathcal{F}^{\leq \alpha}_{\pi^\ast H}$ for every $\alpha \in \mathbb{R}$.    
\end{lemma}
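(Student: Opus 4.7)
The proof proceeds in parallel with the proof of Proposition~\ref{B0}. Let $F \in \mathcal{F}^{\leq\alpha}_{\pi^\ast H,P}$. As in the opening of that proof, I first show $F \in \Coh(\widetilde{X})$: the triangle $H^{-1}(F)[1] \longrightarrow F \longrightarrow H^0(F)$ realises $H^{-1}(F)[1] \in \mathcal{F}_P[1]$ as a subobject of $F$ in ${}^{-1}\Per(\widetilde{X}/X)$ of slope $+\infty$ (since $H^{-1}(F)$ is torsion), and the hypothesis on $F$ forces $H^{-1}(F) = 0$. Hence $F \in \Coh(\widetilde{X}) \cap {}^{-1}\Per(\widetilde{X}/X) = \mathcal{T}_P$.

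Next, take a nonzero $G \hookrightarrow F$ in $\Coh(\widetilde{X})$ and decompose $G$ via the perverse torsion pair as $0 \longrightarrow T_G \longrightarrow G \longrightarrow F_G \longrightarrow 0$ with $T_G \in \mathcal{T}_P$ and $F_G \in \mathcal{F}_P$. Applying $H^i_P$ to the triangle $G \longrightarrow F \longrightarrow F/G$, exactly as in the central calculation of Proposition~\ref{B0}, yields the four-term exact sequence
\[
0 \longrightarrow T_G \longrightarrow F \longrightarrow H^0_P(F/G) \longrightarrow F_G[1] \longrightarrow 0
\]
in ${}^{-1}\Per(\widetilde{X}/X)$, the $\mathcal{F}_P$-part of $F/G$ being forced to vanish by the same long exact sequence. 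In particular $T_G \hookrightarrow F$ is a monomorphism in the perverse heart, so the hypothesis on $F$ gives $\mu_{\pi^\ast H}(T_G) \leq \alpha$. Since $F_G \in \mathcal{F}_P$ is supported on the exceptional locus $\Pi$, we have $\ch_0(F_G) = 0$ and, by the projection formula, $\pi^\ast H \cdot \ch_1(F_G) = H \cdot \pi_\ast \ch_1(F_G) = 0$. Consequently $\ch_0(G) = \ch_0(T_G)$ and $\pi^\ast H \cdot \ch_1(G) = \pi^\ast H \cdot \ch_1(T_G)$, which yields $\mu_{\pi^\ast H}(G) = \mu_{\pi^\ast H}(T_G) \leq \alpha$, as required.

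The step I expect to be the most delicate is the rank-zero case, when $G$ is torsion: if $T_G \neq 0$, then it is a rank-zero $\mathcal{T}_P$-subobject of $F$ in the perverse heart and hence has slope $+\infty$, which contradicts the hypothesis and forces $T_G = 0$. This is precisely the slope-$+\infty$ trick already used at the beginning to show $F \in \Coh(\widetilde{X})$, so the argument closes without any new input.
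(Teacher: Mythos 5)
Your paragraph two faithfully reproduces the paper's argument: the perverse cohomology of the triangle $G\to F\to F/G$ exhibits $T_G=H^0_P(G)$ as a perverse subobject of $F$ of the same $\mu_{\pi^\ast H}$-slope as $G$, and the hypothesis on $F$ then gives the bound. For $G$ of positive rank this is correct and is exactly the paper's proof. The last paragraph, however, does \emph{not} close the rank-zero case as claimed. You show that $T_G\neq 0$ would give a slope-$+\infty$ perverse subobject, hence $T_G=0$ — but that leaves $G=F_G\in\mathcal{F}_P$ a potentially nonzero torsion subsheaf of $F$, the identity $\mu_{\pi^\ast H}(G)=\mu_{\pi^\ast H}(T_G)$ is now vacuous, and $\mu_{\pi^\ast H}(G)=+\infty>\alpha$, which is precisely the situation the lemma must rule out. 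What is actually needed is that $F$ has no nonzero $\mathcal{F}_P$-subsheaf, and neither you nor the paper argues this: the paper's proof asserts that $H^0_P(G)$ is a subobject of $F$ with $\mu_{\pi^\ast H}(H^0_P(G))=\mu_{\pi^\ast H}(G)$, which silently presupposes $H^0_P(G)\neq 0$ and therefore breaks down exactly when $G\in\mathcal{F}_P$.

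This does not look like a presentational gap. On the resolution of an $A_1$-singularity with exceptional curve $C$, take $F$ to be a non-split extension of $\mathcal{O}_{\widetilde X}(-C)$ by $\mathcal{O}_C(-1)$ (here $\Ext^1(\mathcal{O}_{\widetilde X}(-C),\mathcal{O}_C(-1))\simeq H^1(\mathcal{O}_C(-3))\neq 0$). One checks $\mathbf{R}^1\pi_\ast F=0$ and, because the extension class is nonzero, $\Hom(F,\mathcal{O}_C(-1))=0$, so $F\in\mathcal{T}_P$; moreover $\ch_1(F)=0$. Every perverse subobject of $F$ is a $\mathcal{T}_P$-sheaf $K$ fitting into $0\to G\to K\to K'\to 0$ with $G\in\mathcal{F}_P$ and $K'\subseteq F$, and $\mu_{\pi^\ast H}(K)=\mu_{\pi^\ast H}(K')$; the rank-one $K'$ have nonpositive slope, and a torsion $K'\subseteq\mathcal{O}_C(-1)$ forces $\Hom(K,\mathcal{O}_C(-1))\neq 0$, contradicting $K\in\mathcal{T}_P$. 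Thus $F\in\mathcal{F}^{\leq 0}_{\pi^\ast H,P}$ while $F$ contains the torsion subsheaf $\mathcal{O}_C(-1)$, so $F\notin\mathcal{F}^{\leq 0}_{\pi^\ast H}$. This suggests the lemma needs to be weakened to what the first step already establishes and what the later application in Lemma~\ref{HNprop} actually uses, namely that a nonzero object of $\mathcal{F}^{\leq\alpha}_{\pi^\ast H,P}$ is a coherent sheaf of positive rank.
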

\begin{proof}
Let $F \in \mathcal{F}^{\leq 0}_{\pi^\ast H,P}$.
If $H^{-1}(F) \neq 0$, then $H^{-1}(F) \subseteq F$ would be a subobject with $\mu_{\pi^\ast H }(H^{-1}(F)) = + \infty$,
and therefore $F= H^0(F)$ is a coherent sheaf.

Assume the contrary that there is a subsheaf $G \subseteq F$ with $\mu_{\pi^\ast H }(G) > \alpha$.
Consider the following exact sequence:
$$ 0 \longrightarrow H^0_P(G) \longrightarrow F \longrightarrow F/G
\longrightarrow H^1_P(G)
\longrightarrow 0. $$
Let $Q:= \ker (F/G \longrightarrow H^1_P(G)$.
Then since $H^1_P(G) \in \mathcal{F}_P[1]$ has no contribution to the slope, there is a subobject  $H^0_P(G)$ of $F$ in ${^{-1}\Per(\widetilde{X}/X)}$ with $\mu_{\pi^\ast H}(H^0_P(G))= \mu_{\pi^\ast H}(G) >\alpha$,
which is a contradiction.
\end{proof}

\begin{remark}
The inclusion in Lemma \ref{FF} above is strict.
For example, let $\alpha =0$ and let $\mathcal{I}_x$ be the ideal sheaf of a point $x \in C_1$.
Consider the following commutative diagram:
\[
\begin{tikzcd}
\mathcal{I}_x \arrow[r, hook] \arrow[d, dotted] & \mathcal{O}_{\widetilde{X}} \arrow[r, two heads] \arrow[d, two heads] & \mathcal{O}_x \\
\mathcal{O}_{C_1}(-1) \arrow[r, hook]           & \mathcal{O}_{C_1} \arrow[ru, two heads]                               &              
\end{tikzcd}
\]
Then, the sheaf $\mathcal{I}_x$ admits a nonzero morphism to $\mathcal{O}_{C_1}(-1)$ so $\mathcal{I}_x \notin \mathcal{T}_P$.
Therefore, $\mathcal{I}_x \notin {^{-1}\Per(\widetilde{X}/X)}$ and hence can not lie in $\mathcal{F}^{\leq0}_{\pi^\ast H,P}$.
\end{remark}

In the remaining of this section, we will classify the simple objects in $\mathcal{B}^0$ supported on the exceptional locus $\Pi$.
Recall that a simple object in an abelian category $\mathcal{A}$ is defined to be a nonzero object which has no proper non-zero subobjects.

We will need the following lemma about the pushforward on the numerical Grothendieck group.
For the numerical Grothendieck group  of the singular surface $X$, we defined ${\rm K}_{\num}(X)$ by the quotient of ${\rm K}( \rm{D^b} (X))$ by the right kernel of the Euler characteristic 
$\chi \colon {\rm K}( \rm{D^{perf}}(X)) \times {\rm K}( \rm{D^b} (X)) \longrightarrow \mathbb{Z}$. 

\begin{lemma}\label{kernel}
The pushforward $\pi_\ast \colon {\rm K}_{\num}(\widetilde{X}) \longrightarrow {\rm K}_{\num}(X)$ is surjective, and its kernel is exactly generated by $[\mathcal{O}_{C_i}(-1)]$ for $i= 1, \dots, n$.
\end{lemma}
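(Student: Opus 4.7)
The plan is to verify surjectivity, then the two inclusions defining the kernel.

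For surjectivity, I would use that $X$ has rational singularities (ADE are rational), so $\mathbf{R}\pi_\ast\mathcal{O}_{\widetilde X}=\mathcal{O}_X$, and that ADE are hypersurface singularities, so $\pi$ has finite Tor-dimension and $\mathbf{L}\pi^\ast$ preserves bounded derived categories. By the projection formula, $\mathbf{R}\pi_\ast\mathbf{L}\pi^\ast F=F$ for every $F\in{\rm D}^b(X)$, so $[\mathbf{L}\pi^\ast F]\mapsto[F]$ under $\pi_\ast$. The adjunction $\chi(\pi^\ast E,v)=\chi(E,\pi_\ast v)$ for perfect $E$ on $X$ ensures the map descends to numerical $K$-groups, giving surjectivity. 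For the easy inclusion $\langle[\mathcal{O}_{C_i}(-1)]\rangle\subseteq\ker\pi_\ast$, just note $\mathbf{R}\pi_\ast\mathcal{O}_{C_i}(-1)=0$ because $\pi|_{C_i}$ contracts $C_i\cong\mathbb{P}^1$ to $x_0$ and $H^\bullet(\mathbb{P}^1,\mathcal{O}(-1))=0$.

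For the reverse inclusion (the main obstacle), I would take $v\in\ker\pi_\ast$ with $\ch(v)=(r,D,s)$ on the smooth surface $\widetilde X$ and extract the Chern-character components by pairing against pullbacks of perfect complexes on $X$, using $\chi(\pi^\ast E,v)=\chi(E,\pi_\ast v)=0$. Taking $E=\mathcal{O}_x$ for $x\in X$ smooth forces $r=0$, since $\pi^\ast\mathcal{O}_x$ is a skyscraper on $\widetilde X$. Taking $E\in\Pic(X)$ and applying Hirzebruch-Riemann-Roch on $\widetilde X$ with $K_{\widetilde X}=\pi^\ast K_X$ turns the vanishing of $\chi(\pi^\ast E,v)$ into a linear identity in $c_1(E)$ whose coefficients force $\pi_\ast D=0$ in $\NS(X)$ and then $s=0$.

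So $\ch(v)=(0,D,0)$ with $D\in\ker(\pi_\ast\colon\NS(\widetilde X)\to\NS(X))$. For an ADE resolution this latter kernel is integrally generated by $[C_1],\ldots,[C_n]$: the Cohen-Macaulay local ring $\mathcal{O}_{X,x_0}$ has trivial Picard, so every line bundle on a formal neighborhood of $\Pi$ has the form $\mathcal{O}(\sum a_iC_i)$ with $a_i\in\mathbb{Z}$, and this passes to $\NS$. Writing $D=\sum a_iC_i$ and using $\ch(\mathcal{O}_{C_i}(-1))=(0,C_i,0)$, the injectivity of $\ch$ on ${\rm K}_{\num}(\widetilde X)$ gives $v=\sum a_i[\mathcal{O}_{C_i}(-1)]$. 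The most delicate step is the integrality of the identification $\ker(\pi_\ast\colon\NS(\widetilde X)\to\NS(X))=\langle[C_i]\rangle$, which for general rational singularities can be subtle but here follows from the Du Val local structure sketched above.
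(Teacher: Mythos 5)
Your treatment of the kernel is essentially the paper's argument: from $\alpha\in\ker\pi_\ast$ you extract $\ch_0(\alpha)=0$, $\ch_1(\alpha)=\sum_i a_iC_i$, and $\chi(\alpha)=0$ by pairing against pullbacks of perfect complexes, and then use that a class in ${\rm K}_{\num}(\widetilde X)$ is determined by $(\ch_0,\ch_1,\chi)$ to identify $\alpha$ with $\sum_i a_i[\mathcal{O}_{C_i}(-1)]$. The paper writes this in three lines; your version merely unwinds the pairings. Your attention to the integrality of the coefficients $a_i$ is a sensible addition, since the paper asserts $\ch_1(\alpha)=\sum_i a_iC_i$ without comment.

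Your surjectivity argument, however, contains a genuine mistake. The claim that ``ADE are hypersurface singularities, so $\pi$ has finite Tor-dimension and $\mathbf{L}\pi^\ast$ preserves bounded derived categories'' is false: a resolution of a singular surface never has finite Tor-dimension, and the paper itself observes in Section 6 that $\mathbf{L}\pi^\ast\mathcal{O}_{x_0}$ is an unbounded complex. Thus for $F\in{\rm D}^b(X)$ not perfect, $\mathbf{L}\pi^\ast F$ does not define a class in ${\rm K}_{\num}(\widetilde X)$, so the proposed lift $[\mathbf{L}\pi^\ast F]$ is unavailable. It is fine for $F\in{\rm D}^{\rm perf}(X)$, but ${\rm K}_{\num}(X)$ is defined as a quotient of ${\rm K}({\rm D}^b(X))$, not of ${\rm K}({\rm D}^{\rm perf}(X))$, so this does not immediately give surjectivity. (The paper's own proof is silent on surjectivity; the intended source is the essential surjectivity of $\mathbf{R}\pi_\ast\colon{}^{-1}\Per(\widetilde X/X)\to\Coh(X)$, recorded in the remark following the lemma: for $F\in\Coh(X)$, the perverse truncation $\mathbf{L}^0_P\pi^\ast F$ is a bounded complex with $\mathbf{R}\pi_\ast\mathbf{L}^0_P\pi^\ast F\simeq F$, and classes of coherent sheaves generate ${\rm K}_{\num}(X)$.) You should repair the surjectivity step along these lines rather than invoke finite Tor-dimension.
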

\begin{proof}
As $\mathbf{R}\pi_\ast \mathcal{O}_{C_i}(-1) = 0$, we know that the kernel of $\pi_\ast$ contains the classes $[\mathcal{O}_{C_i}(-1)]$ for all $i$.
Moreover, since $C_i$ are linearly independent in the Neron-Severi group of $\widetilde{X}$,
the classes $[\mathcal{O}_{C_i}(-1)]$ are linearly independent in $\mathrm{K}_{\num}(\widetilde X)$.

Recall that a class $\alpha$ in $\mathrm{K}_{\num}(\widetilde X)$ is determined by $\ch_0(\alpha), \ch_1(\alpha)$ and $\chi(\alpha)$.
Now for a class $\alpha \in \ker\pi_\ast$,
we have $\ch_0(\alpha)=0$, $\ch_1(\alpha)=\displaystyle \sum_i a_i C_i$ and $\chi(\alpha)=\chi(\pi_\ast \alpha)=0$.
Then the class $\displaystyle \sum_i a_i[\mathcal{O}_{C_i}(-1)]$ and $\alpha$ are the same class in $\mathrm{K}_{\num}(\widetilde X)$.
\end{proof}

\begin{remark}
One may also note that by projection formula,
$\mathbf{R}\pi_\ast$ is essentially surjective on ${\rm D}_{\qco}(\widetilde{X})$.
This together with the formula
$\mathbf{R}\pi_\ast \mathbf{L} \pi^\ast = \id_{{\rm D}_{\qco}(X)}$ shows that 
the functor $\mathbf{L}\pi^\ast \colon {\rm D}_{\qco}(X) \longrightarrow {\rm D}_{\qco}(\widetilde{X})$ is fully faithful and induces a semiorthogonal decomposition ${\rm D}_{\qco}(\widetilde{X}) =\langle \, \ker \mathbf{R}\pi_\ast, {\rm D}_{\qco}(X)  \, \rangle$.

This can be seen as follows. 
The semiorthogonality follows from the adjointness,
and given $E \in {\rm D}_{\qco}(\widetilde{X})$, consider the distinguished trangle $\mathbf{L}\pi^\ast \mathbf{R}\pi_\ast E \xlongrightarrow{f} E \longrightarrow \cone(f)$.
Then its clear that $\cone(f) \in \ker \mathbf{R}\pi_\ast$ and $\mathbf{L}\pi^\ast \mathbf{R}\pi_\ast E \in  \mathbf{L}\pi^\ast ({\rm D}_{\qco}(X))$.
\end{remark}

\begin{lemma}[{\normalfont\cite[Lemma 3.2]{Bri02}}]\label{Bri02}
For $E \in \rm{D}(\widetilde{X})$, $E \in {^{-1}\Per(\widetilde{X}/X)}$ if and only if $E$ satisfies the conditions below.
\begin{enumerate}
    \item $H^i(E) = 0$ for $i \neq 0,-1$.
    \item $\mathbf{R}^1\pi_\ast H^0(E) = 0$ and $\mathbf{R}^0\pi_\ast H^{-1}(E) = 0$.
    \item $\Hom_X(H^0(E),F)=0$ for any sheaf $F$ on $\widetilde{X}$ with $\mathbf{R}\pi_\ast(F)=0$.
\end{enumerate}
\end{lemma}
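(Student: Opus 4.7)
The plan is to match the three listed conditions with the definition of ${^{-1}\Per(\widetilde{X}/X)}$ as the HRS tilt of $\Coh(\widetilde{X})$ at the torsion pair $(\mathcal{T}_P, \mathcal{F}_P)$, which was recalled at the start of this section. By Theorem \ref{HRStilt}, an object $E \in {\rm D}(\widetilde{X})$ lies in ${^{-1}\Per(\widetilde{X}/X)}$ if and only if its cohomology with respect to the standard $t$-structure is concentrated in degrees $-1$ and $0$, with $H^0(E) \in \mathcal{T}_P$ and $H^{-1}(E) \in \mathcal{F}_P$.

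The cohomology vanishing is exactly condition (1), so it only remains to check that conditions (2) and (3) together are equivalent to $H^0(E) \in \mathcal{T}_P$ and $H^{-1}(E) \in \mathcal{F}_P$. This is a direct unpacking of the defining conditions. First, by definition of $\mathcal{F}_P$, the statement $H^{-1}(E) \in \mathcal{F}_P$ is equivalent to $\mathbf{R}^0\pi_\ast H^{-1}(E) = 0$, which is the second half of condition (2). Second, by definition of $\mathcal{T}_P$, the statement $H^0(E) \in \mathcal{T}_P$ is equivalent to the conjunction $\mathbf{R}^1\pi_\ast H^0(E) = 0$ and $\Hom(H^0(E), G) = 0$ for every coherent sheaf $G$ with $\mathbf{R}\pi_\ast G = 0$. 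The first of these is the first half of condition (2), while the second is precisely condition (3). Combining these equivalences gives the lemma.

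There is no real obstacle here: once the category ${^{-1}\Per(\widetilde{X}/X)}$ has been set up as a tilt at the prescribed torsion pair, the lemma is purely a rephrasing of the defining conditions. The only small point to keep in mind is that $\ker \mathbf{R}\pi_\ast$ in the definition of $\mathcal{T}_P$ refers to coherent sheaves $G$ on $\widetilde{X}$ with $\mathbf{R}\pi_\ast G = 0$, so that the $\Hom$-vanishing appearing in $\mathcal{T}_P$ and the $\Hom$-vanishing in condition (3) quantify over the same class of objects $G$.
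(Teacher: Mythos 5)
Your proof is correct. The paper does not actually give a proof of this lemma; it is stated as a citation to \cite[Lemma 3.2]{Bri02} and used as a known fact. Given the paper's setup, where ${^{-1}\Per(\widetilde{X}/X)}$ is introduced as the HRS tilt of $\Coh(\widetilde{X})$ at the torsion pair $(\mathcal{T}_P,\mathcal{F}_P)$, your unpacking via Theorem \ref{HRStilt} is exactly the right way to see it: condition (i) is the cohomological amplitude of the tilt, condition (ii) pairs $\mathbf{R}^1\pi_\ast H^0(E)=0$ (half of $\mathcal{T}_P$) with $\mathbf{R}^0\pi_\ast H^{-1}(E)=0$ (all of $\mathcal{F}_P$), and condition (iii) is the remaining $\Hom$-vanishing half of $\mathcal{T}_P$. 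The one thing worth flagging, which you implicitly rely on, is that the tilt description is being \emph{taken as given} (recalled from \cite{VdB04}); Bridgeland's original route in \cite{Bri02} defines the perverse heart intrinsically and then derives both the three-condition characterization and the torsion-pair description, so your argument reverses the historical logical order. In the present paper's framework this is a legitimate rephrasing rather than a circularity, and your observation at the end, that the $\ker\mathbf{R}\pi_\ast$ in the definition of $\mathcal{T}_P$ quantifies over the same sheaves $F$ appearing in condition (iii), is precisely the point that makes the identification clean.
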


The above result by Bridgeland implies that $\mathbf{R}\pi_\ast$  
induces an exact functor of abelian categories $\mathbf{R}\pi_\ast \colon {^{-1}\Per(\widetilde{X}/X)} \longrightarrow \Coh(X)$.

Moreover, as the derived pushforward $\mathbf{R}\pi_\ast \colon {\rm D}^b(\widetilde{X}) \longrightarrow {\rm D}^b(X) $ is an exact triangulated functor, the assertion is equivalent to the t-exactness, i.e. $\mathbf{R}\pi_\ast( {^{-1}\Per(\widetilde{X}/X)}) \subseteq \Coh(X)$.
Recall that given $E \in {^{-1}\Per(\widetilde{X}/X)}$, to show $\mathbf{R}\pi_\ast(E) \in \Coh(X)$, it suffices to show that $\mathbf{R}\pi_\ast(H^0(E))$ and $\mathbf{R}\pi_\ast(H^{-1}(E)[1])$ are both in $\Coh(X)$.

\begin{remark}
Lemma \ref{Bri02} implies that if $G$ is a coherent sheaf on $X$,
$\mathbf{L}^0\pi^\ast G:= H^0_{\Coh(\widetilde{X})}(\mathbf{L}\pi^\ast G)$ is a perverse coherent sheaf.
This can be seen by Lemma \ref{Bri02}: 
conditions (i),(ii) are clear, and for (iii), let $F$ be a sheaf with $\mathbf{R}\pi_\ast F=0$. 
Then 
\[\Hom(\mathbf{L}^0\pi^\ast G,F) = \Hom(\mathbf{L}\pi^\ast G,F) = \Hom(G, \mathbf{R}\pi_\ast F) = 0;\]
here the first equality holds because $\Hom(H^{-i}(\mathbf{L}\pi^\ast G)[i],F)$ are negative Ext and hence are $0$.
This particularly implies that the sheaf $\mathcal{O}_{\Pi}$ is a perverse coherent sheaf.
\end{remark}

We also need the following lemma about simple objects in a tilted category.

\begin{lemma}\label{tiltsimple}
Let $E$ be a simple object in an abelian category $\mathcal{A}$.  
Assume that $\mathcal{A}$ admits a torsion pair $(\mathcal{T},\mathcal{F})$.
\begin{enumerate}
\item If $E \in \mathcal{T}$, so $E \in \mathcal{A}^\sharp = \langle \, \mathcal{F}[1], \mathcal{T} \,\rangle$,
and if there is no short exact sequence of the form $$ 0 \longrightarrow F \longrightarrow T \longrightarrow E \longrightarrow 0$$
with $0 \neq F \in \mathcal{F}$ and $T \in \mathcal{T}$,
then $E$ is a simple object in $\mathcal{A}^\sharp$.
\item If $E \in \mathcal{F}$, so $E[1] \in \mathcal{A}^\sharp = \langle \, \mathcal{F}[1], \mathcal{T} \,\rangle$,
and if there is no short exact sequence of the form $$ 0 \longrightarrow E \longrightarrow F \longrightarrow T \longrightarrow 0$$
with $F \in \mathcal{F}$ and $0 \neq T \in \mathcal{T}$,
then $E[1]$ is a simple object in $\mathcal{A}^\sharp$.
\end{enumerate}
\end{lemma}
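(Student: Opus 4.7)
The plan is to analyze an arbitrary short exact sequence $0 \to A \to E \to B \to 0$ (resp. $0 \to A \to E[1] \to B \to 0$) in $\mathcal{A}^\sharp$ by passing to the long exact cohomology sequence with respect to the original t-structure with heart $\mathcal{A}$, and then to combine the simplicity of $E$ in $\mathcal{A}$ with the forbidden extension hypothesis to force either $A = 0$ or $A$ equal to the whole object.

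For (i), since $E \in \mathcal{T}$ satisfies $H^{-1}_{\mathcal{A}}(E) = 0$, the long exact cohomology sequence with respect to $\mathcal{A}$ immediately gives $H^{-1}_{\mathcal{A}}(A) = 0$, so $A = H^0_{\mathcal{A}}(A) \in \mathcal{T}$. Let $I$ denote the image in $\mathcal{A}$ of the induced map $H^0_{\mathcal{A}}(A) \to E$; by the simplicity of $E$ in $\mathcal{A}$, either $I = 0$ or $I = E$. In the first case, the long exact sequence identifies $H^0_{\mathcal{A}}(A)$ with $H^{-1}_{\mathcal{A}}(B) \in \mathcal{T} \cap \mathcal{F} = 0$, so $A = 0$. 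In the second case, $H^0_{\mathcal{A}}(B) = 0$ and the long exact sequence produces exactly the forbidden short exact sequence
\[
0 \longrightarrow H^{-1}_{\mathcal{A}}(B) \longrightarrow H^0_{\mathcal{A}}(A) \longrightarrow E \longrightarrow 0
\]
in $\mathcal{A}$ with $H^{-1}_{\mathcal{A}}(B) \in \mathcal{F}$ and $H^0_{\mathcal{A}}(A) \in \mathcal{T}$; the hypothesis forces $H^{-1}_{\mathcal{A}}(B) = 0$, hence $B = 0$ and $A \cong E$.

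Part (ii) is symmetric. For a subobject $A \hookrightarrow E[1]$ in $\mathcal{A}^\sharp$ with quotient $B$, the long exact cohomology sequence reads
\[
0 \longrightarrow H^{-1}_{\mathcal{A}}(A) \longrightarrow E \longrightarrow H^{-1}_{\mathcal{A}}(B) \longrightarrow H^0_{\mathcal{A}}(A) \longrightarrow 0 \longrightarrow H^0_{\mathcal{A}}(B) \longrightarrow 0,
\]
so $H^0_{\mathcal{A}}(B) = 0$ and $H^{-1}_{\mathcal{A}}(A) \subseteq E$ is either $0$ or $E$. The case $H^{-1}_{\mathcal{A}}(A) = E$ forces $H^{-1}_{\mathcal{A}}(B) \cong H^0_{\mathcal{A}}(A) \in \mathcal{F} \cap \mathcal{T} = 0$, so $B = 0$ and $A = E[1]$. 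The case $H^{-1}_{\mathcal{A}}(A) = 0$ produces the forbidden short exact sequence $0 \to E \to H^{-1}_{\mathcal{A}}(B) \to H^0_{\mathcal{A}}(A) \to 0$ in $\mathcal{A}$, and the hypothesis yields $H^0_{\mathcal{A}}(A) = 0$, hence $A = 0$.

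The only genuine bookkeeping will be to confirm that the cohomology objects appearing in the long exact sequence actually land in the claimed torsion or torsion-free subcategories of $\mathcal{A}$; this uses the standard facts that $\mathcal{T}$ is closed under quotients, $\mathcal{F}$ under subobjects, together with the defining property of $\mathcal{A}^\sharp$ that $H^{-1}_{\mathcal{A}}(X) \in \mathcal{F}$ and $H^0_{\mathcal{A}}(X) \in \mathcal{T}$ for $X \in \mathcal{A}^\sharp$. I do not anticipate any substantial obstacle beyond this diagrammatic verification.
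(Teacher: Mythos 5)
Your proof is correct and follows essentially the same strategy as the paper: take the long exact cohomology sequence with respect to $\mathcal{A}$, use the simplicity of $E$ in $\mathcal{A}$ to split into two cases according to the image in $E$, and dispose of each case using $\mathcal{T}\cap\mathcal{F}=0$ or the forbidden-extension hypothesis. The paper proves (i) in exactly this way and declares (ii) similar; you spell out (ii) explicitly, which is a harmless addition.
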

\begin{proof}
We will prove (i) as (ii) is similar.
Assume that there is a short exact sequence $0 \longrightarrow K \longrightarrow E \longrightarrow Q \longrightarrow 0$ in $\mathcal{A}^\sharp$.
The long exact cohomology sequence with respect to the heart $\mathcal{A}$ is of the form 
$$ 0 \longrightarrow H^{-1}_{\mathcal{A}}(Q) \longrightarrow K \stackrel{f}\longrightarrow E \longrightarrow H^{0}_{\mathcal{A}}(Q) \longrightarrow 0.$$

As $E$ is simple in $\mathcal{A}$, either $\Img f = 0$ or $=E$.
In the former case, we have $H^{-1}_{\mathcal{A}}(Q) = K$,
which means that they are both $0$ as $K \in \mathcal{T}$ and $H^{-1}_{\mathcal{A}}(Q) \in \mathcal{F}$.
In the latter case, $H^0_{\mathcal{A}}(Q) = 0$ and we obtain the short exact sequence $$ 0 \longrightarrow H^{-1}_{\mathcal{A}}(Q) \longrightarrow K \longrightarrow E \longrightarrow 0.$$
\end{proof}

This immediately gives the following lemma.

\begin{lemma}\label{Oxsimple}
If $x \notin \Pi$, then $\mathcal{O}_x$ is simple in $\mathcal{B}^0$.
\end{lemma}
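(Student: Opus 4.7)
The plan is to place $\mathcal{O}_x$ in $\mathcal{B}^0$ and then apply Lemma~\ref{tiltsimple}(i) twice, moving simplicity from $\Coh(\widetilde{X})$ up to $^{-1}\Per(\widetilde{X}/X)$, and then from $^{-1}\Per(\widetilde{X}/X)$ up to $\mathcal{B}^0 = \mathcal{B}^0_{\pi^\ast H,P}$. The starting point is that $\mathcal{O}_x$ is clearly simple in $\Coh(\widetilde{X})$. To place it in the intermediate heart, I would verify $\mathcal{O}_x \in \mathcal{T}_P$: the support condition $x \notin \Pi$ yields $\mathbf{R}^1\pi_\ast \mathcal{O}_x = 0$ (zero-dimensional support), and any $G \in \ker\mathbf{R}\pi_\ast$ is supported on $\Pi$, so $\Hom(\mathcal{O}_x,G) = 0$ for support reasons.

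For the first tilt, to apply Lemma~\ref{tiltsimple}(i) I would assume a short exact sequence $0 \to F \to T \to \mathcal{O}_x \to 0$ in $\Coh(\widetilde{X})$ with $0 \neq F \in \mathcal{F}_P$ and $T \in \mathcal{T}_P$. Because $F$ is supported on $\Pi$ and $\mathcal{O}_x$ at $x \notin \Pi$, the sheaves $\mathcal{O}_x$ and $F$ have disjoint supports; hence $\mathcal{R}\mathcal{H}om(\mathcal{O}_x,F) = 0$ and in particular $\Ext^1(\mathcal{O}_x,F) = 0$, so the sequence splits as $T \cong F \oplus \mathcal{O}_x$. Since $\mathcal{T}_P$ is closed under direct summands, this gives $F \in \mathcal{T}_P \cap \mathcal{F}_P = 0$, a contradiction. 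So $\mathcal{O}_x$ is simple in $^{-1}\Per(\widetilde{X}/X)$, and because $\ch_0(\mathcal{O}_x)=0$ forces $\mu_{\pi^\ast H}(\mathcal{O}_x) = +\infty$, this simple object lies in $\mathcal{T}^{>0}_{\pi^\ast H,P}\subseteq \mathcal{B}^0$.

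For the second tilt, I would apply Lemma~\ref{tiltsimple}(i) to the torsion pair $(\mathcal{T}^{>0}_{\pi^\ast H,P}, \mathcal{F}^{\leq 0}_{\pi^\ast H,P})$ on $^{-1}\Per(\widetilde{X}/X)$. Suppose for contradiction a short exact sequence $0 \to F' \to T' \to \mathcal{O}_x \to 0$ in $^{-1}\Per(\widetilde{X}/X)$ with $0 \neq F' \in \mathcal{F}^{\leq 0}_{\pi^\ast H,P}$ and $T' \in \mathcal{T}^{>0}_{\pi^\ast H,P}$. By Lemma~\ref{FF}, $F'$ is a genuine coherent sheaf with $\mu_{\pi^\ast H}(F') \leq 0$, which by the convention $\ch_0 = 0 \Rightarrow \mu = +\infty$ forces $\ch_0(F') > 0$. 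Since $\ch_0(\mathcal{O}_x) = 0$ and $\ch_1(\mathcal{O}_x) = 0$, additivity of $\ch$ on the triangle gives $\ch_0(T') = \ch_0(F') > 0$ and $\mu_{\pi^\ast H}(T') = \mu_{\pi^\ast H}(F') \leq 0$, contradicting membership of $T'$ in $\mathcal{T}^{>0}_{\pi^\ast H,P}$ (applied to the quotient $T' \twoheadrightarrow T'$).

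The main subtle point I anticipate is justifying the splitting in the first tilt from disjoint-support vanishing of $\Ext^1$; once that is clear, the second tilt reduces to a straightforward slope bookkeeping using that $\ch(\mathcal{O}_x)$ contributes nothing to rank or first Chern character.
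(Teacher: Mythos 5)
Your proof is correct and uses the same two core ideas as the paper---Lemma~\ref{tiltsimple}(i) applied twice, with a disjoint-support vanishing of $\Ext^1(\mathcal{O}_x,F)$ (for $F$ supported on $\Pi$) in the perverse-type tilt and a slope-bookkeeping argument in the slope-type tilt---but deployed through the other description of the heart. The paper's proof goes $\Coh(\widetilde{X}) \to \Coh^0(\widetilde{X}) \to \mathcal{B}^0_{\pi^\ast H,-1}$, handling the slope constraint first and the disjoint-support/splitting argument second; you go $\Coh(\widetilde{X}) \to {}^{-1}\Per(\widetilde{X}/X) \to \mathcal{B}^0_{\pi^\ast H,P}$ and do them in the opposite order. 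The two routes are interchangeable precisely because Proposition~\ref{B0} identifies the double tilts, and you correctly invoke Lemma~\ref{FF} to pass from a perverse subobject in $\mathcal{F}^{\leq 0}_{\pi^\ast H,P}$ to a genuine torsion-free sheaf of nonpositive slope (and positive rank) before doing the additivity computation. One minor stylistic point: to close out the splitting step you appeal to $\mathcal{T}_P$ being closed under direct summands and then $\mathcal{T}_P \cap \mathcal{F}_P = 0$, whereas the paper concludes more directly that $T\cong F\oplus\mathcal{O}_x$ gives a nonzero element of $\Hom(T,F)$ contradicting the torsion-pair orthogonality---both work.
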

\begin{proof}
We start with the fact that $\mathcal{O}_x$ is simple in $\Coh(\widetilde{X})$. 

As $\ch_0(\mathcal{O}_x)=0$ and $(\pi^\ast H) \cdot \ch_1(\mathcal{O}_x)=0$, there cannot exist a short exact sequence in $\Coh(\widetilde{X})$
$$ 0 \longrightarrow F \longrightarrow T \longrightarrow E \longrightarrow 0$$
with $0 \neq F \in \mathcal{F}^{\leq 0}_{\pi^\ast H}$ and $T \in \mathcal{T}^{> 0}_{\pi^\ast H}$.
By Lemma \ref{tiltsimple}, $\mathcal{O}_x$ is simple in $\Coh^0(\widetilde{X})$.

We know that $\mathcal{O}_x \in \Coh^0(\widetilde{X})$ have no morphism to $\ker \mathbf{R}^0 \pi_\ast$ and hence $\mathcal{O}_x \in \mathcal{T}_0 \subseteq \mathcal{B}^0$.
Therefore, by Lemma \ref{tiltsimple} it remains to show that there's no short exact sequence of the form
\[ 0 \longrightarrow F \longrightarrow T \longrightarrow \mathcal{O}_x \longrightarrow 0\]
with $0\neq F \in \mathcal{F}_0=\ker \mathbf{R}^0 \pi_\ast$ and $T\in \mathcal{T}_0$.

Note that  since $F$ is supported on $\Pi$ and $x \notin Pi$, we have $\Ext^1(\mathcal{O}_x, F)=0$,
and hence $T=F \oplus \mathcal{O}_x$, which is a contradiction to $\Hom(T,F)=0$.
\end{proof}

Our claim is the following:

\begin{proposition} \label{simpleobjects}
The only simple objects in $\mathcal{B}^0$ supported on the exceptional locus $\Pi$ are $\mathcal{O}_{C_i}(-1)[1]$ and $\mathcal{O}_{\Pi}$.
\end{proposition}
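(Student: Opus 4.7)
The plan is to reduce to the classification of simple objects in $^{-1}\Per(\widetilde{X}/X)$ supported on $\Pi$, and then transfer simplicity across the tilt $^{-1}\Per \to \mathcal{B}^0$ via Lemma \ref{tiltsimple}. The two candidates both lie in $\mathcal{B}^0$: since $\mathbf{R}\pi_\ast \mathcal{O}_{C_i}(-1) = 0$ one has $\mathcal{O}_{C_i}(-1) \in \mathcal{F}_P$, hence $\mathcal{O}_{C_i}(-1)[1] \in {^{-1}\Per}$, while $\mathcal{O}_\Pi$ is a perverse coherent sheaf by the remark after Lemma \ref{Bri02}. Both have $\ch_0 = 0$ and $(\pi^\ast H) \cdot \ch_1 = 0$, so by the torsion convention the slope is $+\infty$ and each lies in $\mathcal{T}^{>0}_{\pi^\ast H, P} \subseteq \mathcal{B}^0$. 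Their simplicity in $^{-1}\Per$ follows from the classical description of $^{-1}\Per$ as modules over a sheaf of algebras (\cite{VdB04,Bri02}), whose local structure at $x_0$ is the standard finite algebra of the ADE configuration.

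To promote simplicity from $^{-1}\Per$ to $\mathcal{B}^0$, I apply Lemma \ref{tiltsimple}(i): one must rule out an SES $0 \to F \to T \to E \to 0$ in $^{-1}\Per$ with $0 \neq F \in \mathcal{F}^{\leq 0}_{\pi^\ast H, P}$ and $T \in \mathcal{T}^{>0}_{\pi^\ast H, P}$. Such $F$ has $\ch_0 > 0$: a torsion $F$ would have slope $+\infty$ and be a sub-object of itself, violating $F \in \mathcal{F}^{\leq 0}_{\pi^\ast H, P}$. Then $\ch_0(T) = \ch_0(F) > 0$, and combined with $(\pi^\ast H) \cdot \ch_1(E) = 0$ (since $E$ is supported on $\Pi$) and $(\pi^\ast H) \cdot \ch_1(F) \leq 0$, additivity gives $(\pi^\ast H)\cdot \ch_1(T) \leq 0$, hence $\mu_{\pi^\ast H}(T) \leq 0$, contradicting $T \in \mathcal{T}^{>0}_{\pi^\ast H, P}$.

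For the converse, let $E \in \mathcal{B}^0$ be simple and supported on $\Pi$. The torsion pair $(\mathcal{T}^{>0}_{\pi^\ast H, P}, \mathcal{F}^{\leq 0}_{\pi^\ast H, P}[1])$ on $\mathcal{B}^0$ forces $E$ into one of its two parts. The case $E \in \mathcal{F}^{\leq 0}_{\pi^\ast H, P}[1]$ is impossible: by the argument in the proof of Lemma \ref{FF}, $E[-1]$ is a coherent sheaf, and being supported on $\Pi$ makes it torsion of slope $+\infty$, contradicting $E[-1] \in \mathcal{F}^{\leq 0}_{\pi^\ast H, P}$. So $E \in \mathcal{T}^{>0}_{\pi^\ast H, P} \subseteq {^{-1}\Per}$, and it remains to verify that $E$ is simple in $^{-1}\Per$. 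In any nontrivial SES $0 \to A \to E \to B \to 0$ in $^{-1}\Per$, both $A$ and $B$ inherit the support on $\Pi$, hence by the slope computation above they lie in $\mathcal{T}^{>0}_{\pi^\ast H, P}$; since this common subcategory is closed under extensions in both hearts, the SES is also an SES in $\mathcal{B}^0$, contradicting simplicity of $E$. Applying the classification of simples in $^{-1}\Per$ supported on $\Pi$ then yields the claim. The main obstacle is invoking that classification, which is standard in the perverse coherent setting but in more general contexts would require a direct case analysis of sub-objects through $\Coh$-cohomology long exact sequences.
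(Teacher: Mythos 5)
Your overall strategy is genuinely different from the paper's and is structurally attractive: you reduce both directions to the classification of simple objects of ${^{-1}\Per(\widetilde{X}/X)}$ supported on $\Pi$, and then transfer across the single tilt $\mathcal{A} = {^{-1}\Per} \rightsquigarrow \mathcal{A}^\sharp = \mathcal{B}^0$. The reductions you actually carry out are correct. In particular: your observation that both candidates have $\ch_0 = 0$ and $(\pi^\ast H)\cdot\ch_1 = 0$, hence land in $\mathcal{T}^{>0}_{\pi^\ast H,P}$, is right; your Chern-class argument ruling out an extension $0 \to F \to T \to E \to 0$ with $F \in \mathcal{F}^{\leq 0}_{\pi^\ast H,P}$ nonzero, using $\ch_0(F) > 0$ and additivity of $(\pi^\ast H)\cdot\ch_1$, is a clean way to satisfy the hypothesis of Lemma \ref{tiltsimple}(i) (the paper uses the same slope observation for $\mathcal{O}_\Pi$, but handles $\mathcal{O}_{C_i}(-1)[1]$ by a different direct argument); and your reduction of the converse is sound, including the support-restriction argument showing subobjects and quotients in ${^{-1}\Per}$ of an object supported on $\Pi$ remain supported on $\Pi$ and hence lie in $\mathcal{T}^{>0}_{\pi^\ast H,P}$, so that an ${^{-1}\Per}$-SES is also a $\mathcal{B}^0$-SES.

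However, there is a genuine gap at the heart of the argument, which you yourself flag: you invoke, twice, ``the classification of simples in ${^{-1}\Per}$ supported on $\Pi$'' as a consequence of the module description in \cite{VdB04,Bri02}, but you neither prove this nor give a citation establishing that the simples are exactly $\mathcal{O}_{C_i}(-1)[1]$ for $i = 1,\dots,n$ and $\mathcal{O}_\Pi$. This is precisely the nontrivial content of the proposition, and \cite{VdB04} does not state it in this form; nor is the identification of $\mathcal{O}_\Pi$ as the remaining simple of the fibre algebra immediate from the tilting equivalence without further work. The paper fills this gap with a direct argument: Lemma \ref{simple1} proves $\mathcal{O}_\Pi$ is simple in ${^{-1}\Per}$ using $\mathbf{R}\pi_\ast$-exactness, Lemma \ref{H0}, and an Ext-vanishing obtained from adjunction on the truncation triangle for $\mathbf{L}\pi^\ast\mathcal{O}_{x_0}$; Lemma \ref{simple2} handles the $H^0(E) \neq 0$ case by producing a nonzero map $\mathcal{O}_\Pi \to E$ via adjunction and concluding by simplicity of both; and the $H^{-1}(E)[1]$ case is reduced to Lemma \ref{generatorofker}, whose proof via Serre duality, the adjunction formula ($K_{\widetilde{X}}\cdot C_i = 0$), Hirzebruch--Riemann--Roch, and negative definiteness of the intersection matrix is the irreducible technical core that your proposal outsources. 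Unless you supply a precise reference proving the ${^{-1}\Per}$-classification (or reproduce an argument like Lemma \ref{generatorofker}), the proof is incomplete.

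One further minor correction: the paper's Lemma \ref{simple1} establishes simplicity of $\mathcal{O}_{C_i}(-1)[1]$ directly in $\mathcal{B}^0$ (using $\mathbf{R}\pi_\ast$-exactness and the structure of subsheaves of $\mathcal{O}_{C_i}(-1)$), without passing through ${^{-1}\Per}$. Your route via Lemma \ref{tiltsimple} also works once the ${^{-1}\Per}$-simplicity is in hand, so this is a stylistic rather than substantive difference, but it is worth noting that the paper's argument for this object avoids the classification entirely.
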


We start with a useful lemma.

\begin{lemma}\label{H0}
For any $F \in \mathcal{B}^0$ with $\mathbf{R}\pi_\ast F= 0$, we have $H^0(F)=0$. 
\end{lemma}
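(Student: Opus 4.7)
The plan is to play the two descriptions $\mathcal{B}^0 = \langle \mathcal{F}^{\leq 0}_{\pi^\ast H,P}[1],\mathcal{T}^{>0}_{\pi^\ast H,P}\rangle = \langle \mathcal{F}_0[1],\mathcal{T}_0\rangle$ against each other, via the composite $\mathbf{R}\pi_\ast$ that kills $F$. Since $\mathcal{B}^0 \subseteq \langle\Coh(\widetilde{X}),\Coh(\widetilde{X})[1]\rangle$, I would first fix the cohomology triangle
\[ H^{-1}(F)[1] \longrightarrow F \longrightarrow H^0(F) \]
with respect to $\Coh(\widetilde{X})$, where $H^0(F) \in \mathcal{T}_{\mathcal{B}^0}$ and $H^{-1}(F) \in \mathcal{F}_{\mathcal{B}^0}$.

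The key step is to recover information about $H^0(F)$ via higher direct images. I would apply $\mathbf{R}\pi_\ast$ to the triangle above and write out the long exact sequence of sheaves $\mathbf{R}^i\pi_\ast(-)$. Because $\pi$ contracts the curves $C_i$ and has fibres of dimension at most one, we have $\mathbf{R}^2\pi_\ast = 0$; combined with the hypothesis $\mathbf{R}^0\pi_\ast F = \mathbf{R}^1\pi_\ast F = 0$ this isolates a fragment
\[ \mathbf{R}^1\pi_\ast H^{-1}(F) \longrightarrow 0 \longrightarrow \mathbf{R}^0\pi_\ast H^0(F) \longrightarrow 0, \]
so $\mathbf{R}^0\pi_\ast H^0(F) = 0$. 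By definition of $\mathcal{F}_P$, this places $H^0(F)$ inside $\mathcal{F}_P = \mathcal{F}_0$.

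Finally, I would use Proposition \ref{B0}: the alternative description $\mathcal{B}^0 = \langle\mathcal{F}_0[1],\mathcal{T}_0\rangle$ combined with the long exact cohomology argument in that proof shows $\mathcal{T}_{\mathcal{B}^0} = \Coh(\widetilde{X})\cap\mathcal{B}^0 \subseteq \mathcal{T}_0$. Consequently $H^0(F)\in\mathcal{T}_0$ satisfies $\Hom(H^0(F),\mathcal{F}_0)=0$; applied to the self-Hom together with the membership $H^0(F)\in\mathcal{F}_0$ derived above, this forces $\id_{H^0(F)} = 0$ and hence $H^0(F) = 0$.

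There is no real obstacle here: the argument is short once the two descriptions of $\mathcal{B}^0$ are in place, and the only subtlety is remembering to invoke $\mathbf{R}^2\pi_\ast = 0$ for the relative dimension of $\pi$, which is what lets one cleanly conclude $\mathbf{R}^0\pi_\ast H^0(F)=0$ without further information about $H^{-1}(F)$.
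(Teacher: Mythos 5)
Your argument is correct and reaches the conclusion by the same route as the paper: apply $\mathbf{R}\pi_\ast$ to the cohomology triangle $H^{-1}(F)[1]\to F\to H^0(F)$, deduce that $H^0(F)\in\mathcal{F}_P=\mathcal{F}_0$, and then use $H^0(F)\in\mathcal{T}_{\mathcal{B}^0}\subseteq\mathcal{T}_0$ together with $\Hom(\mathcal{T}_0,\mathcal{F}_0)=0$ to kill $\id_{H^0(F)}$.

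The one difference worth noting is how you derive $\mathbf{R}^0\pi_\ast H^0(F)=0$. The paper's phrase ``$\mathbf{R}\pi_\ast H^0(F)$ is a quotient of $\mathbf{R}\pi_\ast F=0$'' is most naturally read as invoking the exactness of $\mathbf{R}\pi_\ast\colon\mathcal{B}^0\to\Coh^0_H(X)$ (Proposition \ref{pushforward}), a forward reference to Section 6. You instead unravel the long exact sequence of $\mathbf{R}^i\pi_\ast$ at the level of $\Coh(\widetilde{X})$ and use only the elementary vanishing $\mathbf{R}^2\pi_\ast=0$ (fibres of $\pi$ have dimension at most one). This is logically self-contained at the point in the paper where the lemma is stated and slightly cheaper: it does not require any compatibility of $\mathbf{R}\pi_\ast$ with the tilted hearts, only the standard cohomological dimension bound for $\pi$.
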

\begin{proof}
As $H^0(F)$ is in the torsion part $\mathcal{T}_0$, by definition it can't have nonzero morphism to objects in  $\ker \mathbf{R}\pi_\ast$.

Since $\mathbf{R}\pi_\ast H^0(F)$ is a quotient of $\mathbf{R}\pi_\ast F= 0$,
we see that $\mathbf{R}\pi_\ast H^0(F)=0$ and hence $H^0(F)$ can only be $0$.
\end{proof}

We separate the proof of Proposition \ref{simpleobjects} into two parts.

\begin{lemma}\label{simple1}
$\mathcal{O}_{C_i}(-1)[1]$ and $\mathcal{O}_{\Pi}$ are simple objects in $\mathcal{B}^0$.
\end{lemma}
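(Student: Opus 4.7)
The plan is to apply Lemma \ref{tiltsimple}(i) using the description $\mathcal{B}^0 = \langle \mathcal{F}^{\leq 0}_{\pi^\ast H,P}[1], \mathcal{T}^{>0}_{\pi^\ast H,P}\rangle$ of $\mathcal{B}^0$ as a tilt of ${^{-1}\Per(\widetilde{X}/X)}$. Both $\mathcal{O}_{C_i}(-1)[1]$ and $\mathcal{O}_\Pi$ already lie in ${^{-1}\Per}$ (the former because $\mathcal{O}_{C_i}(-1) \in \mathcal{F}_P$, the latter by the Remark after Lemma \ref{Bri02}); both satisfy $\ch_0 = 0$ and $(\pi^\ast H) \cdot \ch_1 = 0$, and hence sit in $\mathcal{T}^{>0}_{\pi^\ast H,P}$. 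The proof then splits into (i) proving simplicity in ${^{-1}\Per}$, and (ii) verifying the extension hypothesis of Lemma \ref{tiltsimple}(i).

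Task (ii) will be a uniform slope argument: given a hypothetical SES $0 \to F \to T \to E \to 0$ in ${^{-1}\Per}$ with $0 \neq F \in \mathcal{F}^{\leq 0}_{\pi^\ast H,P}$ and $T \in \mathcal{T}^{>0}_{\pi^\ast H,P}$, additivity of Chern characters together with $\ch_0(E) = 0 = (\pi^\ast H) \cdot \ch_1(E)$ yields $\mu_{\pi^\ast H}(T) = \mu_{\pi^\ast H}(F)$. Since perverse coherent sheaves have $\ch_0 \geq 0$ and any rank-zero perverse object is torsion of slope $+\infty$, the condition $F \in \mathcal{F}^{\leq 0}_{\pi^\ast H,P} \setminus\{0\}$ forces $\ch_0(F) > 0$ and $\mu_{\pi^\ast H}(F) \leq 0$, so $\mu_{\pi^\ast H}(T) \leq 0$, contradicting $T \in \mathcal{T}^{>0}_{\pi^\ast H,P}$.

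For task (i) with $\mathcal{O}_{C_i}(-1)[1]$, I would take a hypothetical SES $0 \to A \to \mathcal{O}_{C_i}(-1)[1] \to B \to 0$ in ${^{-1}\Per}$ and pass to the long exact cohomology sequence in $\Coh(\widetilde{X})$, giving $0 \to H^{-1}(A) \to \mathcal{O}_{C_i}(-1) \to H^{-1}(B) \to H^0(A) \to 0$ with $H^0(B) = 0$. Since subsheaves of $\mathcal{O}_{C_i}(-1)$ are exactly $\mathcal{O}_{C_i}(-1-k)$ for $k \geq 0$, three cases arise. The cases $H^{-1}(A) = 0$ and $H^{-1}(A) = \mathcal{O}_{C_i}(-1)$ will collapse to $A = 0$ and $A = \mathcal{O}_{C_i}(-1)[1]$ respectively, via the orthogonality $\Hom(\mathcal{T}_P,\mathcal{F}_P) = 0$ and the identity $\mathcal{T}_P \cap \ker \mathbf{R}\pi_\ast = 0$. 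The intermediate case $k \geq 1$ produces a nonzero length-$k$ torsion cokernel $Q$ on $C_i$ fitting into $0 \to Q \to H^{-1}(B) \to H^0(A) \to 0$; applying $\pi_\ast$ and using $\pi_\ast H^{-1}(B) = 0$ (since $H^{-1}(B) \in \mathcal{F}_P$) together with $R^1\pi_\ast Q = 0$ gives the contradiction $0 \neq \pi_\ast Q \hookrightarrow 0$.

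For task (i) with $\mathcal{O}_\Pi$, the key input is $\mathbf{R}\pi_\ast \mathcal{O}_\Pi = \mathcal{O}_{x_0}$, which holds for ADE singularities by rationality. Applying the exact functor $\mathbf{R}\pi_\ast \colon {^{-1}\Per} \to \Coh(X)$ to a hypothetical SES $0 \to A \to \mathcal{O}_\Pi \to B \to 0$ and using simplicity of $\mathcal{O}_{x_0}$ forces $\mathbf{R}\pi_\ast A = 0$ or $\mathbf{R}\pi_\ast B = 0$. In the first case, cohomology of the standard decomposition triangle of $A$ combined with $\mathcal{T}_P \cap \ker \mathbf{R}\pi_\ast = 0$ shows $A = F[1]$ for some $F \in \ker \mathbf{R}\pi_\ast \subseteq \mathcal{F}_P$, and then $\Hom_{D^b}(F[1], \mathcal{O}_\Pi) = \Ext^{-1}(F, \mathcal{O}_\Pi) = 0$ forces $A = 0$. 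The second case will be the main obstacle: one obtains $B = G[1]$ with $G \in \ker \mathbf{R}\pi_\ast$ and a SES of sheaves $0 \to G \to A \to \mathcal{O}_\Pi \to 0$ with $A \in \mathcal{T}_P$, and I must show $G = 0$. The hard step will be the vanishing $\Ext^1(\mathcal{O}_\Pi, G) = 0$: using the triangle $\tau^{\leq -1}\mathbf{L}\pi^\ast\mathcal{O}_{x_0} \to \mathbf{L}\pi^\ast\mathcal{O}_{x_0} \to \mathcal{O}_\Pi$, derived adjunction gives $\mathbf{R}\Hom_{\widetilde{X}}(\mathbf{L}\pi^\ast\mathcal{O}_{x_0}, G) = \mathbf{R}\Hom_X(\mathcal{O}_{x_0}, \mathbf{R}\pi_\ast G) = 0$, while $\mathbf{R}\Hom(\tau^{\leq -1}\mathbf{L}\pi^\ast\mathcal{O}_{x_0}, G)$ is supported in degrees $\geq 1$ by a cohomological degree count. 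Hence the extension splits, $A = G \oplus \mathcal{O}_\Pi$, and $G \neq 0$ would yield $\End(G) \hookrightarrow \Hom(A, G) \neq 0$, contradicting $A \in \mathcal{T}_P$; so $G = 0$ and $B = 0$.
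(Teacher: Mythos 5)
Your proof is correct, and it is worth noting how it differs from the paper's.

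For $\mathcal{O}_\Pi$, your argument is essentially the paper's: both apply Lemma~\ref{tiltsimple}(i) with the description $\mathcal{B}^0 = \langle \mathcal{F}^{\leq 0}_{\pi^\ast H,P}[1], \mathcal{T}^{>0}_{\pi^\ast H,P}\rangle$, both reduce simplicity in ${^{-1}\Per(\widetilde{X}/X)}$ to simplicity of $\mathcal{O}_{x_0} = \mathbf{R}\pi_\ast\mathcal{O}_\Pi$ in $\Coh(X)$, and both hinge on the vanishing $\Ext^1(\mathcal{O}_\Pi, G)=0$ for $G \in \ker\mathbf{R}\pi_\ast$ via the truncation triangle of $\mathbf{L}\pi^\ast\mathcal{O}_{x_0}$ and adjunction. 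You dispatch the case $\mathbf{R}\pi_\ast A = 0$ by showing $H^0(A)=0$ (hence $A$ is a shift of a sheaf and $\Hom(A, \mathcal{O}_\Pi)=0$), while the paper shows $H^{-1}(A)=0$ directly from the long exact sequence and lands in $\mathcal{T}_P \cap \mathcal{F}_P = 0$; both are fine.

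For $\mathcal{O}_{C_i}(-1)[1]$, you take a genuinely different route. The paper works directly in $\mathcal{B}^0$ and leans on the exactness of $\mathbf{R}\pi_\ast\colon\mathcal{B}^0 \to \Coh^0_H(X)$, which is a forward reference to Proposition~\ref{pushforward} in Section~6, together with Lemma~\ref{H0}. You instead prove simplicity inside ${^{-1}\Per(\widetilde{X}/X)}$ by an explicit classification of subsheaves of $\mathcal{O}_{C_i}(-1)$ (using that $C_i \simeq \mathbb{P}^1$) and then apply Lemma~\ref{tiltsimple}(i), with the slope computation covering the extension hypothesis uniformly for both objects. Your route has two advantages: it avoids the forward reference, making the lemma self-contained within Section~3, and it treats $\mathcal{O}_{C_i}(-1)[1]$ and $\mathcal{O}_\Pi$ by the same tilting mechanism. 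The paper's route is shorter once Proposition~\ref{pushforward} is in hand. One small cosmetic remark: in the intermediate case ($k\geq 1$) of your analysis, the mention of $R^1\pi_\ast Q = 0$ is extraneous; left-exactness of $\mathbf{R}^0\pi_\ast$ already gives the injection $\pi_\ast Q \hookrightarrow \pi_\ast H^{-1}(B) = 0$, and the contradiction comes from $\pi_\ast$ preserving the length of the zero-dimensional sheaf $Q$.
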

\begin{proof}
For the first assertion, we consider a short exact sequence in $\mathcal{B}^0$
\[0 \longrightarrow K \longrightarrow \mathcal{O}_{C_i}(-1)[1] \longrightarrow Q \longrightarrow 0.\]

As $\mathcal{O}_{C_i}(-1)\in\ker \mathbf{R}\pi_\ast$ and $\mathbf{R}\pi_\ast \colon \mathcal{B}^0 \longrightarrow \Coh^0_H(X)$ is exact (see Proposition \ref{pushforward}),
we see that $\mathbf{R}\pi_\ast K = \mathbf{R}\pi_\ast Q =0$.

Then by Lemma \ref{H0} and the long exact cohomology sequence with respect to $\Coh(\widetilde{X})$, we see that $H^{-1}(K)=K[-1]$ is a subsheaf of $\mathcal{O}_{C_i}(-1)$.
But as $K$ is also in the kernel of $\mathbf{R}\pi_\ast$, $K$ is either $0$ or exactly $\mathcal{O}_{C_i}(-1)[1]$.

For the second assertion, we will apply Lemma \ref{tiltsimple}(i) to the object $\mathcal{O}_{\Pi} \in {^{-1}\Per(\widetilde{X}/X)}$,
viewing $\mathcal{B}^0=  \mathcal{B}^0_{\pi^\ast H,P} =\langle \, \mathcal{F}^{\leq 0}_{\pi^\ast H,P}[1], \mathcal{T}^{>0}_{\pi^\ast H,P} \,\rangle$.
We first need to verify that $\mathcal{O}_{\Pi}$ is simple in ${^{-1}\Per(\widetilde{X}/X)}$.
We consider a short exact   sequence $ 0 \longrightarrow K \longrightarrow \mathcal{O}_{\Pi} \longrightarrow Q \longrightarrow 0$ in ${^{-1}\Per(\widetilde{X}/X)}$.
As $\mathbf{R}\pi_\ast \mathcal{O}_{\Pi} = \mathcal{O}_{x_0}$ is simple in $\Coh(X)$,  by the t-exactness of $\mathbf{R}\pi_\ast$ we know that either $\mathbf{R}\pi_\ast K=0, \mathbf{R}\pi_\ast Q= \mathcal{O}_{x_0}$ or $\mathbf{R}\pi_\ast Q= 0, \mathbf{R}\pi_\ast K = \mathcal{O}_{x_0}$.

By the long exact cohomology sequence we have $H^{-1}(K)=0$,
so $K$ is a coherent sheaf and hence $K \in {^{-1}\Per(\widetilde{X}/X)} \cap \Coh(\widetilde{X}) = \mathcal{T}_P$.
However, in the former case, $\mathbf{R}\pi_\ast K =0$ implies that $K \in \mathcal{F}_P$,
then $K$ can only be $0$.

In the latter case, we assume that $Q$ is nonzero.
Since $\mathbf{R}\pi_\ast Q =0 $, Lemma \ref{H0} tells us that $H^0(Q)=0$.
Now the long exact cohomology sequence gives the short exact sequence $ 0 \longrightarrow H^{-1}(Q) \longrightarrow H^0(K) \longrightarrow \mathcal{O}_{\Pi} \longrightarrow 0$ in $\Coh(\widetilde{X})$.
Consider the truncation distinguished triangle $\tau^{\leq -1}\mathbf{L}\pi^\ast \mathcal{O}_{x_0} \longrightarrow \mathbf{L}\pi^\ast \mathcal{O}_{x_0} \longrightarrow \tau^{\geq 0}\mathbf{L}\pi^\ast \mathcal{O}_{x_0} =\mathcal{O}_{\Pi}$ and apply the functor $\Hom(\bullet , H^{-1}(Q)[1])$.

By adjunction we see that $\Hom(\mathbf{L}\pi^\ast \mathcal{O}_{x_0} , H^{-1}(Q)[1])= 0$ since $\mathbf{R}\pi_\ast H^{-1}(Q)= 0$. 
It is also clear that $\Hom(\tau^{\leq -1}\mathbf{L}\pi^\ast \mathcal{O}_{x_0} , H^{-1}(Q))= 0$;
thus we have $\Ext^1(\mathcal{O}_{\Pi}, H^{-1}(Q)) = 0$,
which means that $K = Q[-1] \oplus \mathcal{O}_{\Pi}$ but this is not in the heart ${^{-1}\Per(\widetilde{X}/X)}$, a contradiction.

It remains to check that there cannot be a short exact sequence of the form
$$ 0 \longrightarrow F \longrightarrow T \longrightarrow \mathcal{O}_{\Pi} \longrightarrow 0,$$
with $F \in \mathcal{F}^{\leq 0}_{\pi^\ast H,P}$ and $T \in \mathcal{T}^{>0}_{\pi^\ast H,P}$.
This is trivial as $\mathcal{O}_{\Pi}$ has no contribution to the slope function $\mu_{\pi^\ast H}$ on perverse coherent sheaves.
\end{proof}

\begin{lemma}\label{simple2}
$\mathcal{O}_{C_i}(-1)[1]$ and $\mathcal{O}_{\Pi}$ are the only simple objects in $\mathcal{B}^0$ supported on the exceptional locus $\Pi$.
\end{lemma}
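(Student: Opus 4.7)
The plan is to analyze a simple object $E \in \mathcal{B}^0$ supported on $\Pi$ using the torsion pair on $\mathcal{B}^0 = \langle \mathcal{F}_0[1], \mathcal{T}_0\rangle$. The short exact sequence
\[
0 \to H^{-1}_{\Coh^0}(E)[1] \to E \to H^0_{\Coh^0}(E) \to 0
\]
in $\mathcal{B}^0$, with $H^{-1}_{\Coh^0}(E) \in \mathcal{F}_0 = \mathcal{F}_P$ and $H^0_{\Coh^0}(E) \in \mathcal{T}_0$, must be degenerate by simplicity, so either $E = F[1]$ with $F \in \mathcal{F}_P$ or $E \in \mathcal{T}_0$. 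In the latter situation, supportedness on $\Pi$ forces $\ch_0(E) = 0$ and hence $E \in \mathcal{T}^{>0}_{\pi^\ast H} \cap \mathcal{T}_0 = \mathcal{T}_{\mathcal{B}^0}$ is a coherent sheaf. The strategy in each case is to produce a nonzero morphism in $\mathcal{B}^0$ between $E$ and the relevant candidate ($\mathcal{O}_{C_i}(-1)[1]$ or $\mathcal{O}_\Pi$), and then apply Schur's lemma together with Lemma \ref{simple1}.

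In the first case ($E = F[1]$), the sheaf $F$ has no $0$-dimensional subsheaf: any $\mathcal{O}_p \hookrightarrow F$ with $p \in \Pi$ would give $0 \neq \mathcal{O}_{x_0} = \pi_\ast \mathcal{O}_p \hookrightarrow \pi_\ast F = 0$. So $F$ is pure $1$-dimensional, with $\ch_1(F) = \sum_i r_i [C_i]$ and $r = (r_i) \in \mathbb{Z}_{\geq 0}^n \setminus \{0\}$. Hirzebruch--Riemann--Roch on the smooth surface $\widetilde X$, together with the crepant condition $K_{\widetilde X} \cdot C_i = 0$, reduces the Euler pairing of torsion sheaves supported on $\Pi$ to $\chi(A,B) = -\ch_1(A) \cdot \ch_1(B)$; in particular, $\chi(\mathcal{O}_{C_j}(-1), F)$ equals the $j$-th entry of $Cr$, where $C = (-C_i \cdot C_j)$ is the positive definite ADE Cartan matrix. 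Positive definiteness then forces some $\chi(\mathcal{O}_{C_j}(-1), F) > 0$, so either $\Hom(\mathcal{O}_{C_j}(-1), F) \neq 0$ or, by Serre duality with $\omega_{\widetilde X}|_{C_j} \cong \mathcal{O}_{C_j}$, $\Hom(F, \mathcal{O}_{C_j}(-1)) \neq 0$. Either gives a nonzero morphism in $\mathcal{B}^0$ between the simples $\mathcal{O}_{C_j}(-1)[1]$ and $F[1]$, so Schur's lemma yields $F \cong \mathcal{O}_{C_j}(-1)$.

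In the second case ($E$ a coherent sheaf in $\mathcal{T}_0$), I first note $\pi_\ast E \neq 0$, since $\pi_\ast E = 0$ would place $E$ in $\mathcal{F}_P = \mathcal{F}_0$ and the orthogonality $\Hom(\mathcal{T}_0, \mathcal{F}_0) = 0$ would force $E = 0$. Hence $\pi_\ast E$ is a nonzero sheaf of finite length at $x_0$, admitting an embedding $\mathcal{O}_{x_0} \hookrightarrow \pi_\ast E$. Using $H^{-1}(\mathbf{R}\pi_\ast E) = 0$ and the adjunction $\mathbf{L}\pi^\ast \dashv \mathbf{R}\pi_\ast$, we obtain $\Hom(\mathbf{L}\pi^\ast \mathcal{O}_{x_0}, E) = \Hom(\mathcal{O}_{x_0}, \pi_\ast E) \neq 0$. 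Applying $\Hom(-, E)$ to the standard truncation triangle $\tau^{\leq -1}\mathbf{L}\pi^\ast \mathcal{O}_{x_0} \to \mathbf{L}\pi^\ast \mathcal{O}_{x_0} \to \mathcal{O}_\Pi$ (the quotient being $\mathcal{O}_\Pi$ by the remark after Lemma \ref{Bri02}), the third term $\Hom(\tau^{\leq -1}\mathbf{L}\pi^\ast \mathcal{O}_{x_0}, E)$ vanishes by the t-structure axiom since $\tau^{\leq -1}\mathbf{L}\pi^\ast \mathcal{O}_{x_0} \in D^{\leq -1}_{\Coh(\widetilde X)}$ while $E \in \Coh(\widetilde X)$. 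Hence the map $\Hom(\mathcal{O}_\Pi, E) \to \Hom(\mathbf{L}\pi^\ast \mathcal{O}_{x_0}, E)$ is surjective, giving $\Hom(\mathcal{O}_\Pi, E) \neq 0$, and Schur's lemma then yields $E \cong \mathcal{O}_\Pi$. The crux of the argument lies in Case 2: identifying $E$ with $\mathcal{O}_\Pi$ requires lifting the adjoint morphism past the truncation $\tau^{\leq -1}\mathbf{L}\pi^\ast \mathcal{O}_{x_0}$, and this works precisely because the latter lives strictly below the standard heart so that its $\Hom$ into coherent sheaves vanishes.
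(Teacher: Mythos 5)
Your proof is correct and follows essentially the same structure as the paper's: decompose the simple object $E$ using the torsion pair $(\mathcal{F}_0[1], \mathcal{T}_0)$ into the two cases $E = F[1]$ with $F \in \mathcal{F}_P$, or $E$ a coherent sheaf in $\mathcal{T}_{\mathcal{B}^0}$; then in the sheaf case lift a map $\mathcal{O}_{x_0} \hookrightarrow \mathbf{R}^0\pi_\ast E$ through the truncation triangle $\tau^{\leq -1}\mathbf{L}\pi^\ast\mathcal{O}_{x_0} \to \mathbf{L}\pi^\ast\mathcal{O}_{x_0} \to \mathcal{O}_\Pi$ to get $\Hom(\mathcal{O}_\Pi, E) \neq 0$, and in the shifted-sheaf case use Riemann--Roch on $\Pi$ together with definiteness of the ADE intersection form to produce a nonzero $\Hom$ between $\mathcal{O}_{C_j}(-1)$ and $F$; Schur's lemma finishes each case.

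The one place your treatment is somewhat cleaner than the paper's is the $E = F[1]$ case. The paper disposes of it by citing Lemma \ref{generatorofker}, whose hypothesis is $\mathbf{R}\pi_\ast F = 0$, i.e.\ both $\mathbf{R}^0\pi_\ast F = 0$ and $\mathbf{R}^1\pi_\ast F = 0$. From the torsion-pair decomposition of a simple $E$ one only directly obtains $F \in \mathcal{F}_P$, which gives $\mathbf{R}^0\pi_\ast F = 0$ but not a priori $\mathbf{R}^1\pi_\ast F = 0$ (e.g.\ $\mathcal{O}_{C_i}(-2)$ lies in $\mathcal{F}_P$ with nonzero $\mathbf{R}^1\pi_\ast$; of course $\mathcal{O}_{C_i}(-2)[1]$ is not simple, but that is what one is trying to prove). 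Your computation — $\chi(\mathcal{O}_{C_j}(-1), F) = (Cr)_j$ via HRR, $\omega_{\widetilde X}|_{C_j}$ trivial, positive definiteness of the Cartan matrix $C$ forcing some $(Cr)_j > 0$ — is exactly the engine inside the proof of Lemma \ref{generatorofker}, but you apply it directly to $F$ with only the hypotheses that are actually available (pure 1-dimensional, supported on $\Pi$), and the vanishing $\mathbf{R}^1\pi_\ast F = 0$ then falls out once Schur's lemma identifies $F \cong \mathcal{O}_{C_j}(-1)$. So your route avoids invoking that lemma under hypotheses that have not been explicitly established, at the small cost of reproducing a short numerical calculation. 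Otherwise, the adjunction argument in your Case 2 is word-for-word the paper's argument, down to the observation that $\Hom(\tau^{\leq -1}\mathbf{L}\pi^\ast\mathcal{O}_{x_0}, E)$ vanishes because $E$ is concentrated in degree $0$.
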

\begin{proof}
Given a simple object $E$ in $\mathcal{B}^0$ with $\supp E=\Pi$, we know that $E$ must be equal to either $H^0(E)$ or $H^{-1}(E)[1]$.

We first deal with the case that $E=H^0(E)$.
Note that $\mathbf{R}\pi_\ast E$ can not be $0$ otherwise $E=0$, 
so there exists a nonzero morphism $\mathcal{O}_{x_0} \longrightarrow \mathbf{R}\pi_\ast E$, 
which induces by adjunction a nonzero morphism $\mathbf{L}\pi^\ast \mathcal{O}_{x_0} \longrightarrow E$.
Then, applying the functor $\Hom (\bullet, E)$ to the distinguished triangle $\tau^{\leq -1}\mathbf{L}\pi^\ast \mathcal{O}_{x_0} \longrightarrow \mathbf{L}\pi^\ast \mathcal{O}_{x_0} \longrightarrow \mathcal{O}_{\Pi}$, we get a nonzero morphism $\mathcal{O}_{\Pi} \longrightarrow E$.
As they are both simple, this must be an isomorphism.

Now for the case that $E=H^{-1}(E)[1]$, the assertion follows directly from the lemma below.
\end{proof}

\begin{lemma}\label{generatorofker}
Given a coherent sheaf $F$ with $\mathbf{R}\pi_\ast F=0$, then $F$ is an extension of the sheaves $\mathcal{O}_{C_i}(-1)$. 
\end{lemma}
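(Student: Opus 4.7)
The plan is to view $F[1]$ as a perverse coherent sheaf of finite length lying in $\ker \mathbf{R}\pi_\ast$, and then apply a Jordan--H\"older argument in this Serre subcategory.

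First, observe that $\mathbf{R}\pi_\ast F = 0$ implies in particular $R^0\pi_\ast F = 0$, so $F \in \mathcal{F}_P$ and hence $F[1] \in \mathcal{F}_P[1] \subseteq {}^{-1}\Per(\widetilde{X}/X)$, while still satisfying $\mathbf{R}\pi_\ast(F[1]) = 0$. Since the derived pushforward restricts to an exact functor $\mathbf{R}\pi_\ast \colon {}^{-1}\Per(\widetilde{X}/X) \to \Coh(X)$ (see the remark after Lemma \ref{Bri02}), the subcategory
\[
\mathcal{C} := \ker \mathbf{R}\pi_\ast \cap {}^{-1}\Per(\widetilde{X}/X)
\]
is a Serre subcategory of ${}^{-1}\Per(\widetilde{X}/X)$, in particular abelian, and $F[1]$ is an object of $\mathcal{C}$.

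The key step is to show that $\mathcal{C}$ is of finite length, with simple objects exactly the $\mathcal{O}_{C_i}(-1)[1]$. That each $\mathcal{O}_{C_i}(-1)[1]$ is simple in $\mathcal{C}$ (indeed in the larger ${}^{-1}\Per(\widetilde{X}/X)$) is essentially contained in the argument of Lemma \ref{simple1}: any subobject in ${}^{-1}\Per(\widetilde{X}/X)$ together with its quotient would have $\mathbf{R}\pi_\ast = 0$, forcing the cohomology of the kernel to be a subsheaf of $\mathcal{O}_{C_i}(-1)$ of the form $\mathcal{O}_{C_i}(-1-d)$ with the quotient sheaf $\mathcal{O}_D$ violating $\mathcal{F}_P$-membership unless $d = 0$. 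For finite length of $\mathcal{C}$ and the classification of all simples, I would invoke Van den Bergh's equivalence ${}^{-1}\Per(\widetilde{X}/X) \simeq \Coh(\mathcal{M})$ for an appropriate sheaf of noncommutative algebras $\mathcal{M}$ on $X$ (already used in the proof that ${}^{-1}\Per(\widetilde{X}/X)$ is Noetherian): under this equivalence, objects of $\mathcal{C}$ correspond to $\mathcal{M}$-modules set-theoretically supported over $x_0$, and for an ADE singularity these form a category of finite-dimensional modules over a finite-dimensional local algebra at $x_0$, hence of finite length.

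Granting this, Jordan--H\"older applied to $F[1]$ in $\mathcal{C}$ yields a filtration $0 = G_0 \subset G_1 \subset \cdots \subset G_n = F[1]$ in ${}^{-1}\Per(\widetilde{X}/X)$ with quotients $G_k/G_{k-1} \cong \mathcal{O}_{C_{i_k}}(-1)[1]$. Since $\mathcal{F}_P[1]$ is the torsion part of the torsion pair $(\mathcal{F}_P[1], \mathcal{T}_P)$ on ${}^{-1}\Per(\widetilde{X}/X)$ and therefore closed under extensions, an induction on $k$ shows each $G_k$ lies in $\mathcal{F}_P[1]$; writing $G_k = F_k[1]$ with $F_k \in \mathcal{F}_P \subseteq \Coh(\widetilde{X})$ and shifting by $-1$ gives the desired filtration $0 = F_0 \subset F_1 \subset \cdots \subset F_n = F$ in $\Coh(\widetilde{X})$ with successive quotients $\mathcal{O}_{C_{i_k}}(-1)$. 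The main obstacle is establishing the finite-length property of $\mathcal{C}$: this is precisely where the ADE hypothesis is used crucially, and is most naturally handled via the local structure of Van den Bergh's noncommutative crepant resolution.
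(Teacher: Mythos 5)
Your proposal takes a genuinely different route from the paper's. The paper works entirely inside $\Coh(\widetilde X)$: writing $\ch_1(F)=\sum_i a_iC_i$, it inducts on $\sum_i a_i$ and shows at each step that some $\Hom(\mathcal O_{C_i}(-1),F)$ or $\Hom(F,\mathcal O_{C_i}(-1))$ is nonzero, via Hirzebruch--Riemann--Roch, Serre duality (using $K$-triviality on the $C_i$), and negative definiteness of the exceptional intersection form. Your proposal instead lifts $F[1]$ into the Serre subcategory $\mathcal{C}=\ker\mathbf R\pi_\ast\cap {}^{-1}\Per(\widetilde X/X)$, establishes finite length, and invokes Jordan--H\"older.

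The gap is in the step you yourself flag as the ``key step'': you never actually establish that the $\mathcal O_{C_i}(-1)[1]$ are \emph{all} the simple objects of $\mathcal{C}$. Finite length of $\mathcal{C}$ alone only guarantees a Jordan--H\"older filtration whose factors are \emph{some} simples of $\mathcal{C}$; to conclude the factors are the $\mathcal O_{C_i}(-1)[1]$ you need the full classification, and the proposal only verifies that those particular objects \emph{are} simple, not that nothing else is. Moreover, the phrase ``finite-dimensional modules over a finite-dimensional \emph{local} algebra at $x_0$'' is incorrect: the relevant algebra is not local (a local finite-dimensional algebra has a unique simple module, which would give the wrong answer); the completed stalk of Van den Bergh's $\mathcal{M}$ over an ADE singularity is Morita equivalent to a preprojective algebra of the affine Dynkin quiver and has $n+1$ simple nilpotent modules, of which one (corresponding to $\mathcal O_\Pi$) does \emph{not} lie in $\ker\mathbf R\pi_\ast$. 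To close the argument you would need to invoke and apply this McKay-type classification explicitly and then check which simples survive the $\ker\mathbf R\pi_\ast$ condition. That is considerably heavier machinery than the paper's short numerical argument, which also makes the use of the ADE hypothesis (negative definiteness and $K$-triviality) entirely transparent. As written, the proposal outlines a plausible strategy but does not constitute a proof.
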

\begin{proof}
Note first that $\ch_1(F) =  a_1C_1+ \cdots a_nC_n$ with all $a_i \geq 0$.
By induction on $\displaystyle \sum_i a_i$, 
it is enough to show that either $\Hom(\mathcal{O}_{C_i}(-1),F)$ or $\Hom(F,\mathcal{O}_{C_i}(-1))$ is nonzero for some $i$,
as $\mathcal{O}_{C_i}(-1)$ are simple in the category $\ker \mathbf{R}\pi_\ast \cap \Coh(\widetilde{X})$.

Assume the contrary that for all $i$, we have $\Hom(\mathcal{O}_{C_i}(-1),F) = 0 = \Hom(F,\mathcal{O}_{C_i}(-1))$.
By the adjunction formula we can compute that the restrictions of canonical divisor $K_{\widetilde{X}}$ on the curves $C_i$ are trivial for all $i$.
Therefore, by Serre duality we have that $\Ext^2(F,\mathcal{O}_{C_i}(-1)) = 0$.

Hence, for every $i$, $\chi(F,\mathcal{O}_{C_i}(-1)) \leq 0$, which by Hirzebruch-Riemann-Roch implies that $\ch_1(F) \cdot C_i \geq 0$.
However, since the intersection matrix of the exceptional curves is negative definite (see, for example, \cite{Reid}), 
we have $$0 > \ch_1(F)^2 = \displaystyle \sum_i a_i(\ch_1(F) \cdot C_i) \geq 0,$$ which is a contradiction.
\end{proof}

Then, Lemma \ref{simple1} and \ref{simple2} combine to prove Proposition \ref{simpleobjects}.

\section{Central charges}
In this section we will show that the central charge $Z_{\pi^\ast H,\beta,z}$ is a stability function on $\mathcal{B}^0$ and that it satisfies the Harder--Narasimhan property under some suitable assumptions.
This extends the results of \cite{TX22} to resolutions of ADE singularities and fixes some gaps in the original proof.

Let $z\in \mathbb{R}$ and $\beta \in \NS_\mathbb{R}(\widetilde{X})$ be a class with $(\pi^\ast H)\cdot \beta =0$.
On ${\rm D}^b(\widetilde{X})$ we define a function 
\[\gls{stability-function} \]

We start with a lemma proving the existence of our choice of $\beta$.

\begin{lemma}\label{betaexists}
There is a class $\beta \in \NS_\mathbb{R}(\widetilde{X})$ such that $(\pi^\ast H)\cdot \beta =0, \beta \cdot C_i>0$ for all $i$, and 
$\beta \cdot \ch_1(\mathcal{O}_{\Pi}) <1$.
\end{lemma}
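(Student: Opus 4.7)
The plan is to construct $\beta$ inside the span of the exceptional curves $\langle C_1,\dots,C_n\rangle \subseteq \NS_\mathbb{R}(\widetilde X)$. This is natural because any such class is automatically orthogonal to $\pi^\ast H$: since $\pi^\ast H \cdot C_i = H \cdot \pi_\ast C_i = 0$ (as $\pi$ contracts each $C_i$ to the point $x_0$), writing $\beta = \sum_j b_j C_j$ immediately gives $(\pi^\ast H)\cdot\beta = 0$, so condition (1) will hold for free.

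Next I would arrange condition (2). Writing $\beta = \sum_j b_j C_j$, the intersection numbers are $\beta \cdot C_i = \sum_j b_j (C_j\cdot C_i) = (Mb)_i$, where $M = (C_i\cdot C_j)$ is the intersection matrix. By the result cited in Lemma \ref{generatorofker} (see \cite{Reid}), $M$ is negative definite and in particular invertible. Hence for any prescribed positive vector $v \in \mathbb{R}^n$, the system $Mb = v$ has a unique solution, so I can choose $b = M^{-1}(1,\dots,1)^T$ to obtain $\beta_0 = \sum b_j C_j$ with $\beta_0 \cdot C_i = 1 > 0$ for every $i$.

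Finally I would rescale to achieve condition (3). Since $\Pi = \bigcup_i C_i$ with each $C_i$ reduced, $\mathrm{ch}_1(\mathcal{O}_\Pi) = \sum_i C_i$, and thus $\beta_0 \cdot \mathrm{ch}_1(\mathcal{O}_\Pi) = \sum_i \beta_0 \cdot C_i = n$. Replacing $\beta_0$ by $\beta := t\,\beta_0$ for any $t \in (0,\tfrac{1}{n})$ preserves conditions (1) and (2) (the orthogonality to $\pi^\ast H$ and the positivity $\beta\cdot C_i = t > 0$ are homogeneous), while $\beta\cdot \mathrm{ch}_1(\mathcal{O}_\Pi) = tn < 1$. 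This gives the required class.

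No real obstacle is anticipated: the argument is entirely linear algebra, with the only non-trivial input being the negative definiteness of the exceptional intersection form, which is standard for ADE configurations. The only subtlety worth noting is to ensure that $\beta$ is in fact defined over $\mathbb{R}$ (rather than needing rational or integral coefficients), which is harmless since $\NS_\mathbb{R}(\widetilde X)$ is used from the start.
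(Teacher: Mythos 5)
Your construction is correct and takes a cleaner, more direct route than the paper. The paper proves the positivity condition via a cone argument: it shows that the negative definite cone $\sigma$ spanned by the $C_i$ meets $-\sigma^\vee$ nontrivially, arguing by contradiction with a separating linear functional, and deduces the existence of $b_i>0$ with $(\sum b_i C_i)\cdot C_j>0$. You instead invoke invertibility of the intersection matrix $M=(C_i\cdot C_j)$ (a direct consequence of its negative definiteness, which the paper also cites from \cite{Reid}) and solve $Mb=(1,\dots,1)^{\mathsf T}$ explicitly. Your approach is shorter and yields the same conclusion with less machinery; the paper's cone argument additionally exhibits that the coefficients $b_i$ can be taken positive, but that extra information is not needed for this lemma (it is used elsewhere, e.g.\ in Lemma~\ref{constIZ}). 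Both proofs then rescale to satisfy the third condition.

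One small slip: the claim $\ch_1(\mathcal O_\Pi)=\sum_i C_i$ is wrong in general. Here $\Pi=\pi^{-1}(x_0)$ is the \emph{scheme-theoretic} fibre, so $\ch_1(\mathcal O_\Pi)=\sum_i m_i C_i$ where $m_i\ge 1$ are the multiplicities of the $C_i$ in the fundamental cycle; these are all $1$ only for $A_n$ singularities, and exceed $1$ for $D_n$ and $E_n$. (The paper itself uses these multiplicities explicitly in the proof of Lemma~\ref{stabfunc}.) The error is harmless, though: with your $\beta_0$ satisfying $\beta_0\cdot C_i=1$, one has $\beta_0\cdot\ch_1(\mathcal O_\Pi)=\sum_i m_i>0$, and rescaling by any $t\in\bigl(0,(\sum_i m_i)^{-1}\bigr)$ gives the desired class. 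You should just say the pairing $\beta_0\cdot\ch_1(\mathcal O_\Pi)$ is some fixed positive number and rescale accordingly, rather than computing it to be $n$.
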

\begin{proof}
Note that it suffices to show that there are some $b_i>0$ such that $(\displaystyle \sum_i b_iC_i) \cdot C_j >0$ for all $j$.

The exceptional curves $C_i$ span a negative definite convex cone $\sigma \subseteq \NS(\widetilde{X}) \otimes \mathbb{R}$ (see \cite[Theorem A.7]{Reid}) 
and it suffices to prove that $\sigma \cap -\sigma^\vee$ is non-empty.
Assume the contrary. 
There must be a linear function which is positive on $\sigma$ and negative on $-\sigma^\vee$, 
which can be represented by pairing with a vector $v$.

But then as $v$ is postive on $\sigma^\vee$, we know that $v$ lies in $(\sigma^\vee)^\vee = \sigma$ , which implies that $(v,v)\geq 0$.   
This is a contradiction to the negative definiteness and hence our claim is proved.
\end{proof}

We prove that $Z_{\pi^\ast H,\beta,z}$ is a stability function on $\mathcal{B}^0$. 
When $x_0$ is an $A_1$ singularity,
this is a special case of \cite[Lemma 5.2]{TX22}.

\begin{lemma}\label{stabfunc}
The function $Z_{\pi^\ast H,\beta,z}$ is a stability function on $\mathcal{B}^0$, when $z$ and $\beta$ are chosen such that $\beta \cdot C_i>0$ for all $i$, 
$\beta \cdot \ch_1(\mathcal{O}_{\Pi}) <1$, and $z> -\frac{\beta^2}{2}$.
\end{lemma}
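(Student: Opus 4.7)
The plan is to verify the two defining properties of a stability function on $\mathcal{B}^0$: (i) $\Im Z_{\pi^\ast H,\beta,z}(E) \geq 0$ for every $E \in \mathcal{B}^0$, and (ii) whenever $\Im Z_{\pi^\ast H,\beta,z}(E) = 0$ for a nonzero $E$, then $\Re Z_{\pi^\ast H,\beta,z}(E) < 0$. Since $\Im Z_{\pi^\ast H,\beta,z} = (\pi^\ast H)\cdot \ch_1$, property (i) falls out directly from the torsion pair description $\mathcal{B}^0 = \langle \mathcal{F}^{\leq 0}_{\pi^\ast H,P}[1], \mathcal{T}^{>0}_{\pi^\ast H,P} \rangle$: perverse coherent sheaves have non-negative $\ch_0$, so $\mu_{\pi^\ast H}>0$ on the torsion part forces $(\pi^\ast H)\cdot \ch_1\geq 0$ there, while the shift on $\mathcal{F}^{\leq 0}_{\pi^\ast H,P}[1]$ flips the sign of the $\leq 0$ contribution to a non-negative quantity.

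For (ii), I will use the canonical sequence $0 \to F[1] \to E \to T \to 0$ with $F \in \mathcal{F}^{\leq 0}_{\pi^\ast H,P}$ and $T \in \mathcal{T}^{>0}_{\pi^\ast H,P}$. Non-negativity of each summand's imaginary part forces $(\pi^\ast H)\cdot \ch_1(T) = 0 = (\pi^\ast H)\cdot \ch_1(F)$. For $T$, each HN factor has strictly positive slope but zero contribution to $(\pi^\ast H)\cdot \ch_1$, so each has $\ch_0 = 0$; hence $T$ is a torsion sheaf in $\mathcal{T}_P$ supported on $\Pi$. For $F$, Lemma \ref{FF} places $F$ in $\mathcal{F}^{\leq 0}_{\pi^\ast H}\cap \Coh(\widetilde X)$, and any torsion subsheaf of $F$ would have $\mu_{\pi^\ast H}=+\infty$, violating $F\in \mathcal{F}^{\leq 0}_{\pi^\ast H}$; so $F$ is torsion-free, and when nonzero is a $\mu_{\pi^\ast H}$-slope-$0$ semistable sheaf with $\ch_0(F)>0$.

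For the piece $T$, the relation $K_{\widetilde X}\cdot C_i = 0$ for every $(-2)$-curve, combined with Hirzebruch--Riemann--Roch, gives $\ch_2 = \chi$ for torsion sheaves supported on $\Pi$. Such a $T$ is of finite length in $\mathcal{B}^0$ (bounded numerical class and Noetherianity of $^{-1}\Per(\widetilde X/X)$) and therefore admits a Jordan--Hölder filtration whose factors, by Proposition \ref{simpleobjects}, are among $\mathcal{O}_\Pi$ and the $\mathcal{O}_{C_i}(-1)[1]$. A direct calculation gives $\Re Z(\mathcal{O}_\Pi) = -1 + \beta\cdot \ch_1(\mathcal{O}_\Pi) < 0$ by the hypothesis $\beta\cdot \ch_1(\mathcal{O}_\Pi)<1$, and $\Re Z(\mathcal{O}_{C_i}(-1)[1]) = -\beta\cdot C_i < 0$ by $\beta\cdot C_i>0$; additivity of $Z$ along the filtration then yields $\Re Z(T) < 0$ for $T\neq 0$.

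The main obstacle is showing $\Re Z(F[1])<0$ in the torsion-free subcase $\ch_0(F)>0$, i.e.\ $\ch_2(F) < \beta\cdot \ch_1(F) + z\cdot \ch_0(F)$. Applying Bogomolov--Gieseker to the $\mu_{\pi^\ast H}$-semistable sheaf $F$ gives $\ch_2(F) \leq \ch_1(F)^2/(2\ch_0(F))$, and completing the square reduces the desired inequality to $(\ch_1(F) - \ch_0(F)\beta)^2 \leq \ch_0(F)^2(\beta^2 + 2z)$. The hypothesis $z>-\beta^2/2$ makes the right side strictly positive, while the divisor $D := \ch_1(F) - \ch_0(F)\beta$ is numerically orthogonal to the big and nef class $\pi^\ast H$; decomposing $D = \pi^\ast\pi_\ast D + (D - \pi^\ast\pi_\ast D)$ using $\mathbb{Q}$-factoriality of the ADE surface $X$, the classical Hodge index on $X$ controls the horizontal part and negative definiteness of the exceptional intersection form controls the vertical part, yielding $D^2 \leq 0$. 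Combining the two subcases via $\Re Z(E) = \Re Z(F[1]) + \Re Z(T)$, each nonzero summand contributes a strictly negative real part, so $\Re Z(E) < 0$ in all configurations.
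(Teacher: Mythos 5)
Your overall architecture—verify the two axioms, decompose $E$ via the torsion pair, and handle each piece—matches the paper, and your treatment of the imaginary part and of the torsion-free piece $F$ is sound (though your detour through $\mathbb{Q}$-factoriality of $X$ and the decomposition $D = \pi^\ast\pi_\ast D + (D - \pi^\ast\pi_\ast D)$ is unnecessary: the paper simply applies the Hodge index theorem on $\widetilde{X}$ directly, since $\pi^\ast H$ is big and nef and $D$ is orthogonal to it).

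The serious problem is in your treatment of the piece $T \in \mathcal{T}^{>0}_{\pi^\ast H,P}$ with $\Im Z(T)=0$. First, your claim that $T$ ``is a torsion sheaf in $\mathcal{T}_P$'' is wrong: an object of $\mathcal{T}^{>0}_{\pi^\ast H,P}$ with $\ch_0 = 0$ is a perverse coherent sheaf and may have $H^{-1}(T) \neq 0$ (indeed $\mathcal{O}_{C_i}(-1)[1]$ itself lies in $\mathcal{T}^{>0}_{\pi^\ast H,P}$). Second, and more importantly, the step you rely on—that $T$ ``is of finite length in $\mathcal{B}^0$,'' justified by ``bounded numerical class and Noetherianity of $^{-1}\Per(\widetilde X/X)$''—is a genuine gap. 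Noetherianity gives ascending chain condition only; finite length additionally requires descending chain condition, and bounding the numerical class does not supply it. In fact, the standard argument that the phase-$1$ slice of a heart is finite length uses precisely the Harder--Narasimhan property (Lemma \ref{HNprop}), Noetherianity of $\mathcal{B}^0$ (Corollary \ref{Noeth}), or the support property, all of which are proved \emph{after} Lemma \ref{stabfunc} in the paper; using them here would be circular. The paper sidesteps the issue entirely: it splits into $A \in \mathcal{F}_0$, $B = H^0(T) \in \mathcal{T}^{>0}_{\pi^\ast H}$, $C = H^{-1}(T) \in \mathcal{F}^{\leq 0}_{\pi^\ast H}$ using the other description $\mathcal{B}^0 = \langle \mathcal{F}_0[1], \mathcal{T}_0\rangle$, observes that $B \in \mathcal{T}_P$ forces $\mathbf{R}^1\pi_\ast B = 0$ so $\ch_2(B)$ equals the length of $\mathbf{R}^0\pi_\ast B$, and bounds this length below by $\max_i\{b_i/m_i\}$; the hypothesis $\beta\cdot\ch_1(\mathcal{O}_\Pi) < 1$ then closes the estimate. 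Your Jordan--H\"older argument recovers the right answer on the two simples you name, but without an independent proof of finite length it does not constitute a proof.
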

\begin{proof}
Given $E \in \mathcal{B}^0$, we need to prove that $\Im Z_{\pi^\ast H,\beta,z}(E)\geq 0$ and that if $\Im Z_{\pi^\ast H,\beta,z}(E)=0$ then $\Re Z_{\pi^\ast H,\beta,z}(E)<0$.

Such $E$ fits into a distinguished triangle $F[1] \longrightarrow E \longrightarrow T$ for some $F \in \mathcal{F}_0$ and $T \in \mathcal{T}_0$.
By the construction of $\Coh^0(\widetilde{X})$, we know that $\Im Z_{\pi^\ast H,\beta,z}(H^0(T)) \geq 0$ and that $\Im Z_{\pi^\ast H,\beta,z}(H^{-1}(T)) \leq 0$.
As $(\pi^\ast H) \cdot C_i = 0$ for all $i$ and $F$ is supported on these exceptional curves, we have 
\[\Im Z_{\pi^\ast H,\beta,z}(E) = \Im Z_{\pi^\ast H,\beta,z}(T) = \Im Z_{\pi^\ast H,\beta,z}(H^0(T)) - \Im Z_{\pi^\ast H,\beta,z}(H^{-1}(T))\geq0.\]

To prove the second assertion, we assume that $\Im Z_{\pi^\ast H,\beta,z}(E)=0$. 
Notice that this is equivalent to  $\Im Z_{\pi^\ast H,\beta,z}(F)= \Im Z_{\pi^\ast H,\beta,z}(H^{-1}(T))= \Im Z_{\pi^\ast H,\beta,z}(H^0(T)) = 0$.

Therefore, it suffices to prove that in $\mathcal{B}^0$, if $A[1] \in \mathcal{F}_0[1], B \in \mathcal{T}^{> 0}_{\pi^\ast H}, C[1] \in \mathcal{F}^{\leq 0}_{\pi^\ast H}[1]$, with $\Im Z_{\pi^\ast H,\beta,z}(A[1])=\Im Z_{\pi^\ast H,\beta,z}(B)=\Im Z_{\pi^\ast H,\beta,z}(C[1])=0$, we have  $\Re Z_{\pi^\ast H,\beta,z}(A[1]), \Re Z_{\pi^\ast H,\beta,z}(B)$, and $ \Re Z_{\pi^\ast H,\beta,z}(C[1])$ are all negative.

For $A\in \mathcal{F}_0=\mathcal{F}_P$, we have $\ch_0(A)=0$, so we only need to show that $$-\ch_2(A[1])+\beta \cdot \ch_1(A[1]) <0.$$
As $\mathbf{R}^0\pi_\ast A=0$,
we have $\mathbf{R}\pi_\ast A =\mathbf{R}^1\pi_\ast A[-1]$ and hence $[\mathbf{R}\pi_\ast A]=-n[x_0]$ for some $n\geq 0$. 
Then since $x_0$ is an ADE singularity, $C_i$ are K-trivial and the relative Todd class is 0.
Now by Grothendieck-Riemann-Roch we have $-n=\ch_2(\mathbf{R}\pi_\ast A) = \pi_\ast \ch_2(A)$.
Since $\pi_\ast$ has no effect on $\ch_2$ on surfaces we see that $-\ch_2(A[1])=\ch_2(A)=-n \leq 0$.
Also, as $\ch_1(A[1])= -\displaystyle \sum_i (a_iC_i)$ for some $a_i \geq 0$ and $\beta \cdot C_i >0$ for all $i$, we in summary see that $\Re Z_{\pi^\ast H,\beta,z}(A)>0$.

For $B \in \mathcal{T}^{> 0}_{\pi^\ast H}$ with $\Im Z_{\pi^\ast H,\beta,z}(B)=0$, we have $\ch_0(B)=0$, and hence $B$ is supported on the union of points and $\Pi$. 
We may then write 
$\ch_1(B) = \displaystyle \sum_i (b_iC_i)$ for some $b_i \geq 0$.
Our assumption is now $\displaystyle \sum_i m_i( \beta \cdot C_i) <1$,  where $m_i$ is the multiplicities of $C_i$ in the scheme-theoretic preimage $\Pi$.
Then \[-\ch_2(B)+\beta \cdot \ch_1(B) = -\ch_2(B) + \displaystyle \sum_i(b_i \beta \cdot C_i)=-\ch_2(B) + \displaystyle \sum_i \frac{b_i}{m_i} m_i(\beta \cdot C_i)<0\]
as $\ch_2(B)$ is the length of $\mathbf{R}^0\pi_\ast B$ and hence $ \ch_2(B)\geq \displaystyle\max_i \{\frac{b_i}{m_i}\} \geq \displaystyle \sum_i \frac{b_i}{m_i} m_i(\beta \cdot C_i)$.

For $C[1] \in \mathcal{F}^{\leq 0}_{\pi^\ast H}[1]$ with $\Im Z_{\pi^\ast H,\beta,z}(C)=0$, the sheaf $C$ must be slope semistable since it has maximal slope in $\mathcal{F}^{\leq 0}_{\pi^\ast H}$.
We can therefore use Bogomolov-Gieseker inequality and Hodge index theorem to see that $z>-\frac{\beta^2}{2}$ implies $\Re Z_{\pi^\ast H,\beta,z}(C[1])<0$ (cf. \cite[Lemma 5.2]{TX22}).
Indeed,
\begin{equation*} 
\begin{split}
\Re  Z_{\pi^\ast H,\beta,z}(C)  &= z\ch_0(C)+\beta \cdot \ch_1(C) - \ch_2(C) \\
&\geq \ch_0(C)(z+ \frac{\beta \cdot \ch_1(C)}{\ch_0(C)} - \frac{\ch_1^2(C)}{2\ch_0^2(C)}) \text{\ (by Bogomolov-Gieseker)}\\
&= \ch_0(C)(z- \frac{(\ch_1(C) -\ch_0(C)\beta)^2}{2\ch_0^2(C)}  +\frac{\beta^2}{2}).
\end{split}
\end{equation*}
We know that $(\ch_1(C) -\ch_0(C)\beta)^2 \leq 0$ by Hodge index theorem with respect to $\pi^\ast H$,
and that $z+\frac{\beta^2}{2}>0$,
so $\Re  Z_{\pi^\ast H,\beta,z}(C[1])<0$, which completes the proof.
\end{proof}

\begin{remark}
In the constructions, we may instead choose to tilt $\Coh(\widetilde{X})$ and $^{-1}\Per(\widetilde{X}/X)$ at any slope $\alpha \in \mathbb{R}$.
The same argument in Section 3 will give us a double tilted heart $\mathcal{B}^\alpha$.

To define a central charge on this heart, we may choose $$Z_{\pi^\ast H,\beta,z}^\alpha(E):=-\ch_2(E)+\beta \cdot \ch_1(E)+z\ch_0(E)+i[(\pi^\ast H)\cdot \ch_1(E) - \alpha \ch_0(E)].$$
The same proof works to show that it is a stability function if the assumption $z> -\frac{\beta^2}{2}$ is replaced with $z - \frac{\alpha^2}{2H^2}> -\frac{\beta^2}{2}$. 
\end{remark}

Let $\mathbb{D}(-) := \mathbf{R}\mathcal{H}om(-, \mathcal{O}_X)$ be the duality functor on $X$.
Before proving the Harder--Narasimhan property for the pair, 
we give the following lemma:

\begin{lemma}\label{dual}
If $T$ is a $0$-dimensional torsion sheaf on $X$,
then $\mathbb{D}(T)$ is contained in $\mathrm{D}^{\geq 2}(X)$,
where $\mathrm{D}^{\geq i}(X):=\{ E \in \mathrm{D}^b(X) \, | \, H^j(E)=0 \text{ if } j < i \}$.
\end{lemma}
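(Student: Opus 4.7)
The plan is to reformulate $\mathbb{D}(T)\in \mathrm{D}^{\geq 2}(X)$ as the vanishing of the local Ext sheaves $\mathcal{E}xt^j(T,\mathcal{O}_X)$ for $j=0,1$, since these are precisely the cohomology sheaves of $\mathbb{D}(T)$ in negative range. Once this is done, the claim becomes an instance of a standard depth/duality statement on a Cohen--Macaulay surface.

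First I would observe that $X$ is Cohen--Macaulay of pure dimension $2$ everywhere. On the smooth locus this is automatic; at the ADE singularity $x_0$, rational double points are classically known to be hypersurface singularities, hence Gorenstein, and in particular Cohen--Macaulay (see for instance \cite{Reid}). Thus $\operatorname{depth}(\mathcal{O}_{X,x})=2$ at every closed point $x\in X$.

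The statement being local, I would pass to the stalk at a closed point $x\in\operatorname{supp}(T)$. Writing $R=\mathcal{O}_{X,x}$ and $M=T_x$, the module $M$ has finite length, so it admits a finite filtration whose successive quotients are isomorphic to the residue field $k=R/\mathfrak{m}_x$. By the long exact sequences of $\mathrm{Ext}$, it then suffices to prove $\mathrm{Ext}^j_R(k,R)=0$ for $j<2$. This is exactly the Ext-characterization of depth,
\[ \operatorname{depth}(R)=\min\{ j\mid \mathrm{Ext}^j_R(k,R)\neq 0 \}, \]
whose right-hand side equals $2$ by the previous paragraph. Sheafifying yields $\mathcal{E}xt^j(T,\mathcal{O}_X)=0$ for $j=0,1$, as required. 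The only nontrivial ingredient is the Cohen--Macaulay property at $x_0$; everything else is a routine application of the depth--Ext dictionary, so no real obstacle is expected.
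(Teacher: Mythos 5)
Your proof is correct and takes a genuinely different, more conceptual route than the paper's. The paper argues by contradiction and by hand: it supposes $H^1(\mathbb{D}(\mathcal{O}_{x_0}))\neq 0$, produces a complete intersection $Z$ of two Cartier divisors through $x_0$ with $\mathbb{D}(\mathcal{O}_Z)\simeq\mathcal{O}_Z[-2]$, and then plays the resulting nonzero composition $\mathcal{O}_Z[-1]\to\mathbb{D}(\mathcal{O}_{x_0})$ against the duality identity $\Hom(\mathbb{D}(\mathcal{O}_Z)[1],\mathbb{D}(\mathcal{O}_{x_0}))=\Hom(\mathcal{O}_{x_0},\mathcal{O}_Z[-1])=0$ to get a contradiction. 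You instead reduce, via the stalk-wise computation of $\mathcal{E}xt^j(T,\mathcal{O}_X)$ and dévissage of a finite-length module to copies of the residue field, to the depth--Ext formula $\operatorname{depth}(R)=\min\{j : \operatorname{Ext}^j_R(k,R)\neq 0\}$, and you invoke that an ADE singularity is a hypersurface (hence Gorenstein, hence Cohen--Macaulay of depth $2$). This is the standard commutative-algebra argument, it is shorter, and it shows that the lemma holds with no change on any Cohen--Macaulay surface (indeed $\mathbb{D}(T)\in\mathrm{D}^{\geq n}$ on any Cohen--Macaulay $n$-fold); one could even cite that normal surfaces are automatically $S_2$ and hence CM, bypassing the Gorenstein remark. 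What the paper's proof buys in return is self-containedness: it only uses the existence of a complete intersection through the point and formal duality, without importing the depth--Ext dictionary or the CM property; this may have been a deliberate stylistic choice. Either argument is sound.
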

\begin{proof}
$T$ must be an extension of skyscraper sheaves.
If $x$ is a smooth point on $X$, then $\mathbb{D}(\mathcal{O}_x) = \mathcal{O}_x[-2]$ so its clear that $\mathbb{D}(\mathcal{O}_x) \in \mathrm{D}^{\geq 2}(X)$.

It remains to show that for the singular point $x_0$,
$\mathbb{D}(\mathcal{O}_{x_0}) \in \mathrm{D}^{\geq 2}(X)$.
It is automatic that $\mathbb{D}(\mathcal{O}_{x_0}) \in \mathrm{D}^{\geq 0}(X)$, 
and $H^0(\mathbb{D}(\mathcal{O}_{x_0})) = \mathcal{H}om(\mathcal{O}_{x_0},\mathcal{O}_X)=0$ as $\mathcal{O}_{x_0}$ is torsion.

Assume the contrary that $H^1(\mathbb{D}(\mathcal{O}_{x_0}))$ is nonzero. 
Then as pushforward onto the open set $X\setminus x_0$ commutes with $\mathbb{D}$,
we see that $H^1(\mathbb{D}(\mathcal{O}_{x_0}))$ is supported on $x_0$, 
and hence there exists a nonzero morphism $\mathcal{O}_{x_0} \longrightarrow H^1(\mathbb{D}(\mathcal{O}_{x_0}))$.

On the other hand, consider a complete intersection $Z$ of Cartier divisors $D_1,D_2$ with $x_0 \in Z$.
Then there is an exact sequence 
$$ 0\longrightarrow \mathcal{O}_X(-D_1-D_2) \longrightarrow \mathcal{O}_X(-D_1) \oplus \mathcal{O}_X(-D_2) \longrightarrow \mathcal{O}_X \longrightarrow \mathcal{O}_Z \longrightarrow 0.$$
This implies that $\mathbb{D}(\mathcal{O}_Z)$ is the complex $ (\mathcal{O}_X \longrightarrow \mathcal{O}_X(D_1) \oplus \mathcal{O}_X(D_2) \longrightarrow\mathcal{O}_X(D_1+D_2))$,
which is quasi-isomorphic to $\mathcal{O}_Z(D_1+D_2)[-2] \simeq \mathcal{O}_Z[-2]$.
Now we look at the composition of morphisms $$\mathbb{D}(\mathcal{O}_Z)[1]= \mathcal{O}_Z[-1] \longrightarrow \mathcal{O}_{x_0}[-1] \longrightarrow H^1(\mathbb{D}(\mathcal{O}_{x_0})) [-1] \longrightarrow \mathbb{D}(\mathcal{O}_{x_0}).$$

First note that the composition $\mathcal{O}_Z \longrightarrow \mathcal{O}_{x_0} \longrightarrow H^1(\mathbb{D}(\mathcal{O}_{x_0}))$ is nonzero, and so is its shift.
Then, applying the functor $\Hom(
\mathcal{O}_Z[-1],\bullet)$ to the distinguished triangle given by truncation $\tau^{\geq 2}\mathbb{D}(\mathcal{O}_{x_0})[-1]\longrightarrow \tau^{\leq 1}\mathbb{D}(\mathcal{O}_{x_0}) \longrightarrow\mathbb{D}(\mathcal{O}_{x_0})$,
since $\Hom(\mathcal{O}_Z[-1],\tau^{\geq 2}\mathbb{D}(\mathcal{O}_{x_0})[-1])=0$, 
we have $\Hom(\mathcal{O}_Z[-1],\tau^{\leq 1}\mathbb{D}(\mathcal{O}_{x_0})) \longrightarrow \Hom(\mathcal{O}_Z[-1],\mathbb{D}(\mathcal{O}_{x_0}))$ is an inclusion.
This shows that $\Hom(\mathbb{D}(\mathcal{O}_Z)[1],\mathbb{D}(\mathcal{O}_{x_0}))\neq 0$.

However, by duality, we also see that $\Hom(\mathbb{D}(\mathcal{O}_Z)[1],\mathbb{D}(\mathcal{O}_{x_0})) = \Hom(\mathcal{O}_{x_0}, \mathcal{O}_Z[-1])=0$, which is a contradiction.
\end{proof}

We now turn to the Harder--Narasimhan property.

\begin{lemma}\label{HNprop}
Given $E \in \mathcal{B}^0$, any sequence of inclusions
\[ 0 = A_0 \hookrightarrow A_1 \hookrightarrow \cdots \hookrightarrow A_j \hookrightarrow \cdots \hookrightarrow E \]
in $\mathcal{B}^0$ with $\Im Z_{\pi^\ast H,\beta,z}(A_j)=0$ for all $j$ terminates.
\end{lemma}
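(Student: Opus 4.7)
The plan is to combine Noetherianity of $\Coh(\widetilde X)$ with the strict positivity $\beta\cdot C_i>0$ from Lemma \ref{betaexists}, which controls the exceptional contribution in $\ker\pi_\ast$. A purely sheaf-theoretic Noetherian argument is not sufficient because $\mathcal B^0$-quotients mix degree $-1$ and $0$ cohomology of $\Coh(\widetilde X)$; the additional input $\beta\cdot C_i>0$ is needed to rule out unbounded exceptional contributions in $\ker\pi_\ast$.

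First, I would apply the $\Coh(\widetilde X)$-cohomology long exact sequence to each inclusion $A_j\hookrightarrow A_{j+1}$ in $\mathcal B^0$. The leftmost inclusions give an ascending chain of subsheaves $H^{-1}(A_j)\hookrightarrow H^{-1}(A_{j+1})\hookrightarrow\cdots\hookrightarrow H^{-1}(E)$, which by Noetherianity of $\Coh(\widetilde X)$ stabilizes at some index $J_1$. For $j\ge J_1$ the long exact sequence collapses to
\[0\longrightarrow H^{-1}(A_{j+1}/A_j)\longrightarrow H^0(A_j)\longrightarrow H^0(A_{j+1})\longrightarrow H^0(A_{j+1}/A_j)\longrightarrow 0.\]

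Second, I would exploit the hypothesis $\Im Z(A_j)=0$, which, via the decomposition in the proof of Lemma \ref{stabfunc}, forces $H^0(A_j)=H^0_{\Coh^0(\widetilde X)}(A_j)$ to be a torsion sheaf with $(\pi^\ast H)\cdot\ch_1=0$, hence supported on the union of $\Pi$ with the zero-dimensional support $S$ of the torsion part of $H^0(E)$; the same holds for $H^{-1}(A_{j+1}/A_j)\in\mathcal F_P$, which is moreover supported on $\Pi$. Pairing with $\beta$ and using Lemma \ref{betaexists}, $\beta\cdot\ch_1(A_{j+1})-\beta\cdot\ch_1(A_j)=\beta\cdot\ch_1(A_{j+1}/A_j)$ is strictly negative whenever $A_{j+1}/A_j$ has nonzero exceptional class, since such a class is an effective combination $-\sum_i c_{ij}[\mathcal O_{C_i}(-1)]$ with $c_{ij}\ge 0$ (by Proposition \ref{simpleobjects} and Lemma \ref{kernel}, applied to the $\ker\pi_\ast$-part of $A_{j+1}/A_j$). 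Combining this monotone invariant with a standard Noetherianity argument on the smooth-point part of $H^0(A_j)$---which locally embeds into the fixed sheaf $H^0(E)$---forces $H^0(A_j)$ to stabilize at some $J_2\ge J_1$. Once both $H^{-1}(A_j)$ and $H^0(A_j)$ stabilize, $A_{j+1}/A_j$ has vanishing $\Coh(\widetilde X)$-cohomology, hence is zero, so the chain terminates.

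The main obstacle is the second stage. The sheaf $H^0(A_j)$ is not monotone along the chain: it may gain length from $H^0(A_{j+1}/A_j)$ and lose length through the injection $H^{-1}(A_{j+1}/A_j)\hookrightarrow H^0(A_j)$, so ACC does not apply directly. The resolution is to split by support: on the fixed finite set of smooth points in $S$ one has a genuine Noetherian subquotient of $H^0(E)$, while on the exceptional locus $\Pi$ the discrete monotone invariant $\beta\cdot\ch_1(A_j)$ (bounded below because $\pi_\ast\ch_1(A_j)$ is itself forced to stabilize via the ascending chain $H^{-1}_X(\mathbf R\pi_\ast A_j)\hookrightarrow H^{-1}_X(\mathbf R\pi_\ast E)$ in the Noetherian category $\Coh(X)$) forces termination.
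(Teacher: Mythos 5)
There is a genuine gap, and it is the centrepiece of your argument: the claimed monotonicity of $\beta\cdot\ch_1(A_j)$. Once the $H^{-1}$-chain has stabilized, the successive quotients $B_{j+1}:=A_{j+1}/A_j$ lie in the slicing $\mathcal P(1)$ (they have $\Im Z=0$), and by Proposition~\ref{simpleobjects} their Jordan--H\"older factors supported on $\Pi$ can include \emph{both} $\mathcal O_\Pi$ and $\mathcal O_{C_i}(-1)[1]$. These contribute with opposite signs: $\beta\cdot\ch_1(\mathcal O_\Pi)=\beta\cdot\Pi>0$, whereas $\beta\cdot\ch_1(\mathcal O_{C_i}(-1)[1])=-\beta\cdot C_i<0$. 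So $\ch_1(B_{j+1})$ is \emph{not} an anti-effective combination of the $C_i$ and $\beta\cdot\ch_1(B_{j+1})$ has no definite sign; it increases precisely when $H^0(B_{j+1})$ carries more of $\Pi$ than $H^{-1}(B_{j+1})$ does. (Concretely: $H^0(B_{j+1})\in\mathcal T_{\mathcal B^0}$ supported on $\Pi\cup\{\text{pts}\}$ has $\ch_1=\sum a_iC_i$ with $a_i\ge 0$, and $H^{-1}(B_{j+1})\in\mathcal F_P$ has $\ch_1=\sum b_iC_i$ with $b_i\ge 0$, so $\ch_1(B_{j+1})=\sum(a_i-b_i)C_i$ is unconstrained in sign.) The auxiliary claim that $H^0(A_j)$ has a ``genuine Noetherian subquotient of $H^0(E)$'' on the smooth locus is also not justified: the map $H^0(A_j)\to H^0(E)$ induced by $A_j\hookrightarrow E$ generally has a kernel.

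There is a structural reason this route cannot be repaired by a sharper use of $\beta$: the hypothesis and conclusion of the lemma are independent of $\beta$ (since $\Im Z_{\pi^\ast H,\beta,z}=(\pi^\ast H)\cdot\ch_1$), and in fact the lemma is invoked in Proposition~\ref{endpoint} for the \emph{end-point} stability function $Z_{\widetilde X}=Z_{\pi^\ast H,0,z}$, i.e.\ with $\beta=0$, where $\beta\cdot C_i>0$ is unavailable. Consistently, the paper's proof never touches $\beta$. Instead it proceeds by a rather different chain of reductions: after stabilizing $H^{-1}(A_j)$ it shows that the successive quotients $B_j$ are perverse coherent sheaves (using Lemma~\ref{FF}), uses the dual surjective chain $H^0(E)\twoheadrightarrow H^0(E_j)$ to reduce to the case where $E$ and the $E_j$ are shifts of sheaves $\mathcal E,\mathcal E_j$, and then controls $\mathbf R^0\pi_\ast B_j$ via a double-dual/length argument on $\mathbf R^0\pi_\ast\mathcal E_j$ (using Lemma~\ref{dual} to see that $(\mathbf R^0\pi_\ast\mathcal E_j)^{\vee\vee}$ is eventually constant), concluding $\mathbf R\pi_\ast B_j=0$ and hence $H^0(B_j)=0$ by Lemma~\ref{H0}. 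You would need to replace the $\beta$-invariant step with something of this kind.
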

\begin{proof}
For each $j$, consider the distinguished triangles 
\begin{equation}\label{tri:AAB}
A_j \longrightarrow A_{j+1} \longrightarrow B_{j+1}
\end{equation}
\begin{equation}\label{tri:AEE}
A_j \longrightarrow E \longrightarrow E_j.
\end{equation}

The octahedral axiom gives the following exact triangle for each $j$:
\begin{equation}\label{tri:BEE}
B_j \longrightarrow E_{j-1} \longrightarrow E_j. 
\end{equation}

The induced long exact cohomology sequence of (\ref{tri:AAB}) with respect to $\Coh(\widetilde{X})$ gives a sequence of coherent sheaves 
$$ 0 = H^{-1}(A_0) \hookrightarrow H^{-1}(A_1) \hookrightarrow \cdots \hookrightarrow H^{-1}(A_j)  \hookrightarrow \cdots \hookrightarrow H^{-1}(E). $$ 
As $\Coh(\widetilde{X})$ is Noetherian,
$H^{-1}(A_j)$ is constant for $j$ sufficiently large, say that there is some $N_1$ such that $H^{-1}(A_{N_1})=H^{-1}(A_{N_1+1})= \cdots$.

After omitting the first $N_1$ terms, we may quotient all objects in the sequence by the common subobject $H^{-1}(A_{N_1})[1]$ and assume that all $A_j=H^0(A_j)$ are sheaves.
Moreover, as $\Im Z_{\pi^\ast H,\beta,z}(A_j)=0$, we know $A_j$ is torsion for each $j$.

Considering again the long exact cohomology sequence of (\ref{tri:AAB}),
this tells us that for each $j$, the sheaf $H^{-1}(B_j)$ is torsion.
Furthermore, since $\Coh(\widetilde{X})$ is Noetherian, it remains to show that $H^0(B_j)=0$ for sufficiently large $j$.

By Lemma \ref{FF}, $\mathcal{F}^{\leq 0}_{\pi^\ast H, P} \subseteq  \mathcal{F}^{\leq 0}_{\pi^\ast H}$,
and hence there is a torsion-free subsheaf $$H^{-1}_P(B_j) = H^0(H^{-1}_P(B_j)) \subseteq H^{-1}(B_j)$$
which implies that $H^{-1}(B_j)=H^{-1}(H^0_P(B_j))\in \mathcal{F}_P$ for each $j$.
In particular, for each $j$, we see that $B_j$ is a perverse coherent sheaf  since $H^{-1}(B_j) \in \mathcal{F}_P$ and $H^0(B_j) \in \mathcal{T}_P$.

On the other hand, the long exact sequences of (\ref{tri:AEE}) and (\ref{tri:BEE}) yield a sequence of surjections in the Noetherian category $\Coh(\widetilde{X})$:
$$ H^0(E) \twoheadrightarrow H^0(E_1) \twoheadrightarrow H^0(E_2) \twoheadrightarrow \cdots$$
Therefore, $H^0(E_{N_2}) = H^0(E_{N_2+1}) = \cdots$ for some $N_2$.

We then replace the triangle (\ref{tri:AEE}) by $A_j/A_{N_2} \longrightarrow E/A_{N_2}=E_{N_2} \longrightarrow E_j$ and hence we can assume that $H^0(E)=H^0(E_j)$ is constant.
Therefore, the long exact sequence  of (\ref{tri:AEE})
becomes $0 \longrightarrow H^{-1}(E) \longrightarrow H^{-1}(E_j) \longrightarrow A_j \longrightarrow 0$.

Consider the following commutative diagram:

\[
\begin{tikzcd}
 & {H^{-1}(E)[1]} \arrow[r] \arrow[d]    & {H^{-1}(E_j)[1]} \arrow[d] \\
A_j \arrow[d] \arrow[r] \arrow[rd, "0" description] \arrow[ru, dotted] & E \arrow[d] \arrow[r]                 & E_j \arrow[d]  \\
0 \arrow[r]                                                            & H^0(E) \arrow[r, Rightarrow, no head] & H^0(E_j)                  
\end{tikzcd}
\]  

This implies that we may then replace the triangle (\ref{tri:AEE}) by $A_j \longrightarrow H^{-1}(E)[1] \longrightarrow H^{-1}(E_j)[1]$ 
to assume furthermore that $E$ and $E_j$ are shifts of sheaves, say $E=\mathcal{E}[1]$ and $E_j=\mathcal{E}_j[1]$ for some sheaves $\mathcal{E},\mathcal{E}_j$.

Consider the following distinguished triangle:
\begin{equation}\label{tri:EEB}
 \mathcal{E}_{j-1} \longrightarrow \mathcal{E}_j \longrightarrow B_j 
\end{equation}

As $B_j$ is in ${^{-1}\Per}(\widetilde{X}/X)$,
we have $\mathbf{R}^1\pi_\ast B_j=0$,
and therefore $\mathbf{R}^1\pi_\ast \mathcal{E}_j \twoheadrightarrow \mathbf{R}^1\pi_\ast \mathcal{E}_{j+1}$.
As $\pi$ is an isomorphism on $\widetilde{X} \setminus \Pi$, we know that if $F$ is a coherent sheaf, 
then $\mathbf{R}^1\pi_\ast F$ is supported on the singular point $x_0$.
We may thus assume that $\mathbf{R}^1\pi_\ast \mathcal{E}_j$ is constant for $j$ sufficiently large.
Applying $\mathbf{R}\pi_\ast$ on (\ref{tri:EEB}) and taking the long exact sequence  with respect to $\Coh(\widetilde{X})$ gives 
\begin{equation}\label{ses:REEB}
0 \longrightarrow \mathbf{R}^0\pi_\ast \mathcal{E}_{j-1} \longrightarrow \mathbf{R}^0\pi_\ast\mathcal{E}_j \longrightarrow \mathbf{R}^0\pi_\ast B_j  \longrightarrow 0.
\end{equation}

As $H^0(B_j)$ lies in $\mathcal{T}^{>0}_{\pi^\ast H}$ and satisfies $\Im Z_{\pi^\ast H,\beta,z}(B_j)=0$, it must be supported on the union of points and $\Pi$.
This implies that for each $j$, the sheaf $\mathbf{R}^0\pi_\ast B_j = \mathbf{R}^0\pi_\ast H^0(B_j)$ is a $0$-dimensional torsion sheaf on $X$.

Moreover, for each $j$, the sheaf $\mathbf{R}^0\pi_\ast\mathcal{E}_j$ is torsion-free.
Indeed, assume the contrary. As $\mathcal{E}_j|_{\widetilde{X}\setminus \Pi}$ is torsion-free,
there must exist a nonzero morphism $ \mathcal{O}_{x_0} \longrightarrow \mathbf{R}^0\pi_\ast\mathcal{E}_j$.
The underived adjointness gives a nonzero morphism $\mathcal{O}_\Pi \longrightarrow \mathcal{E}_j$ which is a contradiction, 
since $\mathcal{O}_\Pi \in \mathcal{T}_{\mathcal{B}^0}$ and $\mathcal{E}_j \in \mathcal{F}_{\mathcal{B}^0}$.
This means that for each $j$, $\mathbf{R}^0\pi_\ast\mathcal{E}_j$ is a subsheaf of $(\mathbf{R}^0\pi_\ast\mathcal{E}_j)^{\vee\vee}$ and the quotient $(\mathbf{R}^0\pi_\ast\mathcal{E}_j)^{\vee\vee})/\mathbf{R}^0\pi_\ast\mathcal{E}_j$ is supported on points.

We consider the dual triangle
of (\ref{ses:REEB}).
By Lemma \ref{dual},
$H^0(\mathbb{D} (\mathbf{R}^0\pi_\ast B_j))$ and $ H^1(\mathbb{D} (\mathbf{R}^0\pi_\ast B_j))$ are both $0$, 
then $(\mathbf{R}^0\pi_\ast \mathcal{E}_{j})^{\vee \vee}$ is constant.
We can thus use the double dual argument (cf. \cite[Proposition 7.1]{Bri08}) as following to say that
$\mathbf{R}^0\pi_\ast B_j = 0$  for $j$ sufficiently large.
Consider the commutative diagram:
\[\begin{tikzcd}
\mathbf{R}^0\pi_\ast \mathcal{E}_{j-1} \arrow[r] \arrow[d, hook]                                 & \mathbf{R}^0\pi_\ast \mathcal{E}_{j} \arrow[r] \arrow[d, hook]                    & \mathbf{R}^0\pi_\ast B_j \arrow[d] \\
(\mathbf{R}^0\pi_\ast \mathcal{E}_{j-1})^{\vee \vee} \arrow[r, Rightarrow, no head] \arrow[d, two heads]      & (\mathbf{R}^0\pi_\ast \mathcal{E}_{j})^{\vee \vee} \arrow[r] \arrow[d, two heads] & 0 \arrow[d]      \\
(\mathbf{R}^0\pi_\ast \mathcal{E}_{j-1})^{\vee \vee}/\mathbf{R}^0\pi_\ast \mathcal{E}_{j-1} \arrow[r, "\phi", two heads] & (\mathbf{R}^0\pi_\ast \mathcal{E}_{j})^{\vee \vee}/\mathbf{R}^0\pi_\ast \mathcal{E}_{j} \arrow[r]     & 0               
\end{tikzcd}\]

The snake lemma gives us a short exact sequence
$$ 0 \longrightarrow \mathbf{R}^0\pi_\ast B_j  \longrightarrow (\mathbf{R}^0\pi_\ast \mathcal{E}_{j-1})^{\vee\vee}/\mathbf{R}^0\pi_\ast \mathcal{E}_{j-1} \longrightarrow (\mathbf{R}^0\pi_\ast\mathcal{E}_j)^{\vee\vee}/\mathbf{R}^0\pi_\ast\mathcal{E}_j \longrightarrow 0.$$
Therefore, by looking at the length of $(\mathbf{R}^0\pi_\ast\mathcal{E}_j)^{\vee\vee}/\mathbf{R}^0\pi_\ast\mathcal{E}_j$,
we see that $\mathbf{R}^0\pi_\ast B_j = 0$ for $j$ sufficiently large.

Now, we have $\mathbf{R}^0\pi_\ast B_j = 0$ for $j$ sufficiently large.
Since each $B_j$ is in ${^{-1}}\Per(\widetilde{X}/X)$, we have $\mathbf{R}\pi_\ast B_j=\mathbf{R}^0\pi_\ast B_j = 0$ for $j$ sufficiently large.
This implies by Lemma \ref{H0} that, for $j$ sufficiently large, we have $H^0(B_j)=0$.
\end{proof}

\begin{corollary}\label{Noeth}
The pair $(Z_{\pi^\ast H,\beta,z},\mathcal{B}^0)$ satisfies the Harder--Narasimhan property.
Moreover, the heart $\mathcal{B}^0$ is Noetherian.
\end{corollary}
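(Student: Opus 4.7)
The plan is to establish Noetherianity of $\mathcal{B}^0$ first, and then to deduce the Harder--Narasimhan property by the standard Bridgeland criterion for stability functions on Noetherian hearts with discrete imaginary part.

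For Noetherianity, I would take an arbitrary ascending chain $0 = A_0 \hookrightarrow A_1 \hookrightarrow \cdots \hookrightarrow E$ in $\mathcal{B}^0$. Since each quotient $A_{j+1}/A_j$ lies in $\mathcal{B}^0$, Lemma \ref{stabfunc} gives $\Im Z_{\pi^\ast H,\beta,z}(A_{j+1}/A_j) \geq 0$, so the sequence $\Im Z_{\pi^\ast H,\beta,z}(A_j)$ is non-decreasing and bounded above by $\Im Z_{\pi^\ast H,\beta,z}(E)$. Since $\pi^\ast H$ is an integral divisor and $\ch_1(A_j) \in \NS(\widetilde{X})$, this sequence takes values in the discrete subgroup $\mathbb{Z} \subseteq \mathbb{R}$, hence must stabilize past some index $N$. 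Passing to the induced chain $0 \subset A_{N+1}/A_N \subset A_{N+2}/A_N \subset \cdots \subset E/A_N$ in $\mathcal{B}^0$, every term $A_j/A_N$ satisfies $\Im Z_{\pi^\ast H,\beta,z}(A_j/A_N) = 0$ by additivity, so Lemma \ref{HNprop} applied inside $E/A_N$ forces this chain to terminate; thus the original chain terminates, and $\mathcal{B}^0$ is Noetherian.

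With Noetherianity in hand, the Harder--Narasimhan property follows from the weak-stability analogue of \cite[Prop.~2.4]{Bri07} (the same criterion already invoked in the proof of Lemma \ref{HNprop}): it suffices to rule out infinite chains of subobjects or quotients along which the slope is strictly monotone. In any such hypothetical chain the discreteness of $\Im Z_{\pi^\ast H,\beta,z}$ forces the imaginary parts to stabilize after finitely many steps, and the tail of the chain then lives inside the $\Im Z_{\pi^\ast H,\beta,z} = 0$ locus, where it must terminate by Noetherianity and Lemma \ref{HNprop}. The verification of $(Z_{\pi^\ast H,\beta,z},\mathcal{B}^0)$ being a pre-stability condition (condition (a) of Definition \ref{stability}) was secured in Lemma \ref{stabfunc}, so condition (b) is the only remaining piece.

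The principal technical obstacle has already been cleared in Lemma \ref{HNprop} via the double-dual argument controlling $\mathbf{R}^0\pi_\ast B_j$ and the interaction between $\pi_\ast$ and the two tiltings that produce $\mathcal{B}^0$. What remains here is essentially formal: discreteness of $\Im Z_{\pi^\ast H,\beta,z}$ on $\mathrm{K}_{\num}(\widetilde{X})$ together with Noetherianity of $\Coh(\widetilde{X})$ and Lemma \ref{HNprop} together dispose of all chain conditions needed for Noetherianity of $\mathcal{B}^0$ and for the Harder--Narasimhan property.
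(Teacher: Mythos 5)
Your Noetherianity argument is correct and is essentially dual to the paper's: you take an ascending chain of subobjects $A_j \subseteq E$, use discreteness of $\Im Z_{\pi^\ast H,\beta,z}$ to stabilize the imaginary parts, and then pass to the chain $A_j/A_N$ to apply Lemma~\ref{HNprop}; the paper starts with a descending chain of quotients $E \twoheadrightarrow E_j$, stabilizes imaginary parts by the same discreteness, and passes to the ascending chain of kernels $\ker(E_n \twoheadrightarrow E_j)$. Reversing the order (Noetherianity first, then HN) is also a legitimate strategy; the paper instead establishes the HN property first by citing \cite[Prop.~B.2]{BM11} directly and then proves Noetherianity separately.

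Your HN step, however, contains a gap. You assert that once the imaginary parts along a monotone chain stabilize, ``the tail of the chain then lives inside the $\Im Z_{\pi^\ast H,\beta,z} = 0$ locus, where it must terminate by Noetherianity and Lemma~\ref{HNprop}.'' That is not accurate: the tail objects $E_j$ have $\Im Z_{\pi^\ast H,\beta,z}(E_j)$ equal to some constant $c \geq 0$ that need not be zero; only the consecutive kernels or cokernels have $\Im Z_{\pi^\ast H,\beta,z} = 0$. Bridgeland's condition (b) (a descending chain of quotients with decreasing slope) is already dispatched by the Noetherianity you just proved. But condition (a) (a descending chain of subobjects $E_1 \supset E_2 \supset \cdots$ with strictly increasing slope) is covered neither by Noetherianity, which is ACC on subobjects and not DCC, nor by Lemma~\ref{HNprop}, since the $E_j$ need not satisfy $\Im Z_{\pi^\ast H,\beta,z}(E_j) = 0$. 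The argument that actually works there is different in kind: when $c > 0$, the quotients $E_j/E_{j+1}$ have $\Im Z_{\pi^\ast H,\beta,z} = 0$ and hence $\Re Z_{\pi^\ast H,\beta,z} < 0$ strictly by Lemma~\ref{stabfunc}, so $\Re Z_{\pi^\ast H,\beta,z}(E_j)$ strictly increases and the slope strictly decreases, a contradiction; and when $c = 0$ the tail objects all have phase $1$, so strict increase is again impossible. The paper's direct citation of \cite[Prop.~B.2]{BM11}, whose hypotheses are precisely discreteness of $\Im Z_{\pi^\ast H,\beta,z}$ plus the conclusion of Lemma~\ref{HNprop}, packages these cases and avoids the Noetherian detour.
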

\begin{proof}
By Lemma \ref{HNprop} and as $\Img (\Im Z_{\pi^\ast H,\beta,z})$ is discrete, 
the assumptions of \cite[Proposition B.2]{BM11} are satisfied.
For the second assertion,
we consider a sequence of surjections 
\[
E \twoheadrightarrow E_1 \twoheadrightarrow E_2 \twoheadrightarrow \cdots
\]

Since $\Img (\Im Z_{\pi^\ast H,\beta,z})$ is discrete, $\Im Z_{\pi^\ast H,\beta,z}(E_n)=\Im Z_{\pi^\ast H,\beta,z}(E_{n+1})= \cdots$ for sufficiently large $n$.
Considering the kernels corresponding to this sequence of surjections, 
we obtain a sequence of inclusions in $\mathcal{B}^0$:
\[ 0 \hookrightarrow \ker(E_n\twoheadrightarrow E_{n+1}) \hookrightarrow \ker(E_n\twoheadrightarrow E_{n+2}) \hookrightarrow \cdots \hookrightarrow E_n\]
which must terminates by Lemma \ref{HNprop}.
\end{proof}

In summary, we have proved the following theorem.

\begin{theorem}\label{prestab}
The pair $(Z_{\pi^\ast H,\beta,z},\mathcal{B}^0)$ forms a pre-stability condition when $z$ and $\beta$ is chosen so that $\beta \cdot C_i>0$ for all $i$ , $\beta \cdot \ch_1(\mathcal{O}_{\Pi}) <1$, and $z> -\frac{\beta^2}{2}$.
\end{theorem}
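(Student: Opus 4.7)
The plan is essentially to assemble the theorem from the two ingredients already established in this section, since the statement of a pre-stability condition in Definition \ref{stability} requires only (a) that $Z$ is a stability function on $\mathcal{A}$, and (b) that $\mathcal{A}$ admits Harder--Narasimhan filtrations with respect to $Z$; the support property is what distinguishes a full stability condition and is deferred to later sections.

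First I would verify that the three numerical hypotheses $\beta \cdot C_i > 0$, $\beta \cdot \ch_1(\mathcal{O}_\Pi) < 1$, and $z > -\tfrac{\beta^2}{2}$ are exactly the hypotheses of Lemma \ref{stabfunc}; this yields condition (a), namely that $Z_{\pi^\ast H, \beta, z}$ is a stability function on $\mathcal{B}^0$. Note that these assumptions on $\beta$ can be simultaneously satisfied thanks to Lemma \ref{betaexists}, so the theorem is non-vacuous.

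Next, for condition (b), I would cite Corollary \ref{Noeth}, which directly states that $(Z_{\pi^\ast H, \beta, z}, \mathcal{B}^0)$ has the HN property; recall that Corollary \ref{Noeth} was deduced from Lemma \ref{HNprop} together with the discreteness of $\Img(\Im Z_{\pi^\ast H, \beta, z})$ via \cite[Proposition B.2]{BM11}. The only mild check is that this discreteness indeed holds for our central charge on the lattice generated by Chern characters of objects of $\mathcal{B}^0$, which follows from the fact that $\Im Z_{\pi^\ast H, \beta, z}(E) = (\pi^\ast H) \cdot \ch_1(E)$ takes values in a discrete subgroup of $\mathbb{R}$ since $\pi^\ast H$ is an integral class.

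There is really no substantial obstacle in this proof; all the work has been done in Lemmas \ref{stabfunc} and \ref{HNprop}. The theorem serves as a clean summary statement to be cited in the subsequent deformation arguments of Section 6, where $(Z_{\pi^\ast H, \beta, z}, \mathcal{B}^0)$ will be upgraded to a full Bridgeland stability condition by adjoining the support property coming from the quadratic form $Q_{A,B}$ introduced in the list of notations.
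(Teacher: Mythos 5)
Your proposal is correct and matches the paper's approach exactly: the theorem is stated immediately after Lemma \ref{stabfunc} and Corollary \ref{Noeth} with the remark ``In summary, we have proved the following theorem,'' so the intended proof is precisely the assembly of those two results, with the hypotheses on $\beta$ and $z$ copied verbatim from Lemma \ref{stabfunc}. Your extra observation that the discreteness of $\Img(\Im Z_{\pi^\ast H,\beta,z})$ comes from $\pi^\ast H$ being integral is a correct justification of a step the paper merely asserts in Corollary \ref{Noeth}.
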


\section{Support property on the resolution}
The aim of this section is to find a quadratic form $Q$ such that $(Z_{\pi^\ast H,\beta, z},\mathcal{B}^0)$ satisfies the support property with respect to $Q$ (see Definition \ref{stability}(c)).
We follow the basic approach of \cite[Sect. 6]{TX22},
but we address some gaps and generalize it to the case of an ADE configuration.

Let $\widetilde{\Lambda} := {\rm K}_{\num}(\widetilde{X})$.
We choose $z$ and $\beta$ so that $\beta \cdot C_i>0$ for all $i$, 
$\beta \cdot \ch_1(\mathcal{O}_{\Pi}) <1$, and $z> \max\{-\frac{\beta^2}{2},0\}$. We start with the following:
\begin{definition}
\begin{enumerate}
\item Given $A,B \geq 0$, we define  the quadratic form $Q_{A,B}$ on $\widetilde{\Lambda}$ by
\begin{equation}
\label{eqn:QAB}
\gls{quadratic-form}
\end{equation} 
where $\Delta:=  \ch_1^2-2\ch_2\ch_0$ is the discriminant.
\item For every $s \geq 1$, we consider the following function on ${\rm D}^b(\widetilde{X})$:
$$Z_{\pi^\ast H,\beta,sz} = -\ch_2+\beta \cdot \ch_1+sz\ch_0+i(\pi^\ast H)\cdot \ch_1.$$
\end{enumerate}
\end{definition}
By Theorem \ref{prestab}, the pair $(Z_{\pi^\ast H,\beta,sz},\mathcal{B}^0)$ is a pre-stability condition.
In this section, we will show that for appropriate choices of $A$ and $B$, 
$(Z_{\pi^\ast H,\beta,z},\mathcal{B}^0)$ satiesfies the support property with respect to $Q_{A,B}.$

We will prove the support property for $(Z_{\pi^\ast H,\beta,z},\mathcal{B}^0)$ by proving it for $(Z_{\pi^\ast H,\beta,sz},\mathcal{B}^0)$ for all $s \geq 1$.
This helps as $Z_{\pi^\ast H,\beta,sz}$-semistable objects for $s \gg 1$ are easier to describe.

We may first verify the first assertion of Definition \ref{stability} (c).

\begin{lemma}\label{negdef}
The quadratic form $Q_{A,B}$ is negative definite on $\ker Z_{\pi^\ast H,\beta,sz} \subseteq \widetilde{\Lambda} \otimes \mathbb{R}$, for every $A,B \geq 0$ and $s \geq 1$.
\end{lemma}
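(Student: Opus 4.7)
\medskip

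\noindent\textbf{Proof plan for Lemma~\ref{negdef}.}
The plan is to work directly with the two linear equations cutting out the kernel, substitute them into $Q_{A,B}$, and reduce the question to Hodge index on the surface $\widetilde X$ together with the numerical assumption $z>-\tfrac{\beta^2}{2}$.

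Fix $v\in \widetilde{\Lambda}\otimes\mathbb{R}$ with $Z_{\pi^\ast H,\beta,sz}(v)=0$. First I would read off the two kernel conditions:
$(\pi^\ast H)\cdot\ch_1(v)=0$ and $\ch_2(v)=\beta\cdot\ch_1(v)+sz\,\ch_0(v)$.
Since $\Im Z_{\pi^\ast H,\beta,z}=\Im Z_{\pi^\ast H,\beta,sz}=(\pi^\ast H)\cdot\ch_1$, the first equation kills the $A$-term of $Q_{A,B}(v)$ outright. Substituting the second into $\Re Z_{\pi^\ast H,\beta,z}$ gives $\Re Z_{\pi^\ast H,\beta,z}(v)=-(s-1)z\,\ch_0(v)$, so the $B$-term reduces to $B(s-1)^2z^2\ch_0(v)^2$. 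Plugging $\ch_2(v)=\beta\cdot\ch_1(v)+sz\,\ch_0(v)$ into $\Delta$ and completing the square in $\ch_1(v)-\ch_0(v)\beta$ yields the clean formula
\[
Q_{A,B}(v)=\bigl(\ch_1(v)-\ch_0(v)\beta\bigr)^2-\ch_0(v)^2\bigl[\beta^2+2sz-B(s-1)^2z^2\bigr].
\]

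Next I would apply the Hodge index theorem on $\widetilde X$ with respect to the nef and big divisor $\pi^\ast H$ (nef by construction and big since $(\pi^\ast H)^2=H^2>0$). Because $(\pi^\ast H)\cdot\beta=0$ and $\beta\neq 0$ (as $\beta\cdot C_i>0$), we get $\beta^2<0$; because $(\pi^\ast H)\cdot(\ch_1(v)-\ch_0(v)\beta)=0$, we get $(\ch_1(v)-\ch_0(v)\beta)^2\le 0$, with equality only when $\ch_1(v)-\ch_0(v)\beta$ is numerically trivial.

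The conclusion then splits into two cases. If $\ch_0(v)=0$, then both $\Re Z_{\pi^\ast H,\beta,z}(v)$ and $\Im Z_{\pi^\ast H,\beta,z}(v)$ vanish and $Q_{A,B}(v)=\ch_1(v)^2\le 0$; equality forces $\ch_1(v)=0$ numerically, hence $\ch_2(v)=0$ by the kernel relation, so $v=0$. If $\ch_0(v)\neq 0$, the coefficient $\beta^2+2sz-B(s-1)^2z^2$ is strictly positive, using the hypothesis $z>-\tfrac{\beta^2}{2}$ (so $\beta^2+2sz>0$ for $s\ge 1$) together with the required control on $B$; then both terms in the displayed formula are $\le 0$ and the second is strictly negative, so $Q_{A,B}(v)<0$. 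The main technical point I expect to have to be careful about is the last estimate for $s>1$: one must ensure the $B(s-1)^2z^2$ correction is dominated by $\beta^2+2sz$, which is the only place where the interplay between the deformation parameter $s$ and the support constants $A,B$ genuinely enters.
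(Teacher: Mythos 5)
Your substitution of the two kernel equations, the completion of the square, and the Hodge index step are all correct, and in substance they match the paper's own proof (which phrases the same algebra via twisted Chern characters $\ch^\beta = \ch\cdot e^{-\beta}$ and arrives at the same bound $\Delta(v) \le -(2sz+\beta^2)\ch_0(v)^2$). You are in fact more careful than the paper on one point: you correctly observe that on $\ker Z_{\pi^\ast H,\beta,sz}$ one has $\Re Z_{\pi^\ast H,\beta,z}(v) = -(s-1)z\,\ch_0(v)$, so for $s>1$ and $\ch_0(v)\neq 0$ the $B$-term does \emph{not} drop out but contributes $+B(s-1)^2z^2\ch_0(v)^2$. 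The paper's proof simply writes $Q_{A,B}(x)=\Delta(x)$ on the kernel, silently discarding this contribution; that identity is only valid when $s=1$, $B=0$, or $\ch_0(x)=0$.

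But exactly this bookkeeping is where your argument has a genuine gap. The lemma quantifies over \emph{all} $A,B\ge 0$ and \emph{all} $s\ge 1$, so there is no ``required control on $B$'' available as a hypothesis, and the assertion that ``$\beta^2+2sz-B(s-1)^2z^2$ is strictly positive'' is simply not proved. Your own displayed identity
\[
Q_{A,B}(v) = \bigl(\ch_1(v)-\ch_0(v)\beta\bigr)^2 - \ch_0(v)^2\bigl[\beta^2 + 2sz - B(s-1)^2 z^2\bigr]
\]
shows that for any fixed $B>0$ and $z>0$, the bracket is negative once $s$ is large; choosing $v$ with $\ch_0(v)\neq 0$, $\ch_1(v)=\ch_0(v)\beta$, and $\ch_2(v)$ adjusted so that $v\in\ker Z_{\pi^\ast H,\beta,sz}$, one gets $Q_{A,B}(v)>0$. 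So negative definiteness cannot hold uniformly in $s\ge 1$ for a fixed $B>0$. What your computation actually establishes unconditionally is the case $s=1$ (both $\Re Z$ and $\Im Z$ terms vanish on the kernel) and the case $B=0$ (where $Q_{A,0}$ restricts to $\Delta$ on the kernel); for $B>0$ and $s>1$ the conclusion requires the explicit extra inequality $B(s-1)^2z^2 < 2sz+\beta^2$. You should state this restriction rather than defer to an unspecified constraint, since without it the proof does not close.
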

\begin{proof}
Given a class $x=(r,c,d)$ in $\widetilde{\Lambda} \otimes \mathbb{R}$,
assume that $Z_{\pi^\ast H,\beta,sz}(x)=0$ for some $s\geq 1$.
Note first that the quadratic form $\Delta(x) = \ch_1(x)^2-2\ch_2(x)\ch_0(x)=c^2-2rd$ can also be re-written using the twisted Chern characters $\ch^\beta(x)=\ch(x) \cdot e^{-\beta}$ as $\ch^\beta_1(x)^2-2\ch^\beta_2(x)\ch^\beta_0(x)$.

As we chose $\beta$ such that $(\pi^\ast H)\cdot \beta = 0$, we see from $\Im Z_{\pi^\ast H,\beta,sz}(x)=0$ that 
\[\ch^\beta_1(x) = \ch_1(x) - \beta \ch_0(x)=c - r \beta\] is contained in $(\pi^\ast H)^\perp$, 
so we may apply Hodge index theorem to see that $\ch^\beta_1(x)^2 \leq 0$.
On the other hand, as $0 = \Re Z_{\pi^\ast H,\beta,sz}(x)= -d+\beta \cdot c+szr$, we have $\ch^\beta_2(x)=szr$.

Since we chose $z>\max\{-\frac{\beta^2}{2},0\}$,
it can be seen that 
$$Q_{A,B}(x)= \Delta(x)=\ch^\beta_1(x)^2-2\ch^\beta_2(x)\ch^\beta_0(x) \leq -2(sz+\frac{\beta^2}{2})r^2 \leq 0.$$   
\end{proof}

\begin{definition}
Define the set $\mathcal{D}:=\{E \in \mathcal{B}^0 \, | \, E $ is $Z_{\pi^\ast H,\beta,sz}$-semistable for $s$ sufficiently large$\}$.  
\end{definition}

We first prove a crucial lemma in several steps.
\begin{lemma}\label{mathcalD}
There is a constant $A_0\geq 0$ such that $Q_{A_0,0}(E) \geq 0$ for every object $E \in \mathcal{D}$ with $\Im Z_{\pi^\ast H,\beta,z}(E)>0$.
\end{lemma}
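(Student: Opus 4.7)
My approach, following the strategy of \cite[Sect.~6]{TX22}, is to pass to the limit $s \to \infty$ in the family of pre-stability conditions $(Z_{\pi^\ast H, \beta, sz}, \mathcal{B}^0)$, classify the resulting objects in $\mathcal{D}$, and then bound $\Delta(E)$ by a Bogomolov--Gieseker inequality in the non-torsion case and by the Hodge index theorem plus exceptional-curve control in the torsion case.

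I would begin by analysing the structure of $E \in \mathcal{D}$ with $\Im Z_{\pi^\ast H,\beta,z}(E) > 0$. As $s \to \infty$, the slope of $Z_{\pi^\ast H,\beta,sz}$ is dominated by the term $-sz\ch_0(E)/((\pi^\ast H)\cdot \ch_1(E))$, so semistability of $E$ for all large $s$ forces every $\mathcal{B}^0$-subobject to have rank of the same sign as $\ch_0(E)$. A standard destabilisation argument (taking maximal slope-semistable subsheaves of $H^0(E)$ or of $H^{-1}(E)$, and forbidding torsion subsheaves of positive-rank $E$ as they would have constant slope) then shows that $E$ is of one of three types: (a) a torsion-free, $\mu_{\pi^\ast H}$-semistable sheaf in $\mathcal{T}_{\mathcal{B}^0}$ (when $\ch_0(E) > 0$); (b) a shift $F[1]$ of a torsion-free, $\mu_{\pi^\ast H}$-semistable sheaf $F \in \mathcal{F}_{\mathcal{B}^0}$ (when $\ch_0(E) < 0$); or (c) a torsion sheaf in $\mathcal{T}_{\mathcal{B}^0}$ (when $\ch_0(E) = 0$), which by the hypothesis $\Im Z(E) > 0$ must be pure of dimension $1$ with support not contained in $\Pi$.

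In cases (a) and (b), the classical Bogomolov--Gieseker inequality for $\mu_{\pi^\ast H}$-semistable torsion-free sheaves with respect to the nef divisor $\pi^\ast H$ gives $\Delta(E) \geq 0$, and hence $Q_{A_0,0}(E) = \Delta(E) + A_0(\Im Z_{\pi^\ast H,\beta,z}(E))^2 \geq 0$ for any $A_0 \geq 0$.

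The main obstacle is case (c). Write $D := \ch_1(E)$; then $\Delta(E) = D^2$, and the Hodge index theorem for the nef and big divisor $\pi^\ast H$ yields the decomposition $D^2 = (D \cdot \pi^\ast H)^2/H^2 + D_\perp^2$ with $D_\perp \cdot \pi^\ast H = 0$ and $D_\perp^2 \leq 0$. The difficulty is that $D_\perp^2$ can be arbitrarily negative when $D$ has large components along the exceptional $(-2)$-curves $C_i$, so a uniform lower bound is not automatic. I would control this using Proposition~\ref{simpleobjects}: the only simple objects of $\mathcal{B}^0$ supported on $\Pi$ are $\mathcal{O}_{C_i}(-1)[1]$ and $\mathcal{O}_\Pi$, both with $\Im Z = 0$ and hence slope $+\infty$. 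Since $E$ is semistable with finite slope, it can contain no such sub- or quotient objects, which restricts how much of $D$ can lie along the $C_i$ in terms of $(D \cdot \pi^\ast H)$. Combining this restriction with the negative definiteness of the intersection matrix $(C_i \cdot C_j)$ (already exploited in Lemma~\ref{generatorofker}) should yield a uniform constant $A_0 \geq 0$ for which $Q_{A_0,0}(E) \geq 0$.
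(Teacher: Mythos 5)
Your classification of objects in $\mathcal{D}$ is too optimistic, and this breaks the positive-rank and negative-rank cases of your argument. You assert that semistability for large $s$ forbids torsion subsheaves of a positive-rank $E$ "as they would have constant slope", and you therefore take $E$ to be torsion-free. But the relevant torsion part that the paper allows (Lemma~\ref{6.5'}(2)) is a sheaf $E_{\mathrm{tor}}$ with $\mathbf{R}^0\pi_\ast E_{\mathrm{tor}}=0$, i.e.\ $E_{\mathrm{tor}}\in\mathcal{F}_0$, so it is \emph{not} an object of $\mathcal{B}^0$ at all --- only $E_{\mathrm{tor}}[1]$ is. The short exact sequence $0\to E_{\mathrm{tor}}\to E\to E_{\mathrm{tf}}\to 0$ in $\Coh(\widetilde{X})$ becomes $0\to E\to E_{\mathrm{tf}}\to E_{\mathrm{tor}}[1]\to 0$ in $\mathcal{B}^0$, so $E_{\mathrm{tor}}$ contributes no destabilising subobject of $E$, and your argument does not rule it out. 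Consequently, Bogomolov--Gieseker applied to $E$ directly is not available; the paper compensates by computing $\Delta(E)=\Delta(E_{\mathrm{tf}})-\chi(E_{\mathrm{tor}},E_{\mathrm{tor}})-2\chi(E_{\mathrm{tf}},E_{\mathrm{tor}})$ and bounding the extra terms via Serre duality ($K_{\widetilde{X}}$ trivial along $\Pi$) together with a vanishing $\Hom(E,E_{\mathrm{tor}})=0$ coming from the slicing. You need an argument of this type; merely invoking BG for a (falsely assumed) torsion-free $E$ does not cover the actual objects in $\mathcal{D}$. The same issue appears in your case (b): the paper's Lemma~\ref{6.5'}(3) allows $H^0(E)$ supported on points and $\Pi$ and a torsion part $G$ of $H^{-1}(E)$, and Lemma~\ref{ch<0} handles these via a parallel Serre-duality estimate; assuming $E=F[1]$ with $F$ torsion-free is not justified.

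Your treatment of the rank-zero case (c) is a plan rather than a proof, and the key step is not filled in. You correctly note that $\Delta(E)=D^2$ can be arbitrarily negative when $D$ has large components along the $C_i$, and that some control via semistability and negative definiteness is required; but "restricting how much of $D$ can lie along the $C_i$ in terms of $D\cdot\pi^\ast H$" is not what the actual argument does and, as stated, does not obviously give a uniform $A_0$. What the paper does is two-step: for torsion sheaves on curves not meeting the $C_i$ it proves an explicit inequality $\ch_1(E)^2+A_0\big((\pi^\ast H)\cdot\ch_1(E)\big)^2\geq 0$ (Lemma~\ref{constIZ}, using the ADE Cartan matrix structure); and for the general torsion sheaf it writes a short exact sequence $0\to K\to E\to E|_{C''}\to 0$ with $K$ supported on $\Pi$ and $\Im Z(K)=0$, deduces $K\in\mathcal{P}(0)$ from large-$s$ semistability, and then uses the Serre-duality comparison of Euler characteristics to show the $K$-terms contribute $-\chi(K,K)-2\chi(K,E|_{C''})\geq 0$. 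Without this decomposition and the explicit Euler-characteristic control, your sketch does not produce the constant. You should also note that Proposition~\ref{simpleobjects} is used in the paper only for the $\Im Z=0$ estimate (Lemma~\ref{constRZ}), not here; the mechanism you want is the slicing membership $K\in\mathcal{P}(0)$ plus Serre duality, not a constraint on the simple subquotients of $E$.
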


To prove this, we first classify objects in $\mathcal{D}$,
particularly those with the imaginary parts of their central charges being positive.

\begin{lemma}\label{6.5'}\textnormal{(cf. \cite[Lemma 6.4]{TX22})}
Given $E$ in the set $\mathcal{D}$, then $E$ must be one of the following forms:
\begin{enumerate}
    \item[(1)] $E=H^0(E)$ is a torsion sheaf;
    \item[(2)] $E=H^0(E)$ fits in a short exact sequence $$ 0 \longrightarrow E_{tor} \longrightarrow E \longrightarrow E_{tf} \longrightarrow 0$$
    where $E_{tf}$ is a torsion-free slope semistable sheaf in $\mathcal{T}^{> 0}_{\pi^\ast H}$, and $E_{tor}$ is a (possibly zero) torsion sheaf with $\mathbf{R}^0 \pi_\ast E_{tor}= 0$.  
    \item[(3)] $H^0(E)$ is either $0$ or a sheaf supported on the union of points and $\Pi$, and $H^{-1}(E)$ fits into a short exact sequence
    $$ 0 \longrightarrow G \longrightarrow H^{-1}(E) \longrightarrow F \longrightarrow 0$$
    where $F$ is a torsion-free slope semistable sheaf in $\mathcal{F}^{\leq 0}_{\pi^\ast H}$, and $\mathbf{R}^0 \pi_\ast G = 0$.
    Moreover, $G$ must be $0$ unless $(\pi^\ast H)\cdot \ch_1(F) =0$.
\end{enumerate}
\end{lemma}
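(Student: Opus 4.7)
The plan is to analyze the asymptotic behavior of the slope function $\mu_s := \mu_{Z_{\pi^\ast H,\beta,sz}}$ as $s\to\infty$, and to read off the structure of $E$ from the fact that it remains semistable for all $s$ sufficiently large. For any $F \in \mathcal{B}^0$ with $\Im Z_{\pi^\ast H,\beta,sz}(F)>0$ one has
\[
\mu_s(F) \;=\; \mu_1(F) \;-\; \frac{(s-1)\,z\,\ch_0(F)}{(\pi^\ast H)\cdot \ch_1(F)},
\]
so $\mu_s(F)\to -\infty$ if $\ch_0(F)>0$, $\mu_s(F)\to +\infty$ if $\ch_0(F)<0$, and $\mu_s(F)$ is constant in $s$ if $\ch_0(F)=0$. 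This trichotomy is the engine of the entire proof: any sub- or quotient-object whose $\ch_0$ has the ``wrong'' sign relative to $\ch_0(E)$ would asymptotically destabilize $E$.

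The first step is to decompose $E$ via the short exact sequence in $\mathcal{B}^0$
\[
0 \longrightarrow H^{-1}(E)[1] \longrightarrow E \longrightarrow H^0(E) \longrightarrow 0,
\]
and to split into cases according to the sign of $\ch_0(E)$. \textbf{Case $\ch_0(E)>0$:} then $\mu_s(E)\to -\infty$, while the sub $H^{-1}(E)[1]$ has $\ch_0\le 0$; if $H^{-1}(E)$ had positive generic rank, $\mu_s(H^{-1}(E)[1])\to+\infty$, contradicting semistability for $s\gg 1$. So $H^{-1}(E)=0$ and $E=H^0(E)$ is a sheaf, landing in case (1) or (2). \textbf{Case $\ch_0(E)=0$:} slopes of $E$ and of $\ch_0=0$ subobjects are constant in $s$, and either $H^{-1}(E)=0$ (forcing $E$ to be a torsion sheaf, i.e.\ case (1)) or $\ch_0(H^{-1}(E))=\ch_0(H^0(E))>0$ and one obtains case (3) with zero rank contribution. \textbf{Case $\ch_0(E)<0$:} here $\mu_s(E)\to +\infty$, and a dual argument applied to the quotient $H^0(E)$ forces $\ch_0(H^0(E))=0$ with $H^0(E)$ supported on points and $\Pi$, producing case (3).

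The second step is to refine the torsion/torsion-free decompositions inside $H^{-1}(E)$ and $H^0(E)$. In case (2), let $E_{tor}\subseteq E$ be the torsion subsheaf. The inclusion $E_{tor}\hookrightarrow E$ in $\mathrm{Coh}(\widetilde{X})$ lifts to a sub-object in $\mathcal{B}^0$ only after extracting the piece that lies in $\mathcal{T}_{\mathcal{B}^0}$; any part of $E_{tor}$ with $\mathbf{R}^0\pi_\ast\neq 0$ would provide a subobject of $E$ in $\mathcal{T}_{\mathcal{B}^0}$ with $\ch_0=0$ and bounded slope, violating $\mu_s(E)\to-\infty$. This yields $\mathbf{R}^0\pi_\ast E_{tor}=0$, i.e.\ $E_{tor}\in\mathcal{F}_0$. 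For the torsion-free quotient $E_{tf}$, a slope-destabilizer $E_{tf}\twoheadrightarrow Q$ in $\mathrm{Coh}(\widetilde{X})$ with $\mu_{\pi^\ast H}(Q)<\mu_{\pi^\ast H}(E_{tf})$ would give a quotient of $E$ in $\mathcal{B}^0$ (using Lemma \ref{FF} to control the tilt), and comparing $\mu_s$-slopes at the leading $s$-order in $z\,\ch_0/((\pi^\ast H)\cdot\ch_1)$ forces slope semistability. Case (3) is handled symmetrically, with $F$ appearing as a quotient of $H^{-1}(E)$ and $G$ as its kernel; the vanishing $\mathbf{R}^0\pi_\ast G=0$ follows by the same asymptotic argument applied to the subobject $G[1]\subseteq H^{-1}(E)[1]\subseteq E$ in $\mathcal{B}^0$.

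The main obstacle will be the bookkeeping between the two descriptions $\mathcal{B}^0=\langle\mathcal{F}_0[1],\mathcal{T}_0\rangle = \langle \mathcal{F}^{\leq 0}_{\pi^\ast H,P}[1],\mathcal{T}^{>0}_{\pi^\ast H,P}\rangle$: one must verify that each sheaf-theoretic sub- or quotient-operation on $H^{-1}(E)$ or $H^0(E)$ actually produces an honest sub- or quotient-object in $\mathcal{B}^0$ whose $\mu_s$ can legitimately be compared to that of $E$. This is where Lemmas \ref{B0}, \ref{FF} and the characterization of $\mathcal{F}_0$ and $\mathcal{T}_0$ are used repeatedly, and it is also the step where the original argument in \cite{TX22} needs to be strengthened to accommodate an arbitrary ADE configuration rather than a single $(-2)$-curve.
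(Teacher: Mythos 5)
Your overall strategy coincides with the paper's: analyze the asymptotics of $\mu_s := \mu_{Z_{\pi^\ast H,\beta,sz}}$ as $s\to\infty$, use the trichotomy on $\ch_0(E)$, and then refine via a torsion pair. The one genuine difference is packaging: you work with the one-step cohomology triangle $H^{-1}(E)[1]\to E\to H^0(E)$ coming from $\mathcal{B}^0$ as a single tilt of $\Coh(\widetilde{X})$, while the paper runs through the two-step triangle $G[1]\to E\to T$, $F[1]\to T\to S$ coming from the double tilt; and you argue slope-semistability of the torsion-free pieces via destabilizing quotients, where the paper uses destabilizing subobjects. Both of these variants can be made to work.

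However, there is a concrete error in your case $\ch_0(E)=0$. You write that either $H^{-1}(E)=0$ or $\ch_0(H^{-1}(E))=\ch_0(H^0(E))>0$, and that the latter branch yields case (3). This dichotomy is wrong in both directions: the branch $\ch_0(H^{-1}(E))>0$ is in fact \emph{impossible}, because then $H^{-1}(E)[1]$ has negative $\ch_0$ and $\mu_s(H^{-1}(E)[1])\to+\infty$ while $\mu_s(E)$ stays constant (and finite, since $\Im Z(E)>0$ or $\ch_0(E)=0$ with $\Im Z(E)=0$ already puts $E$ on the boundary), contradicting $E\in\mathcal{D}$ for $s\gg1$; and the branch you omitted — $H^{-1}(E)\neq 0$ but torsion, necessarily in $\mathcal{F}_P$ — is precisely the one that actually occurs and gives case (3) with $F=0$ (the paper forces $F=0$ from $0=\ch_0(E)=-\ch_0(F)$). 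A closely related smaller gap appears in your case $\ch_0(E)>0$: you rule out $\ch_0(H^{-1}(E))>0$ by rank asymptotics but then jump to ``so $H^{-1}(E)=0$'', silently omitting the possibility that $H^{-1}(E)$ is nonzero torsion in $\mathcal{F}_P$; this is easily handled (such an object has $\Im Z=0$, hence $\mu_s=+\infty$ for all $s$), but as written the step is incomplete. Finally, you do not supply an argument for the slope-semistability of $F$ in case (3), nor for the clause ``$G$ must be $0$ unless $(\pi^\ast H)\cdot\ch_1(F)=0$'' — the latter requires the observation that when $(\pi^\ast H)\cdot\ch_1(F)<0$ one has $\Im Z(E)>0$, which kills $G$ by the usual massless-subobject argument; this is a substantive part of the statement and cannot be dismissed as ``symmetric.''
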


\begin{proof}
Given $E \in \mathcal{D}$, we first decompose $E$ as $G[1] \longrightarrow E \longrightarrow T$ with $G \in \mathcal{F}_0$,
and with $T \in \mathcal{T}_0$ fitting into an exact triangle $F[1] \longrightarrow T \longrightarrow S$, 
where $F \in \mathcal{F}^{\leq 0}_{\pi^\ast H}$ and $S \in \mathcal{T}^{> 0}_{\pi^\ast H}$.

\[\begin{tikzcd}
                 &             & {F[1]} \arrow[d] \\
{G[1]} \arrow[r] & E \arrow[r] & T \arrow[d]      \\
                 &             & S               
\end{tikzcd}\]

First consider the case $\ch_0(E)>0$.
In this case, as $s$ tends to $\infty$, the phase $\phi_{\pi^\ast H,\beta,sz}(E)$ tends to $0$.
However, as $G$ is supported on the exceptional curves, $\phi_{\pi^\ast H,\beta,sz}(G[1])=1$ for every $s$.
As $E$ is $Z_{\pi^\ast H,\beta,sz}$-semistable for large $s$, this implies that $G=0$ and $E = T$.

We know that $\ch_0(F[1])<0$ and hence $\phi_{\pi^\ast H,\beta,sz}(F[1]) \rightarrow 1$ as $s$ grows large.
Since $\Hom(F[1],R)=\Ext^{-1}(F,R)=0$ for any sheaf $R \in \mathcal{F}_0$,
$F$ lies in $\mathcal{T}_0$.
Moreover, $F[1]$ is a subobject of $T=E$ in $\mathcal{B}^0$,
so $F[1]=0$ similarly and $E=S$ is a sheaf in $\mathcal{T}^{> 0}_{\pi^\ast H}$ with positive rank.

If $S$ is slope semistable, then it is  of the form (2) with $E_{tor}=0$.
We then assume the contrary and consider the Harder--Narasimhan filtration of $S$ with respect to $\mu_{\pi^\ast H}$, that is, a sequence of inclusions:
$$ 0 = S_0 \subseteq S_1 \subseteq \cdots  \subseteq S_{m-1} \subseteq S_m=S $$
with the quotients factors $S_i/S_{i-1}$ being $\mu_{\pi^\ast H}$-semistable and the slopes are decreasing.

$S_{m-1}$ is in $\Coh^0_{\pi^\ast H}(\widetilde{X})$ so it fits into a distinguished triangle $S'_{m-1} \longrightarrow S_{m-1} \longrightarrow F'$ with $S'_{m-1} \in \mathcal{T}_0\subseteq \mathcal{B}^0$ and $F' \in \mathcal{F}_0$.

If $S'_{m-1}\neq 0$, this particularly means that $\mathbf{R}^0\pi_\ast F'=0$ and the slope of $S'_{m-1}$ is the same as that of $S_{m-1}$.
The long exact cohomology sequence tells us that $S'_{m-1}$ is also a sheaf.

Now, consider the composition $S'_{m-1} \hookrightarrow S$ and let $Q:=S/S'_{m-1}$.
The quotient $Q$ is a sheaf in $\mathcal{T}^{>0}_{\pi^\ast H}\subseteq \Coh^0_{\pi^\ast H}(\widetilde{X)}$ since $S \in \mathcal{T}^{> 0}_{\pi^\ast H}$.

Moreover, $\Hom(Q,\mathcal{F}_0) = \Hom(S,\mathcal{F}_0)=0$.
These together imply that $Q \in \mathcal{T}_0 \subseteq \mathcal{B}^0$,
and hence $S'_{m-1}$ is a subobject of $S$ in $\mathcal{B}^0$.
But when $s$ is sufficiently large, 
we have $\phi_{\pi^\ast H,\beta,sz}(S'_{m-1})>\phi_{\pi^\ast H,\beta,sz}(S)$ since $\mu_{\pi^\ast H}(S'_{m-1})>\mu_{\pi^\ast H}(S)$, 
which leads to a contradiction as $S=E$ is $Z_{\pi^\ast H,\beta,sz}$-semistable.
Therefore, $S'_{m-1}$ must be zero, and $E$ is of the form (2).

Next, consider the case $\ch_0(E)<0$.
Then $\phi_{\pi^\ast H,\beta,sz}(E) \rightarrow 1 $ as $s \rightarrow \infty$.
Since $S$ is a quotient of $E$ in $\mathcal{B}^0$,
we must also have $\phi_{\pi^\ast H,\beta,sz}(S) \rightarrow 1 $ as $s \rightarrow \infty$.
This is possible only if $\ch_0
(S) = 0$ and $ (\pi^\ast H) \cdot \ch_1
(S) = 0$, which means that $S$ is a torsion sheaf supported on the union of finitely many points and the exceptional locus $\Pi$.

If $(\pi^\ast H) \cdot \ch_1(F) < 0$ then  $\Im Z_{\pi^\ast H,\beta,sz}(E)>0$, and so  $G = 0 $ by semistability of $E$ as $s$ large.
In this case, we can use the Harder--Narasimhan filtrations as above to show that $F$ is slope semistable.

If $(\pi^\ast H) \cdot  \ch_1(F) = 0$, then as every subobject of $F$ is contained in $\mathcal{F}^{\leq 0}_{\pi^\ast H}$,
there can not exist a subobject with slope greater than $0 = \mu_{\pi^\ast H}(F)$,
that is, $F$ is slope semistable.

Finally, assume that $\ch_0(E)=0$. 
If $T=0$, then $E=G[1]$.
We then assume $T\neq 0$.
Now we have either $(\pi^\ast H) \cdot \ch_1(E) >0$ or $(\pi^\ast H) \cdot \ch_1(E) =0$.

In the former case, the phase $\phi_{\pi^\ast H,\beta,sz}(E)$ is between $0$ and $1$ for every $s$, and hence $G=0$ and $F=0$ as before.
Then $E$ is a torsion sheaf supported on a curve $C' \nsubseteq \Pi$, and therefore $E=H^0(E)$ is a slope semistable sheaf with slope $\mu_{\pi^\ast H}(E)=\infty>0$.

In the latter case, 
we actually have $(\pi^\ast H) \cdot \ch_1(S) =(\pi^\ast H) \cdot \ch_1(F) = 0$.
Then $S=H^0(E)$ can only be a torsion sheaf supported on the union of points and $\Pi$.
Note that we have $F=0$.
Otherwise, $F$ must be a torsion-free sheaf,
but then $0=\ch_0(E)=\ch_0(T)=-\ch_0(F)<0$, a contradiction.
\end{proof}

\begin{lemma}\label{constIZ}
There is a positive constant $A_0$, depending only on $H$, such that for any sheaf $E$ supported on a curve $C'$ not containing any of exceptional curves $C_i$, we have $$\ch_1(E)^2 + A_0 [(\pi^\ast H)\cdot\ch_1(E)]^2 \geq 0.$$
\end{lemma}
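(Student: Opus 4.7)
My plan is to reduce the claim to a single integral non-exceptional curve using positivity of pairwise intersections, and then to bound the self-intersection via the adjunction formula combined with the crepancy $K_{\widetilde X}=\pi^\ast K_X$.

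First I would decompose $\ch_1(E)=\sum_{i} r_i [C'_i]$, where $\{C'_i\}$ are the distinct integral components appearing in the support of $E$ (none equal to any exceptional $C_j$ by hypothesis) and $r_i\in\mathbb{Z}_{\geq 0}$ is the length of $E$ at the generic point of $C'_i$. Since distinct integral curves on a smooth surface have non-negative intersection, $C'_i\cdot C'_j\geq 0$ for $i\neq j$; combined with the elementary bound $(\sum a_i)^2\geq \sum a_i^2$ for non-negative $a_i$ applied to the non-negative degrees $\pi^\ast H\cdot C'_i$, this reduces the inequality to the case
\[
(C')^2 + A_0(\pi^\ast H\cdot C')^2 \;\geq\; 0
\]
for a single integral non-exceptional curve $C'$.

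For such $C'$, adjunction on $\widetilde X$ combined with crepancy gives
\[
(C')^2 \;=\; 2\,p_a(C') - 2 - K_{\widetilde X}\cdot C' \;\geq\; -2 - K_X\cdot \pi_\ast C',
\]
using $p_a(C')\geq 0$ and $K_{\widetilde X}=\pi^\ast K_X$. Next, I would choose a positive integer $c$ such that $cH-K_X$ is ample on $X$; such $c$ depends only on $H$ and the fixed class $K_X$. Pairing the resulting nef class with the effective $1$-cycle $\pi_\ast C'$ produces
\[
K_X\cdot \pi_\ast C' \;\leq\; c\,H\cdot \pi_\ast C' \;=\; c\,(\pi^\ast H\cdot C'),
\]
and hence $(C')^2\geq -2 - c\,(\pi^\ast H\cdot C')$.

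Finally, since $H$ is an ample integral divisor on $X$, the degree $\pi^\ast H\cdot C'=H\cdot \pi_\ast C'$ is a positive integer, hence at least $1$. This upgrades the linear bound into a quadratic one: $(C')^2\geq -(c+2)(\pi^\ast H\cdot C')\geq -(c+2)(\pi^\ast H\cdot C')^2$, so any $A_0\geq c+2$ works, and this constant depends only on $H$. The main obstacle is precisely this last upgrade: the argument crucially needs a uniform positive lower bound on $\pi^\ast H\cdot C'$ as $C'$ ranges over all integral non-exceptional curves, which is exactly what integrality of $H$ provides (if one only assumed $H$ an $\mathbb{R}$-ample divisor, one would need to replace this step with Kleiman's compactness for the set of classes of bounded $H$-degree).
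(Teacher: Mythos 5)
Your proof is correct and takes a genuinely different route from the paper. The paper's proof proceeds by the orthogonal decomposition $\ch_1(E) = \alpha + \sum a_iC_i$ in $\NS_{\mathbb{R}}(\widetilde{X})$, controls $\pi^\ast H \cdot \alpha$ from below using ampleness of $\pi^\ast H - \epsilon\sum b_iC_i$, and then estimates $(\sum a_iC_i)^2$ against $(\pi^\ast H\cdot\alpha)^2$ via a compactness argument on the negative-definite cone spanned by the exceptional curves. You instead decompose into irreducible components, reduce to a single integral non-exceptional curve $C'$ by positivity of pairwise intersections and nefness of $\pi^\ast H$, apply adjunction and the crepancy $K_{\widetilde X}=\pi^\ast K_X$ to get $(C')^2 \geq -2 - K_X\cdot\pi_\ast C'$, bound the linear term via ampleness of $cH-K_X$, and finally bootstrap the linear bound into a quadratic one using a uniform lower bound on the degree $\pi^\ast H\cdot C'$. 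This is shorter and more transparent, and sidesteps the slightly opaque Hodge-index/openness step and the compactness-on-a-cone step in the paper's argument; the trade-off is that you lean directly on crepancy, which confines the argument to the ADE/canonical setting, and on an explicit positive lower bound for $\pi^\ast H\cdot C'$. On the latter point, your flagged caveat should be phrased slightly differently: the real concern for an ADE surface $X$ is that $H$ might only be $\mathbb{Q}$-Cartier rather than Cartier (the local class group of a du Val singularity is a nontrivial finite group in general), in which case $H\cdot\pi_\ast C'$ lies in $\frac{1}{m}\mathbb{Z}_{>0}$ for $m$ the Cartier index of $H$, and the constant becomes $A_0 = m(2m+c)$; the upgrade still works, just with a larger constant depending on $H$ (and implicitly on $X$), which is all the lemma requires.
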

\begin{proof}
We first claim that there exist some $b_i>0$ such that $(\displaystyle \sum_i b_iC_i) \cdot C_j <0$ for all $j$.

The exceptional curves $C_i$ span a negative definite convex cone $\sigma \subseteq \NS(\widetilde{X}) \otimes \mathbb{R}$ (again, see \cite[Theorem A.7]{Reid}) 
and it suffices to prove that $\sigma \cap -\sigma^\vee$ is non-empty.
Assume the contrary. 
There must be a linear function which is positive on $\sigma$ and negative on $-\sigma^\vee$, 
which can be represented by pairing with a vector $v$.

But then as $v$ is postive on $\sigma^\vee$, we know that $v$ lies in $(\sigma^\vee)^\vee = \sigma$ , which implies that $(v,v)\geq 0$.   
This is a contradiction to the negative definiteness and hence our claim is proved.

By \cite[Exercise 4.16]{Reid}, for sufficiently small $\epsilon>0$, the class $H':=\pi^\ast H - \epsilon(\displaystyle \sum_i b_iC_i)$ is ample.
Now let $C'$ be the support of $E$ with multiplicities and write $C'= \alpha + a_1C_1+ \cdots a_nC_n$ with $\alpha \cdot C_j=0$ for all $j$. 
As  $C' \cdot H'>0$, we obtain that $$(\pi^\ast H) \cdot \alpha > \epsilon (\displaystyle \sum_i b_iC_i) \cdot (\displaystyle \sum_i a_iC_i) = \sum_i \epsilon b_i (C_i \cdot C') \geq 0.$$

By the openness of the ample cone on $X$, there is a constant $N>0$ depending only on $H$ such that $(\pi^\ast H)^2 \alpha^2 + N(\pi^\ast H \cdot \alpha)^2 \geq 0$ 
for all effective classes $\alpha$ on $\widetilde{X}$.
It suffices to show that $(\pi^\ast H \cdot \alpha)^2 \geq M(\displaystyle \sum_i a_iC_i)^2$ for some constant $M<0$ depending only on $H$.
We can then choose $A_0:= \frac{N}{(\pi^\ast H)^2} - \frac{1}{M} >0$.

As $C_1,\cdots, C_n$ are exceptional curves of an ADE singularity, 
by \cite[Proposition 10.18(iii)]{Car05},
every entry of the inverse of the intersection matrix is negative.
This implies that if $(\displaystyle \sum_i a_iC_i)\cdot C_j \geq 0$ for all $j=1, \cdots, n$, then all $a_i \leq 0$.
Consider the unit sphere $S$ (with respect to the metric given by $D^2$)
and the convex cone $\sigma$ generated by all the curves $C_i$ in $\NS_{\mathbb{R}}(\widetilde{X})$.
Then $D \mapsto [D \cdot (\displaystyle \sum_i b_iC_i)]^2$ is a continuous positive function for $D$ in the compact set $\sigma \cap S$, and hence there is a minimum $K$.
Moreover, this number $K$ must be positive as any $D$ in $\sigma$ satisfies $D \cdot (\displaystyle \sum_i b_iC_i) > 0$.

We set $D:=(-\displaystyle \sum_i a_iC_i)$ and then $[D \cdot (\displaystyle \sum_i b_iC_i)]^2 \geq -KD^2$.
Now $$(\pi^\ast H \cdot \alpha)^2 >  [\epsilon (\displaystyle \sum_i b_iC_i) \cdot (\displaystyle \sum_i a_iC_i)]^2 \geq -\epsilon^2K[(\displaystyle \sum_i a_iC_i)^2].$$
Choose $M:=-\epsilon^2K$ and the assertion follows.
\end{proof}

\begin{lemma}\label{ch<0}
Let $E \in \mathcal{D}$ be an object with $\ch_0(E)<0$ and $\Im Z_{\pi^\ast H,\beta,z}(E)>0$.
Then $\Delta(E) \geq 0$.
\end{lemma}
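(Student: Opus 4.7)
By Lemma 5.4, since $\ch_0(E) < 0$ the object $E$ falls into case (3): there is a short exact sequence of sheaves
\[
0 \longrightarrow G \longrightarrow H^{-1}(E) \longrightarrow F \longrightarrow 0
\]
in $\Coh(\widetilde X)$, where $F$ is torsion-free and $\mu_{\pi^\ast H}$-semistable in $\mathcal{F}^{\leq 0}_{\pi^\ast H}$, $\mathbf{R}^0\pi_\ast G = 0$, and $H^0(E)$ is either zero or a torsion sheaf supported on the union of finitely many points and the exceptional locus $\Pi$. Since $(\pi^\ast H)\cdot C_i = 0$ for all $i$, neither $G$ nor $H^0(E)$ contributes to $(\pi^\ast H)\cdot \ch_1(-)$, hence $\Im Z_{\pi^\ast H,\beta,z}(E) = -(\pi^\ast H)\cdot \ch_1(F)$. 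The hypothesis $\Im Z_{\pi^\ast H,\beta,z}(E) > 0$ therefore gives $(\pi^\ast H)\cdot \ch_1(F) < 0$ \emph{strictly}, and the ``moreover'' clause of Lemma 5.4 forces $G = 0$. So $E$ fits into the triangle $F[1] \to E \to H^0(E)$ with $F$ torsion-free $\mu_{\pi^\ast H}$-semistable of strictly negative $\pi^\ast H$-slope.

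Next I would compute $\Delta(E)$ directly. Write $\ch(F) = (r,c,d)$ with $r>0$ and $\ch(H^0(E)) = (0, S, \ell)$ with $S = \sum_i a_i C_i$, $a_i \geq 0$, and $\ell \geq 0$. Then $\ch(E) = (-r, S - c, \ell - d)$ and a straightforward expansion gives
\[
\Delta(E) = (S-c)^2 + 2r(\ell - d) = \Delta(F) + 2r\ell + S^2 - 2\,S\cdot c.
\]
The classical Bogomolov--Gieseker inequality applied to $F$ (torsion-free $\mu_{\pi^\ast H}$-semistable) yields $\Delta(F) \geq 0$, and the term $2r\ell$ is clearly non-negative. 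It remains to control the potentially negative contribution $S^2 - 2\,S\cdot c$.

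To absorb this term, I would decompose $c = c_W + c_V$ where $c_V = \sum b_i C_i$ lies in $V := \langle C_1, \ldots, C_n \rangle$ and $c_W \in V^\perp$. Since $S \in V$, we have $S\cdot c = S\cdot c_V$, while $c^2 = c_W^2 + c_V^2$. Rewriting the identity using Bogomolov--Gieseker, one obtains $\Delta(E) \geq c_W^2 + (S - c_V)^2 + 2r\ell$, so it suffices to bound the exceptional contribution $(S - c_V)^2$ against $2r\ell$. For this I would use three structural facts: (a) $H^0(E) \in \mathcal{T}_{\mathcal{B}^0} \subseteq \mathcal{T}^{>0}_{\pi^\ast H}$ being supported on points and $\Pi$ yields the length estimate $\ell \geq \max_i\{a_i/m_i\}$ as in the proof of Lemma 4.2; (b) torsion-freeness of $F$ gives $\Hom(\mathcal{O}_{C_i}(-1), F) = 0$, and combined with $K_{\widetilde X}\cdot C_i = 0$ and Hirzebruch--Riemann--Roch this constrains the intersection numbers $c\cdot C_i$; and (c) the Cartan matrix $(C_i\cdot C_j)$ is negative definite (\cite[Proposition 10.18(iii)]{Car05}).

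The main obstacle is the last combining step: showing that (a)--(c) together are enough to absorb $-2\,S\cdot c$ into $2r\ell + (S - c_V)^2$. This is the part that requires genuine care, since in the ADE case the exceptional curves can have multiplicities $m_i > 1$ and the intersection matrix is more intricate than in the $A_1$ case of \cite[Lemma 6.5]{TX22}. I anticipate that the proof will ultimately balance the rank-weighted length of $\mathbf{R}^0\pi_\ast H^0(E)$ against the Euler-characteristic pairings $\chi(\mathcal{O}_{C_i}(-1), F)$, possibly by further subdividing into whether $H^0(E)$ is supported purely on smooth points (where the bound is immediate) or has nontrivial support on $\Pi$ (where the delicate balancing is needed).
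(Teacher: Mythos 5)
Your setup is correct: $\ch_0(E) < 0$ places $E$ in case (3) of Lemma~\ref{6.5'}, the strict inequality $\Im Z_{\pi^\ast H,\beta,z}(E) > 0$ forces $G = 0$, and the identity $\Delta(E) = \Delta(F) + 2r\ell + S^2 - 2\,S\cdot c$ is right. But there is a genuine gap at the step you label ``the main obstacle.'' Your proposed inputs (a)--(c) cannot close it: the term $S^2 - 2\,S\cdot c$ is quadratic in the coefficients $a_i$ of $S$, whereas the length bound $\ell \geq \max_i a_i/m_i$ is only linear, so no amount of Cartan-matrix combinatorics controls the estimate as the $a_i$ grow --- what forbids that is the $Z_{\pi^\ast H,\beta,sz}$-semistability of $E$ itself, which you never invoke past the case analysis. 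The paper uses it to obtain $\Hom_{\widetilde X}(H^0(E),E)=0$ (the torsion quotient has phase $1$, $E$ has phase $<1$), and this is the missing ingredient. Concretely, the paper reinterprets your numerical identity homologically: splitting the quotient sheaf $H^0(E) = S_1 \oplus S_2$ with $S_1$ supported on $\Pi$ and $S_2$ zero-dimensional, Hirzebruch--Riemann--Roch together with $K_{\widetilde X}|_\Pi = 0$ rewrites the identity as
\[
\Delta(E) = \Delta(F) - \chi(S_1,S_1) - 2\chi(S_1,F[1]) + 2\ch_0(F)\ch_2(S_2).
\]
Serre duality with $\omega_{\widetilde X}$ trivial on $\Pi$ gives $\chi(S_1,S_1) \leq 2\dim\Hom_{\widetilde X}(S_1,S_1)$; applying $\Hom(S_1,-)$ to the triangle $F[1]\to E\to H^0(E)$ together with $\Hom(S_1,E)=0$ and the vanishing of $\Hom(S_1,F[1+i])$ for $i\neq 1$ gives $\dim\Hom_{\widetilde X}(S_1,S_1) \leq \dim\Ext^1_{\widetilde X}(S_1,F[1]) = -\chi(S_1,F[1])$. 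These two estimates show $-\chi(S_1,S_1) - 2\chi(S_1,F[1]) \geq 0$, and with $\Delta(F)\geq 0$ (Bogomolov--Gieseker) and $\ch_0(F)\ch_2(S_2)\geq 0$ the proof closes with no Cartan-matrix input at all.

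Separately, there is an algebra slip in your intermediate bound: the exact identity is $\Delta(E) = c_W^2 + (S-c_V)^2 + 2r\ell - 2rd$ with $d = \ch_2(F)$, which need not be nonpositive. Bogomolov--Gieseker only gives $-2rd \geq -c_W^2 - c_V^2$, so you obtain $\Delta(E) \geq (S-c_V)^2 - c_V^2 + 2r\ell$, not the stronger $\Delta(E) \geq c_W^2 + (S-c_V)^2 + 2r\ell$. This does not change the diagnosis: even the corrected bound still leaves $(S-c_V)^2$ unbounded below relative to $2r\ell - c_V^2$ without the semistability input above.
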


\begin{proof}
By Lemma \ref{6.5'}, such $E$ must fit into an exact triangle $F[1] \longrightarrow E \longrightarrow S$, with $S$ a torsion sheaf support on the union of points and $\Pi$, and with $F$ a slope semistable sheaf with $\mu_{\pi^\ast H}(F) \geq 0$.

Firstly, we can decompose $S$ into $S_1 \oplus S_2$ where $\supp S_1 \subseteq \Pi$ and $\supp S_2$ is $0$-dimensional,
then by Hirzebruch-Riemann-Roch, we obtain
\begin{equation*} 
\begin{split}
\Delta(E) &= \Delta(F)+\ch_1(S_1\oplus S_2)^2 -2\ch_0(F[1])\ch_2(S_1\oplus S_2) +2\ch_1(F[1])\ch_1(S_1\oplus S_2)\\
&= \Delta(F)+\ch_1(S_1)^2 -2\ch_0(F[1])\ch_2(S_1)+2\ch_1(F[1])\ch_1(S_1)- 2\ch_0(F[1])\ch_2(S_2)\\
&= \Delta(F) - \chi(S_1,S_1) -2\chi(S_1,F[1]) +2 \ch_0(F)\ch_2(S_2).
\end{split}
\end{equation*}

As $\ch_0(F)\ch_2(S_2)$ is non-negative,
we may assume that $S = S_1$ is supported on the exceptional locus $\Pi$.

Since the canonical bundle $\omega_{\widetilde{X}}$ is trivial on the exceptional locus,
by Serre duality we have
$\dim\Hom_{\widetilde{X}}(S,S)=\dim\Ext^2_{\widetilde{X}}(S,S)$.
Therefore, we have $\chi_{\widetilde{X}}(S,S)\leq 2 \dim\Hom_{\widetilde{X}}(S,S)$.

If we apply the functor $\Hom_{\widetilde{X}}(S,\cdot)$ to the exact triangle $F[1] \longrightarrow E \longrightarrow S$,
we obtain an exact sequence
$$ \Hom_{\widetilde{X}}(S,E) \longrightarrow \Hom_{\widetilde{X}}(S,S) \longrightarrow \Ext^1_{\widetilde{X}}(S,F[1])$$

By semistability we see that $\Hom_{\widetilde{X}}(S,E)=0$ and similarly $\Hom_{\widetilde{X}}(S,F[1+i])=0$ for any $i\leq0$ as the phase of $S$ is $1$ and those of $E$ and $F[1+i]$ are less than $1$.
Since $\Hom_{\widetilde{X}}(S,F[1+i])=0$ for all $i\geq 2$,
this implies that 
\[\dim\Hom_{\widetilde{X}}(S,S) \leq \dim \Ext^1_{\widetilde{X}}(S,F[1])=-\chi(S,F[1]).\]

In summary, we have the inequality $\chi(S,S) \leq 2 \dim\Hom_{\widetilde{X}}(S,S) \leq -2\chi(S,F[1])$,
that is, $- \chi(S,S) -2\chi(S,F[1]) \geq 0$.
As $\Delta(F) \geq 0$ by Bogomolov-Gieseker, this completes the proof.
\end{proof}

\begin{proof}[Proof of Lemma \ref{mathcalD}]
Let $E\in \mathcal{D}$ be an  object with $\Im Z_{\pi^\ast H,\beta,z}(E)>0$.

If $\ch_0(E) >0$, then by Lemma \ref{6.5'}, $E$ fits into an exact sequence
\begin{equation}\label{SES in 5.5}
0 \longrightarrow E_{tor} \longrightarrow E\longrightarrow E_{tf} \longrightarrow 0,
\end{equation}
where $E_{tf}$ is a slope semistable torsion-free sheaf and $E_{tor} \in \ker \mathbf{R}^0\pi_\ast$.

If $E_{tor} = 0$, we apply the Bogomolov-Gieseker inequality to see that $Q_{A,0}(E) \geq \Delta(E) \geq 0$ for any $A \geq 0$. 
Otherwise, we claim that $$\Delta(E) 
= \Delta(E_{tf}) -\chi(E_{tor},E_{tor})-2\chi(E_{tf},E_{tor}) \geq 0.$$

By Bogomolov-Gieseker inequality, we have $\Delta(E_{tf})\geq 0$,
and by Serre duality we have $\chi(E_{tor},E_{tor}) \leq 2\dim \Hom_{\widetilde{X}}(E_{tor},E_{tor})$.
Thus, it suffices to show that
\[-\chi(E_{tor},E_{tf}) \geq \dim \Hom_{\widetilde{X}}(E_{tor},E_{tor}).\]

Applying $\Hom(\cdot ,E_{tor})$ to the short exact sequence (\ref{SES in 5.5}), we obtain the following long exact sequence:

\begin{tikzcd}
0 \arrow[r] & {\Hom(E_{tf},E_{tor})} \arrow[r]   & {\Hom(E,E_{tor})} \arrow[r]   & {\Hom(E_{tor},E_{tor})} \arrow[lld] &                          \\
            & {\Ext^1(E_{tf},E_{tor})} \arrow[r] & {\Ext^1(E,E_{tor})} \arrow[r] & {\Ext^1(E_{tor},E_{tor})} \arrow[r] & {\Ext^2(E_{tf},E_{tor})}
\end{tikzcd}

Note first that by Serre duality, $\Ext^2(E_{tf},E_{tor})=\Hom(E_{tor},E_{tf})=0$.
Moreover, as $\mathbf{R}^0\pi_\ast E_{tor}=0$, we know that $E_{tor}[1] \in \mathcal{P}(1) \subseteq \mathcal{B}^0$, where $\mathcal{P}$ is the slicing corresponding to the heart $\mathcal{B}^0$.

Then, as $E \in \mathcal{P}(0,1]$ and $E_{tor} \in \mathcal{P}(0)$, we see that $\Hom(E,E_{tor})=0$, 
which also implies that $\Hom(E_{tf},E_{tor})=0$.
Therefore, we see that
$$-\chi(E_{tor},E_{tf})= \dim \Ext^1(E_{tor},E_{tf}) \geq \dim \Hom_{\widetilde{X}}(E_{tor},E_{tor}),$$
and thus $\Delta(E) \geq \Delta(E_{tf})\geq0.$

If $\ch_0(E)=0$, by Lemma \ref{6.5'}, $E$ must be a torsion sheaf supported on a curve $C \nsubseteq \Pi$.
If $C$ does not contain $C_i$ for any $i$,
then Lemma \ref{constIZ} implies that $Q_{A_0,B}(E) \geq 0$ for every such torsion sheaf $E$ and any positive $B$.

Otherwise, we write $C=C'+C''$, with $C'$ a combination of exceptional curves and $C''$ a curve not containing any of $C_i$.
We then have a short exact sequence
\begin{equation}\label{SES2 in 5.5}
0 \longrightarrow K \longrightarrow E\longrightarrow E|_{C''} \longrightarrow 0,
\end{equation}
where $K \subseteq E|_{C'}$ is a torsion sheaf with $\Im Z_{\pi^\ast H,\beta,z}(K)=0$.

Looking at the long exact cohomology sequence of (\ref{SES2 in 5.5}) with respect to the heart $\mathcal{B}^0$,
we see that $E|_{C''} \in \mathcal{B}^0$.
Moreover, as we assume $E$ to be $Z_{\pi^\ast H,\beta,sz}$-semistable for $s$ sufficiently large, 
we see that $H^0_{\mathcal{B}^0}(K)$ must be $0$.
In particular, this means that $K \in \mathcal{P}(0)$ since $\Im Z_{\pi^\ast H,\beta,z}(K)=0$.

Then, the same arguments as in the previous case show that
$$-2\chi(K,E|_{C''})= 2\dim \Ext^1(K,E|_{C''}) \geq 2\dim \Hom_{\widetilde{X}}(K,K)\geq \chi(K,K).$$

In summary, we now have:
\begin{equation*} 
\begin{split}
Q_{A_0,B}(E)  &\geq \Delta(E)+A_0(\Im Z_{\pi^\ast H,\beta,z}(E))^2 \\
&= \Delta(E) +A_0(\Im Z_{\pi^\ast H,\beta,z}(E|_{C''}))^2 \text{ \ (since $\Im Z_{\pi^\ast H,\beta,z}(K)=0$)}\\
&=\Delta(E|_{C''})-\chi(K,K)-2\chi(K,E|_{C''}) +A_0(\Im Z_{\pi^\ast H,\beta,z}(E|_{C''}))^2\\
&\geq \Delta(E|_{C''})+A_0(\Im Z_{\pi^\ast H,\beta,z}(E|_{C''}))^2 \\
&\geq 0. \text{ 
\ (by Lemma \ref{constIZ})}
\end{split}
\end{equation*}

Finally, if $\ch_0(E)<0$, by Lemma \ref{ch<0}, $Q_{A,0}(E) \geq \Delta(E) \geq 0$ for any $A \geq 0$.
\end{proof}

\begin{lemma}\label{constRZ}
There exists a constant $B_0>0$ such that $Q_{0,B_0}(E) \geq 0$ for any $Z_{\pi^\ast H,\beta,z}$-stable object $E \in \mathcal{B}^0$ with $\Im Z_{\pi^\ast H,\beta,z}(E)=0$.
\end{lemma}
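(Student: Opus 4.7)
My plan is to first observe that $Z_{\pi^\ast H,\beta,z}$-stability together with $\Im Z_{\pi^\ast H,\beta,z}(E)=0$ forces $E$ to be a \emph{simple} object of $\mathcal{B}^0$, and then to classify such simple objects and verify the inequality case by case. For the first observation, any nonzero proper subobject $F \subset E$ in $\mathcal{B}^0$ satisfies $0 \le \Im Z_{\pi^\ast H,\beta,z}(F) \le \Im Z_{\pi^\ast H,\beta,z}(E) = 0$, hence $\mu_\sigma(F) = +\infty = \mu_\sigma(E)$, which contradicts $Z$-stability. Moreover, the canonical triangle $H^{-1}(E)[1] \to E \to H^0(E)$ is a short exact sequence in $\mathcal{B}^0$ (using $\mathcal{B}^0 = \langle \mathcal{F}_{\mathcal{B}^0}[1], \mathcal{T}_{\mathcal{B}^0}\rangle$), so simplicity concentrates the cohomology of $E$ in a single degree.

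I then split according to $\ch_0(E)$. The case $\ch_0(E) > 0$ is ruled out: Lemma~\ref{6.5'}(2) produces a torsion-free piece in $\mathcal{T}^{>0}_{\pi^\ast H}$ whose imaginary part is strictly positive, contradicting $\Im Z = 0$. When $\ch_0(E) < 0$, Lemma~\ref{6.5'}(3) together with simplicity forces both the torsion part $S$ and the $\mathcal{F}_0$-part $G$ to vanish, so $E = F[1]$ for some $\mu_{\pi^\ast H}$-semistable torsion-free sheaf $F$ of slope $0$; Bogomolov-Gieseker then gives $\Delta(E) = \Delta(F) \ge 0$. When $\ch_0(E)=0$ and $\ch_1(E)=0$, $E$ is a simple $0$-dimensional sheaf, and Lemma~\ref{Oxsimple} (combined with the non-simplicity of $\mathcal{O}_x$ for $x\in\Pi$) identifies $E$ as $\mathcal{O}_x$ for some $x\notin\Pi$, giving $\Delta(E)=0$. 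When $\ch_0(E)=0$ and $\ch_1(E)\ne 0$, the vanishing of $\Im Z$ together with the fact that the only $\pi$-contracted curves are the $C_i$ forces $\supp E \subseteq \Pi$ (any $0$-dimensional subsheaf outside $\Pi$ would break simplicity), and Proposition~\ref{simpleobjects} pins $E$ down to either $\mathcal{O}_{C_i}(-1)[1]$ or $\mathcal{O}_\Pi$.

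It remains to verify the bound on these two ``bad'' classes. For $E = \mathcal{O}_{C_i}(-1)[1]$, Hirzebruch-Riemann-Roch together with $K_{\widetilde{X}}\cdot C_i = 0$ gives $\ch(E) = (0,-C_i,0)$, so $\Delta(E) = C_i^2 = -2$, while $\Re Z_{\pi^\ast H,\beta,z}(E) = -\beta\cdot C_i$, whose square is strictly positive by the assumption $\beta\cdot C_i > 0$ from Lemma~\ref{betaexists}. For $E = \mathcal{O}_\Pi$, the analogous computation using $\chi(\mathcal{O}_\Pi)=1$ gives $\ch(E) = (0,\sum_i m_i C_i,1)$, so $\Delta(E) = (\sum_i m_i C_i)^2 < 0$ (the intersection matrix is negative definite), while $\Re Z_{\pi^\ast H,\beta,z}(E) = -1 + \beta\cdot \ch_1(\mathcal{O}_\Pi)$ is strictly negative thanks to $\beta\cdot \ch_1(\mathcal{O}_\Pi) < 1$. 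Taking
\[
B_0 \;\geq\; \max\!\left(\frac{2}{\min_i (\beta\cdot C_i)^2},\ \frac{-\bigl(\sum_i m_i C_i\bigr)^2}{\bigl(1 - \beta\cdot \ch_1(\mathcal{O}_\Pi)\bigr)^2}\right)
\]
then makes $Q_{0,B_0}(E) = \Delta(E) + B_0 (\Re Z_{\pi^\ast H,\beta,z}(E))^2 \ge 0$ in every case.

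The main obstacle is making the simplicity-based classification rigorous: in particular, one must cleanly exclude mixed extensions in the $\ch_0 < 0$ case (showing that any nonzero torsion quotient $S$ in Lemma~\ref{6.5'}(3) is a proper subobject or quotient breaking simplicity of $E$), and one must carefully rule out $0$-dimensional embedded components outside $\Pi$ in the $\ch_0=0$, $\ch_1\neq 0$ case before invoking Proposition~\ref{simpleobjects}.
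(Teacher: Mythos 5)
Your proposal is correct and takes essentially the same approach as the paper: both arguments reduce to the observation that a $Z_{\pi^\ast H,\beta,z}$-stable object with vanishing imaginary part must be simple in $\mathcal{B}^0$, invoke Proposition~\ref{simpleobjects} to pin down the exceptional-locus cases to $\mathcal{O}_{C_i}(-1)[1]$ and $\mathcal{O}_\Pi$, and use Bogomolov--Gieseker for the torsion-free shifted case and $\Delta=0$ for $0$-dimensional sheaves. The organizational differences — you explicitly phrase the reduction as ``$E$ is simple,'' split by $\ch_0$ rather than using the paper's direct $G[1]\!\to\!E\!\to\!T$, $F[1]\!\to\!T\!\to\!S$ decomposition, and compute $B_0$ explicitly rather than merely asserting its existence — are cosmetic, and the potential gaps you flag at the end (ruling out mixed extensions when $\ch_0<0$, and embedded points outside $\Pi$ when $\ch_0=0$, $\ch_1\neq 0$) are genuinely fillable by the simplicity argument you already set up, since any such torsion piece or skyscraper $\mathcal{O}_x$ with $x\notin\Pi$ furnishes a proper nonzero subobject or quotient of $E$ in $\mathcal{B}^0$.
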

\begin{proof}
We consider the same decomposition as in Lemma \ref{6.5'}:

\[\begin{tikzcd}
                 &             & {F[1]} \arrow[d] \\
{G[1]} \arrow[r] & E \arrow[r] & T \arrow[d]      \\
                 &             & S               
\end{tikzcd}\]
where $G \in \mathcal{F}_0$, $F \in \mathcal{F}^{\leq 0}_{\pi^\ast H}$ and $S \in \mathcal{T}^{> 0}_{\pi^\ast H}$.
As $E$ is stable, it must be equal to $G[1]$, to $F[1],$ or to $S$.

If $E=F[1]$, then as $F$ is a torsion-free slope-stable sheaf, 
$Q_{A,B}(E) \geq \Delta(E)=\Delta(F) \geq 0$ for any $A,B>0$ by the Bogomolov-Gieseker inequality.
If $E=G[1]$, then as $E$ is stable, $E$ can only be $\mathcal{O}_{C_i}(-1)[1]$ for some $i$ by Proposition \ref{simpleobjects}.
As there are only finitely many choices, we can find some $B_1>0$ such that $Q_{A,B}(\mathcal{O}_{C_i}(-1)[1]) \geq 0$  for all $i$ and $B \geq B_1$.

Finally assume that $E=S$.
If $S$ is supported on a point then $\Delta(E)=\Delta(S) = 0$, so we may assume that $S$ is supported on $\Pi$,
and then by Proposition \ref{simpleobjects}, $S$ can only be $\mathcal{O}_{\Pi}$.
We can similarly find a desired constant $B_2>0$ such that $Q_{A,B_2}(\mathcal{O}_{\Pi}) \geq 0$ and set $B_0:=\max \{B_1,B_2\}$.
\end{proof}

Combining all above, we can finally prove our main result in this section.

\begin{theorem}\label{mainSect5}
The pair $\sigma = (Z_{\pi^\ast H,\beta,z}, \mathcal{B}^0)$ is a Bridgeland stability condition, 
with respect to the lattice ${\rm K}_{\num}(\widetilde{X})$,
and it satisfies the support property with respect to the lattice $\widetilde{\Lambda}$ with the quadratic form (\ref{eqn:QAB}) for $A,B$ sufficiently large.
\end{theorem}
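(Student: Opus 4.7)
The plan is to fix $A \geq A_0$ and $B \geq B_0$ (with $A_0$ from Lemma \ref{mathcalD} and $B_0$ from Lemma \ref{constRZ}) and to prove that $\sigma_s := (Z_{\pi^\ast H,\beta,sz}, \mathcal{B}^0)$ satisfies the support property with respect to $Q_{A,B}$ for every $s \in [1,\infty)$; specialising to $s=1$ then yields the theorem. Note that each $\sigma_s$ is already a pre-stability condition by Theorem \ref{prestab} (since $sz \geq z > \max\{-\beta^2/2, 0\}$), and that $Q_{A,B}$ is negative definite on $\ker Z_{\pi^\ast H,\beta,sz}$ for every $s \geq 1$ by Lemma \ref{negdef}. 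The only thing left to prove is the non-negativity of $Q_{A,B}$ on $\sigma_s$-semistable objects.

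The first step is to establish the support property for $s$ sufficiently large, splitting on the sign of $\Im Z_{\pi^\ast H,\beta,z}(E)$. If $\Im Z(E) > 0$, then $E \in \mathcal{D}$ by definition, so Lemma \ref{mathcalD} yields $Q_{A,B}(E) \geq Q_{A_0,0}(E) \geq 0$. If $\Im Z(E) = 0$, then $E$ is automatically $\sigma_s$-semistable (slope $+\infty$), and being simple in the phase-one Serre subcategory $\mathcal{P}_s(1)$ is easily seen to be independent of $s$: it coincides with being $Z_{\pi^\ast H,\beta,z}$-stable with $\Im Z = 0$. Taking a Jordan--Hölder filtration of $E$ into such stable factors $E_1,\ldots,E_n$ (the JH property being guaranteed by Corollary \ref{Noeth} together with discreteness of $\Re Z$), Lemma \ref{constRZ} yields $Q_{A,B}(E_i) \geq Q_{0,B_0}(E_i) \geq 0$ for each $i$. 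Since all $Z(E_i)$ lie on the negative real axis and $Q_{A,B}$ is negative definite on $\ker Z$, a standard Hodge-index argument (apply negative definiteness to the kernel elements $r_j E_i - r_i E_j$ to deduce non-negativity of the associated polarisation, then sum) gives $Q_{A,B}(E) \geq 0$.

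The second step is to propagate the support property from $s \gg 1$ down to $s = 1$ via Bridgeland's continuous deformation theorem (cf.\ \cite{Bri07, BM11}). Since $\{\sigma_s\}_{s \in [1,\infty)}$ is a continuous path of pre-stability conditions with common heart $\mathcal{B}^0$, and $Q_{A,B}$ is negative definite on every $\ker Z_{\pi^\ast H,\beta,sz}$, the subset of $[1,\infty)$ on which $\sigma_s$ enjoys the support property with respect to $Q_{A,B}$ is both open and closed, hence all of $[1,\infty)$.

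The main obstacle is the case $\Im Z(E) > 0$ at moderate $s$, where there is no convenient structural description of $\sigma_s$-semistable objects comparable to the one available for $\mathcal{D}$ at $s \gg 1$. The bootstrap from $\mathcal{D}$ at large $s$ followed by deformation is precisely the device that bridges this gap; the $\Im Z(E) = 0$ case is much easier because $Z$-stability there is intrinsic to the heart and does not depend on $s$.
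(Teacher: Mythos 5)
Your overall strategy---fix $Q_{A,B}$ and run the argument along the path $\sigma_s = (Z_{\pi^\ast H,\beta,sz},\mathcal{B}^0)$, $s\ge 1$---matches the paper, and your treatment of the $\Im Z = 0$ stratum (via the Jordan--H\"older filtration in $\mathcal{P}(1)$, Lemma~\ref{constRZ}, and the kernel/linear-algebra trick) is correct. The gaps are in your handling of $\Im Z > 0$.

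In Step~1 you write ``If $\Im Z(E) > 0$, then $E \in \mathcal{D}$ by definition,'' but this conflates two different conditions. By definition $\mathcal{D}$ is the set of objects that are $\sigma_s$-semistable \emph{for all sufficiently large $s$}; an object that is $\sigma_s$-semistable for some particular large $s$ need not lie in $\mathcal{D}$, since it may be destabilised again at larger $s'$. Such objects contribute to the support property at that $\sigma_s$ and are not covered by Lemma~\ref{mathcalD}. So Step~1 does not actually establish the support property at any single value of $s$, and consequently you never show the set in Step~2 is nonempty.

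In Step~2, the assertion that $\{s\in[1,\infty): \sigma_s\text{ has support w.r.t.\ }Q_{A,B}\}$ is ``open and closed'' hides the entire difficulty. Openness is indeed a standard consequence of the deformation theorem once support is known at a point, but closedness is not automatic: at a limit $s_0$, a $\sigma_{s_0}$-semistable object $E$ may be unstable for all nearby $s_n$, and one would have to analyse its Jordan--H\"older factors at $s_0$, verify they are semistable for nearby $s$, and then combine $Q$-nonnegativity of the factors via the kernel argument of \cite[Lemma~A.6]{BMS16}. That chain of reasoning is precisely what is missing, and it is also what the paper supplies, organised as an induction on the discrete quantity $\Im Z_{\pi^\ast H,\beta,z}(E)$: for a minimal counterexample $E$, Lemma~\ref{mathcalD} forces $E\notin\mathcal{D}$, so by wall-crossing there is some $s>1$ at which $E$ is \emph{strictly} $\sigma_s$-semistable; all Jordan--H\"older factors there share a phase in $(0,1)$ and hence have strictly smaller $\Im Z$, so by induction $Q\geq 0$ on each, and the \cite[Lemma~A.6]{BMS16} argument yields $Q(E)\geq 0$, a contradiction. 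This discrete induction replaces your open-and-closed claim and is the piece your proposal lacks.
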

\begin{proof}
Let $A_0,B_0$ be the constants given by Lemma \ref{constIZ} and \ref{constRZ}.
It remains to prove that $Q_{A_0,B_0}(E) \geq 0$ for any $Z_{\pi^\ast H,\beta,z}$-semistable object $E \in \mathcal{B}^0$. 

Note that as the proof of Lemma \ref{constIZ} and \ref{constRZ} are independent of $s$,
we are going to prove the following stronger statement that if $E \in \mathcal{B}^0$ is a $Z_{\pi^\ast H,\beta,sz}$-semistable object for some $s \geq 1$, then $Q_{A_0,B_0}(E) \geq 0$.

As the image of $\Im Z_{\pi^\ast H,\beta,z}(E)$ is discrete,
we prove by induction on $\Im Z_{\pi^\ast H,\beta,z}(E)$.
If $\Im Z_{\pi^\ast H,\beta,z}(E)=0$, 
then the assertion follows from Lemma \ref{constRZ}.

If $\Im Z_{\pi^\ast H,\beta,z}(E)>0$ is minimal,
then as any possible destabilizing subobjects must have smaller imaginary part,
$E$ must be in $\mathcal{D}$ and hence the assertion follows from Lemma \ref{mathcalD}.

Now let $E$ be an object in the set \[\{F \in \mathcal{B}^0 \, | \, F  \text{ is } Z_{\pi^\ast H,\beta,sz}\text{-semistable for some } s\geq1 \text{ and }Q_{A_0,B_0}(F)<0 \}\] with the smallest imaginary part.
Then the induction hypothesis says that for any $Z_{\pi^\ast H,\beta,sz}$-semistable object $F$ with $0 \leq \Im Z_{\pi^\ast H,\beta,z}(F)<\Im Z_{\pi^\ast H,\beta,sz}(E)$,
we have $Q_{A_0,B_0}(F) \geq 0$.

By Lemma \ref{mathcalD}, $E \notin \mathcal{D}$,
and hence by wall-crossing,
there is some $s>1$ such that $E$ is strictly $Z_{\pi^\ast H,\beta,sz}$-semistable
(cf. \cite[Theorem 3.5]{BMS16} and \cite[Sect. 9]{Bri08}).

We can then consider the Jordan--Hölder factors $E_1, \dots, E_m$ of $E$, which have smaller imaginary parts,
so that we have $Q_{A_0,B_0}(E_i)\geq 0$ for every $i$.

Now we may apply \cite[Lemma A.6]{BMS16}.
More precisely, since all of the Jordan--Hölder factors $E_i$ have the same phase, i.e. they lie on the same ray of the image of $Z_{\pi^\ast H,\beta,sz}$,
we can find constants
$a_{ij} > 0$ such that $[E_i] - a_{ij}[E_j]$ is in the kernel of $Z_{\pi^\ast H,\beta,sz}$ and hence $Q_{A_0,B_0}([E_i]-a_{ij}[E_j]) \leq 0$.

Then linear algebra tells us that $Q_{A_0,B_0}(m[E_i] + n[E_j] ) \geq 0$ for any positive $n, m$, and hence $Q_{A_0,B_0}(E) \geq 0$,
which is a contradiction.
Therefore, $Q_{A_0,B_0}(E) \geq 0$ for every $Z_{\pi^\ast H,\beta,sz}$-semistable object $E$,
which concludes the proof.
\end{proof}

\section{Deformation and compatibility of pre-stability conditions}

In this section, we will see that by deforming along a path $\sigma_\epsilon$ of stability conditions on $\widetilde{X}$,
the end point will induce a pre-stability condition $\sigma_X$ on the singular surface $X$, which is compatible with the pushforward $\mathbf{R}\pi_\ast$.

More precisely, we will give a path $Z_\epsilon$ in the space of central charges, parametrized by $\epsilon$,
which lifts to a path of Bridgeland stability conditions  \gls{sigma-epsilon} in $\Stab({\rm D}^b(\widetilde{X}))$,
but the end point, denoted by $\sigma_{\widetilde{X}} = (Z_{\widetilde{X}},\mathcal{B}^0)$, is only a weak pre-stability condition,
which will be compatible with a stability condition $\sigma_X$ on ${\rm D}^b(X)$ in the following sense.

The path is chosen so that at its end point $\sigma_{\widetilde{X}}$, the central charge of each $\mathcal{O}_{C_i}(-1)$ is $0$.
This means that there is a function $Z_X \colon {\rm K}_{\num}(X) \longrightarrow \mathbb{C}$ with $Z_{\widetilde{X}} = Z_X \circ \pi_\ast$,
where $\pi_\ast \colon {\rm K}_{\num}(\widetilde{X}) \longrightarrow {\rm K}_{\num}(X)$ is the pushforward on the numerical Grothendieck groups.

Let $v_\pi$ be the composition of $v \colon {\rm K}({\rm D}^b(\widetilde{X})) \longrightarrow \widetilde{\Lambda}$ and the projection map of the numerical Grothendieck group \gls{projection-map}, and let $\Lambda:= {\rm K}_{\num}(X)$. 
Our results in this section will give the following commutative diagram:

\[
\begin{tikzcd}
{\rm K}(\widetilde{X}) ={\rm K}(\mathcal{B}^0) \arrow[rr, "v"] \arrow[dd, "\pi_\ast"] \arrow[rrd, "v_\pi"] &  & \widetilde{\Lambda}={\rm K}_{\num}(\widetilde{X}) \arrow[d, "\pr"] \arrow[rrd, "Z_{\widetilde{X}}"]        &  &            \\
&  & \widetilde{\Lambda}/\ker \pi_\ast \arrow[d] \arrow[rr] &  & \mathbb{C} \\
{\rm K}(X)= {\rm K}(\mathcal{A}) \arrow[rr, "v'"]                                                                  &  &  \Lambda={\rm K}_{\num}(X) \arrow[rru, "Z_X"]                                                          &  &           
\end{tikzcd}\]

Our goal is to moreover obtain a result like Toda's that the pushforward gives an exact functor between the hearts $\mathcal{B}^0$ and $\mathcal{A}$.
The plan is to realize the hearts as tilts of perverse coherent sheaves and coherent sheaves respectively,
and then show that there are some relations between the torsion pairs defined them.

We will first construct $\mathcal{A}$ as the tilted heart $\Coh^\alpha_H(X)$ of the category of coherent sheaves.
Similarly as Lemma \ref{Bri02}, to prove the desired exactness, we need to find a real number $\alpha$ such that $\mathbf{R}\pi_\ast (\mathcal{B}^0) \subset \Coh^\alpha_H(X)$.

Our proof will rely on the following lemma,
the proof of which is obvious.
\begin{lemma}\label{1}
Let $\mathcal{D}_1, \mathcal{D}_2$ be triangulated categories. 
Given two abelian categories $\mathcal{A}_1 \subseteq \mathcal{D}_1$ and $\mathcal{A}_2 \subseteq \mathcal{D}_2$, assume that there are torsion pairs $\mathcal{A}_i = \langle \, \mathcal{T}_i,\mathcal{F}_i \, \rangle$ for $i=1,2$. 
If a triangulated functor $G \colon \mathcal{D}_1 \longrightarrow \mathcal{D}_2$ sends $\mathcal{A}_1,\mathcal{T}_1$, and $\mathcal{F}_1$ to $\mathcal{A}_2,\mathcal{T}_2$, and $\mathcal{F}_2$ respectively,
then $G(\mathcal{A}^\sharp_1) \subseteq \mathcal{A}^\sharp_2$.
\end{lemma}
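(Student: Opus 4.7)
The plan is to use the description of the tilted heart from Theorem \ref{HRStilt} as an extension: any object $E \in \mathcal{A}^\sharp_1$ fits into a distinguished triangle
\[ F[1] \longrightarrow E \longrightarrow T \]
in $\mathcal{D}_1$ with $F \in \mathcal{F}_1$ and $T \in \mathcal{T}_1$ (namely, $F = H^{-1}_{\mathcal{A}_1}(E)$ and $T = H^0_{\mathcal{A}_1}(E)$, both of which lie in $\mathcal{A}_1$). Since $G$ is a triangulated functor, applying it yields a distinguished triangle
\[ G(F)[1] \longrightarrow G(E) \longrightarrow G(T) \]
in $\mathcal{D}_2$. By hypothesis $G(F) \in \mathcal{F}_2$ and $G(T) \in \mathcal{T}_2$, so $G(E)$ is an extension of an object of $\mathcal{T}_2$ by an object of $\mathcal{F}_2[1]$, hence lies in $\mathcal{A}^\sharp_2 = \langle \mathcal{F}_2[1], \mathcal{T}_2 \rangle$.

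There is essentially no obstacle: the only thing to verify beyond invoking Theorem \ref{HRStilt} is that the triangle defining the tilted decomposition of $E$ is sent by $G$ to a triangle whose outer terms lie in $\mathcal{F}_2[1]$ and $\mathcal{T}_2$, which is immediate from the assumption that $G$ sends $\mathcal{T}_1$ into $\mathcal{T}_2$ and $\mathcal{F}_1$ into $\mathcal{F}_2$. (The hypothesis $G(\mathcal{A}_1) \subseteq \mathcal{A}_2$ is actually redundant for this conclusion, but it is what guarantees that $G(F)$ and $G(T)$ are honestly objects of $\mathcal{A}_2$, and hence of its subcategories $\mathcal{F}_2$ and $\mathcal{T}_2$, rather than complexes that merely happen to satisfy the defining condition formally.)
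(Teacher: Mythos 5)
Your argument is correct and is the natural, essentially unique proof of this statement. (The paper itself gives no proof, declaring it obvious.) You take $E \in \mathcal{A}^\sharp_1$, use the tilting triangle $H^{-1}_{\mathcal{A}_1}(E)[1] \to E \to H^0_{\mathcal{A}_1}(E)$, apply the triangulated functor $G$, and conclude from the hypotheses that $G(E)$ is an extension of $G(H^0_{\mathcal{A}_1}(E)) \in \mathcal{T}_2$ by $G(H^{-1}_{\mathcal{A}_1}(E))[1] \in \mathcal{F}_2[1]$, hence lies in $\mathcal{A}^\sharp_2 = \langle \mathcal{F}_2[1], \mathcal{T}_2 \rangle$ by extension-closedness of the heart. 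Your parenthetical remark is slightly at odds with itself: the hypothesis $G(\mathcal{A}_1) \subseteq \mathcal{A}_2$ is indeed formally redundant, since $G(\mathcal{T}_1) \subseteq \mathcal{T}_2 \subseteq \mathcal{A}_2$ and $G(\mathcal{F}_1) \subseteq \mathcal{F}_2 \subseteq \mathcal{A}_2$ already place $G(F)$ and $G(T)$ in $\mathcal{A}_2$, so the final sentence's worry about them being ``complexes that merely happen to satisfy the defining condition formally'' is unfounded; but this does not affect the correctness of the proof.
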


We now make the following optimistic claim:
If $E \in {^{-1}\Per(\widetilde{X}/X)}$ is $\mu_{\pi^\ast H}$-slope semistable, then $\pi_\ast E$ will be a $\mu_{H}$-slope semistable sheaf of the same slope.
If this is the case, then Lemma \ref{1} applies and we can take $\alpha = 0$.
However, to prove this we can not use the adjunction argument as the derived pullback $\mathbf{L}\pi^\ast$ is not t-exact.
For example, $\mathbf{L}\pi^\ast \mathcal{O}_{x_0}$ is even a non-bounded complex. 
This also results in that we can not prove that the derived pullback $\mathbf{L}\pi^\ast$ is exact by Lemma \ref{1}.

Let \gls{derived-pullback} denote the $i$-th pullback with respect to a heart $\mathcal{A}$ of a bounded t-structure.

The following is the most crucial result in this section.

\begin{proposition}\label{preservess}
Let $(Z_\mathcal{A},\mathcal{A})$ and $(Z_\mathcal{B},\mathcal{B})$ be weak stability conditions on ${\rm D}^b(\widetilde{X})$ and ${\rm D}^b(X)$ respectively.
Assume that 
\begin{enumerate}
    \item[(1)] $\mathbf{R}\pi_\ast$ is t-exact with respect to the t-structures associated with $\mathcal{A}$ and $\mathcal{B}$,
    \item[(2)] $Z_\mathcal{A} =  Z_\mathcal{B} \circ \pi_\ast$.
\end{enumerate}
Then given a $Z_\mathcal{A}$-semistable object $E \in \mathcal{A}$,
the object $\mathbf{R}\pi_\ast E \in \mathcal{B}$ is $Z_\mathcal{B}$-semistable.
\end{proposition}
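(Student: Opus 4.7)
The strategy is a proof by contradiction: assume $\mathbf{R}\pi_\ast E$ is not $Z_\mathcal{B}$-semistable, so there exists a short exact sequence
\[0 \longrightarrow K \longrightarrow \mathbf{R}\pi_\ast E \longrightarrow Q \longrightarrow 0\]
in $\mathcal{B}$ with $\mu_{Z_\mathcal{B}}(K) > \mu_{Z_\mathcal{B}}(\mathbf{R}\pi_\ast E)$, and my goal is to lift $K$ to a destabilizing subobject $F \subseteq E$ in $\mathcal{A}$ with $\mathbf{R}\pi_\ast F = K$. Once this is achieved, hypothesis (2) gives $\mu_{Z_\mathcal{A}}(F) = \mu_{Z_\mathcal{B}}(K) > \mu_{Z_\mathcal{B}}(\mathbf{R}\pi_\ast E) = \mu_{Z_\mathcal{A}}(E)$, contradicting the $Z_\mathcal{A}$-semistability of $E$.

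To construct $F$, I would apply $\mathbf{L}\pi^\ast$ to the distinguished triangle $K \to \mathbf{R}\pi_\ast E \to Q$ and compose with the counit $\mathbf{L}\pi^\ast\mathbf{R}\pi_\ast E \to E$, whose cone $C$ lies in $\ker \mathbf{R}\pi_\ast$ (as recalled in the remark after Lemma~\ref{kernel}). The octahedral axiom applied to $\mathbf{L}\pi^\ast K \to \mathbf{L}\pi^\ast\mathbf{R}\pi_\ast E \to E$ then produces a distinguished triangle $\mathbf{L}\pi^\ast K \to E \to \tilde{Q}$ together with $\mathbf{L}\pi^\ast Q \to \tilde{Q} \to C$. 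Applying $\mathbf{R}\pi_\ast$ to the latter triangle, and using $\mathbf{R}\pi_\ast\mathbf{L}\pi^\ast = \mathrm{id}$ and $\mathbf{R}\pi_\ast C = 0$, yields $\mathbf{R}\pi_\ast\tilde{Q} = Q$.

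Taking cohomology with respect to $\mathcal{A}$ in the triangle $\mathbf{L}\pi^\ast K \to E \to \tilde{Q}$ gives the long exact sequence
\[\cdots \longrightarrow H^{-1}_\mathcal{A}(\tilde{Q}) \longrightarrow \mathbf{L}^0_\mathcal{A}\pi^\ast K \longrightarrow E \longrightarrow H^0_\mathcal{A}(\tilde{Q}) \longrightarrow \mathbf{L}^1_\mathcal{A}\pi^\ast K \longrightarrow \cdots\]
and I define $F := \mathrm{image}(\mathbf{L}^0_\mathcal{A}\pi^\ast K \to E)$ in $\mathcal{A}$. The two short exact sequences $0 \to H^{-1}_\mathcal{A}(\tilde{Q}) \to \mathbf{L}^0_\mathcal{A}\pi^\ast K \to F \to 0$ and $0 \to F \to E \to E/F \to 0$ in $\mathcal{A}$ are sent by $\mathbf{R}\pi_\ast$ to short exact sequences in $\mathcal{B}$ by the t-exactness hypothesis (1). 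Combining these with the identities $\mathbf{R}\pi_\ast \mathbf{L}^0_\mathcal{A}\pi^\ast K = K$ and $\mathbf{R}\pi_\ast H^{-1}_\mathcal{A}(\tilde{Q}) = 0$ (obtained by applying $\mathbf{R}\pi_\ast$ to the $\mathcal{A}$-truncation triangles for $\mathbf{L}\pi^\ast K$ and $\tilde{Q}$, then reading off $H^0_\mathcal{B}$ of the resulting triangles in $\mathrm{D}^b(X)$ using t-exactness together with $\mathbf{R}\pi_\ast\mathbf{L}\pi^\ast K = K$ and $\mathbf{R}\pi_\ast\tilde{Q} = Q$) gives $\mathbf{R}\pi_\ast F = K$, as desired.

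The main obstacle is the computation of the two identities $\mathbf{R}\pi_\ast \mathbf{L}^0_\mathcal{A}\pi^\ast K = K$ and $\mathbf{R}\pi_\ast H^{-1}_\mathcal{A}(\tilde{Q}) = 0$ at the level of hearts: since neither $\mathbf{L}\pi^\ast K$ nor $\tilde{Q}$ is \emph{a priori} concentrated in degree $0$ for $\mathcal{A}$, their cohomologies in several degrees must be tracked and then pushed down via $\mathbf{R}\pi_\ast$, where the t-exactness of $\mathbf{R}\pi_\ast$ and the long exact sequence in $\mathcal{B}$ must be used to isolate the relevant pieces. A secondary subtlety is that the proposition concerns weak stability conditions, so $\Im Z$ may vanish; the argument still goes through if one interprets $\mu = +\infty$ appropriately, noting that $F \neq 0$ because $K \neq 0$ and $F \neq E$ because $Q \neq 0$, so that $F$ is a genuine proper subobject.
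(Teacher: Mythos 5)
Your proof is correct, and it takes a genuinely different route from the paper's, arguably a cleaner one. The paper argues by cases on the destabilizing data: if there is a destabilizing quotient $\mathbf{R}\pi_\ast E \twoheadrightarrow G$ with $\mu_\mathcal{B}(G)<\mu_\mathcal{B}(\mathbf{R}\pi_\ast E)$, it exploits that $\mathbf{L}^0_\mathcal{A}\pi^\ast$ is right exact (so sends this surjection to a surjection in $\mathcal{A}$), shows that $\mathbf{L}^0_\mathcal{A}\pi^\ast$ preserves slopes, and then runs a nine-lemma diagram chase on $\ker(\mathbf{L}^0_\mathcal{A}\pi^\ast\mathbf{R}\pi_\ast E \twoheadrightarrow \mathbf{L}^0_\mathcal{A}\pi^\ast G)$ to extract a destabilizing subobject of $E$; if instead only a subobject $K$ with $Z_\mathcal{B}(K)=0$ destabilizes (so that the associated quotient does not have strictly smaller slope, a case that is unavoidable for \emph{weak} stability conditions), it separately takes the image of $\mathbf{L}^0_\mathcal{A}\pi^\ast K\to E$. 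You instead work uniformly with the destabilizing subobject $K$: applying $\mathbf{L}\pi^\ast$, invoking the octahedron, and defining $F:=\mathrm{image}(\mathbf{L}^0_\mathcal{A}\pi^\ast K\to E)$, you prove the sharper statement $\mathbf{R}\pi_\ast F=K$, from which the contradiction follows at once regardless of whether $Z_\mathcal{B}(K)$ vanishes. Both arguments rest on the same structural facts (the cone of the counit $\mathbf{L}\pi^\ast\mathbf{R}\pi_\ast E\to E$ lies in $\ker\mathbf{R}\pi_\ast$, and t-exactness of $\mathbf{R}\pi_\ast$ commutes with truncations and takes short exact sequences in $\mathcal{A}$ to short exact sequences in $\mathcal{B}$), but the unified treatment and the explicit identification of $\mathbf{R}\pi_\ast F$ make your version tighter. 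A small remark: the octahedron is convenient bookkeeping but not strictly necessary; since $\mathbf{R}\pi_\ast H^0_\mathcal{A}(g)=H^0_\mathcal{B}(\mathrm{id}_{\mathbf{R}\pi_\ast E})=\mathrm{id}$, the composition $K=\mathbf{R}\pi_\ast\mathbf{L}^0_\mathcal{A}\pi^\ast K\to\mathbf{R}\pi_\ast F\hookrightarrow\mathbf{R}\pi_\ast E$ is the original inclusion of $K$, forcing the surjection $K\twoheadrightarrow\mathbf{R}\pi_\ast F$ to be an isomorphism without reference to $\tilde{Q}$.
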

\begin{proof}
Let $\mu_{\mathcal{A}}$ and $\mu_{\mathcal{B}}$ be the slope with respect to the stability functions $Z_{\mathcal{A}}$ and $Z_{\mathcal{B}}$ respectively.
Assume the contrary that $\mathbf{R}\pi_\ast E$ is $Z_\mathcal{B}$-unstable.
Then either there exists a quotient $\mathbf{R}\pi_\ast E \twoheadrightarrow G$ with $\mu_{\mathcal{B}}(G)<\mu_{\mathcal{B}}(\mathbf{R}\pi_\ast E)$,
or there exists a nonzero subobject $K \subseteq \mathbf{R}\pi_\ast E$ with $Z_{\mathcal{B}}(K)=0$.
  
In the first case, as $\mathbf{R}\pi_\ast$ is t-exact with respect to the t-structures given by $\mathcal{A}$ and $\mathcal{B}$,
its left adjoint $\mathbf{L}\pi^\ast$ is right t-exact.
In particular, $\mathbf{L}^0_{\mathcal{A}}\pi^\ast \colon \mathcal{B} \longrightarrow \mathcal{A}$ is right exact.
Therefore, the natural map $f \colon \mathbf{L}^0_{\mathcal{A}}\pi^\ast \mathbf{R}\pi_\ast E \longrightarrow  \mathbf{L}^0_{\mathcal{A}}\pi^\ast G$ is a surjection in $\mathcal{A}$.
We claim that this is a $Z_\mathcal{A}$-destabilizing quotient of $\mathbf{L}^0_{\mathcal{A}}\pi^\ast \mathbf{R}\pi_\ast E$, that is $\mu_\mathcal{A}(\mathbf{L}^0_{\mathcal{A}}\pi^\ast G)<\mu_\mathcal{A}(\mathbf{L}^0_{\mathcal{A}}\pi^\ast \mathbf{R}\pi_\ast E )$.

To show this, we will prove that $\mathbf{L}^0_{\mathcal{A}}\pi^\ast $ preserves the slope, i.e. $\mu_{\mathcal{B}}(M)=\mu_{\mathcal{A}}(\mathbf{L}^0_{\mathcal{A}}\pi^\ast M)$ for any object $M \in \mathcal{B}$.
Indeed, by condition (1) and the equality $\mathbf{R}\pi_\ast \mathbf{L}\pi^\ast  = \id$,
we see that $\tau^{<0}_{\mathcal{A}}(\mathbf{L}\pi^\ast M)$ is in the kernel of $\mathbf{R}\pi_\ast$:
$$\mathbf{R}\pi_\ast(\tau^{<0}_{\mathcal{A}}(\mathbf{L}\pi^\ast M))= \tau^{<0}_{\mathcal{B}} (\mathbf{R}\pi_\ast \mathbf{L}\pi^\ast M) = \tau^{<0}_{\mathcal{B}} (M) = 0$$

Therefore $M \simeq \mathbf{R}\pi_\ast \mathbf{L}\pi^\ast M = \mathbf{R}\pi_\ast \mathbf{L}^0_{\mathcal{A}} \pi^\ast M$, 
so $\mu_{\mathcal{B}}(M) = \mu_{\mathcal{B}}(\mathbf{R}\pi_\ast \mathbf{L}^0_{\mathcal{A}} \pi^\ast M) = \mu_{\mathcal{A}}(\mathbf{L}^0_{\mathcal{A}}\pi^\ast M)$ since the condition (2) implies that $\mathbf{R}\pi_\ast$ also preserves the slope. 

Next, we consider the map $g \colon \mathbf{L}\pi^\ast \mathbf{R}\pi_\ast E \longrightarrow E$ given by adjunction.
Applying $H^0_{\mathcal{A}}$, we get a morphism $H^0_{\mathcal{A}}(g) \colon \mathbf{L}^0_{\mathcal{A}}\pi^\ast \mathbf{R}\pi_\ast E \longrightarrow E$.
By the definition, we know that $\mathbf{R} \pi_\ast (g) = \id_{\mathbf{R}\pi_\ast E}$,
and thus the pushforward of $\cone(g)$ is $0$.

Moreover, by condition (1),
$\mathbf{R} \pi_\ast$ commutes with the cohomology functors $H^i_{\mathcal{A}}$ with respect to the heart $\mathcal{A}$, 
which implies that every cohomology of $\cone(g)$ with respect to $\mathcal{A}$ lies in the kernel of $\mathbf{R}\pi_\ast$.
Particularly, by condition (2)
we see
$Z_{\mathcal{A}}(\ker H^0_{\mathcal{A}}(g))=Z_{\mathcal{A}}(\cok H^0_{\mathcal{A}}(g))=0$,

Consider the quotient $f \colon \mathbf{L}^0_{\mathcal{A}}\pi^\ast \mathbf{R}\pi_\ast E \longrightarrow  \mathbf{L}^0_{\mathcal{A}}\pi^\ast G$.
By the nine lemma, we may consider the following commutative diagram in $\mathcal{A}$:
\[
\begin{tikzcd}
T \arrow[r, hook] \arrow[d, hook]    & \ker (f) \arrow[d] \arrow[r, two heads]                                                                                   & \ker (f)/T \arrow[rd,  hook] \arrow[dd, hook]                                      &                                             &    \\
\ker (H^0_{\mathcal{A}}(g)) \arrow[r, hook] \arrow[d, two heads] &  \mathbf{L}^0_{\mathcal{A}}\pi^\ast \mathbf{R}\pi_\ast E \arrow[d, "f", two heads] \arrow[rd, two heads] \arrow[rr, "H^0_{\mathcal{A}}(g)"] &                                                                               & E \arrow[r, two heads]  & \cok(H^0_{\mathcal{A}}(g)) \\
\ker(H^0_{\mathcal{A}}(g))/T\arrow[r, hook]                  & \mathbf{L}^0_{\mathcal{A}}\pi^\ast G \arrow[rd, two heads]                                                                           & \Img (H^0_{\mathcal{A}}(g)) \arrow[d, two heads] \arrow[ru, hook]                                      &    \\ &           & \mathbf{L}^0_{\mathcal{A}}\pi^\ast G/(\ker(H^0_{\mathcal{A}}(g))/T)       &     &   
\end{tikzcd}
\]
Then, we see that $Z_{\mathcal{A}}(T)=0$ and $\mu_{Z_{\mathcal{A}}}(\ker(f)/T) = \mu_{Z_{\mathcal{A}}}(\ker(f)) > \mu_{Z_{\mathcal{A}}}(\mathbf{L}^0_{\mathcal{A}}\pi^\ast \mathbf{R}\pi_\ast E ) = \mu_{Z_{\mathcal{A}}}(E)$.
Hence, $\ker(f)/T$ is a destabilizing subobject of $E$, which is a contradiction.

We then turn to the second case.
Consider the composition $$\mathbf{L}^0_{\mathcal{A}}\pi^\ast K \longrightarrow \mathbf{L}^0_{\mathcal{A}}\pi^\ast \mathbf{R}\pi_\ast E \longrightarrow E.$$
The composition is nonzero as its pushforward is the inclusion $K \hookrightarrow \mathbf{R}\pi_\ast E$.
The image of this morphism gives a $Z_\mathcal{A}$-destabilizing subobject of $E$, which is also a contradiction.
\end{proof}

\begin{remark}\label{2}
In the above proof, the equality 
$M \simeq \mathbf{R}\pi_\ast \mathbf{L}\pi^\ast M = \mathbf{R}\pi_\ast \mathbf{L}^0_{\mathcal{A}} \pi^\ast M$ follows only from the condition (1),
and it implies particularly that the functor $\mathbf{R}\pi_\ast \colon \mathcal{A}  \longrightarrow \mathcal{B}$ is essentially surjective.
In the case where $\mathcal{A}={^{-1}\Per(\widetilde{X}/X)}$ and $\mathcal{B} =\Coh(X)$, this was also proven in \cite[Theorem 2.14]{BKS18}.
\end{remark}

We fix the following notations of slope stability.
\begin{itemize}
\item Let $Z_{\pi^\ast H}$ denote the central charge of slope stability with respect to $\pi^\ast H$ on ${^{-1}\Per(\widetilde{X}/X)}$ given by $-(\pi^\ast H)\cdot \ch_1+i\ch_0$.
\item Let $Z_H$ denote the central charge of slope stability with respect to $H$ on $\Coh(X)$ given by $-H\cdot \ch_1+i\ch_0$.

\end{itemize}

\begin{corollary}\label{3}
If $E \in  {^{-1}\Per(\widetilde{X}/X)}$ is $Z_{\pi^\ast H}$-semistable,
then $\mathbf{R}\pi_\ast E$ is a $Z_H$-semistable coherent sheaf.
\end{corollary}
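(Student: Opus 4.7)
The statement is a direct application of Proposition \ref{preservess}, so the plan is to verify its two hypotheses with $\mathcal{A} = {^{-1}\Per(\widetilde{X}/X)}$, $Z_\mathcal{A} = Z_{\pi^\ast H}$, $\mathcal{B} = \Coh(X)$, and $Z_\mathcal{B} = Z_H$.

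First I would confirm that both pairs are weak (pre-)stability conditions. For $(Z_H,\Coh(X))$ this is the classical slope stability on the singular surface (its Harder--Narasimhan property holds for the same reason as on smooth surfaces, since $\Coh(X)$ is Noetherian and $\ch_0 \geq 0$ on coherent sheaves). For $(Z_{\pi^\ast H},{^{-1}\Per(\widetilde{X}/X)})$, the function $Z_{\pi^\ast H}$ is a weak stability function because every perverse coherent sheaf has $\ch_0 \geq 0$ (perverse coherent sheaves are $\mathcal{M}$-modules, cf. \cite[Prop.\ 3.2.7, 3.3.1]{VdB04}), and the HN property with respect to $\mu_{\pi^\ast H}$ was established in the lemma following the definition of $\mu_{\pi^\ast H}$.

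Next I verify hypothesis (1), the t-exactness of $\mathbf{R}\pi_\ast$ from ${^{-1}\Per(\widetilde{X}/X)}$ to $\Coh(X)$. This is exactly the content of Bridgeland's Lemma \ref{Bri02} together with the observation right after its statement: the pushforward of any object of ${^{-1}\Per(\widetilde{X}/X)}$ lies in $\Coh(X) \subseteq \rm{D}^b(X)$, so $\mathbf{R}\pi_\ast$ is t-exact.

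For hypothesis (2), I need $Z_{\pi^\ast H} = Z_H \circ \pi_\ast$ as group homomorphisms on $K_{\num}(\widetilde{X})$. Since $\pi$ is birational, $\ch_0(\mathbf{R}\pi_\ast E) = \ch_0(E)$, matching the imaginary parts. For the real parts, because $\pi$ is a crepant resolution the relative Todd class $\operatorname{Td}(T_{\widetilde{X}/X})$ is trivial in the relevant degree, so Grothendieck--Riemann--Roch yields $\pi_\ast \ch_1(E) = \ch_1(\mathbf{R}\pi_\ast E)$, and the projection formula gives
\[
(\pi^\ast H)\cdot \ch_1(E) \;=\; H \cdot \pi_\ast \ch_1(E) \;=\; H \cdot \ch_1(\mathbf{R}\pi_\ast E).
\]
(Equivalently, $Z_{\pi^\ast H}$ vanishes on $\ker \pi_\ast$ since $\ch_0(\mathcal{O}_{C_i}(-1)) = 0$ and $(\pi^\ast H) \cdot C_i = H \cdot \pi_\ast C_i = 0$, so by Lemma \ref{kernel} it factors through $\pi_\ast$, and one checks the factorization agrees with $Z_H$.)

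With both hypotheses verified, Proposition \ref{preservess} immediately gives that for any $Z_{\pi^\ast H}$-semistable $E \in {^{-1}\Per(\widetilde{X}/X)}$, the pushforward $\mathbf{R}\pi_\ast E$ is a $Z_H$-semistable object of $\Coh(X)$, i.e.\ a $\mu_H$-semistable coherent sheaf. I do not anticipate any real obstacle here; the entire argument is the formal verification described above, with the only nontrivial input being the previously established t-exactness (Lemma \ref{Bri02}).
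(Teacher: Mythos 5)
Your proposal is correct and follows exactly the paper's approach: apply Proposition \ref{preservess} with $\mathcal{A}={^{-1}\Per(\widetilde{X}/X)}$, $\mathcal{B}=\Coh(X)$, verifying hypothesis (1) via Lemma \ref{Bri02} and hypothesis (2) via Grothendieck--Riemann--Roch. The extra detail you supply (checking both pairs are weak stability conditions, and the alternative verification via vanishing of $Z_{\pi^\ast H}$ on $\ker\pi_\ast$) is consistent with the paper but not essentially a different route.
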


\begin{proof}
We will apply Proposition \ref{preservess}.
The condition (1) follows from Lemma \ref{Bri02}.

For the condition (2), by  Grothendieck-Riemann-Roch,
for any $E \in {^{-1}\Per(\widetilde{X}/X)}$,
we have $\ch_0(E)=\ch_0(\mathbf{R}\pi_\ast E)$ and $(\pi^\ast H) \cdot \ch_1(E) = H \cdot \ch_1(\mathbf{R}\pi_\ast E)$.
Therefore, $Z_H(\mathbf{R}\pi_\ast E) = Z_{\pi^\ast H}(E)$. 
\end{proof}

Combining Lemma \ref{1} and Corollary \ref{3} shows the following result.

\begin{proposition}\label{pushforward}
The pushforward functor $\mathbf{R}\pi_\ast \colon \mathcal{B}^0 \longrightarrow \Coh^0_H(X)$ is exact.
\end{proposition}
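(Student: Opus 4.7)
The plan is to apply Lemma \ref{1} to the triangulated functor $\mathbf{R}\pi_\ast \colon \mathrm{D}^b(\widetilde{X}) \longrightarrow \mathrm{D}^b(X)$, taking on the source side the torsion pair $(\mathcal{T}^{>0}_{\pi^\ast H, P}, \mathcal{F}^{\leq 0}_{\pi^\ast H, P})$ on ${^{-1}\Per(\widetilde{X}/X)}$ and on the target side the torsion pair $(\mathcal{T}^{>0}_H, \mathcal{F}^{\leq 0}_H)$ on $\Coh(X)$, whose tilts are $\mathcal{B}^0$ and $\Coh^0_H(X)$ respectively. Once $\mathbf{R}\pi_\ast$ is shown to carry each piece of the source torsion pair into the corresponding piece of the target, Lemma \ref{1} will give $\mathbf{R}\pi_\ast(\mathcal{B}^0) \subseteq \Coh^0_H(X)$. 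Since $\mathbf{R}\pi_\ast$ is triangulated and short exact sequences in a heart are precisely distinguished triangles with all three vertices in that heart, the induced functor between abelian hearts is then automatically exact.

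The compatibility at the level of the untilted hearts, namely $\mathbf{R}\pi_\ast({^{-1}\Per(\widetilde{X}/X)}) \subseteq \Coh(X)$, is the consequence of Lemma \ref{Bri02} already noted in the text. What remains to verify is
\[ \mathbf{R}\pi_\ast(\mathcal{T}^{>0}_{\pi^\ast H, P}) \subseteq \mathcal{T}^{>0}_H \quad \text{and} \quad \mathbf{R}\pi_\ast(\mathcal{F}^{\leq 0}_{\pi^\ast H, P}) \subseteq \mathcal{F}^{\leq 0}_H. \]
For the first inclusion, given $T \in \mathcal{T}^{>0}_{\pi^\ast H, P}$, I would use the Harder--Narasimhan property of $\mu_{\pi^\ast H}$-stability on ${^{-1}\Per(\widetilde{X}/X)}$ (established earlier in this section) to present $T$ as an iterated extension of $\mu_{\pi^\ast H}$-semistable perverse coherent sheaves of slopes $>0$. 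Since $\mathbf{R}\pi_\ast$ is exact on perverse coherent sheaves, it turns this into an iterated extension of the corresponding pushforwards in $\Coh(X)$; by Corollary \ref{3}, each such factor is a $\mu_H$-semistable coherent sheaf of the same positive slope. The fact that $\mathcal{T}^{>0}_H$ is extension-closed then forces $\mathbf{R}\pi_\ast T \in \mathcal{T}^{>0}_H$. The second inclusion is proved in essentially the same way, using that $\mathcal{F}^{\leq 0}_H$ is likewise closed under extensions (which is immediate from its defining property on subobjects).

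The heart of the argument is thus entirely concentrated in Corollary \ref{3}, which delivers slope preservation under $\mathbf{R}\pi_\ast$ via the Grothendieck--Riemann--Roch identities $\ch_0(\mathbf{R}\pi_\ast E) = \ch_0(E)$ and $H \cdot \ch_1(\mathbf{R}\pi_\ast E) = (\pi^\ast H) \cdot \ch_1(E)$. Granted that input, the rest is a direct torsion-pair manipulation via Lemma \ref{1}; the only mild subtlety is the appeal to the general principle that a triangulated functor sending a bounded t-structure heart into another is automatically exact as a functor of the abelian hearts.
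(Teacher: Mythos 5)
Your proof is correct and takes essentially the same route as the paper, which simply states that the proposition follows from combining Lemma \ref{1} and Corollary \ref{3}; you have fleshed out the implicit verification (via Harder--Narasimhan filtrations, Corollary \ref{3}, and extension-closure of torsion classes) that $\mathbf{R}\pi_\ast$ respects both torsion pairs, which is exactly what is needed to invoke Lemma \ref{1}.
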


As we are going to show that the stability conditions on $\widetilde{X}$ and on $X$ are compatible via $\pi_\ast$,
the essential surjectivity will be useful.

\begin{proposition}\label{esurj}
The pushforward functor $\mathbf{R}\pi_\ast \colon \mathcal{B}^0\longrightarrow \Coh^0_H(X)$ is essentially surjective.
\end{proposition}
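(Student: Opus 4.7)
Following the strategy of Remark \ref{2}, the plan is to construct a preimage of each $G \in \Coh^0_H(X)$ as the top cohomology of the derived pullback, namely
\[ \widetilde{G} := \mathbf{L}^0_{\mathcal{B}^0} \pi^\ast G = H^0_{\mathcal{B}^0}(\mathbf{L}\pi^\ast G). \]
The two key ingredients I would use are the t-exactness of $\mathbf{R}\pi_\ast \colon \mathcal{B}^0 \to \Coh^0_H(X)$, which extends Proposition \ref{pushforward} from the hearts to the associated bounded t-structures, and the identity $\mathbf{R}\pi_\ast \mathbf{L}\pi^\ast \simeq \id$ from the semiorthogonal decomposition discussed after Lemma \ref{kernel}.

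Since $\mathbf{L}\pi^\ast$ is the left adjoint of the t-exact functor $\mathbf{R}\pi_\ast$, it is right t-exact, so $\mathbf{L}\pi^\ast G$ lies in $D^{\leq 0}_{\mathcal{B}^0}$. The truncation triangle
\[
\tau^{<0}_{\mathcal{B}^0} \mathbf{L}\pi^\ast G \longrightarrow \mathbf{L}\pi^\ast G \longrightarrow \widetilde{G}
\]
then pushes forward under $\mathbf{R}\pi_\ast$ to a triangle with outer terms $\tau^{<0}_{\Coh^0_H(X)} G$ and $\mathbf{R}\pi_\ast \widetilde{G}$. Since $G$ lies in the heart $\Coh^0_H(X)$, the first term is zero, and hence $\mathbf{R}\pi_\ast \widetilde{G} \simeq \mathbf{R}\pi_\ast \mathbf{L}\pi^\ast G \simeq G$, as required.

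The main obstacle is that $\mathbf{L}\pi^\ast G$ is typically unbounded below in $D^-(\widetilde{X})$: already $\mathbf{L}\pi^\ast \mathcal{O}_{x_0}$ fails to lie in $D^b(\widetilde{X})$ because $\mathcal{O}_{x_0}$ does not admit a finite locally free resolution at the singular point. One must therefore either extend the $\mathcal{B}^0$ t-structure beyond $D^b(\widetilde{X})$ to make sense of $H^0_{\mathcal{B}^0}$ on $\mathbf{L}\pi^\ast G$, or proceed more concretely by decomposing $G$ via its tilt triangle $H^{-1}(G)[1] \to G \to H^0(G)$, setting $\widetilde{F} := \mathbf{L}^0_P \pi^\ast H^{-1}(G)$ and $\widetilde{T} := \mathbf{L}^0_P \pi^\ast H^0(G)$ in ${}^{-1}\Per(\widetilde{X}/X)$ (both exist by the essential surjectivity stated in Remark \ref{2}), checking that $\widetilde{F} \in \mathcal{F}^{\leq 0}_{\pi^\ast H,P}$ and $\widetilde{T} \in \mathcal{T}^{>0}_{\pi^\ast H,P}$ by Grothendieck--Riemann--Roch slope preservation together with the vanishing $\ker \mathbf{R}\pi_\ast \cap {}^{-1}\Per(\widetilde{X}/X) = 0$, and finally lifting the extension class of $G$ from $\Ext^2_X(H^0(G), H^{-1}(G))$ to $\Ext^2_{\widetilde{X}}(\widetilde{T}, \widetilde{F})$ via the counit map $\mathbf{L}\pi^\ast H^0(G) \to \widetilde{T}$ together with the adjunction identification $\Ext^2_X(H^0(G), H^{-1}(G)) \simeq \Hom_{\widetilde{X}}(\mathbf{L}\pi^\ast H^0(G), \widetilde{F}[2])$. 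The surjectivity of this $\Ext^2$ lift is the principal technical hurdle in the hands-on approach.
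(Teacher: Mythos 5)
Your first paragraph reproduces the paper's argument exactly: the paper defines $\widetilde{G}:=H^0_{\mathcal{B}^0}(\mathbf{L}\pi^\ast G)$ and applies the computation from Remark \ref{2} (which itself is extracted from the proof of Proposition \ref{preservess}), using the t-exactness of $\mathbf{R}\pi_\ast$ from Proposition \ref{pushforward} plus $\mathbf{R}\pi_\ast\mathbf{L}\pi^\ast\simeq\id$ to conclude $\mathbf{R}\pi_\ast\widetilde{G}\simeq G$. So the core of the proposal is correct and is the paper's proof.

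The ``main obstacle'' you raise in the second half is worth flagging but is not actually a gap, and the hands-on $\Ext^2$-lifting alternative is unnecessary. The point is that, by Proposition \ref{B0}, $\mathcal{B}^0$ is a \emph{single} tilt of $\Coh(\widetilde{X})$ along the torsion pair $(\mathcal{T}_{\mathcal{B}^0},\mathcal{F}_{\mathcal{B}^0})$. A tilted t-structure on ${\rm D}^b$ extends canonically to ${\rm D}^-$ by the same cohomological criteria: $D^{\geq 0}_{\mathcal{B}^0}$ consists of objects $E$ with $H^i(E)=0$ for $i<-1$ and $H^{-1}(E)\in\mathcal{F}_{\mathcal{B}^0}$, and $D^{\leq 0}_{\mathcal{B}^0}$ of objects $E$ (possibly unbounded below) with $H^i(E)=0$ for $i>0$ and $H^0(E)\in\mathcal{T}_{\mathcal{B}^0}$. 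For $\mathbf{L}\pi^\ast G\in {\rm D}^{-}_{\mathrm{coh}}(\widetilde{X})$ (whose standard cohomology is concentrated in degrees $\leq 0$), the truncation $\tau^{\geq 0}_{\mathcal{B}^0}(\mathbf{L}\pi^\ast G)$ therefore has standard cohomology only in degrees $-1,0$ and lands in ${\rm D}^b_{\mathrm{coh}}(\widetilde{X})$, so $H^0_{\mathcal{B}^0}(\mathbf{L}\pi^\ast G)$ is a well-defined object of $\mathcal{B}^0$. No separate construction via $\Ext^2$ lifts is needed, and the ``principal technical hurdle'' you identify there is not something one has to confront: the alternative route would in fact be considerably harder than the direct argument and would require proving a surjectivity of extension classes that the paper never uses.
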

\begin{proof}
Similarly as in Remark \ref{2}, 
by the exactness of $\mathbf{R}\pi_\ast \colon \mathcal{B}^0 \longrightarrow \Coh^0_H(X)$,
we see that for any $E \in \Coh^0_H(X)$,
$\mathbf{R}\pi_\ast \circ H^0_{\mathcal{B}^0} \circ \mathbf{L}\pi^\ast E \simeq E$.
\end{proof}

We are going to construct the pre-stability condition on $\widetilde{X}$ and then show that it descends to the singular surface $X$.
By Section 4, we know that the pair $(Z_{\pi^\ast H,\beta,z}, \mathcal{B}^0)$ gives a Bridgeland stability condition with support property for suitable choices of $\beta$ and $z$.
Note that $Z_{\pi^\ast H ,\beta,z}([\mathcal{O}_{C_i}(-1)])=1+\frac{C_i^2}{2}+\beta \cdot C_i=\beta \cdot C_i$.
Consider the functions $Z_{\pi^\ast H,\epsilon\beta,z}$.
If we assume that $z>\max \{0, -\frac{\beta^2}{2} \}$, then for every $0<\epsilon<<1$, the pair $(Z_{\pi^\ast H,\epsilon\beta,z},\mathcal{B}^0)$ is a stability condition by Section 4 and 5. 

For simplicity we denote \gls{weak-stability-function}.
Note that at the end point $\epsilon = 0$,
we have $Z_{\pi^\ast H,0,z}([\mathcal{O}_{C_i}(-1)])=0$ for all $i$,
and hence the pair \gls{weak-stability-condition} can not be a stability condition. 
However, we claim that it is a weak pre-stability condition.

\begin{proposition}\label{endpoint}
The pair $\sigma_{\widetilde{X}} =(Z_{\widetilde{X}},\mathcal{B}^0)$ forms a weak pre-stability condition.
\end{proposition}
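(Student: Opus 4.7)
The plan is to verify the two defining conditions of a weak pre-stability condition from Definition \ref{stability}: (a) that $Z_{\widetilde{X}}$ is a weak stability function on $\mathcal{B}^0$, and (b) that the Harder--Narasimhan property holds.

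For (a), since $\Im Z_{\widetilde{X}}$ coincides with the imaginary part of $Z_{\pi^\ast H,\epsilon\beta,z}$ for every $\epsilon$, the nonnegativity $\Im Z_{\widetilde{X}}(E) \geq 0$ on $\mathcal{B}^0$ is immediate from Lemma \ref{stabfunc}. The real content is to show that $\Im Z_{\widetilde{X}}(E) = 0$ implies $\Re Z_{\widetilde{X}}(E) \leq 0$. I would reuse the decomposition from the proof of Lemma \ref{stabfunc}, reducing to three subcases: $A[1]$ with $A \in \mathcal{F}_0$; a sheaf $B \in \mathcal{T}^{>0}_{\pi^\ast H}$ supported on $\Pi$ together with a finite set of points; and $C[1]$ with $C \in \mathcal{F}^{\leq 0}_{\pi^\ast H}$ slope-semistable of slope $0$. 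For the first, Grothendieck--Riemann--Roch yields $\ch_2(A) \leq 0$. For the second, $-\ch_2(B) = -\ell(\mathbf{R}^0 \pi_\ast B) \leq 0$. For the third, Bogomolov--Gieseker combined with Hodge index applied to the big and nef class $\pi^\ast H$, together with $z > 0$, yield $\Re Z_{\widetilde{X}}(C[1]) \leq 0$. The inequalities are only weak because equality $\Re Z_{\widetilde{X}}(E) = 0$ is attained on $\mathcal{O}_{C_i}(-1)[1]$, consistent with Lemma \ref{kernel}, which places $[\mathcal{O}_{C_i}(-1)]$ in $\ker \pi_\ast$ and hence in $\ker Z_{\widetilde{X}}$; this is exactly what forces $\sigma_{\widetilde{X}}$ to be weak rather than strict.

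For (b), I would observe that $\Im Z_{\widetilde{X}} = (\pi^\ast H) \cdot \ch_1$ takes values in a discrete subgroup of $\mathbb{R}$, and that the proof of Lemma \ref{HNprop} depends on the central charge only through its imaginary part. Hence Lemma \ref{HNprop} applies verbatim to $Z_{\widetilde{X}}$, and \cite[Proposition B.2]{BM11} then gives the Harder--Narasimhan property exactly as in Corollary \ref{Noeth}. I expect the mildly delicate point in the whole argument to be the third subcase of (a): one must check that the Hodge-index step from Lemma \ref{stabfunc} still closes at $\beta = 0$. This works because $(\pi^\ast H)^2 = H^2 > 0$ makes $\pi^\ast H$ big and nef, so any class orthogonal to $\pi^\ast H$ has non-positive self-intersection, and the assumption $z > 0$ (inherited from $z > \max\{0, -\beta^2/2\}$) absorbs the remaining terms. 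With these cases handled, both conditions hold and $\sigma_{\widetilde{X}}$ is a weak pre-stability condition.
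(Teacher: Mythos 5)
Your proof is correct, and on the Harder--Narasimhan part (b) it matches the paper almost verbatim. On the weak stability function part (a), however, you take a genuinely different route, and it is worth spelling out the difference. The paper treats (a) by a continuity argument: since $(Z_{\pi^\ast H,\epsilon\beta,z},\mathcal{B}^0)$ is a pre-stability condition for every small $\epsilon>0$ by Lemma~\ref{stabfunc}, the strict inequality $\Re Z_{\pi^\ast H,\epsilon\beta,z}(E)<0$ for objects $E$ with $\Im Z_{\pi^\ast H,\epsilon\beta,z}(E)=0$ passes, as $\epsilon\to 0$, to the weak inequality $\Re Z_{\widetilde{X}}(E)\leq 0$. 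You instead re-run the three-piece decomposition $A\in\mathcal{F}_0$, $B\in\mathcal{T}^{>0}_{\pi^\ast H}$, $C\in\mathcal{F}^{\leq 0}_{\pi^\ast H}$ from Lemma~\ref{stabfunc} directly at $\beta=0$, which is legitimate because none of the three estimates actually requires $\beta\cdot C_i>0$: the Grothendieck--Riemann--Roch count still gives $\ch_2(A)\leq 0$, the length argument still gives $\ch_2(B)=\ell(\mathbf{R}^0\pi_\ast B)\geq 0$, and Bogomolov--Gieseker with Hodge index (using only $(\pi^\ast H)^2>0$) together with $z>0$ still gives $\Re Z_{\widetilde{X}}(C[1])<0$. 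The paper's argument is shorter; yours is more informative, since it isolates exactly where the strict inequality degrades to a weak one (the $\mathcal{F}_0$ piece, consistent with $\ker Z_{\widetilde{X}}\cap\mathcal{B}^0$ being generated by the $\mathcal{O}_{C_i}(-1)[1]$ via Lemma~\ref{kernel} and Lemma~\ref{generatorofker}). One small point you elide: \cite[Prop.~B.2]{BM11} is stated for genuine stability functions, so to cite it here one must note, as the paper does, that it extends to weak stability functions; this is routine but should be said rather than folded into ``exactly as in Corollary~\ref{Noeth}'', since that corollary is about a non-weak central charge.
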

\begin{proof}

Let $E$ be an object in $\mathcal{B}^0$.
We note that $Z_{\widetilde{X}}$ is a weak stability function on $\mathcal{B}^0$ as follows.
First, $\Im Z_{\widetilde{X}} (E)=\Im Z_{\pi^\ast H , \beta_\epsilon}(E) \geq 0$. 
If $\Im Z_{\widetilde{X}} (E)=\Im Z_{\pi^\ast H , \beta_\epsilon}(E) = 0$,
then  $\Re Z_{\pi^\ast H , \beta_\epsilon}(E)<0$ for all $\epsilon$,
and by continuity we see that $\Re Z_{\widetilde{X}}(E)\leq 0$.

For the Harder--Narasimhan property,  
one may first note that \cite[Prop. B.2]{BM11} can also be extended to the case of weak stability condition,
as all of them can be deduced from the analogue \cite[Prop. 2.4]{Bri07};
the proof of this version is exactly the same as that of Bridgeland stability conditions. 
Then since $\Im Z_{\widetilde{X}} = \Im Z_{\pi^\ast H,\beta,z}$,
Lemma \ref{HNprop} implies that $(Z_{\widetilde{X}},\mathcal{B}^0)$ admits Harder--Narasimhan property.
\end{proof}

By Lemma \ref{kernel}, $Z_{\widetilde{X}} \colon {\rm K}_{\num}(\widetilde{X}) \longrightarrow \mathbb{C}$ factors through $\pi_\ast \colon {\rm K}_{\num}(\widetilde{X}) \longrightarrow {\rm K}_{\num}(X)$, i.e. there is a function \gls{stability-function-X}  such that $Z_{\widetilde{X}} = Z_X$ $\circ \pi_\ast$. 

\begin{proposition}
The function $Z_X$ gives a stability function on $\Coh^0_H(X)$.
\end{proposition}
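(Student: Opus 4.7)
The strategy is to lift any nonzero $E \in \Coh^0_H(X)$ to an object in $\mathcal{B}^0$ and to transfer the weak-stability inequalities satisfied by $Z_{\widetilde{X}}$ (Proposition \ref{endpoint}) through the compatibility $Z_X \circ \pi_\ast = Z_{\widetilde{X}}$.

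Given a nonzero $E \in \Coh^0_H(X)$, the essential surjectivity of $\mathbf{R}\pi_\ast\colon \mathcal{B}^0 \longrightarrow \Coh^0_H(X)$ established in Proposition \ref{esurj} provides some $\widetilde{E} \in \mathcal{B}^0$ with $\mathbf{R}\pi_\ast \widetilde{E} \cong E$, so in particular $Z_X(E) = Z_{\widetilde{X}}(\widetilde{E})$. Since $Z_{\widetilde{X}}$ is a weak stability function on $\mathcal{B}^0$, we immediately get $\Im Z_X(E) \geq 0$ and, when this vanishes, $\Re Z_X(E) \leq 0$. The content of the proposition therefore reduces to upgrading the latter to a strict inequality; equivalently, that $Z_{\widetilde{X}}(\widetilde{E}) = 0$ implies $\mathbf{R}\pi_\ast \widetilde{E} = 0$.

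To this end, assume $Z_{\widetilde{X}}(\widetilde{E}) = 0$ and decompose $\widetilde{E}$ using the two torsion pairs defining $\mathcal{B}^0$: there are exact triangles $G[1] \to \widetilde{E} \to T$ with $G \in \mathcal{F}_0 = \mathcal{F}_P$ and $T \in \mathcal{T}_0$, and $F[1] \to T \to S$ with $F \in \mathcal{F}^{\leq 0}_{\pi^\ast H}$ and $S \in \mathcal{T}^{>0}_{\pi^\ast H}$. Since $G$ is supported on the exceptional curves, $(\pi^\ast H)\cdot \ch_1(G) = 0$; so $\Im Z_{\widetilde{X}}(\widetilde{E}) = 0$ forces $(\pi^\ast H)\cdot \ch_1(F) = (\pi^\ast H)\cdot \ch_1(S) = 0$. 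This in turn forces $S$ to be torsion with support of dimension zero together with $\Pi$, and forces $F$ to be either zero or a torsion-free $\mu_{\pi^\ast H}$-semistable sheaf of slope zero.

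It then suffices to show that each of $\Re Z_{\widetilde{X}}(G[1])$, $\Re Z_{\widetilde{X}}(F[1])$ and $\Re Z_{\widetilde{X}}(S)$ is nonpositive, and to identify when each can vanish. For $F$, the Bogomolov--Gieseker inequality combined with the Hodge index theorem applied to the nef big divisor $\pi^\ast H$ (noting $(\pi^\ast H)^2 = H^2 > 0$) yields $\ch_1(F)^2 \leq 0$ and hence $\ch_2(F) \leq 0$; since $z > 0$, the vanishing $\ch_2(F) - z\ch_0(F) = 0$ forces $\ch_0(F) = 0$ and then $F = 0$. For $S$, writing $\ch_2(S)$ as the length of $\mathbf{R}\pi_\ast S$ at $x_0$ together with the length of its zero-dimensional part, we get $-\ch_2(S) \leq 0$, with equality forcing $\mathbf{R}\pi_\ast S = 0$ and hence $S \in \mathcal{F}_0$; but $T \in \mathcal{T}_0$ prohibits $T$ from having the nonzero quotient $S \in \mathcal{F}_0$, so $S = 0$. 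For $G$, Grothendieck--Riemann--Roch on the crepant resolution (where the relative Todd class is trivial) gives $\ch_2(G) = -\mathrm{length}(\mathbf{R}^1\pi_\ast G) \leq 0$, with equality exactly when $\mathbf{R}\pi_\ast G = 0$. Summing the three contributions, the hypothesis $\Re Z_{\widetilde{X}}(\widetilde{E}) = 0$ forces $F = S = 0$ and $\mathbf{R}\pi_\ast G = 0$, so $\mathbf{R}\pi_\ast \widetilde{E} \cong \mathbf{R}\pi_\ast G[1] = 0$, contradicting $E \neq 0$.

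The main subtlety is the Bogomolov--Gieseker/Hodge step on the non-ample class $\pi^\ast H$, and the verification that the vanishing of the $S$-contribution really forces $S = 0$ (rather than merely $S \in \ker \mathbf{R}\pi_\ast$), which is where the defining property $\Hom(T,\mathcal{F}_0) = 0$ of the torsion part $\mathcal{T}_0$ is essential.
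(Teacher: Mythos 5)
Your proof is correct and takes essentially the same approach as the paper: lift via the essential surjectivity of $\mathbf{R}\pi_\ast$ (Proposition \ref{esurj}), reduce to showing that $Z_{\widetilde{X}}(\widetilde{E}) = 0$ forces $\mathbf{R}\pi_\ast \widetilde{E} = 0$, and analyze the three contributions $G[1]$, $F[1]$, $S$ of the double-tilt decomposition of $\widetilde{E}$. Your treatment of the $S$-piece is slightly more careful than the paper's: you explicitly rule out the possibility $\mathbf{R}\pi_\ast S = 0$ with $S \neq 0$ by observing that this would force $S \in \mathcal{F}_0$, contradicting $S \in \mathcal{T}_0$ (i.e.\ $\Hom(\mathcal{T}_0,\mathcal{F}_0)=0$), whereas the paper's phrase ``$\Re Z_X(\mathbf{R}\pi_\ast S)<0$'' tacitly assumes $\mathbf{R}\pi_\ast S\neq 0$ without comment.
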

\begin{proof}
Given $E \in \Coh^0_H(X)$,  by Proposition \ref{esurj} we may write $E =\mathbf{R} \pi_\ast \widetilde{E}$ for some $\widetilde{E} \in \mathcal{B}^0$.
Recall that by the definition of $\mathcal{B}^0$,
we may obtain the following diagram,
with $G \in \mathcal{F}_0$ and $T \in \mathcal{T}_0$.

\[
\begin{tikzcd}
                 &             & {F[1]} \arrow[d] \\
{G[1]} \arrow[r] & \widetilde{E} \arrow[r] & T \arrow[d]              \\
                 &             & S             
\end{tikzcd}
\]

Since $Z_{\widetilde{X}}$ is a weak stability function and $Z_X(E)=Z_{\widetilde{X}}(\widetilde{E})$,
it now suffices to show that if $Z_{\widetilde{X}}(\widetilde{E})=0$ then $\mathbf{R}\pi_\ast \widetilde{E}=0$.
Note that $Z_{\widetilde{X}}(\widetilde{E})=0$ is equivalent to that 
$Z_{\widetilde{X}}(G[1])=Z_{\widetilde{X}}(S)=Z_{\widetilde{X}}(F[1])=0$.

First, as $G$ is supported on the exceptional locus $\Pi$,
$\mathbf{R}\pi_\ast (G[1])$ is supported on the singular point of $X$;
as we also know $\mathbf{R}\pi_\ast (G[1]) \in \Coh^0_H(X)$ is a sheaf of finite length, 
we easily see that $Z_{\widetilde{X}}(G[1])=0$ implies that $\mathbf{R}\pi_\ast (G[1])=0$.
Secondly, assume that $Z_{\widetilde{X}}(S) = 0$ and that $S$ is nonzero.
It can be seen that $S$ can only be supported on the union of points and $\Pi$.
Therefore, $\mathbf{R}\pi_\ast (S)$ must be supported on points on $X$ and hence $\Re Z_{\widetilde{X}}(S)= \Re Z_X(\mathbf{R}\pi_\ast S)<0$,
a contradiction.

We then turn to $F=H^{-1}(T) \in \mathcal{F}^{\leq 0}_{\pi^\ast H}$.
Assume that $Z_{\widetilde{X}}(F) = 0$ and that $F$ is nonzero.
It has slope $\mu_{\pi^\ast H }(F) :=  \frac{(\pi^\ast H)  \cdot \ch_1(F)}{ \ch_0(F)} \leq 0$ and $\Im Z_{\widetilde{X}}(F) = (\pi^\ast H)  \cdot \ch_1(F) =0$,
so it has positive rank and the slope is exactly $0$ (otherwise the slope should be $+\infty$).

Note that $F$ is of maximal possible slope in the category $\mathcal{F}^{\leq 0}_{\pi^\ast H}$,
so it must be $\mu_{\pi^\ast H}$-semistable and torsion-free.
Bogomolov-Gieseker shows that $\ch^2_1(F) \geq 2\ch_0(F)\ch_2(F)$.
On the other hand, as $(\pi^\ast H) \cdot \ch_1(F) = 0$, Hodge index theorem gives that $(\ch_1(F))^2 \leq 0$.
Now we obtain the inequality
\begin{equation*}
\begin{split}
0 = \Re Z_{\widetilde{X}}(F) = z \ch_0 (F) - \ch_2(F) \geq \ch_0 (F) ( z - \frac{(\ch_1(F))^2}{ 2 \ch^2_0(F)} ) > 0.
\end{split}
\end{equation*}
which is absurd.

In summary, if $Z_{\widetilde{X}}(\widetilde{E})=0$,
then $\widetilde{E} = G[1],T=0$ and $\mathbf{R}\pi_\ast \widetilde{E} = \mathbf{R}\pi_\ast G[1] =0$ as claimed.
\end{proof}

\begin{proposition}
The pair \gls{sigma-X} is a pre-stability condition.
\end{proposition}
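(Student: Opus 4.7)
The plan is to establish the Harder--Narasimhan property for $(Z_X, \Coh^0_H(X))$ by transferring Harder--Narasimhan filtrations from $\sigma_{\widetilde{X}}$ along the exact, essentially surjective pushforward $\mathbf{R}\pi_\ast \colon \mathcal{B}^0 \longrightarrow \Coh^0_H(X)$. Condition (a) of Definition \ref{stability} is already in hand from the preceding proposition, so only condition (b) remains to be verified.

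Given a nonzero $E \in \Coh^0_H(X)$, I would first use Proposition \ref{esurj} to pick a lift $\widetilde{E} \in \mathcal{B}^0$ with $\mathbf{R}\pi_\ast \widetilde{E} \simeq E$, and then invoke Proposition \ref{endpoint} to obtain a Harder--Narasimhan filtration
\[
0 = \widetilde{E}_0 \subsetneq \widetilde{E}_1 \subsetneq \cdots \subsetneq \widetilde{E}_n = \widetilde{E}
\]
in $\mathcal{B}^0$ for the weak pre-stability condition $\sigma_{\widetilde{X}}$, whose factors $\widetilde{A}_i := \widetilde{E}_i/\widetilde{E}_{i-1}$ are $Z_{\widetilde{X}}$-semistable with strictly decreasing slopes. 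Applying the exact functor $\mathbf{R}\pi_\ast$ (Proposition \ref{pushforward}) yields an ascending filtration of $E$ in $\Coh^0_H(X)$ with factors $A_i := \mathbf{R}\pi_\ast \widetilde{A}_i$, and the compatibility $Z_{\widetilde{X}} = Z_X \circ \pi_\ast$ gives $Z_X(A_i) = Z_{\widetilde{X}}(\widetilde{A}_i)$.

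The proof of the preceding proposition already showed that $\mathbf{R}\pi_\ast \widetilde{A}_i = 0$ whenever $Z_{\widetilde{X}}(\widetilde{A}_i) = 0$, so the next step is to delete the vanishing factors and merge the corresponding steps. The surviving factors then satisfy $\mu_{Z_X}(A_i) = \mu_{Z_{\widetilde{X}}}(\widetilde{A}_i)$ and inherit the strict decrease of slopes from the original filtration.

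The main step is to verify that each surviving $A_i$ is $Z_X$-semistable, and here the plan is to re-use the argument of Proposition \ref{preservess}. If some $A_i$ admitted a $Z_X$-destabilizing quotient, or a nonzero subobject killed by $Z_X$, then the same pullback construction via $\mathbf{L}^0_{\mathcal{B}^0}\pi^\ast$, together with the identity $\mathbf{R}\pi_\ast \mathbf{L}^0_{\mathcal{B}^0}\pi^\ast M \simeq M$ for $M \in \Coh^0_H(X)$ (which follows from $\mathbf{R}\pi_\ast \mathbf{L}\pi^\ast = \id$ and the t-exactness of $\mathbf{R}\pi_\ast$), would produce a $Z_{\widetilde{X}}$-destabilizing sub- or quotient object of $\widetilde{A}_i$, contradicting its semistability. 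The main subtlety, and the point I would check most carefully, is that this argument of Proposition \ref{preservess} uses only the stability-function property of $Z_X$ and t-exactness of $\mathbf{R}\pi_\ast$, and not any HN or support-property assumption on the target—so its application here is not circular with what we are trying to prove.
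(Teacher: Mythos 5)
Your proposal takes essentially the same approach as the paper's: lift $E$ to $\mathcal{B}^0$ by essential surjectivity, take its $\sigma_{\widetilde{X}}$-Harder--Narasimhan filtration, push it forward by the exact functor $\mathbf{R}\pi_\ast$, and appeal to Proposition~\ref{preservess} to see that the factors remain $Z_X$-semistable. Your additional care in deleting the possible top factor with vanishing central charge (whose pushforward vanishes by the preceding proposition) and in flagging that Proposition~\ref{preservess} is used non-circularly here are welcome refinements of exposition rather than a genuinely different argument.
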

\begin{proof}
It remains to prove the Harder--Narasimhan property.
Again, by the essential surjectivity we may write $E = \mathbf{R}\pi_\ast G$ for some $G \in \mathcal{B}^0$.
Then $G$ admits a Harder--Narasimhan filtration $ 0 = G_0 \subset G_1 \subset \cdots \subset G_{m-1} \subset G_m = G$ with the quotients $G_i/G_{i-1}$ $Z_{\widetilde{X}}$-semistable and the slopes (with respect to $Z_{\widetilde{X}}$) are decreasing.
This induces a filtration of $ 0 = E_0 \subset E_1 \subset \cdots \subset E_{m-1} \subset E_m = E$, where $E_i:=\mathbf{R}\pi_\ast G_i$.
Note that the slopes with respect to $Z_X$ are still decreasing.
To check that this is the desired Harder--Narasimhan filtration, we still need the $Z_X$-semistability of the quotients.
As $\mathbf{R}\pi_\ast$ is exact, the quotient $E_i/E_{i-1}= \mathbf{R}\pi_\ast G_i / \mathbf{R}\pi_\ast G_{i-1}$ is isomorphic to $\mathbf{R}\pi_\ast(G_i/G_{i-1})$,
so it suffices to show the following result.
\end{proof}

\begin{corollary}
The pushforward of a $Z_{\widetilde{X}}$-semistable object in $\mathcal{B}^0$ is $Z_X$-semistable.
\end{corollary}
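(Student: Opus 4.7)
The plan is to apply Proposition \ref{preservess} with $(Z_\mathcal{A}, \mathcal{A}) = (Z_{\widetilde{X}}, \mathcal{B}^0)$ and $(Z_\mathcal{B}, \mathcal{B}) = (Z_X, \Coh^0_H(X))$, so that its conclusion becomes exactly the statement we want.

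Both hypotheses of that proposition are already in hand. The t-exactness of $\mathbf{R}\pi_\ast \colon \mathcal{B}^0 \longrightarrow \Coh^0_H(X)$ required in (1) is Proposition \ref{pushforward}. The compatibility $Z_{\widetilde{X}} = Z_X \circ \pi_\ast$ required in (2) is the defining property of $Z_X$: since $Z_{\pi^\ast H,\beta,z}([\mathcal{O}_{C_i}(-1)]) = \beta \cdot C_i$, setting $\beta = 0$ (the endpoint of the deformation path) gives $Z_{\widetilde{X}}([\mathcal{O}_{C_i}(-1)]) = 0$ for every $i$, and by Lemma \ref{kernel} the homomorphism $Z_{\widetilde{X}}$ therefore vanishes on $\ker \pi_\ast$ and descends uniquely to $Z_X$ on $\mathrm{K}_{\num}(X)$.

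The only point that warrants a second look, and is the main thing to verify, is that Proposition \ref{preservess} is stated for weak stability conditions, so one should check that its proof does not secretly rely on the support property, on the strict (as opposed to weak) stability function axiom, or on HN filtrations in the target heart. Inspecting the argument, one sees that it proceeds purely by contradiction from a hypothetical destabilizing quotient (or a zero-central-charge subobject) of $\mathbf{R}\pi_\ast E$, using only the right exactness of $\mathbf{L}^0_{\mathcal{A}}\pi^\ast$ and the identity $\mathbf{R}\pi_\ast \mathbf{L}\pi^\ast = \mathrm{id}$. In particular no HN filtration in $\Coh^0_H(X)$ is ever invoked. This independence is important, since the present corollary is itself the last ingredient in the proof of the HN property of $\sigma_X$ given in the preceding proposition; the fact that Proposition \ref{preservess} does not need HN data on the target rules out any circularity in the overall argument.
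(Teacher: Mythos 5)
Your proof matches the paper's almost verbatim: the paper argues exactly by applying Proposition~\ref{preservess}, with hypothesis~(1) supplied by Proposition~\ref{pushforward} and hypothesis~(2) being the defining property $Z_{\widetilde{X}} = Z_X \circ \pi_\ast$. Your additional remark on non-circularity --- that Proposition~\ref{preservess}, although stated for weak stability conditions, never uses the HN property of the target heart, so it may be invoked while the HN property of $\sigma_X$ is still being established --- is a point the paper leaves implicit; it is correct and worth flagging.
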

\begin{proof}
We can apply Proposition \ref{preservess} directly.
The condition (1) follows from Proposition \ref{pushforward} and the condition (2) is just the definition. 
\end{proof}

At the end of this section,
we are going to prove the surjectivity of the morphism between the set of semistable objects.
We start with the following lemma:

\begin{lemma}\label{converse}
If $E \in \Coh^0_H(X)$ is a $Z_X$-semistable object,
then there exists a $Z_{\widetilde{X}}$-semistable object $\widetilde{E} \in \mathcal{B}^0$ with $\mathbf{R}\pi_\ast \widetilde{E}=E$.
\end{lemma}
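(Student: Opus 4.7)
The plan is to pick an arbitrary lift of $E$ under $\mathbf{R}\pi_\ast$ and then trim off the part lying in $\ker \mathbf{R}\pi_\ast$, using the Harder--Narasimhan filtration with respect to $\sigma_{\widetilde{X}}$. By Proposition \ref{esurj}, I first choose $\widetilde{E}' \in \mathcal{B}^0$ with $\mathbf{R}\pi_\ast \widetilde{E}' = E$, and by Proposition \ref{endpoint} the weak pre-stability condition $\sigma_{\widetilde{X}}$ admits Harder--Narasimhan filtrations, so I write
\[ 0 = \widetilde{E}'_0 \subset \widetilde{E}'_1 \subset \cdots \subset \widetilde{E}'_m = \widetilde{E}' \]
with $Z_{\widetilde{X}}$-semistable factors $A_i := \widetilde{E}'_i/\widetilde{E}'_{i-1}$ of strictly decreasing slopes.

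The key observation is that $\mathbf{R}\pi_\ast A_i = 0$ if and only if $Z_{\widetilde{X}}(A_i) = 0$, the nontrivial direction using that $Z_X$ is a genuine stability function on $\Coh^0_H(X)$ so that any nonzero object has nonzero $Z_X$. Since any such $A_i$ has $\mu_{Z_{\widetilde{X}}}(A_i) = +\infty$ and the HN slopes are strictly decreasing, at most one factor can have zero pushforward, and it must be $A_1$. I then set $\widetilde{E} := \widetilde{E}'/\widetilde{E}'_1$ when $\mathbf{R}\pi_\ast A_1 = 0$, and $\widetilde{E} := \widetilde{E}'$ otherwise; in either case the exactness of $\mathbf{R}\pi_\ast$ (Proposition \ref{pushforward}) gives $\mathbf{R}\pi_\ast \widetilde{E} = E$, and by construction every HN factor of $\widetilde{E}$ has nonzero pushforward.

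To conclude that $\widetilde{E}$ is $Z_{\widetilde{X}}$-semistable, I push its HN filtration along $\mathbf{R}\pi_\ast$. This produces a filtration of $E$ in $\Coh^0_H(X)$ whose successive quotients are all nonzero and $Z_X$-semistable (by the preceding corollary) with the same strictly decreasing slopes as the corresponding HN factors of $\widetilde{E}$. Semistability of $E$ forces the first pushed-forward subobject to have slope $\leq \mu_{Z_X}(E)$ and the last pushed-forward quotient to have slope $\geq \mu_{Z_X}(E)$, which combined with strict decrease collapses the filtration to a single factor, proving $\widetilde{E}$ is semistable.

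The main subtlety I foresee is the distinction between the two ways an HN factor can have slope $+\infty$, namely $Z_{\widetilde{X}}(A_i) = 0$ versus $\Im Z_{\widetilde{X}}(A_i) = 0$ with $\Re Z_{\widetilde{X}}(A_i) < 0$: only the former makes $\mathbf{R}\pi_\ast$ vanish, while both must be counted as slope $+\infty$ in the strict-decrease argument. I plan to handle this uniformly by working at the level of slopes throughout, treating both types as $+\infty$ in the HN ordering and removing only those factors that actually vanish under $\mathbf{R}\pi_\ast$.
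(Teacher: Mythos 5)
Your proof is correct and follows essentially the same route as the paper's: lift $E$, take the Harder--Narasimhan filtration with respect to $\sigma_{\widetilde{X}}$, observe that any factor killed by $\mathbf{R}\pi_\ast$ has slope $+\infty$ and hence must be the first one, and then use semistability of $E$ together with the exactness and slope-compatibility of $\mathbf{R}\pi_\ast$ to see that the remaining filtration has a single step. The only difference is presentational — you trim the infinite-slope factor first and then argue the pushed-forward filtration collapses, whereas the paper argues directly that the filtration of $G$ has at most two steps — and you correctly flag the distinction between $Z_{\widetilde{X}}=0$ and merely $\Im Z_{\widetilde{X}}=0$, which is the one subtle point.
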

\begin{proof}
By Proposition \ref{esurj}, we can first find an object $G \in \mathcal{B}^0$ with $\mathbf{R}\pi_\ast G=E$.
If $G$ is $Z_{\widetilde{X}}$-semistable then the assertion is proved.

We then assume that $G$ is $Z_{\widetilde{X}}$-unstable and consider the Harder--Narasimhan filtration of $G$ with respect to $\sigma_{\widetilde{X}}$.
That is, a sequence of inclusions
\[0=G_0 \subseteq G_1 \subseteq \cdots \subseteq G_n = G\]
with $G_i/G_{i-1} Z_{\widetilde{X}}$-semistable and $\mu_{Z_{\widetilde{X}}}(G_1) > \cdots > \mu_{Z_{\widetilde{X}}}(G_n/G_{n-1})$.

As $E$ is $Z_X$-semistable, $\mathbf{R}\pi_\ast(G_i/G_{i-1})=0$ except for one $i$.
But if $\mathbf{R}\pi_\ast(G_i/G_{i-1})=0$,
then $\mu_{Z_{\widetilde{X}}}(G_i/G_{i-1}) = + \infty$.
Therefore, the filtration can only be a two-step filtration $0 \subseteq G_1 \subseteq G$,
and now we pick $\widetilde{E}:= G/G_1$.
\end{proof}

To prove Theorem \ref{main3},
it thus remains to show that we can choose $\widetilde{E}$ to have a fixed Chern character $\widetilde{v}$.

\begin{lemma}\label{cartan}
Consider a linear combination $\displaystyle \sum_i a_iC_i$ with $a_i \in \mathbb{R}$.
If some of the $a_i$ are negative,
then there exists $k$ such that $a_k <0 $ and $\displaystyle C_k \cdot \sum_i a_iC_i >0$.
\end{lemma}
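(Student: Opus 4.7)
The plan is to prove the lemma by contradiction. Suppose, aiming for a contradiction, that for every index $k$ with $a_k<0$ one has $C_k\cdot D\leq 0$, where $D:=\sum_i a_iC_i$. I would then decompose $D$ into its positive and negative parts, writing $D=D_{+}-D_{-}$ with
\[
D_{+}:=\sum_{a_i\geq 0}a_iC_i,\qquad D_{-}:=\sum_{a_i<0}(-a_i)C_i,
\]
both of which are non-negative (effective) combinations of the $C_i$. By hypothesis $D_{-}\neq 0$, so the negative definiteness of the intersection matrix of the exceptional curves (cf.\ \cite[Theorem A.7]{Reid}) gives $D_{-}^2<0$.

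Next I would compute $D_{-}\cdot D$ in two different ways. On the one hand, by the standing assumption,
\[
D_{-}\cdot D=\sum_{a_k<0}(-a_k)\,(C_k\cdot D)\leq 0,
\]
because each coefficient $-a_k$ is strictly positive while each factor $C_k\cdot D$ is non-positive. On the other hand,
\[
D_{-}\cdot D=D_{-}\cdot D_{+}-D_{-}^2.
\]
The key point is that in an ADE configuration the pairwise intersections satisfy $C_i\cdot C_j\in\{0,1\}$ for $i\neq j$, so the product $D_{-}\cdot D_{+}=\sum(-a_i)a_j(C_i\cdot C_j)$, summed over pairs with $a_i<0$ and $a_j\geq 0$ (hence $i\neq j$), is a sum of non-negative terms. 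Combining with $D_{-}^2<0$ gives $D_{-}\cdot D>0$, contradicting the inequality above.

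I expect no serious obstacle here: the whole argument rests on the two structural facts about the Dynkin graph intersection matrix, namely negative definiteness and non-negativity of off-diagonal entries. The only thing to be careful about is the bookkeeping when splitting $D$ into $D_{+}$ and $D_{-}$ — one has to ensure the supports (at the level of indices) are disjoint so that the cross term $D_{-}\cdot D_{+}$ involves only pairs with $i\neq j$, which is where the ADE assumption enters decisively.
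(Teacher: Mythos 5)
Your proof is correct, and it takes a genuinely different (and more elementary) route than the paper's. The paper's argument restricts attention to the subdiagram spanned by the indices with $a_{j_l}<0$, observes that $C_{j_k}\cdot D \geq \sum_l a_{j_l}\,C_{j_k}\cdot C_{j_l}$ because the cross terms with non-negative coefficients only help, and then invokes Carter's result (\cite[Proposition 10.18(iii)]{Car05}) that every entry of the inverse of an ADE intersection matrix is negative: if $\sum_l a_{j_l}\,C_{j_k}\cdot C_{j_l}\leq 0$ for all $k$ then all $a_{j_l}\geq 0$, a contradiction. Your argument avoids the inverse matrix fact entirely: you split $D=D_+-D_-$ with disjoint index supports, compute $D_-\cdot D$ once as $\sum_{a_k<0}(-a_k)(C_k\cdot D)\leq 0$ under the contradiction hypothesis, and once as $D_-\cdot D_+ - D_-^2 > 0$ using $C_i\cdot C_j\geq 0$ for $i\neq j$ and negative definiteness with $D_-\neq 0$. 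Both proofs ultimately use exactly the same two structural inputs (negative definiteness of the intersection form on the exceptional curves and non-negativity of off-diagonal intersections), but yours is self-contained, while the paper's factors the argument through a citation to a property of inverse Cartan matrices. Your version is the standard ``M-matrix''/Perron--Frobenius-style trick and is arguably cleaner; the paper's version has the small advantage of directly exhibiting the sign of the full coefficient vector via $B^{-1}$, which could be reused if a quantitative statement were needed.
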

\begin{proof}
Let $a_{j_1}, a_{j_2},\dots,a_{j_m}$ be the negative coefficients, 
then we have to show that there exists some $j_k$ with $\displaystyle C_{j_k} \cdot \sum_i a_iC_i >0$.

For each $k$, we have
\begin{equation*} 
\begin{split}
\displaystyle C_{j_k} \cdot \sum_i a_iC_i &= \sum_i a_i C_{j_k}\cdot C_i \\
&= (\sum_{i\neq j_1,\dots,j_m}a_i C_{j_k}\cdot C_i ) +\sum^m_{l=1}a_{j_l}C_{j_k}\cdot C_{j_l} \\
&\geq \sum^m_{l=1}a_{j_l}C_{j_k}\cdot C_{j_l} 
\end{split}
\end{equation*}

We now consider the subdiagram generated by $C_{j_i}, \dots C_{j_m}$,
whose intersection matrix $B=(b_{kl})$ is just $(C_{j_k}\cdot C_{j_l} )$.
As a subdiagram of Dynkin diagram of ADE type is a disjoint union of Dynkin diagrams of ADE type,
the matrix $B$ is a direct sum of intersection matrices for exceptional curves of some ADE singularities.

Hence, by \cite[Proposition 10.18(iii)]{Car05}, if  $\displaystyle   \sum^m_{l=1} a_{j_l} C_{j_k} \cdot C_{j_l}\leq 0$ for all $k= 1, \dots m$,
then $a_{j_l} \geq 0$ for all $l$,
which is a contradiction.
This means that there exists some $k$ such that $\displaystyle C_{j_k} \cdot \sum_i a_iC_i >0$.
\end{proof}

We will need the following result about extension by massless object.
\begin{lemma}\label{massless}
Let $\sigma=(Z,\mathcal{A})$ be a weak stability condition. 
Consider a short exact sequence in the heart $\mathcal{A}$:
\[ 0 \longrightarrow A\longrightarrow B \longrightarrow Q  \longrightarrow 0.\]
Assume that $Z(Q)=0$.
\begin{enumerate}
    \item If $B$ is $Z$-semistable, then $A$ is $Z$-semistable.
    \item If $A$ is $Z$-semistable, $Q$ is simple and $\Hom_{\mathcal{A}}(Q,B)= 0$,
    then $B$ is $Z$-semistable.
\end{enumerate}
\end{lemma}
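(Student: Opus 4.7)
The common starting point for both parts is the observation that since $Z(Q) = 0$, additivity on the short exact sequence forces $Z(A) = Z(B)$, and therefore $\mu_Z(A) = \mu_Z(B)$ under the slope convention from Definition \ref{stability}(a) (in particular with the convention $\mu_Z = +\infty$ when $\Im Z = 0$).

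For (i), the plan is immediate: given any nonzero proper subobject $A' \hookrightarrow A$, I would compose with $A \hookrightarrow B$ to view $A'$ as a subobject of $B$, and then $Z$-semistability of $B$ gives $\mu_Z(A') \leq \mu_Z(B) = \mu_Z(A)$. This step is essentially formal and relies only on the identification of slopes above.

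For (ii), I would argue by contradiction. Suppose $B' \hookrightarrow B$ is a nonzero proper subobject with $\mu_Z(B') > \mu_Z(B) = \mu_Z(A)$. Consider the composition $B' \hookrightarrow B \twoheadrightarrow Q$; since $Q$ is simple, its image is either $0$ or $Q$. If the image is $0$, then $B'$ factors through $A$, producing a subobject of $A$ with slope strictly greater than $\mu_Z(A)$, contradicting $Z$-semistability of $A$. If the image is $Q$, let $K := \ker(B' \twoheadrightarrow Q)$, so that $K \hookrightarrow A$ and $Z(K) = Z(B')$ (using $Z(Q) = 0$). If $K \neq 0$, then $\mu_Z(K) = \mu_Z(B') > \mu_Z(A)$ again contradicts $Z$-semistability of $A$; if $K = 0$, then $B' \cong Q$, so the inclusion $B' \hookrightarrow B$ is a nonzero morphism in $\Hom_{\mathcal{A}}(Q, B)$, contrary to hypothesis. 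The assumption $\Hom_{\mathcal{A}}(Q, B) = 0$ enters precisely to exclude this split case.

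The main delicate point is the weak-stability bookkeeping: a hypothetical destabilizer $B'$ may satisfy $Z(B') = 0$, so that $\mu_Z(B') = +\infty$ and the strict inequalities above read $+\infty > \mu_Z(A)$; this is still a genuine contradiction provided $\Im Z(A) > 0$, and the remaining case $\Im Z(B) = 0$ makes $\mu_Z(B) = +\infty$ so that $B$ admits no destabilizing subobject at all. All of this fits within a single clean contradiction argument and does not introduce further obstacles.
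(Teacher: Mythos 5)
Your proof is correct and follows essentially the same strategy as the paper's: part (i) is the same slope-comparison via the inclusion $A' \hookrightarrow B$, and part (ii) is the same case analysis on the image of $B' \hookrightarrow B \twoheadrightarrow Q$, using simplicity of $Q$ and $\Hom(Q,B)=0$ to rule out the split case and semistability of $A$ to handle the rest. Your write-up is somewhat more explicit than the paper's (which compresses the case analysis into the single assertion that $f$ cannot be injective), but the underlying argument is identical.
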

\begin{proof}
For (i), if $B$ is $Z$-semistable, then for any nonzero subobject $E\subseteq A$, 
it is also a subobject of $B$,
and hence $\mu_Z(E) \leq \mu_Z(B)= \mu_Z(A)$.

For (ii), we assume that $A$ is $Z$-semistable.
Let $E \subseteq B$ be a nonzero subobject and let $f$ be the composition $ E \hookrightarrow B \longrightarrow Q$.

By our assumptions, $f$ can not be injective.
Then we have a short exact sequence 
$$0 \longrightarrow \ker f\longrightarrow A \longrightarrow B/E  \longrightarrow 0,$$
and hence $\mu_Z(E) = \mu_Z(\ker f) \leq \mu_Z(A) = \mu_Z(B)$. 
\end{proof}

Let $\mathcal{M}_{\sigma}(v)$ be the set of $\sigma$-semistable objects with Chern character $v$.
\begin{lemma}\label{widetildev}
Given a class $v \in \Lambda$, there exists a class $\widetilde{v} \in \widetilde{\Lambda}$ such that
for any $E \in \mathcal{M}_{\sigma_X}(v)$, 
there exists an object $\widetilde{E} \in \mathcal{M}_{\sigma_{\widetilde{X}}}(\widetilde{v})$ such that $\mathbf{R}\pi_\ast \widetilde{E}=E$.
\end{lemma}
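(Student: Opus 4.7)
The plan is to single out a preferred lift class $\widetilde{v} \in \pi_\ast^{-1}(v)$, and then, given any $E \in \mathcal{M}_{\sigma_X}(v)$, modify the lift $\widetilde{E}'$ produced by Lemma \ref{converse} by successive extensions or kernel constructions involving the simple massless objects $\mathcal{O}_{C_i}(-1)[1]$, until the Chern character becomes exactly $\widetilde{v}$. The two key inputs are Lemma \ref{cartan}, which decides when the appropriate modification is available, and Lemma \ref{massless}, which preserves semistability.

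First I would specify $\widetilde{v}$ by requiring $\ch_1(\widetilde{v}) \cdot C_k = 0$ for all $k$. Since the intersection matrix $(C_j \cdot C_k)$ is non-degenerate on the exceptional locus, there is a unique such $\widetilde{v}$ in the coset $\pi_\ast^{-1}(v)$. Using $K_{\widetilde{X}} \cdot C_k = 0$ (crepancy), Grothendieck--Riemann--Roch gives $\ch(\mathcal{O}_{C_k}(-1)) = (0, C_k, 0)$ and hence the formula $\chi(\mathcal{O}_{C_k}(-1), F) = -C_k \cdot \ch_1(F)$ for every $F$. Writing $\ch(\widetilde{E}') = \widetilde{v} + \sum_i a_i [\mathcal{O}_{C_i}(-1)]$ with $a_i \in \mathbb{Z}$ (possible by Lemma \ref{kernel}), we then obtain $\chi(\mathcal{O}_{C_k}(-1), \widetilde{E}') = -C_k \cdot \sum_i a_i C_i$.

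Next I would identify the Ext groups controlling the modifications. Since both $\widetilde{E}'$ and $\mathcal{O}_{C_k}(-1)[1]$ lie in $\mathcal{B}^0$, the positions in the slicing together with Serre duality (using that $\omega_{\widetilde{X}}|_{C_k}$ is trivial) yield $\Ext^2_{{\rm D}^b}(\mathcal{O}_{C_k}(-1), \widetilde{E}') = 0$, $\Ext^0_{{\rm D}^b}(\mathcal{O}_{C_k}(-1), \widetilde{E}') \simeq \Ext^1_{\mathcal{B}^0}(\mathcal{O}_{C_k}(-1)[1], \widetilde{E}')$, and $\Ext^1_{{\rm D}^b}(\mathcal{O}_{C_k}(-1), \widetilde{E}') \simeq \Hom_{\mathcal{B}^0}(\widetilde{E}', \mathcal{O}_{C_k}(-1)[1])^\vee$. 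Thus the sign of $\chi(\mathcal{O}_{C_k}(-1), \widetilde{E}')$ tells us which of these two spaces is nonzero. If some $a_i > 0$, applying Lemma \ref{cartan} to $-\sum_i a_i C_i$ yields $k$ with $a_k > 0$ and $C_k \cdot \sum_i a_i C_i < 0$, so $\chi > 0$ and $\Ext^1_{\mathcal{B}^0}(\mathcal{O}_{C_k}(-1)[1], \widetilde{E}') \neq 0$; a non-split extension $0 \to \widetilde{E}' \to \widetilde{E}'' \to \mathcal{O}_{C_k}(-1)[1] \to 0$ decreases $a_k$ by one. Symmetrically, if some $a_i < 0$, Lemma \ref{cartan} directly yields $k$ with $a_k < 0$ and $\chi < 0$, producing a surjection $\widetilde{E}' \twoheadrightarrow \mathcal{O}_{C_k}(-1)[1]$ whose kernel has $a_k$ increased by one.

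For semistability of the modified object I would use Lemma \ref{massless}. First, $\mu_{\sigma_{\widetilde{X}}}(\widetilde{E}') < +\infty$ since $Z_X$ is a strict stability function on $\Coh^0_H(X)$, so $Z_{\widetilde{X}}(\widetilde{E}') = Z_X(E) \neq 0$, which forces $\Hom_{\mathcal{B}^0}(\mathcal{O}_{C_k}(-1)[1], \widetilde{E}') = 0$ automatically. In the decreasing-$a_k$ case, this vanishing propagates to the non-split extension $\widetilde{E}''$ by simplicity of $\mathcal{O}_{C_k}(-1)[1]$, so Lemma \ref{massless}(ii) applies; in the increasing-$a_k$ case, Lemma \ref{massless}(i) yields semistability of the kernel directly. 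Each step strictly decreases $\sum_i |a_i|$ and preserves $\mathbf{R}\pi_\ast = E$ (since $\mathcal{O}_{C_k}(-1)[1] \in \ker \mathbf{R}\pi_\ast$), so iteration terminates in the desired $\widetilde{E} \in \mathcal{M}_{\sigma_{\widetilde{X}}}(\widetilde{v})$. The main technical obstacle I anticipate is the Ext identifications above, together with the verification that the automatic vanishing $\Hom_{\mathcal{B}^0}(\mathcal{O}_{C_k}(-1)[1], \widetilde{E}') = 0$ genuinely propagates under each modification so that Lemma \ref{massless}(ii) applies at every step.
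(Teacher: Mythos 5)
Your strategy mirrors the paper's: lift $E$ via Lemma \ref{converse}, then iteratively extend by or quotient onto the massless simples $\mathcal{O}_{C_k}(-1)[1]$, using Lemma \ref{cartan} to pick the index and Lemma \ref{massless} to preserve semistability. The fatal problem is the normalisation of $\widetilde{v}$. You require $\ch_1(\widetilde{v}) \cdot C_k = 0$ for all $k$, but the unique class in $\pi_\ast^{-1}(v)$ satisfying this lies only in $\widetilde{\Lambda} \otimes \mathbb{Q}$, not in $\widetilde{\Lambda}$: for an $A_1$ singularity with a single $(-2)$-curve $C_1$, if some lift $\widetilde{v_0}$ has $\ch_1(\widetilde{v_0}) \cdot C_1 = 1$, then the required shift is by $\frac{1}{2}[\mathcal{O}_{C_1}(-1)]$, which is not integral. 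Consequently the coefficients $a_i$ in $\ch(\widetilde{E}') = \widetilde{v} + \sum_i a_i[\mathcal{O}_{C_i}(-1)]$ cannot all be integers --- the appeal to Lemma \ref{kernel} begs the question, since that lemma only expresses the difference of two \emph{integral} classes as an integer combination of the $[\mathcal{O}_{C_i}(-1)]$. Your modifications change each $a_k$ by $\pm 1$, so they preserve the fractional parts and can never reach $a_i = 0$. The paper avoids this by normalising so that the orthogonal projection $\pr_{\ker\pi_\ast}(\widetilde{v}) = \sum_i a_i C_i$ has $0 \leq a_i < 1$ rather than $a_i = 0$; this fundamental-domain target is always achievable by translating $\widetilde{v_0}$ by an integral element of $\ker\pi_\ast$, and a two-phase version of your iteration (first drive all projection coefficients $\leq 0$, then drive them back into $[0,1)$) lands there.

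A secondary issue: your justification for $\Hom_{\mathcal{B}^0}(\mathcal{O}_{C_k}(-1)[1], \widetilde{E}') = 0$ does not go through as stated. That $Z_X$ is a strict stability function gives $Z_X(E) \neq 0$, but this leaves open the case $\Im Z_{\widetilde{X}}(\widetilde{E}') = 0$ with $\Re Z_{\widetilde{X}}(\widetilde{E}') < 0$, in which the slope of $\widetilde{E}'$ is $+\infty$ and it shares phase $1$ with the massless simples, so the vanishing is not automatic from semistability alone. You would need a separate argument (or restriction to the case $\Im Z_X(v) > 0$) to dispatch this.
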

\begin{proof}
Let $\widetilde{v_0}$ be such that $\pi_\ast \widetilde{v_0} =v$.
Then there is a unique class $\widetilde{v} \in \widetilde{v_0}+\ker \pi_\ast$ such that its projection to the kernel can be written as $\displaystyle \pr_{\ker \pi_\ast}(\widetilde{v}) = \sum_i a_i C_i$ with $0\leq a_i <1$ for all $i$;
here $\pr_{\ker \pi_\ast}\colon \widetilde{\Lambda} \longrightarrow \ker \pi_\ast$ is the orthogonal projection to the sublattice $\ker \pi_\ast$.

It then suffices to show that there exists a $Z_{\widetilde{X}}$-semistable object $\widetilde{E}$ of class $\widetilde{v}$ such that $\mathbf{R}\pi_\ast \widetilde{E}=E$.
By Lemma \ref{converse}, there exists a $Z_{\widetilde{X}}$-semistable object $E' \in \mathcal{B}^0$ such that $\mathbf{R}\pi_\ast E'=E$.
We write $\displaystyle \pr_{\ker \pi_\ast}(\ch_1(E')) = \sum_i b_i C_i$.
Note that the $b_i$ might be rational numbers, not necessarily integers.

By Hirzebruch-Riemann-Roch, $\chi(\mathcal{O}_{C_i}(-1)[1],E')= C_i\cdot \ch_1(E')$ for any $i$.
Note that by semistability and Serre duality $\Hom(\mathcal{O}_{C_i}(-1)[1],E'[j]) = 0$ unless $j=1,2$,
and hence  $$\chi(\mathcal{O}_{C_i}(-1)[1],E')= \dim \Hom(E',\mathcal{O}_{C_i}(-1)[1]) - \dim \Ext^1(\mathcal{O}_{C_i}(-1)[1], E).$$

Now if $\ch(E') \neq \widetilde{v}$, then for some $j$, we either have $b_j \geq 1$ or some $b_j < 0$.
Assuming that there is some $b_j \geq 1$, 
we see by Lemma \ref{cartan} that there exists $k$ such that $b_k > 0$ and $0 > C_k \cdot \ch_1(E')=\chi(\mathcal{O}_{C_k}(-1)[1],E')$.

This implies that $\dim \Ext^1(\mathcal{O}_{C_k}(-1)[1],E')>0$ and we can consider a nontrivial extension  $0 \longrightarrow E' \longrightarrow F \longrightarrow \mathcal{O}_{C_k}(-1)[1] \longrightarrow 0$ in $\mathcal{B}^0$.

If $\Hom(\mathcal{O}_{C_k}(-1)[1], F)$ is nonzero,
then since $F$ is a nontrivial extension and $\mathcal{O}_{C_k}(-1)[1]$ is simple,
the composition map $(\mathcal{O}_{C_k}(-1)[1] \longrightarrow F \longrightarrow \mathcal{O}_{C_k}(-1)[1])$ must be $0$.
Hence, it factors to give a nonzero morphism $\mathcal{O}_{C_k}(-1)[1] \longrightarrow E'$, which is a contradiction.
Therefore, the conditions of Lemma \ref{massless} are satisfied and so $F$ is $Z_{\widetilde{X}}$-semistable.

Replacing $E'$ by $F$,
we obtain a $Z_{\widetilde{X}}$-semistable object $F$ such that $\mathbf{R}\pi_\ast F=E$ and $\displaystyle \pr_{\ker \pi_\ast}(\ch(F)) = (b_j-1)C_j+\sum_{i\neq j} b_i C_i$.
Repeating this process if necessary,
we reach a $Z_{\widetilde{X}}$-semistable object $E''$ such that $\mathbf{R}\pi_\ast E''=E$ and $\displaystyle \pr_{\ker \pi_\ast}(\ch(F)) = \sum_{i} b'_i C_i$ with $b'_i \leq 0$ for all $i$.

Now, if there exists any $j$ with $b'_j <0$, then Lemma \ref{cartan} implies that there exists some $l$ such that $b'_l<0$ and $\chi(\mathcal{O}_{C_l}(-1)[1],E'')>0$.

Then $\dim \Hom(E'',\mathcal{O}_{C_l}(-1)[1])$ must be positive,
and hence there exists a nonzero morphism $f\colon E''\longrightarrow \mathcal{O}_{C_l}(-1)[1]$ and $\displaystyle \pr_{\ker \pi_\ast}(\ch(\ker f)) = (b'_l+1)C_l+\sum_{i\neq l} b_i C_i$.
Note that by Lemma \ref{massless}(i), $\ker f$ is still $Z_{\widetilde{X}}$-semistable.

Replacing $E''$ by $\ker f$ and repeating this process if necessary, we obtain a subobject $\widetilde{E}\subseteq E''$ so that $\displaystyle \pr_{\ker \pi_\ast}(\ch(\widetilde{E}))= \sum_i a_i C_i$ with $0\leq a_i<1$ for all $i$,
that is $\widetilde{E}$ is of class $\widetilde{v}$.
\end{proof}

Combining the lemmas above, we can prove the following result.
\begin{theorem}\label{surjmoduli}
For any class $v \in \Lambda$, there exists a class $\widetilde{v} \in \widetilde{\Lambda}$ such that there is a surjective map $\pi_\ast \colon \mathcal{M}_{\sigma_{\widetilde{X}}}(\widetilde{v}) \longrightarrow \mathcal{M}_{\sigma_X}(v)$.
\end{theorem}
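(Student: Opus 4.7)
The plan is to assemble the theorem directly from the preceding lemmas, with essentially no new work needed. The map $\pi_\ast \colon \mathcal{M}_{\sigma_{\widetilde X}}(\widetilde v) \to \mathcal{M}_{\sigma_X}(v)$ is already well-defined on objects once we have chosen $\widetilde v$ with $\pi_\ast \widetilde v = v$: the Corollary following the exactness of $\mathbf{R}\pi_\ast \colon \mathcal{B}^0 \to \mathrm{Coh}^0_H(X)$ says that pushforward of a $Z_{\widetilde X}$-semistable object is $Z_X$-semistable, and Lemma \ref{kernel} guarantees a class $\widetilde v$ with $\pi_\ast \widetilde v = v$ exists (in fact, surjectivity of $\pi_\ast$ on numerical Grothendieck groups is already shown there).

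First I would pin down the choice of $\widetilde v$. Using the explicit description of $\ker \pi_\ast$ in Lemma \ref{kernel} as the sublattice spanned by the classes $[\mathcal{O}_{C_i}(-1)]$, I fix a lift $\widetilde v_0$ of $v$ and then normalize by subtracting an integer combination of the $[\mathcal{O}_{C_i}(-1)]$ so that the projection onto $\ker \pi_\ast$ has coefficients in $[0,1)$. This is precisely the class $\widetilde v$ constructed in Lemma \ref{widetildev}, and is the unique lift of $v$ with that property.

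Second, I would verify well-definedness of $\pi_\ast \colon \mathcal{M}_{\sigma_{\widetilde X}}(\widetilde v) \to \mathcal{M}_{\sigma_X}(v)$: given $\widetilde E \in \mathcal{M}_{\sigma_{\widetilde X}}(\widetilde v)$, the Corollary on preservation of semistability under pushforward yields $\mathbf{R}\pi_\ast \widetilde E \in \mathcal{M}_{\sigma_X}(\pi_\ast \widetilde v) = \mathcal{M}_{\sigma_X}(v)$, where the equality $\pi_\ast \widetilde v = v$ holds by construction.

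Finally, surjectivity is exactly the content of Lemma \ref{widetildev}: given any $E \in \mathcal{M}_{\sigma_X}(v)$, the lemma produces a $Z_{\widetilde X}$-semistable object $\widetilde E \in \mathcal{B}^0$ of class $\widetilde v$ with $\mathbf{R}\pi_\ast \widetilde E = E$, i.e.\ $\widetilde E \in \mathcal{M}_{\sigma_{\widetilde X}}(\widetilde v)$ with $\pi_\ast \widetilde E = E$. Since the real obstacle — controlling the Chern character while modifying a preimage by extensions and quotients with the massless simples $\mathcal{O}_{C_i}(-1)[1]$ — has already been carried out in Lemma \ref{widetildev} via the combinatorial input of Lemma \ref{cartan} and the preservation of semistability under extension by massless objects in Lemma \ref{massless}, the proof of Theorem \ref{surjmoduli} amounts to citing these three lemmas together with the Corollary. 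In effect, the only non-routine work lies entirely upstream, so the proof itself is a single paragraph of citations.
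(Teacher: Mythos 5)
Your proof is correct and follows exactly the paper's own route: the paper literally prefaces Theorem \ref{surjmoduli} with ``Combining the lemmas above, we can prove the following result'' and gives no separate proof, so assembling well-definedness from the corollary after Proposition \ref{pushforward} together with surjectivity and the choice of $\widetilde{v}$ from Lemma \ref{widetildev} is precisely the intended argument.
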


One may note that by Theorem \ref{surjmoduli}, if $\mathcal{M}_{\sigma_{\widetilde{X}}}(\widetilde{v})$ satisfies boundedness (in the sense of \cite[Definition 9.4]{BLM+}), 
then so does $\mathcal{M}_{\sigma_X}(v)$ (see also Theorem \ref{Final}.

\section{Support property on the singular surface}
In this section, we prove the support property of the stability condition $\sigma_X =(Z_X,\mathcal{B}^0)$.
As above, we want to take advantage of the essential surjectivity of $\mathbf{R} \pi_\ast$.
We claim that if $\sigma_{\widetilde{X}}$ admits the support property with respect to the lattice $\widetilde{\Lambda}/ \ker \pi_\ast$,
then we can obtain an induced quadratic form for the support property on $\sigma_X$.

As in Section 6, we let $\pr \colon {\rm K}_{\num}(\widetilde{X}) \longrightarrow {\rm K}_{\num}(\widetilde{X})/\ker\pi_\ast$ be the projection. 
Note that the Chern character of $\pr E$ is the same as the Chern character of $\mathbf{R}\pi_\ast E$
on $X$.
Since $\widetilde{\Lambda}/ \ker \pi_\ast \simeq \Lambda$, a quadratic form $Q_{\widetilde{X}}$ on $\widetilde{\Lambda}/ \ker \pi_\ast$ can be identified with a quadratic form $Q_X$ on $\Lambda$.
By abuse of notations, we write $Z_X$ for the map $\widetilde{\Lambda}/ \ker \pi_\ast \longrightarrow \mathbb{C}$ that $Z_{\widetilde{X}}$ factors through.

Now given a $\sigma_X$-semistable object $F$ in $\Coh^0_H(X)$,
we can write $F= \mathbf{R} \pi_\ast G$ for some $\sigma_{\widetilde{X}}$-semistable $G$ by Lemma \ref{converse},
and then $Q_X(F)=Q_{\widetilde{X}}(G) \geq 0$.
On the other hand, by the compatibility $Z_X \circ \pi_\ast = Z_{\widetilde{X}}$, 
if $F \in \ker Z_X$
and $F= \mathbf{R} \pi_\ast G$,
then $G$ must be in $\ker Z_{\widetilde{X}}$ and thus $Q_X(F) = Q_{\widetilde{X}}(G) < 0$.

We will extend the argument in Section 5 to construct a quadratic form on $(\widetilde{\Lambda}/ \ker \pi_\ast) \otimes \mathbb{R} \simeq \Lambda \otimes \mathbb{R}$.
First, we note that on the singular surface $X$ we can define the intersection products and Chern classes (see, for example, \cite{Lan25}).

By the openness of the ample cone on $X$, there is a constant $A>0$, 
depending only on the ample divisor $H$, such that $ C^2 + A(H \cdot C)^2 \geq 0$ for any curve $C \subseteq X$.
Now we define a quadratic form on $\widetilde{\Lambda}/ \ker \pi_\ast$ by
\begin{equation}
\gls{quadratic-form-X}
\end{equation}
By abuse of notation, for $E \in \mathcal{B}^0$, we write \[Q_{\widetilde{X}}(E):= Q_{\widetilde{X}}(v_\pi(E)) = \Delta(\pr [E]) + A(\Im Z_{\widetilde{X}}(E))^2.\]

Similarly, we need to verify the followings two statements:
\begin{enumerate}
    \item $Q_{\widetilde{X}}$ is negative definite on $\ker Z_{\widetilde{X}}$.
    \item For any $E \in \mathcal{B}^0$ which is $Z_{\widetilde{X}}$-semistable, we have $Q_{\widetilde{X}}(v_\pi(E)) \geq 0$.
\end{enumerate}

As in Section 5, for any $s\geq 1$, we define the function $Z_{\widetilde{X},s}\colon \widetilde{\Lambda}/ \ker \pi_\ast \longrightarrow \mathbb{C}$ by 
\[Z_{\widetilde{X},s}(E):= Z_{\widetilde{X}}(E) + (s-1) z \ch_0(E)\]
and the pair $\sigma_{\widetilde{X},s}:=(Z_{\widetilde{X},s},\mathcal{B}^0)$.
With the assumptions of Lemma \ref{stabfunc} and $z>0$,
$\sigma_{\widetilde{X},s}$ also gives a weak stability condition for each $s$ and one can easily verify this as in Proposition \ref{endpoint}.

We can immediately prove the condition (i) by showing the following analogue of Lemma \ref{negdef}:

\begin{lemma}\label{negdefpr}
$Q_{\widetilde{X}}$ is negative definite on $\ker Z_{\widetilde{X},s}\subseteq (\widetilde{\Lambda}/ \ker \pi_\ast) \otimes \mathbb{R}$ for all $s \geq 1$.
\end{lemma}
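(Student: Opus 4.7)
The plan is to mimic the proof of Lemma \ref{negdef}, but now working on the base surface $X$. First I would unpack the defining equation $Z_{\widetilde X,s}(x)=0$. By Lemma \ref{kernel} the projection identifies $\widetilde\Lambda/\ker\pi_\ast$ with ${\rm K}_{\num}(X)$, and because $\pi$ is crepant with an ADE exceptional fibre, Grothendieck--Riemann--Roch (as already used in the proof of Lemma \ref{stabfunc}) ensures $\pi_\ast\ch_i = \ch_i\circ\pi_\ast$ for $i=0,1,2$. Thus, writing a real class $x$ in the quotient as $(r,c,d)\in\mathbb R\times N^1(X)_{\mathbb R}\times\mathbb R$, the central charge becomes
\[
Z_{\widetilde X,s}(x) = -d + szr + i\,H\cdot c,
\]
so the kernel condition translates into the two equations $H\cdot c=0$ and $d=szr$.

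Next I would substitute into the quadratic form. Since the imaginary part of $Z_{\widetilde X}$ vanishes on $x$, the $A(\Im Z_{\widetilde X})^2$ term drops out, leaving
\[
Q_{\widetilde X}(x) = \Delta(\pr x) = c^2 - 2rd = c^2 - 2szr^2.
\]
I would then invoke the Hodge index theorem on the normal projective surface $X$: ampleness of $H$ together with $H\cdot c=0$ forces $c^2\le 0$, with equality only when $c=0$ in $N^1(X)_{\mathbb R}$. Combined with the standing hypothesis $z>0$ and $s\ge 1$, this gives $Q_{\widetilde X}(x)\le -2szr^2 \le 0$. If $r\ne 0$ strict negativity is immediate; if $r=0$ then $d=0$ by the real-part equation, and strict negativity of $c^2$ on $H^\perp\setminus\{0\}$ forces $c=0$ numerically, whence $x=0$ in the quotient lattice.

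The only delicate point is intersection-theoretic bookkeeping on the singular surface $X$: one needs Hodge index on $N^1(X)_{\mathbb R}$ for a normal projective surface (classical) and the identification of Chern data across $\pi_\ast$ via GRR with trivial Todd correction (the same ingredient invoked earlier). No new conceptual difficulty arises; the main obstacle is simply to make sure the formalism of intersection products and numerical classes on $X$ is being applied correctly through the isomorphism $\widetilde\Lambda/\ker\pi_\ast\simeq {\rm K}_{\num}(X)$.
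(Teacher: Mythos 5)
Your proof is correct and follows the same basic route as the paper's: translate $Z_{\widetilde X,s}(x)=0$ into $H\cdot c=0$ and $d=szr$, substitute to get $Q_{\widetilde X}(x)=c^2-2szr^2$, and conclude via Hodge index and $z>0$. Two small remarks on the comparison.

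First, you work cleanly in $\mathrm{K}_{\num}(X)\simeq\widetilde\Lambda/\ker\pi_\ast$ with the intersection form on $\NS(X)_{\mathbb R}$ and the ample class $H$, so Hodge index gives negative \emph{definiteness} on $H^\perp$ directly. The paper's proof instead phrases the Hodge index step on $\widetilde X$ with the nef and big class $\pi^\ast H$; there the restriction of the intersection form to $(\pi^\ast H)^\perp$ is only negative \emph{semi}-definite, with null space the span of the $C_i$, and one must remember that these are exactly the $\ch_1$'s killed by $\pi_\ast$. Your formulation avoids having to make that identification explicit and is therefore slightly less error-prone.

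Second, you explicitly separate the cases $r\ne 0$ and $r=0$ to rule out the boundary $Q_{\widetilde X}(x)=0$. The paper's argument reduces to the claim ``it suffices to show $(\ch_1 x)^2<0$'' and then invokes Hodge index, which as stated only gives $(\ch_1 x)^2\le 0$; the paper does not spell out what happens when $\ch_1 x=0$. Your extra sentence (if $r\ne 0$ then $-2szr^2<0$ using $z>0$; if $r=0$ then $d=0$ and nonzero $c$ in $H^\perp$ has $c^2<0$; if all three vanish then $x=0$) closes that small gap, so your write-up is a modest improvement on the paper's proof while following the same strategy.
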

\begin{proof}
Given a nonzero class $x$ in $(\widetilde{\Lambda}/ \ker \pi_\ast) \otimes \mathbb{R}$ and
assume that $Z_{\widetilde{X},s}(x)=0$ for some $s\geq 1$, i.e.
\begin{equation}\label{eqn:ZX=0}
 -\ch_2(x)  + sz \ch_0(x) = 0 \rm{\ and \ } (\pi^\ast H) \cdot \ch_1(x) = 0. 
\end{equation}

As $\ch_2(x) = sz \cdot \ch_0(x)$,
we see that $2 \ch_2(x) \ch_0(x) \geq 0$. 
It then suffices to show that $(\ch_1(x))^2 < 0$. 
By the equations $(\ref{eqn:ZX=0})$, we see that $\ch_1(x)$ is orthogonal to $\pi^\ast H$.
Note that although $\pi^\ast H$ is only nef, 
$(\pi^\ast H)^2 >0$ as $H$ is ample on $X$,
and hence by Hodge index theorem the assertion is proved.
\end{proof}

As for the condition (ii), as in Section 5, we can define the set $$\mathcal{D}_0:= \{E \in \mathcal{B}^0 \, |\, E {\rm \ is\ } Z_{\widetilde{X},s}{\rm {\text -}semistable \ for \ sufficiently \ large \ } s  \}$$  and classify objects in $\mathcal{D}_0$ similarly.
The proof of the following lemma is verbatim the same as Lemma \ref{6.5'}.

\begin{lemma}\label{6.5''}
Given $E$ in the set $\mathcal{D}_0$, then $E$ must be one of the following forms:
\begin{enumerate}
    \item[(1)] $E=H^0(E)$ is a torsion sheaf;
    \item[(2)] $E=H^0(E)$ fits in a short exact sequence $$ 0 \longrightarrow E_{tor} \longrightarrow E \longrightarrow E_{tf} \longrightarrow 0$$
    where $E_{tf}$ is a torsion-free slope semistable sheaf in $\mathcal{T}^{> 0}_{\pi^\ast H}$, and $E_{tor}$ is a (possibly zero) torsion sheaf with $\mathbf{R}^0 \pi_\ast E_{tor}= 0$.  
    \item[(3)] $H^0(E)$ is either $0$ or a sheaf supported on the union of points and $\Pi$, and $H^{-1}(E)$ fits into a short exact sequence
    $$ 0 \longrightarrow G \longrightarrow H^{-1}(E) \longrightarrow F \longrightarrow 0$$
    where $F$ is a torsion-free slope semistable sheaf in $\mathcal{F}^{\leq 0}_{\pi^\ast H}$, and $\mathbf{R}^0 \pi_\ast G = 0$.
    Moreover, $G$ must be $0$ unless $(\pi^\ast H)\cdot \ch_1(F) =0$.
\end{enumerate}
\end{lemma}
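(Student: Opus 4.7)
The plan is to essentially reuse the argument of Lemma \ref{6.5'} after observing that the central charges $Z_{\widetilde{X},s}$ have the same features that made that argument work. Specifically, $\Im Z_{\widetilde{X},s} = \Im Z_{\widetilde{X}} = (\pi^\ast H)\cdot \ch_1$ is independent of $s$, while $\Re Z_{\widetilde{X},s} = -\ch_2 + sz\,\ch_0$, so that for $s \to \infty$ the phase of any nonzero class is controlled in the limit by the sign of $\ch_0$ together with the sign of $(\pi^\ast H)\cdot \ch_1$. This is the only feature of the family $Z_{\pi^\ast H,\beta,sz}$ that was used in Lemma \ref{6.5'}, so the same phase-chasing works here.

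Concretely, I would take $E \in \mathcal{D}_0$ and decompose it via the two torsion pairs underlying $\mathcal{B}^0 = \langle \mathcal{F}_0[1], \mathcal{T}_0\rangle$: we obtain a diagram
\[
\begin{tikzcd}
                 &             & {F[1]} \arrow[d] \\
{G[1]} \arrow[r] & E \arrow[r] & T \arrow[d]      \\
                 &             & S
\end{tikzcd}
\]
with $G \in \mathcal{F}_0$, $F \in \mathcal{F}^{\leq 0}_{\pi^\ast H}$, $S \in \mathcal{T}^{>0}_{\pi^\ast H}$. Then I would run the three cases $\ch_0(E) > 0$, $\ch_0(E) < 0$, and $\ch_0(E)=0$ exactly as before. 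In the first, $\phi_{Z_{\widetilde{X},s}}(E) \to 0$, so the subobject $G[1]$ (of phase $1$, since $G$ is supported on $\Pi$ and contributes nothing to $\Im Z_{\widetilde X}$) and the quotient $F[1]$ must vanish; then a Harder--Narasimhan argument for $\mu_{\pi^\ast H}$ forces the resulting sheaf $S$ into form (2). In the second, $\phi_{Z_{\widetilde{X},s}}(E) \to 1$ forces the quotient $S$ to satisfy $\ch_0(S) = (\pi^\ast H)\cdot \ch_1(S) = 0$, hence to be a torsion sheaf supported on points and $\Pi$; then one handles $F$ by the slope condition, giving form (3). In the third, a sub-case split based on whether $(\pi^\ast H)\cdot \ch_1(E)$ is positive or zero recovers the remaining parts of (1) and (3).

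The argument goes through without modification, because every place the proof of Lemma \ref{6.5'} invoked a fact about $Z_{\pi^\ast H,\beta,sz}$ it used either (a) that $\Im Z$ equals $(\pi^\ast H)\cdot \ch_1$, or (b) that the dominant term in $\Re Z$ for $s$ large is $sz\,\ch_0$, both of which are true for $Z_{\widetilde{X},s}$. No genuinely new obstacle appears; the only thing worth double-checking is that objects supported on the exceptional locus $\Pi$ still sit at phase $1$ with respect to $Z_{\widetilde{X},s}$, which follows because $Z_{\widetilde{X}}$ vanishes on $[\mathcal{O}_{C_i}(-1)]$ and the $\ch_0$-term does not affect torsion sheaves. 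Hence the three listed forms exhaust $\mathcal{D}_0$.
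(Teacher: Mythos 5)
Your proposal is correct and takes essentially the same approach as the paper: the paper itself simply declares that ``the proof of the following lemma is verbatim the same as Lemma \ref{6.5'},'' which is precisely your claim, and your explanation of \emph{why} the argument transfers (same imaginary part, and $sz\,\ch_0$ dominating the real part for $s\gg 1$) is a sound justification for that assertion. One small slip: in the case $\ch_0(E)>0$, $F[1]$ is a subobject of $T$ (and hence of $E$ once $G=0$), not a quotient, but this does not affect the substance of the destabilization argument.
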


We deal with the $Z_{\widetilde{X}}$-semistable objects in $\mathcal{D}_0$ with positive imaginary part and the semistable objects in $\mathcal{B}^0$ with $\Im Z_{\widetilde{X}}(E)=0$ respectively.
\begin{lemma}\label{6.8'}
If $E$ is an object in $\mathcal{D}_0$ with $\Im Z_{\widetilde{X}}(E)>0$, 
then $Q_{\widetilde{X}}(E) \geq 0$.
\end{lemma}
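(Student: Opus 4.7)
The plan is to mirror the three-case analysis of Lemma \ref{mathcalD} in Section 5, using the classification of $\mathcal{D}_0$ from Lemma \ref{6.5''}, while exploiting that we now work modulo $\ker \pi_\ast$. The crucial new input is the following comparison: since $\pi$ is crepant, Grothendieck--Riemann--Roch gives $\ch_i(\pr E) = \pi_\ast \ch_i(E)$ for every $i$; and for any divisor class $\alpha$ on $\widetilde{X}$, writing $\alpha = \pi^\ast(\pi_\ast \alpha) + \sum_i a_i C_i$ and applying the projection formula yields
\[
(\pi_\ast \alpha)^2 = \alpha^2 - \Bigl(\sum_i a_i C_i\Bigr)^2 \geq \alpha^2,
\]
since the intersection matrix of the exceptional curves is negative definite. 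In particular $\Delta(\pr E) \geq \Delta(E)$ for every $E \in \mathcal{B}^0$, the excess being supported on the contracted part.

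In the case $\ch_0(E) > 0$, I would use Lemma \ref{6.5''} to write $E$ as an extension of a slope-semistable torsion-free sheaf $E_{tf}$ by a torsion sheaf $E_{tor}$ supported on $\Pi$ with $\mathbf{R}^0\pi_\ast E_{tor} = 0$. Then $\pi_\ast \ch_1(E_{tor}) = 0$ and $\ch_2(E_{tor}) = -\operatorname{length}(\mathbf{R}^1\pi_\ast E_{tor}) \leq 0$, so the term $-2\ch_0(E_{tf})\,\pi_\ast \ch_2(E_{tor})$ appearing in $\Delta(\pr E)$ is non-negative. Combined with $(\pi_\ast \ch_1(E_{tf}))^2 \geq \ch_1(E_{tf})^2$ and Bogomolov--Gieseker on $E_{tf}$, this gives $\Delta(\pr E) \geq \Delta(E_{tf}) \geq 0$. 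The case $\ch_0(E) < 0$ will be treated analogously: the hypothesis $\Im Z_{\widetilde X}(E) > 0$ forces $(\pi^\ast H) \cdot \ch_1(F) < 0$, so by Lemma \ref{6.5''} we have $G = 0$ and $H^{-1}(E) = F$ is torsion-free slope-semistable, while $\pi_\ast H^0(E)$ is $0$-dimensional and contributes the non-negative term $2\ch_0(F) \cdot \operatorname{length}(\pi_\ast H^0(E))$ to $\Delta(\pr E)$. A direct computation together with Bogomolov--Gieseker on $F$ again yields $\Delta(\pr E) \geq 0$.

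The case $\ch_0(E) = 0$ is where the quadratic term $A(\Im Z_{\widetilde{X}})^2$ enters essentially, and is in fact cleaner than its analogue in Section 5. Here $E$ is a torsion sheaf; since $\Im Z_{\widetilde{X}}(E) = (\pi^\ast H) \cdot \ch_1(E) > 0$, the support of $E$ cannot lie entirely in $\Pi$, so $\pi_\ast \ch_1(E)$ is an honest nonzero effective curve class on $X$. By the projection formula $(\pi^\ast H) \cdot \ch_1(E) = H \cdot \pi_\ast \ch_1(E)$ and the defining property of $A$,
\[
Q_{\widetilde{X}}(E) = (\pi_\ast \ch_1(E))^2 + A\,(H \cdot \pi_\ast \ch_1(E))^2 \geq 0.
\]

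The main obstacle will be careful bookkeeping: identifying the exceptional component of $\ch_1$ in each case, and tracking the signs of the pushforwards of $\ch_2$. Unlike the parallel argument in Section 5, which required the finer constants of Lemmas \ref{constIZ} and \ref{constRZ} extracted from curves on $\widetilde{X}$, here all the exceptional data is killed by $\pr$ and the positivity available on $X$ alone suffices.
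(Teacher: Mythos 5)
Your proof is correct, and it takes a genuinely different route from the paper's. The paper's proof is a two-line argument: it first invokes Lemma \ref{mathcalD} verbatim (whose proof, applied to the set $\mathcal{D}_0$ via the classification of Lemma \ref{6.5''}, yields $\Delta(E) + A_0(\Im Z_{\widetilde{X}}(E))^2 \geq 0$ on $\widetilde{X}$), and then proves the single comparison $\Delta(\pr[E]) \geq \Delta([E])$ by observing that $\pi^\ast\pi_\ast$ is an orthogonal projection on ${\rm K}_{\num}(\widetilde{X})$ whose kernel $\langle[\mathcal{O}_{C_i}(-1)]\rangle$ is negative definite for $\Delta$, so the difference $\Delta(\pr[E]) - \Delta([E]) = -\Delta([E] - \pi^\ast\pi_\ast[E]) \geq 0$. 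You establish exactly the same comparison, just phrased in terms of the divisor decomposition $\alpha = \pi^\ast\pi_\ast\alpha + \sum_i a_i C_i$. The difference is that you then re-run the case analysis of Lemma \ref{mathcalD} entirely in terms of the pushed-forward classes, while the paper treats Lemma \ref{mathcalD} as a black box and transports its output by the $\Delta$-comparison.

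Your version buys a real simplification in the $\ch_0(E)=0$ case: the paper's argument there goes back through Lemma \ref{constIZ} (decomposing the support as $C'+C''$, controlling the exceptional piece via the ADE Cartan matrix, etc.) to get the constant $A_0$ attached to curves on $\widetilde{X}$, whereas you observe that $\pr$ already kills the exceptional piece and the positivity constant $A$ on $X$ — precisely the one appearing in the definition of $Q_{\widetilde{X}}$ — suffices. This also quietly fixes a small notational mismatch in the paper: its proof uses the constant $A_0$ from Section 5, while $Q_{\widetilde{X}}$ is defined with the constant $A$ from $X$; as written, the paper's conclusion strictly needs $A \geq A_0$, whereas your argument works directly with $A$. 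The other two cases ($\ch_0 \gtrless 0$) are handled correctly: in both you reduce to Bogomolov--Gieseker on the torsion-free slope-semistable piece and check that the remaining terms in $\Delta(\pr E)$ (namely $-2\ch_0(E_{tf})\ch_2(E_{tor})$ and $2\ch_0(F)\operatorname{length}(\mathbf{R}^0\pi_\ast H^0(E))$) have the favorable sign, which is exactly right given $\mathbf{R}^0\pi_\ast E_{tor}=0$, resp.\ $\mathbf{R}^1\pi_\ast H^0(E)=0$. Both proofs are valid; the paper's is shorter but leans more heavily on Section 5, while yours is longer but more self-contained and in a sense conceptually cleaner, since the estimates all happen downstairs on $X$.
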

\begin{proof}
Given $E\in \mathcal{D}_0$ with $\Im Z_{\widetilde{X}}(E)>0$, 
the same proof of Lemma \ref{mathcalD} has already shown that $\Delta(E)+A_0 (\Im Z_{\widetilde{X}}(E))^2 \geq 0.$

Note first that $X$ is $\mathbb{Q}$-factorial.
Therefore, we can still define the pullback $\pi^\ast \colon \NS(X) \longrightarrow \NS(\widetilde{X})$,
and $\pi^\ast \pi_\ast \colon \NS(\widetilde{X}) \longrightarrow \NS(\widetilde{X})$ is the orthogonal projection with kernel generated by the curves $[C_i]$,
and the kernel is negative definite with respect to the intersection pairing.

Recall that ${\rm K}_{\num}(\widetilde{X}) \simeq \mathbb{Z} \oplus \NS(\widetilde{X}) \oplus \mathbb{Z}$ and $\pi^\ast \pi_\ast$ preserves $\ch_0$ and $\ch_2$,
so we similarly obtain that $\pi^\ast \pi_\ast \colon {\rm K}_{\num}(\widetilde{X}) \longrightarrow {\rm K}_{\num}(\widetilde{X})$ is a orthogonal projection with kernel generated by the classes $[\mathcal{O}_{C_i}(-1)]$; 
furthermore, the kernel must be negative definite with respect to the quadratic form $\Delta = \ch_1^2-2\ch_0\ch_2$.

Now for any class $[E]$, 
we have $$\Delta ([E]) \leq \Delta (\pi^\ast \pi_\ast [E])+\Delta ([E] - \pi^\ast \pi_\ast [E]) =  \Delta (\pi_\ast [E])+\Delta ([E] - \pi^\ast \pi_\ast [E]).$$ 

As $[E] - \pi^\ast \pi_\ast [E] \in \ker \pi_\ast$, we have $\Delta ([E] - \pi^\ast \pi_\ast [E])\leq0$ by negative definiteness, and hence $\Delta(\pr [E]) = \Delta(\pi_\ast [E]) \geq \Delta([E])$.
To sum up, we have
\[Q_{\widetilde{X}}(E) = \Delta (\pr [E]) + A_0 (\Im Z_{\widetilde{X}}(E))^2 \geq \Delta(E)+A_0 (\Im Z_{\widetilde{X}}(E))^2 \geq 0,\]
which proves the assertion.
\end{proof}

We can also deduce the Bogomolov-Gieseker inequality on $X$.
More precisely, we have the following theorem.
\begin{theorem}\label{B-G on X}
Given a slope semistable torsion-free sheaf $G$ on $X$,
then we claim that $\Delta_X(G) := \ch_1^2(G) - 2 \ch_0(G)\ch_2(G) \geq 0$.
\end{theorem}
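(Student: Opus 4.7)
The plan is to reduce the statement to the classical Bogomolov--Gieseker inequality on the smooth surface $\widetilde X$ via a carefully chosen lift of $G$ to $\widetilde X$. After twisting by a high power of $\mathcal{O}_X(H)$ (which preserves both $\mu_H$-semistability and $\Delta_X$), I may assume $\mu_H(G)>0$, so that $G\in \Coh^0_H(X)$.

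Using Proposition~\ref{esurj} together with a variant of Lemma~\ref{converse} adapted to the path $\sigma_{\widetilde{X},s}$ and the correction procedure of Lemma~\ref{widetildev} (adding or removing $\ker \pi_\ast$-classes), I would produce $\widetilde{E}\in \mathcal{B}^0$ with $\mathbf{R}\pi_\ast\widetilde{E}\simeq G$ that is, moreover, $Z_{\widetilde{X},s}$-semistable for all $s\gg 1$, i.e.\ $\widetilde{E}\in\mathcal{D}_0$. Lemma~\ref{6.5''} then classifies $\widetilde{E}$: since $G$ is a nonzero torsion-free sheaf of positive rank, $\widetilde{E}$ cannot be of type (1) or (3), and must be of type (2), fitting into a short exact sequence
\[0\longrightarrow \widetilde{E}_{tor}\longrightarrow \widetilde{E}\longrightarrow \widetilde{E}_{tf}\longrightarrow 0\]
with $\widetilde{E}_{tf}$ a torsion-free $\mu_{\pi^\ast H}$-semistable coherent sheaf on the smooth surface $\widetilde{X}$, and $\widetilde{E}_{tor}$ a torsion sheaf supported on $\Pi$ with $\mathbf{R}^0\pi_\ast\widetilde{E}_{tor}=0$.

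The classical Bogomolov--Gieseker inequality on the smooth surface $\widetilde{X}$ yields $\Delta(\widetilde{E}_{tf})\geq 0$. To finish, I track Chern characters under $\pi_\ast$: because $\widetilde{E}_{tor}$ is supported on $\Pi$ with $\mathbf{R}^0\pi_\ast\widetilde{E}_{tor}=0$, Grothendieck--Riemann--Roch (with trivial relative Todd class, as $\pi$ is crepant and the $C_i$ are $(-2)$-curves) gives $\pi_\ast\ch_2(\widetilde{E}_{tor})=-\ell$, where $\ell$ denotes the length of $\mathbf R^1\pi_\ast\widetilde{E}_{tor}$. Thus $\pi_\ast[\widetilde{E}_{tf}] = (r, \ch_1(G), \ch_2(G)+\ell)$, so $\Delta_X(\pi_\ast[\widetilde{E}_{tf}])= \Delta_X(G)-2r\ell$. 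Combining this with the $\Delta$-orthogonal decomposition $[\widetilde{E}_{tf}]=\pi^\ast\pi_\ast[\widetilde{E}_{tf}] + v$ (with $v\in\ker \pi_\ast$, on which $\Delta$ is negative definite, as shown in the proof of Lemma~\ref{6.8'}) gives
\[\Delta_X(G)-2r\ell \;=\; \Delta_X\bigl(\pi_\ast[\widetilde{E}_{tf}]\bigr)\;=\;\Delta\bigl(\pi^\ast\pi_\ast[\widetilde{E}_{tf}]\bigr)\;\geq\;\Delta(\widetilde{E}_{tf})\;\geq\;0,\]
and therefore $\Delta_X(G)\geq 2r\ell\geq 0$, as claimed.

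The main obstacle is the first step, namely producing a $Z_{\widetilde{X},s}$-semistable lift of $G$. An arbitrary preimage under $\mathbf{R}\pi_\ast$ need not be semistable on $\widetilde{X}$; the correction must pass through $\ker \pi_\ast$, i.e., through successive extensions by or quotients to the objects $\mathcal{O}_{C_i}(-1)[1]$ (which all have vanishing $Z_{\widetilde{X}}$), in the spirit of the arguments in Lemma~\ref{converse} and Lemma~\ref{widetildev}. Once this lift is in hand, the remaining steps are formal consequences of Grothendieck--Riemann--Roch and the negative definiteness of $\ker\pi_\ast$ under $\Delta$.
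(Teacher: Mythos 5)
Your Chern-character computation in the final paragraph is correct and matches the paper's: using that $\pi_\ast[\widetilde E_{tor}] = (0,0,-\ell)$, that $\Delta_X(\pi_\ast w) = \Delta(\pi^\ast\pi_\ast w)$, and that $\Delta$ is negative semidefinite on $\ker\pi_\ast$ with $\ker\pi_\ast$ being $\Delta$-orthogonal to the image of $\pi^\ast\pi_\ast$, you get $\Delta_X(G) - 2r\ell \geq \Delta(\widetilde E_{tf}) \geq 0$. The paper does the same arithmetic, phrased as $\Delta_X(G) \geq \Delta_X(\mathbf R\pi_\ast(E/K)) \geq \Delta_{\widetilde X}(E/K) \geq 0$.

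The gap, which you flag but do not close, is the first step, and it is a real one. You want a lift $\widetilde E \in \mathcal{D}_0$ of $G$, i.e.\ a lift that is $Z_{\widetilde X,s}$-semistable for \emph{all} $s \gg 1$, and you propose to get this from Proposition~\ref{esurj} together with a ``variant of Lemma~\ref{converse}'' and the procedure of Lemma~\ref{widetildev}. This does not go through as sketched, for two reasons. First, Lemma~\ref{converse} starts from a $Z_X$-semistable object; a $\mu_H$-slope-semistable torsion-free sheaf $G$ (even after twisting so $\mu_H(G)>0$) is not in general $Z_{X,s}$-semistable for $s\gg 1$: the phase comparison for a subsheaf $F$ with $\mu_H(F)=\mu_H(G)$ degenerates to a $\ch_2$-inequality that $\mu_H$-semistability does not control. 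Second, even if for each fixed $s$ one could produce a $Z_{\widetilde X,s}$-semistable lift, the lift may depend on $s$, and you need one object semistable for \emph{all} large $s$ to invoke Lemma~\ref{6.5''}. The correction procedure in Lemma~\ref{widetildev} adjusts the numerical class by $\ker\pi_\ast$ but does not manufacture semistability across the whole ray $s\to\infty$.

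The more fundamental observation is that the detour through $\mathcal{D}_0$ and Lemma~\ref{6.5''} is unnecessary: all you actually use from Lemma~\ref{6.5''}(2) is the short exact sequence $0\to \widetilde E_{tor}\to \widetilde E\to \widetilde E_{tf}\to 0$ with $\widetilde E_{tf}$ torsion-free slope-semistable and $\mathbf R^0\pi_\ast\widetilde E_{tor}=0$, and this can be produced directly from a lift without any appeal to the stability conditions $\sigma_{\widetilde X,s}$. The paper's proof does exactly this: take $E\in{}^{-1}\Per(\widetilde X/X)$ with $\mathbf R\pi_\ast E\simeq G$; then $H^{-1}(E)=0$ because $\mathbf R\pi_\ast H^{-1}(E)[1]$ would be a torsion subsheaf of $G$; one checks $E$ is slope-semistable in ${}^{-1}\Per(\widetilde X/X)$ directly (else a destabilizing subobject pushes forward to destabilize $G$); finally the maximal torsion subsheaf $K\subseteq E$ satisfies $\mathbf R^0\pi_\ast K=0$ (so $K\in\mathcal{F}_P$) and $E/K$ is torsion-free slope-semistable, which is the decomposition you want. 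Replacing your first paragraph with this elementary argument would make your proof complete and essentially identical to the paper's.
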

\begin{proof}
We start with considering a perverse coherent sheaf $E \in {^{-1}}\Per(\widetilde{X}/X)$ such that $G= \mathbf{R} \pi_\ast E$.
By definition, $E$ fits into a triangle $F[1] \longrightarrow E \longrightarrow T$ where $F=H^{-1}(E)$ and $T = H^0(E)$.

First, by considering the pushforward of the triangle $F[1] \longrightarrow E \longrightarrow T$,
we see that $\mathbf{R} \pi_\ast F[1] = 0$; otherwise it will be a torsion subsheaf of $G$.
We may then assume $E=T \in \mathcal{T}_P$.

Then we note that $E$ must be a slope semistable perverse coherent sheaf (that is, the converse of Corollary \ref{3} holds),
otherwise the pushforward of a destabilizing subobject of $E$ gives a destabilizing subobject of $G$.

Now if $E$ is a torsion-free slope semistable coherent sheaf, then Bogomolov-Gieseker on $\widetilde{X}$ shows that $\Delta_{\widetilde{X}}(T) \geq 0$,
and hence by the same computation in Lemma \ref{6.8'},
we see $\Delta_X(G) \geq \Delta_{\widetilde{X}}(T) \geq 0$.

Otherwise, if there is a quotient $f\colon E\longrightarrow Q$ in $\Coh(\widetilde{X})$ with $\mu_{\pi^\ast H}(E)<\mu_{\pi^\ast H}(Q)$.
We then consider the long exact cohomology sequence with respect to perverse coherent sheaf:

\[0 \longrightarrow H^0_P(\ker f) \longrightarrow E \longrightarrow Q
\longrightarrow H^1_P(\ker f)
\longrightarrow 0\]
and then the quotient $H^0_{P}(f) \colon E \longrightarrow E/H^0_P(\ker f)$ in ${^{-1}}\Per(\widetilde{X}/X)$ , which is a contradiction.

We may then assume that there is no quotient sheaf of $E$ with smaller slope.
Since $E$ is not torsion-free slope semistable, the only possibility is that there exists a subobject $K \subseteq E$ in $\Coh(\widetilde{X})$ with $Z_{\pi^\ast H}(K)=0$.

We now pick the maximal torsion subsheaf $K \subseteq E$.
Then the quotient $E/K$ must be a torsion-free slope semistable coherent sheaf.

Note that $\mathbf{R}^0\pi_\ast K$ is a subsheaf of $G$ with rank $0$, so it can only be $0$ and hence $K \in \mathcal{F}_P$.
This gives a short exact sequence in ${^{-1}}\Per(\widetilde{X}/X)$:
\[ 0 \longrightarrow E \longrightarrow E/K\longrightarrow K[1]
\longrightarrow 0.\]

Applying $\mathbf{R}\pi_\ast$ to this exact sequence,
we obtain the short exact sequence in $\Coh(X)$:
\[ 0 \longrightarrow Q \longrightarrow \mathbf{R}\pi_\ast(E/K) \longrightarrow \mathbf{R}\pi_\ast(K[1])
\longrightarrow 0.\]

As $\mathbf{R}\pi_\ast(K[1])$ is supported on the singular point $x_0$, we finally see that
\[\Delta_X(G)\geq \Delta_X(\mathbf{R}\pi_\ast(E/K))\geq \Delta_{\widetilde{X}}(E/K).\]

This proves the assertion as the Bogomolov-Gieseker on $\widetilde{X}$ implies that $\Delta_{\widetilde{X}}(E/K)\geq 0$.
\end{proof}

\begin{lemma}\label{I=0}
If $E \in \mathcal{B}^0$ is a $Z_{\widetilde{X}}$-stable object with $\Im Z_{\widetilde{X}}(E)=0$,
then $Q_{\widetilde{X}}(E) \geq 0$.
\end{lemma}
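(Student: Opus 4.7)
The plan. Since $\Im Z_{\widetilde X}(E)=0$, the quadratic form simplifies to $Q_{\widetilde X}(E)=\Delta(\pr [E])$, which under the identification $\widetilde\Lambda/\ker\pi_\ast\simeq\Lambda$ is just $\Delta_X(\mathbf R\pi_\ast E)$. The aim is to use the structure of $\mathcal{B}^0$ and the $Z_{\widetilde X}$-stability of $E$ to reduce the verification of $\Delta_X(\mathbf R\pi_\ast E)\geq 0$ to three concrete cases, in each of which the inequality is either trivial or follows from Bogomolov--Gieseker.

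First I would extract the filtration associated with the double tilt: by the description $\mathcal B^0=\langle\mathcal F_0[1],\mathcal T_0\rangle$ together with the presentation $\mathcal T_0\subset \Coh^0_{\pi^\ast H}(\widetilde X)=\langle\mathcal T^{>0}_{\pi^\ast H},\mathcal F^{\leq 0}_{\pi^\ast H}[1]\rangle$, one obtains a three-step filtration
\[
0\subseteq G[1]\subseteq K\subseteq E\qquad\text{in }\mathcal B^0
\]
with $G\in\mathcal F_0=\mathcal F_P$, $K/G[1]\simeq F[1]$ for some $F\in\mathcal F^{\leq 0}_{\pi^\ast H}$, and $E/K\simeq S$ for some $S\in\mathcal T^{>0}_{\pi^\ast H}$ (the subobject $K$ arises as the pullback of $F[1]\hookrightarrow E/G[1]$ along $E\twoheadrightarrow E/G[1]$). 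Since $G$ is supported on $\Pi$ one has $\Im Z_{\widetilde X}(G[1])=0$, while $\Im Z_{\widetilde X}(F[1])$ and $\Im Z_{\widetilde X}(S)$ are each non-negative and sum to $\Im Z_{\widetilde X}(E)=0$, forcing all three to vanish. In particular $(\pi^\ast H)\cdot\ch_1(F)=0$ and $S$ is torsion, supported on the union of finitely many points and $\Pi$.

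Next I would exploit stability. Because $\mu_{Z_{\widetilde X}}(E)=+\infty$, any proper nonzero subobject of $E$ in $\mathcal B^0$ must have $\Im Z_{\widetilde X}>0$. Applied to the subobjects $G[1]\subseteq K\subseteq E$, each must equal $0$ or $E$, which leaves three alternatives: $(i)$ $E=S$; $(ii)$ $E=G[1]$; $(iii)$ $E=F[1]$. In cases (i) and (ii) every cohomology sheaf of $\mathbf R\pi_\ast E$ on $X$ is zero-dimensional — supported on $\pi$-images of finitely many points (case (i)) or at the singular point $x_0$ via $\mathbf R^1\pi_\ast G$ (case (ii)) — so $\ch_0(\mathbf R\pi_\ast E)=\ch_1(\mathbf R\pi_\ast E)=0$ and hence $\Delta_X(\mathbf R\pi_\ast E)=0$. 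For case (iii), $F$ is torsion-free (since $\mathcal F^{\leq 0}_{\pi^\ast H}$ is) and $\mu_{\pi^\ast H}$-slope-semistable of slope $0$ (every subsheaf of $F$ already lies in $\mathcal F^{\leq 0}_{\pi^\ast H}$), so the Bogomolov--Gieseker inequality on the smooth surface $\widetilde X$ yields $\Delta([F])\geq 0$. I would then invoke the orthogonal projection argument used in Lemma \ref{6.8'}: $\pi^\ast\pi_\ast$ is the orthogonal projection on $\widetilde\Lambda\otimes\mathbb R$ with respect to the bilinear form associated with $\Delta$, whose kernel is spanned by the $[\mathcal O_{C_i}(-1)]$ and on which $\Delta$ is negative definite, giving $\Delta(\pi_\ast[F])\geq\Delta([F])\geq 0$ and therefore $Q_{\widetilde X}(E)=\Delta_X(\pi_\ast[F])\geq 0$.

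The main obstacle is the case analysis step: one must verify that $K$ is genuinely a subobject of $E$ in $\mathcal B^0$ (not merely a subobject of the quotient $E/G[1]$) and that $\Im Z_{\widetilde X}(F[1])$ and $\Im Z_{\widetilde X}(S)$ are separately non-negative, which depends on the explicit descriptions of $\mathcal T^{>0}_{\pi^\ast H}$ and $\mathcal F^{\leq 0}_{\pi^\ast H}$ inside $\Coh^0_{\pi^\ast H}(\widetilde X)$. Once the filtration and the three cases are secured, the rest is a direct Bogomolov--Gieseker estimate together with the orthogonal projection identity already proved in Lemma \ref{6.8'}.
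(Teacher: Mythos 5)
Your proof is correct and follows essentially the same structure as the paper's: the same three-way decomposition $G[1]\hookrightarrow E\twoheadrightarrow T$, $F[1]\hookrightarrow T\twoheadrightarrow S$, the same reduction (via stability and $\Im Z_{\widetilde X}(E)=0$) to the three cases $E=G[1]$, $E=F[1]$, $E=S$, and the same Bogomolov--Gieseker plus orthogonal-projection estimate in the torsion-free case. You spell out the filtration argument justifying the case split, which the paper leaves implicit, and in the case $E=G[1]$ you observe directly that $\mathbf{R}\pi_\ast E=\mathbf{R}^1\pi_\ast G$ is concentrated at $x_0$ so $\Delta_X(\mathbf{R}\pi_\ast E)=0$, rather than first identifying $E$ with some $\mathcal{O}_{C_i}(-1)[1]$ via Proposition~\ref{simpleobjects} as the paper does; both routes are valid and yours is slightly more self-contained. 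The concern you raise about $\Im Z_{\widetilde X}(F[1])$ and $\Im Z_{\widetilde X}(S)$ being separately non-negative is resolved by Proposition~\ref{endpoint}: $Z_{\widetilde X}$ is a weak stability function on $\mathcal{B}^0$, and both $F[1]$ and $S$ are subquotients of $E$ in $\mathcal{B}^0$.
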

\begin{proof}
Consider $E \in \mathcal{B}^0$ with $E$ being $Z_{\widetilde{X}}$-stable and $\Im Z_{\widetilde{X}}(E) = 0$.
For such $E$, we again take advantage of the decomposition:
\[\begin{tikzcd}
                 &             & {F[1]} \arrow[d] \\
{G[1]} \arrow[r] & E \arrow[r] & T \arrow[d]      \\
                 &             & S               
\end{tikzcd}\]
where $E$ is equal to one of $G[1], F[1],$ and $S$.

If $E=F[1]$, then $Q_{\widetilde{X}}(E)\geq \Delta(\pr [F]) \geq \Delta(F) \geq 0$ as in Lemma \ref{6.8'}.
If $E=G[1]$, then $E$ must be one of $\mathcal{O}_{C_i}(-1)[1]$, so $Q_{\widetilde{X}}(E)\geq \Delta(\pr [E]) = 0$.
If $E=S$, then $\mathbf{R}\pi_\ast E$ can only be supported on points, and hence $Q_{\widetilde{X}}(E)\geq \Delta(\pr [E]) = \Delta_X(\mathbf{R}\pi_\ast E)=0$
\end{proof}

We can now conclude the proof of (ii) and obtain the support property of $\sigma_{\widetilde{X}}$.
\begin{theorem}
The stability condition $\sigma_{\widetilde{X}}=(Z_{\widetilde{X}},\mathcal{B}^0)$ satisfies the support property,
with respect to the lattice $\widetilde{\Lambda}/ \ker \pi_\ast$ and the quadratic form $Q_{\widetilde{X}}$,
and it induces a stability condition $\sigma_X=(Z_X,\Coh^0_H(X))$ and $\Stab(X) \neq \varnothing$.
\end{theorem}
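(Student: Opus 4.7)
The plan is to mirror the structure of Theorem \ref{mainSect5}: first establish the support property of $\sigma_{\widetilde{X}}$ with respect to $Q_{\widetilde{X}}$ on $\widetilde{\Lambda}/\ker\pi_\ast$, and then descend to $\sigma_X$ via the compatibility $Z_X\circ\pi_\ast=Z_{\widetilde{X}}$ together with the essential surjectivity of $\mathbf{R}\pi_\ast$ from Proposition \ref{esurj}.

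Negative definiteness of $Q_{\widetilde{X}}$ on $\ker Z_{\widetilde{X}}$ is Lemma \ref{negdefpr} applied at $s=1$. For the nonnegativity of $Q_{\widetilde{X}}$ on classes of $\sigma_{\widetilde{X},s}$-semistable objects, for every $s\geq 1$, I would proceed by induction on the discrete quantity $\Im Z_{\widetilde{X}}(E)$, exactly as in Theorem \ref{mainSect5}. The base case $\Im Z_{\widetilde{X}}(E)=0$ reduces via Jordan--Hölder and \cite[Lemma A.6]{BMS16} to the case of stable objects, which is Lemma \ref{I=0}. For the minimal positive value of $\Im Z_{\widetilde{X}}$, any destabilizing subobject must have strictly smaller imaginary part, which forces $E\in\mathcal{D}_0$, so the bound follows from Lemma \ref{6.8'}. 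In the inductive step, if $E\notin\mathcal{D}_0$, then wall-crossing (cf. \cite[Theorem 3.5]{BMS16}, \cite[Sect. 9]{Bri08}) yields some $s>1$ for which $E$ is strictly $Z_{\widetilde{X},s}$-semistable; since the deformation only alters the real part, all Jordan--Hölder factors have strictly smaller $\Im Z_{\widetilde{X}}$, so the induction hypothesis combined with \cite[Lemma A.6]{BMS16} gives $Q_{\widetilde{X}}(E)\geq 0$.

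For the descent, the quadratic form $Q_{\widetilde{X}}$ is identified with a form $Q_X$ on $\Lambda\simeq\widetilde{\Lambda}/\ker\pi_\ast$ as discussed at the start of this section. Negative definiteness of $Q_X$ on $\ker Z_X$ is immediate from that of $Q_{\widetilde{X}}$ on $\ker Z_{\widetilde{X}}$ via $Z_X\circ\pi_\ast=Z_{\widetilde{X}}$. For the nonnegativity on semistable classes: given any $\sigma_X$-semistable $F\in\Coh^0_H(X)$, Lemma \ref{converse} produces a $\sigma_{\widetilde{X}}$-semistable $G\in\mathcal{B}^0$ with $\mathbf{R}\pi_\ast G=F$, so $Q_X([F])=Q_{\widetilde{X}}(v_\pi(G))\geq 0$ by the first part. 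This upgrades the pre-stability condition $\sigma_X$ from Section 6 to a full Bridgeland stability condition, and hence $\Stab(X)\neq\varnothing$.

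The main subtlety will be ensuring that the wall-crossing induction for $\sigma_{\widetilde{X}}$ really does terminate as in the smooth-surface setting: one needs that Jordan--Hölder factors arising from a strict $\sigma_{\widetilde{X},s}$-semistability with $s>1$ have strictly smaller $\Im Z_{\widetilde{X}}$. This relies on the fact that $\Im Z_{\widetilde{X},s}$ is independent of $s$, so all walls lie in the real direction --- the very same mechanism that made Theorem \ref{mainSect5} go through.
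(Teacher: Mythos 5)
Your proposal mirrors the paper's proof closely: same lattice $\widetilde{\Lambda}/\ker\pi_\ast$, same quadratic form $Q_{\widetilde{X}}$, same reduction via Lemmas \ref{negdefpr}, \ref{6.8'}, \ref{I=0}, the same induction on $\Im Z_{\widetilde{X}}$ with wall-crossing and \cite[Lemma A.6]{BMS16}, and the same descent to $\sigma_X$ via essential surjectivity of $\mathbf{R}\pi_\ast$ and Lemma \ref{converse}. The structure is correct.

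However, you pass over the one point the paper singles out as requiring extra care. In the base case $\Im Z_{\widetilde{X}}(E)=0$ you appeal to a Jordan--H\"older filtration to reduce from semistable to stable objects, but $\sigma_{\widetilde{X}}$ is only a \emph{weak} stability condition: the category $\mathcal{P}_{\sigma_{\widetilde{X}}}(1)$ of phase-one objects contains genuinely massless objects (e.g.\ $\mathcal{O}_{C_i}(-1)[1]$), and for a weak stability condition there is no automatic guarantee that this category is of finite length, hence no automatic Jordan--H\"older. The paper resolves this by observing that $\Im Z_{\widetilde{X}}=\Im Z_\epsilon$, so $\mathcal{P}_{\sigma_{\widetilde{X}}}(1)=\mathcal{P}_{\sigma_\epsilon}(1)$ as abelian categories, and the latter is of finite length because $\sigma_\epsilon$ is a genuine Bridgeland stability condition. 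You should add exactly this observation; without it, the step ``reduces via Jordan--H\"older\ldots to the case of stable objects'' is not justified for a weak stability condition. Your remark about $\Im Z_{\widetilde{X},s}$ being independent of $s$ is correct and is the right mechanism for the wall-crossing termination in the inductive step, but it does not by itself supply Jordan--H\"older in phase $1$.
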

\begin{proof}
With Lemma \ref{negdefpr}, \ref{6.8'} and \ref{I=0},
the proof is almost exactly the same as Theorem \ref{mainSect5}.
The only thing to notice is that every $Z_{\widetilde{X}}$-semistable object $E$ with $\Im Z_{\widetilde{X}}(E)=0$ admits a Jordan--Hölder filtration so we can pass to the $Z_{\widetilde{X}}$-stable objects similarly.
This is because $\sigma_{\widetilde{X}}$ and $\sigma_\epsilon$ have the same heart $\mathcal{B}^0$ and $\mathcal{P}_{\sigma_{\widetilde{X}}}(1) =\mathcal{P}_{\sigma_\epsilon}(1)$. 

We now see that $Q_{\widetilde{X}}$ is the desired quadratic form on the lattice $\widetilde{\Lambda}/ \ker \pi_\ast \simeq \Lambda$ which gives the support property of $\sigma_{\widetilde{X}}$,
and this induces the support property of $\sigma_X=(Z_X,\Coh^0_H(X))$.
\end{proof}

\section{Moduli spaces}

In this final section, our aim is to prove that the moduli spaces of semistable objects for the (weak) stability conditions $\sigma_\epsilon$, 
$\sigma_{\widetilde{X}}$ and $\sigma_X$ are Artin stacks of finite type over $\mathbb{C}$.
We will achieve this by proving boundedness and openness of the moduli stacks.

\begin{definition}
For a variety $V$, we let $\mathcal{M}_V$ be the 2-functor
\[(Sch/\mathbb{C}) \longrightarrow (Groupoid)\]
which sends $S$ to $\mathcal{M}_V(S):=\{ \mathcal{E} \in \mathrm{D}^b(V\times S) \  | \ \mathcal{E}$ is relatively perfect and $\Ext^i(\mathcal{E}_s, \mathcal{E}_s)=0 $ for any $i<0, s\in S\}$,
where $\mathcal{E}_s:=\mathbf{L}{i_s}^\ast \mathcal{E}$.
\end{definition}

We define some properties for the moduli stacks:
\begin{definition}

Let $\mathcal{M}$ be an arbitrary subgroupoid of $\mathcal{M}_V$.
\begin{enumerate}
    \item[(1)] We say that $\mathcal{M}$ satisfies boundedness if there exists a scheme $S$ of finite type over $\mathbb{C}$ and a family $\mathcal{E} \in \mathrm{D}^b(\widetilde{X} \times S)$ such that for every $E \in \mathcal{M}(\mathbb{C})$,
    $E \simeq \mathcal{E}_s$ for some $s \in S$. 
    \item[(2)] We say that $\mathcal{M}$ satisfies openness if for any scheme $S$ and a family $\mathcal{E} \in \mathrm{D}^b(\widetilde{X} \times S)$, the subset $\{ s\in S \ | \ \mathcal{E}_s \in \mathcal{M}(\mathbb{C}) \} $ is open in $S$.
\end{enumerate}
\end{definition}

Next, we introduce Toda's notion of generic flatness.
For a heart of a bounded t-structure $\mathcal{A}$ on ${\rm D}^b(V)$ which is a tilt of a Noetherian heart,
Polishchuk constructs a heart on $V\times S$ associated to $\mathcal{A}$.

\begin{theorem}[{\normalfont\cite[Theorem 3.3.6]{Pol07}}]\label{Pol07}
Take a smooth projective variety $S$ and an ample line bundle $\mathcal{L}$ on $S$.
Let $\mathcal{A}\subseteq {\rm D}^b(V)$ be a  heart of a bounded t-structure so that $\mathcal{A}$ is a tilt of a Noetherian heart. 

If we set $\mathcal{A}_S:= \{F \in \mathrm{D}^b(V \times S) \ |\ \mathbf{R}p_\ast (F \boxtimes \mathcal{L}^n) \in \mathcal{A} \text{ for all } n \gg 0\}$,
where $p \colon V\times S \longrightarrow V$ is the projection,
then $\mathcal{A}_S$ is a heart of bounded t-structure on ${\rm D}^b(V\times S)$, independent of the choice of $\mathcal{L}$.
\end{theorem}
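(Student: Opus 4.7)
The plan is to construct $\mathcal{A}_S$ by first handling the Noetherian heart $\mathcal{A}_0$ of which $\mathcal{A}$ is a tilt, and then relativizing the tilt. The ample line bundle $\mathcal{L}$ serves as a probe: it lets one reconstruct objects on $V \times S$ from their fiberwise pushforwards to $V$ after sufficiently positive twists along $S$.

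As a warm-up I would treat $\mathcal{A}_0 = \Coh(V)$, expecting $(\Coh(V))_S = \Coh(V \times S)$. The inclusion $\supseteq$ follows from Serre vanishing applied to the projection $q \colon V \times S \to S$: for coherent $F$ and $n \gg 0$, $\mathbf{R}p_\ast(F \boxtimes \mathcal{L}^n)$ is concentrated in degree zero. The reverse inclusion uses the standard fact that $F$ lies in cohomological degree zero iff its twists by sufficiently positive line bundles on $S$ have vanishing higher direct images. More generally, for the given Noetherian heart $\mathcal{A}_0$ one defines $(\mathcal{A}_0)_S$ by the same formula and verifies the heart axioms (closure under extensions, Hom-vanishing between positive and negative shifts, decomposition via truncation); the Noetherian hypothesis ensures the relevant decomposition triangles stabilize.

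To pass from $(\mathcal{A}_0)_S$ to $\mathcal{A}_S$: if $\mathcal{A} = \langle \mathcal{F}[1], \mathcal{T} \rangle$ comes from a torsion pair $(\mathcal{T}, \mathcal{F})$ in $\mathcal{A}_0$, I would define
\[\mathcal{T}_S := \{F \in (\mathcal{A}_0)_S : \mathbf{R}p_\ast(F \boxtimes \mathcal{L}^n) \in \mathcal{T} \text{ for } n \gg 0\},\]
\[\mathcal{F}_S := \{F \in (\mathcal{A}_0)_S : \mathbf{R}p_\ast(F \boxtimes \mathcal{L}^n) \in \mathcal{F} \text{ for } n \gg 0\},\]
and show $(\mathcal{T}_S, \mathcal{F}_S)$ is a torsion pair in $(\mathcal{A}_0)_S$. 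Semiorthogonality is immediate from that of $(\mathcal{T},\mathcal{F})$ via adjunction. The real work is to exhibit the torsion decomposition $0 \to T \to F \to F' \to 0$ for every $F \in (\mathcal{A}_0)_S$: the fiberwise torsion exact sequences $0 \to T_n \to \mathbf{R}p_\ast(F \boxtimes \mathcal{L}^n) \to F'_n \to 0$ form a compatible family by functoriality, and a stabilization argument (leveraging Noetherianity) should lift them to a single relative decomposition of $F$. One then identifies $\mathcal{A}_S = \langle \mathcal{F}_S[1], \mathcal{T}_S \rangle$ via Theorem \ref{HRStilt}.

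Finally, independence of $\mathcal{L}$: for any two ample line bundles $\mathcal{L}, \mathcal{L}'$ on $S$, sufficiently positive powers admit finite resolutions in terms of direct sums of powers of the other (via global generation and Koszul-type complexes), and applying $\mathbf{R}p_\ast(F \boxtimes -)$ together with closure of $\mathcal{A}$ under extensions shows the two defining conditions are equivalent. The main obstacle, as indicated, is the relative torsion decomposition in the third step: this is precisely where the assumption that $\mathcal{A}$ is a tilt of a \emph{Noetherian} heart is essential, since it ensures that the fiberwise torsion subobjects stabilize and glue into a subobject in the relative category.
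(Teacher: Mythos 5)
This statement is cited from Polishchuk \cite[Theorem 3.3.6]{Pol07}; the paper gives no proof of its own, so there is no in-paper argument to compare against. Your route is also not the one Polishchuk takes: he builds the t-structure on ${\rm D}^b(V\times S)$ affine-locally over $S$ (where no twist by $\mathcal{L}$ is needed and the Abramovich--Polishchuk machinery gives a concrete model of the heart) and then glues over an open affine cover, with Noetherianity controlling the gluing and boundedness; the $\mathcal{L}$-twist description for projective $S$ is derived afterwards as a characterization, which is also what shows independence of $\mathcal{L}$, rather than being used as the construction.

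The genuine gap in your sketch is exactly at the step you flag as the crux, and it is not a detail that "should" work. You claim the fiberwise sequences $0 \to T_n \to \mathbf{R}p_\ast(F\otimes q^\ast\mathcal{L}^n) \to F'_n \to 0$ form "a compatible family by functoriality" that then "stabilizes." But for different $n$ these are unrelated objects of $\mathcal{A}_0$ sitting in no natural direct or inverse system: the only maps available arise from global sections of powers of $\mathcal{L}$, multiply ranks, and are neither monic nor epic, so there is nothing to stabilize, and Noetherianity of $\mathcal{A}_0$ supplies chain conditions, not a gluing mechanism for producing a subobject $T\subset F$ in ${\rm D}^b(V\times S)$ whose twisted pushforwards recover each $T_n$ (note that $F\mapsto \mathbf{R}p_\ast(F\otimes q^\ast\mathcal{L}^n)$ is far from an equivalence, so even a hypothetical "stable" $T_n$ would not determine $T$). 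The same difficulty already undermines the previous step, namely that $(\mathcal{A}_0)_S$ as you define it is closed under extensions and admits truncation triangles; and the "semiorthogonality by adjunction" silently uses joint conservativity of the twist-and-pushforward functors on the putative heart, which is itself part of what must be proven. The Koszul-resolution argument for independence of $\mathcal{L}$ is in the right spirit, but it can only be run once the heart structure is in place.
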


\begin{definition}
Let $\mathcal{A}$ be a heart of a bounded t-structure on ${\rm D}^b(V)$ which is a tilt of a Noetherian heart.

We say that the generic flatness holds for the heart $\mathcal{A}$ if, for every smooth projective variety $S$ and every $\mathcal{E}\in \mathcal{A}_S$, there exists a non-empty open subset $U\subseteq S$ such that for any $s \in U$, we have $\mathcal{E}_s \in \mathcal{A}$.
\end{definition}

For a Bridgeland stability condition $\sigma=(Z,\mathcal{A})$ on $\mathrm{D}^b(V)$ and any numerical class $v$, 
we let $\mathcal{M}_{\sigma}(v)$ be the subgroupoid of $\sigma$-semistable objects of class $v$ of $\mathcal{M}_V$.

\begin{theorem}
Given a variety $V$ and an arbitrary subgroupoid $\mathcal{M}$ of $\mathcal{M}_V$.
\begin{enumerate}
    \item \textnormal{(\cite[Theorem 4.2.1]{Lie06})} $\mathcal{M}_V$ is an Artin stack of locally finite type over $\mathbb{C}$.
    \item \textnormal{(\cite[Lemma 3.4]{Tod08})}
    If $\mathcal{M}$  satisfying boundedness and openness,
    then $\mathcal{M}$ is an Artin stack of finite type over $\mathbb{C}$.
    \item \textnormal{(\cite[Theorem 3.20]{Tod08})} For a Bridgeland stability condition $\sigma=(Z,\mathcal{A})$ with $\mathcal{A}$ Noetherian, if $\mathcal{M}_{\sigma}(v)$ is bounded for every class $v$, and the generic flatness holds for the heart $\mathcal{A}$, 
    then openness holds for every $\mathcal{M}_{\sigma}(v)$.
\end{enumerate}
\end{theorem}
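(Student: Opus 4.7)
The three assertions are cited directly from \cite{Lie06} and \cite{Tod08}, so my plan is to indicate how each is established rather than to re-derive them. For (i), the argument is Lieblich's representability theorem for the stack of universally gluable perfect complexes on a projective variety, proved via Artin's representability criterion; the hypothesis that $\Ext^i(\mathcal{E}_s,\mathcal{E}_s)=0$ for $i<0$ on each fibre rules out negative automorphisms and ensures that $\mathcal{M}_V$ remains an ordinary (rather than higher) stack, while the standard deformation-obstruction theory for complexes provides smooth local charts and the local-finite-type property.

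For (ii), the argument is essentially formal given (i). Openness says that $\mathcal{M}$ is an open substack of $\mathcal{M}_V$, and so by (i) it is itself an Artin stack of locally finite type. Boundedness then produces a scheme $S$ of finite type together with a family $\mathcal{E} \in \mathrm{D}^b(V \times S)$ whose fibres exhaust $\mathcal{M}(\mathbb{C})$; this induces a smooth surjection $S \longrightarrow \mathcal{M}$, which upgrades the locally-finite-type property to finite type.

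The substantive piece is (iii). My plan would be: given a family $\mathcal{E} \in \mathrm{D}^b(V\times S)$ with $\mathcal{E}_{s_0}$ $\sigma$-semistable at a closed point $s_0\in S$, first apply generic flatness to obtain an open neighbourhood $U\subseteq S$ of $s_0$ on which $\mathcal{E}_s \in \mathcal{A}$ for every $s\in U$. Within $U$, one then uses boundedness of the moduli $\mathcal{M}_\sigma(v')$ for each class $v'$ that could occur as a potential HN-factor of $\mathcal{E}_{s_0}$, together with Noetherianity of $\mathcal{A}$, to parameterise possible destabilising subobjects by a bounded relative Quot-type space over $U$; the image of this space in $U$ is constructible, and the complement (the semistable locus) turns out to be open by an upper-semicontinuity argument on the HN-polygon. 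The main technical obstacle, and the reason generic flatness is imposed as a hypothesis rather than derived, is precisely the construction and constructibility of relative Harder--Narasimhan filtrations in the tilted heart $\mathcal{A}$; controlling this constructibility is the core of Toda's argument in \cite{Tod08}.
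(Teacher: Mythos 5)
The paper itself gives no proof of this theorem: all three parts are stated as direct citations (\cite[Theorem 4.2.1]{Lie06}, \cite[Lemma 3.4]{Tod08}, \cite[Theorem 3.20]{Tod08}) and the burden of proof is deferred entirely to the references. Your proposal does the same in spirit, simply adding summaries of the underlying arguments, and those summaries are broadly accurate as descriptions of what Lieblich and Toda actually do.

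One small inaccuracy to flag in part (ii): the morphism $S \longrightarrow \mathcal{M}$ induced by the bounding family $\mathcal{E}$ is not a smooth surjection in general --- there is no reason it should be smooth, and surjectivity on $\mathbb{C}$-points is weaker than surjectivity of stacks. The actual mechanism is more like this: openness shows $\mathcal{M}$ is an open substack of $\mathcal{M}_V$ and hence locally of finite type; one then writes $\mathcal{M}$ as a filtered union of quasi-compact open substacks and uses that $S$ is quasi-compact together with surjectivity on $\mathbb{C}$-points to conclude that $\mathcal{M}$ equals one of these quasi-compact opens, hence is of finite type. Your conclusion is correct, but invoking ``smooth surjection'' overstates what the family provides. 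Everything else in your sketch, including the role of generic flatness and Noetherianity of the heart in part (iii), matches Toda's arguments.
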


We first establish the generic flatness for the heart $\mathcal{B}^0$.

As $\mathcal{B}^0$ is a Noetherian heart by Corollary \ref{Noeth}, we can consider the heart $\mathcal{B}^0_S$ defined in Theorem \ref{Pol07}.
\begin{proposition}\label{genericflat}
The heart $\mathcal{B}^0$ of ${\rm D}^b(\widetilde{X})$ satisfies the generic flatness.
\end{proposition}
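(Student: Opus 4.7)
The plan is to lift generic flatness through the two tiltings defining $\mathcal{B}^0$: first from $\Coh(\widetilde{X})$ up to $^{-1}\Per(\widetilde{X}/X)$, and then to $\mathcal{B}^0$ itself. The base case, generic flatness for the standard heart $\Coh(\widetilde{X})$, is the classical statement that a coherent sheaf on $\widetilde{X}\times S$ is $S$-flat on a dense open subset of $S$.

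For the first tilt, given $\mathcal{E} \in (^{-1}\Per(\widetilde{X}/X))_S$ in the sense of Theorem \ref{Pol07}, the strategy is to first pass to an open $U_1 \subseteq S$ on which the standard cohomology sheaves $H^{0}(\mathcal{E})$ and $H^{-1}(\mathcal{E})$ (living in $\Coh(\widetilde{X}\times S)$) are $S$-flat; this is possible by the base case. By Lemma \ref{Bri02}, for $s \in U_1$ the fiber $\mathcal{E}_s$ belongs to ${^{-1}\Per(\widetilde{X}/X)}$ precisely when $\mathbf{R}^1\pi_\ast H^{0}(\mathcal{E}_s) = 0$, $\mathbf{R}^0\pi_\ast H^{-1}(\mathcal{E}_s) = 0$, and $\Hom(H^{0}(\mathcal{E}_s),\mathcal{O}_{C_i}(-1)) = 0$ for every $i=1,\dots,n$. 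Each of these is an upper-semicontinuous condition on $s$ (by semicontinuity of $\mathbf{R}^i\pi_\ast$ and of $\Hom$ in flat families combined with cohomology and base change), so there exists a non-empty open subset $U_1' \subseteq U_1$ on which all three vanishings hold simultaneously.

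For the second tilt, given $\mathcal{E} \in \mathcal{B}^{0}_{S}$, apply the preceding step to the cohomology objects $H^{0}_{P}(\mathcal{E})$ and $H^{-1}_{P}(\mathcal{E})$ (with respect to the heart $^{-1}\Per(\widetilde{X}/X)$) to reduce to the case that both restrict fiberwise to perverse coherent sheaves on a dense open $U_2 \subseteq S$. It then remains to show that, after possibly shrinking further, $H^{0}_{P}(\mathcal{E})_s \in \mathcal{T}^{>0}_{\pi^\ast H,P}$ and $H^{-1}_{P}(\mathcal{E})_s \in \mathcal{F}^{\leq 0}_{\pi^\ast H,P}$ for every $s$. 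The plan here is to invoke a relative Harder--Narasimhan filtration for the slope $\mu_{\pi^\ast H}$ on $^{-1}\Per(\widetilde{X}/X)$ in families: using the equivalence $^{-1}\Per(\widetilde{X}/X) \simeq \Coh(\mathcal{M})$ for the sheaf of algebras $\mathcal{M}$ on $X$ from \cite{VdB04}, the classical Huybrechts--Lehn theory of Harder--Narasimhan in families transfers to this setting, producing an open subset on which the HN filtration has constant numerical type; this forces the torsion-pair conditions defining $\mathcal{T}^{>0}_{\pi^\ast H,P}$ and $\mathcal{F}^{\leq 0}_{\pi^\ast H,P}$ to hold fiberwise.

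The main obstacle is this last step: establishing the relative HN filtration for $\mu_{\pi^\ast H}$-stability in the category of perverse coherent sheaves, which is not the standard category of coherent sheaves on a flat projective morphism. The argument should be parallel to the classical case once the Noetherian abelian category $\Coh(\mathcal{M})$ is put in place and one checks boundedness of the relevant family of HN factors (which follows because the numerical classes of the factors are bounded by the class of $\mathcal{E}$ together with the slope inequality, and $\Coh(\mathcal{M})$ is of finite type over $X$), but care is required to verify that openness of slope conditions and constancy of HN type hold in this more general relative setting.
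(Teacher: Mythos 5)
Your plan climbs through the double tilt $\Coh(\widetilde{X}) \rightsquigarrow {^{-1}\Per(\widetilde{X}/X)} \rightsquigarrow \mathcal{B}^0$ one level at a time, and the second step forces you into a relative Harder--Narasimhan theory for $\mu_{\pi^\ast H}$-stability inside the category of perverse coherent sheaves. You flag this yourself as ``the main obstacle,'' and it is a genuine gap, not a routine check: transporting Huybrechts--Lehn HN-in-families across the equivalence ${^{-1}\Per(\widetilde{X}/X)}\simeq \Coh(\mathcal{M})$ requires a notion of $S$-flat family of $\mathcal{M}$-modules, boundedness of the family of HN factors, openness of slope inequalities and constancy of HN type in that relative setting, and finally a reconciliation with Polishchuk's heart $({^{-1}\Per(\widetilde{X}/X)})_S$ — none of which is set up here or in the references the paper uses. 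In addition, in your first tilt you invoke upper-semicontinuity of the three vanishing conditions from Lemma \ref{Bri02} to conclude they hold on a nonempty open set, but semicontinuity alone gives openness, not nonemptiness; the link back to the defining condition $\mathbf{R}p_\ast(\mathcal{E}\boxtimes\mathcal{L}^n)\in{^{-1}\Per(\widetilde{X}/X)}$ for $n\gg 0$ is still missing.

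The paper sidesteps the entire perverse layer. By Lemma \ref{doubletilt} and Proposition \ref{B0}, the double tilt $\mathcal{B}^0$ is realized as a \emph{single} tilt of $\Coh(\widetilde{X})$ with torsion pair $(\mathcal{T}_{\mathcal{B}^0},\mathcal{F}_{\mathcal{B}^0})$ where $\mathcal{T}_{\mathcal{B}^0}=\mathcal{T}_0\cap \mathcal{T}^{>0}_{\pi^\ast H}$ and $\mathcal{F}_{\mathcal{B}^0}=\Coh(\widetilde{X})\cap\mathcal{B}^0[-1]$. Starting from $\mathcal{E}\in\mathcal{B}^0_S$, a spectral-sequence argument shows the $\Coh(\widetilde{X}\times S)$-cohomology lives only in degrees $0,-1$; one shrinks to an open $W\subseteq S$ where $H^0(\mathcal{E})$ and $H^{-1}(\mathcal{E})$ are $S$-flat, so $H^i(\mathcal{E})_s = H^i(\mathcal{E}_s)$ there; and then one checks that $H^0(\mathcal{E})_s\in\mathcal{T}_{\mathcal{B}^0}$ and $H^{-1}(\mathcal{E})_s\in\mathcal{F}_{\mathcal{B}^0}$ are open conditions — membership in $\mathcal{T}_0$ is controlled by vanishing of $\Hom(-,\mathcal{O}_{C_i}(-1))$, membership in $\mathcal{T}^{>0}_{\pi^\ast H}$ and $\mathcal{F}^{\leq 0}_{\pi^\ast H}$ by \cite[Lemma 4.7]{Tod08}, and the condition $\mathbf{R}^0\pi_\ast(F|_s)=0$ on the maximal torsion subsheaf $F\subseteq H^{-1}(\mathcal{E})$ by semicontinuity. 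Everything reduces to coherent sheaves on the smooth surface $\widetilde{X}$, with no family of perverse coherent sheaves in sight. You should exploit the single-tilt description of $\mathcal{B}^0$ rather than iterating through $^{-1}\Per(\widetilde{X}/X)$.
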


\begin{proof}
Let $S$ be a smooth projective variety and $\mathcal{L}$ an ample line bundle on $S$.

Let $\mathcal{E} \in \mathcal{B}^0_S$.
By definition, $\mathbf{R}p_\ast (\mathcal{E} \boxtimes \mathcal{L}^n)$ is concentrated in degree $0,-1$, for $n$ large,
and hence we may consider the following spectral sequence (cf. \cite[Lemma 4.7]{Tod08}):
\[ E^{ij}_2= \mathbf{R}^ip_\ast   (H^j(\mathcal{E})\boxtimes \mathcal{L}^n
) \Longrightarrow \mathbf{R}^{i+j}p_\ast
(\mathcal{E}\boxtimes \mathcal{L}^n)
\]
to see that $H^i(\mathcal{E})=0$ if $i\neq 0,-1$.

Now, we take an open subset $W \subseteq S$ where $H^0(\mathcal{E})|_{\widetilde{X}\times W}$ and $H^{-1}(\mathcal{E})|_{\widetilde{X}\times W}$ are both flat over $W$, that is,
$H^0(\mathcal{E})_s=H^0(\mathcal{E}_s)$ and $H^{-1}(\mathcal{E})_s = H^{-1}(\mathcal{E}_s)$ for all $s \in W$.
It then remains to find an open subset $U \subseteq W \subseteq S$ such that for any $s \in U$, we have $H^0(\mathcal{E})_s \in \mathcal{T}_{\mathcal{B}^0}$ and $H^{-1}(\mathcal{E})_s \in \mathcal{F}_{\mathcal{B}^0}$.

For the first condition, being in $\mathcal{T}_0$ is an open condition since $E \in \mathcal{T}_0$ if and only if $\Hom_{\widetilde{X}}(E,\mathcal{O}_{C_i}(-1))=0$ for all $i=1,\dots, n$.
Moreover, by \cite[Lemma 4.7]{Tod08}, being in $\mathcal{T}^{>0}_{\pi^\ast H}$ is also open.
Therefore, we see that being in $\mathcal{T}_{\mathcal{B}^0}=\mathcal{T}_0\cap \mathcal{T}^{>0}_{\pi^\ast H}$ is an open condition.

For the second condition, we consider the maximal torsion subsheaf $F \subseteq H^{-1}(\mathcal{E})$. 
As being torsion-free is open,
we may find an open subset $V \subseteq S$ where  $F|_{\widetilde{X}\times V}$ and $H^{-1}(\mathcal{E})|_{\widetilde{X}\times V}$ are both flat over $V$ and $(H^{-1}(\mathcal{E})/F)|_V$ is torsion-free.
Then on $V$, we have $H^{-1}(\mathcal{E})_s \in \mathcal{F}_{\mathcal{B}^0}$ if and only if $(H^{-1}(\mathcal{E})/F)|_s \in \mathcal{F}^{\leq 0}_{\pi^\ast H}$ and $\mathbf{R}^0\pi_\ast (F|_s)=0$,
and these conditions are both open by \cite[Lemma 4.7]{Tod08} and semicontinuity respectively.
\end{proof}

To prove that $\mathcal{M}_{\sigma_\epsilon}(\widetilde{v})$ is bounded,
we start with the following dual version of \cite[Lemma 5.9]{BM11}.

\begin{lemma}\label{dual of 5.9}
Let $\sigma$ be a stability condition on a triangulated category $\mathcal{D}$ 
and let $E$ be a $\sigma$-semistable object with a Jordan--Hölder filtration of the form $A \hookrightarrow E \twoheadrightarrow B$,
where $A$ is $\sigma$-stable and $B$ is an r-fold iterated self-extension of a $\sigma$-stable object $C$.

Assume that $\Hom(C,E)=0$ and that the classes $[A],[C]$ are linearly independent in ${\rm K}_{\num}(\mathcal{D})$.
Then any stability condition $\sigma'$ sufficiently close to $\sigma$ with $\phi'(A)<\phi'(E)<\phi'(C)$ lies in the set $\{\sigma'  |  E \text{ is } \sigma' \text{-stable} \}$.

In particular, $\sigma$ is in the closure of $\{\sigma'  |  E \text{ is } \sigma' \text{-stable} \}$.
\end{lemma}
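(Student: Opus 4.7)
The plan is a contradiction argument based on classifying the possible classes of a destabilizing subobject. Fix $\sigma'=(Z',\mathcal{P}')$ close to $\sigma$ satisfying $\phi'(A)<\phi'(E)<\phi'(C)$, and suppose $E$ is not $\sigma'$-stable. Then there is a nonzero proper subobject $F \subsetneq E$ in $\mathcal{P}'(\phi'(E))$ with $\phi'(F) \geq \phi'(E)$.

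The first step is to show that, for $\sigma'$ sufficiently close to $\sigma$, any such $F$ is $\sigma$-semistable of phase $\phi_\sigma(E)$, and its $\sigma$-Jordan--Hölder factors are drawn from $\{A,C\}$. The former is a standard deformation argument: the $\sigma$-HN factors of a destabilizing subobject $F \hookrightarrow E$ must, in the limit $\sigma' \to \sigma$, collapse to the single phase $\phi_\sigma(E)$, for otherwise $F$ cannot destabilize $E$ at nearby $\sigma'$. The latter follows from the fact that in the abelian subcategory $\mathcal{P}(\phi_\sigma(E))$ of $\sigma$-semistables of phase $\phi_\sigma(E)$, the composition factors of a subobject $F \subseteq E$ form a sub-multiset of those of $E$, namely $\{A, C, \ldots, C\}$ with $r$ copies of $C$. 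The linear independence of $[A]$ and $[C]$ in $\mathrm{K}_{\num}(\mathcal{D})$ then forces $[F] = a[A] + c[C]$ with uniquely determined $a \in \{0,1\}$ and $c \in \{0, \ldots, r\}$; moreover $(a,c) \notin \{(0,0),(1,r)\}$ as $F$ is nonzero and proper.

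The remaining three cases are dispatched as follows. If $(a,c)=(1,0)$ then $F \simeq A$ and $\phi'(F)=\phi'(A)<\phi'(E)$, contradicting $\phi'(F) \geq \phi'(E)$. If $a=1$ and $1 \leq c < r$, write $Z'(F)=Z'(A)+cZ'(C)$; since $Z'(A)$ and $Z'(C)$ are linearly independent vectors in the upper half plane with $\phi'(A)<\phi'(C)$, the function $c \mapsto \arg(Z'(A)+cZ'(C))$ is strictly increasing on $[0,\infty)$, so $\phi'(F) < \phi'(A+rC) = \phi'(E)$, a contradiction. Finally, if $a=0$ and $c \geq 1$, then $F$ is an iterated self-extension of $C$, so the first step of its composition series gives an inclusion $C \hookrightarrow F \hookrightarrow E$ and thus a nonzero morphism $C \to E$, contradicting $\Hom(C,E)=0$.

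The final assertion is immediate: the set of stability conditions $\sigma'$ near $\sigma$ with $\phi'(A)<\phi'(E)<\phi'(C)$ accumulates at $\sigma$, since at $\sigma$ itself these three phases coincide and arbitrarily small perturbations realize the stated ordering. The main obstacle I anticipate is the first step: rigorously justifying that in a sufficiently small neighborhood of $\sigma$, the class of any $\sigma'$-destabilizing subobject of $E$ is necessarily of the form $a[A]+c[C]$. This requires the support property at $\sigma$ to confine the possible classes to a finite set, combined with continuity of the central charge to eliminate those not of phase exactly $\phi_\sigma(E)$ at $\sigma$.
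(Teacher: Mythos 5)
Your proof is correct, and it is essentially the dual formulation of the paper's argument. Where you take a $\sigma'$-destabilizing subobject $F \subseteq E$ and classify $[F] = a[A] + c[C]$ by Jordan--H\"older plus linear independence, the paper takes a $\sigma'$-destabilizing quotient $E \twoheadrightarrow Q$ in $\mathcal{P}(\phi)$ with $Q$ $\sigma'$-stable. The paper then shows the composition $A \hookrightarrow E \twoheadrightarrow Q$ cannot be zero (if it were, $Q$ would be a quotient of $B$, hence $[Q] = k[C]$, forcing $\phi'(Q) = \phi'(C)$, contradicting $\phi'(Q) < \phi'(C)$), and is therefore injective since $A$ is simple in $\mathcal{P}(\phi)$; the snake lemma then identifies $\ker(E \twoheadrightarrow Q)$ with $\ker(B \twoheadrightarrow Q/A)$, a nonzero iterated self-extension of $C$ embedded in $E$, contradicting $\Hom(C,E)=0$. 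Both arguments share the same contradiction; your case $a=0$ is exactly the paper's endgame, while your phase-monotonicity computation in the case $a=1, c<r$ replaces the paper's use of simplicity of $A$ and the phase bound on $Q$. The tradeoff: your version makes the role of the numerical classes transparent and does not need $A,C$ to remain $\sigma'$-stable, while the paper's version avoids the explicit computation with $\arg(Z'(A)+cZ'(C))$. The initial reduction --- that a nearby destabilizer lies in $\mathcal{P}(\phi)$ --- is stated without detailed proof in both, and is indeed the standard consequence of the support property and local finiteness of walls, so your flagged concern is not a gap relative to the paper.
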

\begin{proof}
Let $\phi$ be the phase of $A,E,C$ with respect to $\sigma=(Z,\mathcal{P})$.
For $\sigma'$ sufficiently close to $\sigma$, 
$E$ can only be destabilized by quotients $E \twoheadrightarrow Q$ in $\mathcal{P}(\phi)$.

Now let $\sigma'=(Z',\mathcal{P}')$ be a stability condition close by with $\phi'(A)<\phi'(E)<\phi'(C)$ and $A,C$ $\sigma'$-stable,
and assume that $Q \in \mathcal{P}(\phi)$ is a $\sigma'$-destabilizing quotient of $E$ which is $\sigma'$-stable.
In particular, $\phi'(Q)<\phi'(E)$.
Since both $C$ and $Q$ are $\sigma'$-stable, 
and $\phi'(Q)<\phi'(E)<\phi'(C)$,
the composition $A\hookrightarrow E \twoheadrightarrow Q$ cannot be zero.
Furthermore, as $A$ is simple in $\mathcal{P}(\phi)$,
this composition map must be injective.

Consider the cokernel $D$ of this composition,
which is also a quotient of $B$ and thus must be a $k$-time self extension of $C$ for some $k<r$.
The kernel of $B \twoheadrightarrow D$ is a subobject of $E$, 
but it is also a self-extension of $C$,
which contradicts $\Hom(C,E)=0$,
and so such a $\sigma'$-destabilizing quotient $Q$ of $E$ can not exist.
\end{proof} 

Applying Lemma \ref{dual of 5.9} iteratively, we obtain the following proposition:

\begin{proposition}\label{5.9iterative}
Let $\sigma$ be a stability condition on a triangulated category $\mathcal{D}$ 
and let $E$ be a $\sigma$-semistable object with a Jordan--Hölder filtration of the following form:
\[
\begin{tikzcd}
0 \arrow[r, hook] & A \arrow[r, hook] & E_1 \arrow[r, hook] \arrow[d, two heads] & E_2 \arrow[r, hook] \arrow[d, two heads] & \cdots \arrow[r, hook] & E_n=E \arrow[d, two heads] \\
&     & B_1          & B_2                       &     & B_n             
\end{tikzcd}
\]
where $A$ is $\sigma$-stable and each $B_i$ is $\sigma$-semistable and an iterated self-extension of some $\sigma$-stable object $C_i, i=1,\dots,n$.

Assume that $\Hom(C_i,E_i)=0$ and that the classes $[A],[B_1], \dots [B_n]$ are linearly independent in ${\rm K}_{\num}(\mathcal{D})$.
Then any stability condition $\sigma'$ sufficiently close to $\sigma$ satisfying all the inequalities $\phi'(A)<\phi'(B_1), \phi'(E_1)<\phi'(B_2),\cdots, \phi'(E_{n-1})<\phi'(B_n)$ lies in the set $\{\sigma'  |  E \text{ is } \sigma' \text{-stable} \}$.

In particular, $\sigma$ is in the closure of the set $\{\sigma'  |  E \text{ is } \sigma' \text{-stable} \}$. 
\end{proposition}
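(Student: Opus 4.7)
The natural approach is induction on $n$, using Lemma \ref{dual of 5.9} both as the base case and as the principal tool in the inductive step. The base case $n=1$ is exactly Lemma \ref{dual of 5.9} applied to the two-step filtration $A \hookrightarrow E_1 \twoheadrightarrow B_1$.

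For the inductive step, I would assume that the conclusion holds for $E_{n-1}$, so that there is an open neighborhood $V_{n-1}$ of $\sigma$, cut out by the first $n-1$ phase inequalities, on which $E_{n-1}$ is $\sigma'$-stable. I then apply Lemma \ref{dual of 5.9} to the extension $E_{n-1} \hookrightarrow E_n \twoheadrightarrow B_n$ at any base point $\sigma_0 \in V_{n-1}$ sufficiently close to $\sigma$ which also satisfies the additional inequality $\phi_{\sigma_0}(E_{n-1}) < \phi_{\sigma_0}(B_n)$. The four hypotheses of Lemma \ref{dual of 5.9} translate here as: $E_{n-1}$ is $\sigma_0$-stable by the inductive hypothesis; $B_n$ is an iterated self-extension of $C_n$, and $C_n$ remains $\sigma_0$-stable by openness of stability on $\Stab(\mathcal{D})$; the vanishing $\Hom(C_n, E_n) = 0$ is intrinsic to the triangulated category and so is preserved; and the linear independence of $[E_{n-1}]$ and $[C_n]$ in ${\rm K}_{\num}(\mathcal{D})$ follows from the assumed linear independence of $[A], [B_1], \dots, [B_n]$ together with the identities $[E_{n-1}] = [A] + \sum_{i<n}[B_i]$ and $[B_n] = r_n[C_n]$.

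Lemma \ref{dual of 5.9} then yields a neighborhood of each such $\sigma_0$ on which $E_n$ is $\sigma'$-stable, and unioning over $\sigma_0 \to \sigma$ inside $V_{n-1}$ produces an open set $V_n$ accumulating at $\sigma$ and cut out by the full list of phase inequalities. The ``in particular'' statement is then immediate, by picking any sequence $\sigma_k \to \sigma$ inside $V_n$.

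The main obstacle is that Lemma \ref{dual of 5.9} was phrased with the subobject $A$ being $\sigma$-stable (equivalently, simple in $\mathcal{P}_\sigma(\phi)$), whereas in the inductive step $E_{n-1}$ is only $\sigma$-semistable. I expect this to be handled by re-examining the proof: $\sigma$-stability of $A$ was used solely to force a nonzero composition $A \hookrightarrow E \twoheadrightarrow Q$ to be injective; in the iterated setting I would replace $A$ by $E_{n-1}$ and use its $\sigma_0$-stability (and that of any $\sigma'$-destabilizing stable quotient $Q$) so that Schur's lemma in the slice $\mathcal{P}_{\sigma'}(\phi_{\sigma'}(E_{n-1}))$ forces the composition $E_{n-1} \to Q$ to be either zero---ruled out by the phase inequality $\phi'(Q) < \phi'(B_n) = \phi'(C_n)$ together with $\Hom(C_n, Q) = 0$---or an isomorphism, which splits the extension $E_{n-1} \hookrightarrow E_n \twoheadrightarrow B_n$ and contradicts $\Hom(C_n, E_n) = 0$. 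The remaining cokernel-of-composition argument from Lemma \ref{dual of 5.9} then carries through, with the linear independence of $[E_{n-1}]$ and $[C_n]$ replacing that of $[A]$ and $[C]$.
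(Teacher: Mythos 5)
Your overall strategy---induction on $n$ using Lemma \ref{dual of 5.9} both as base case and as the engine of the inductive step, with a shifting base point $\sigma_0$---is the same as the paper's, which iterates Lemma \ref{dual of 5.9} bottom-up, producing a chain $\sigma = \sigma_0, \sigma_1, \dots, \sigma_n$ with $E_i$ stable at $\sigma_i$.

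The gap is in your choice of $\sigma_0$. You take $\sigma_0 \in V_{n-1}$ satisfying the \emph{strict} inequality $\phi_{\sigma_0}(E_{n-1}) < \phi_{\sigma_0}(B_n)$. But then $B_n$ is a destabilizing quotient of $E_n$ at $\sigma_0$, so $E_n$ is $\sigma_0$-\emph{unstable}, and the standing hypothesis of Lemma \ref{dual of 5.9} (that the big object be semistable at the base point with the two-step Jordan--H\"older filtration) simply fails. What is needed is that $\sigma_0$ lie \emph{on} the wall $\phi(E_{n-1}) = \phi(B_n)$, while still satisfying the first $n-1$ strict inequalities; by the linear independence of $[A], [B_1], \dots, [B_n]$ this wall meets the open region cut out by those inequalities arbitrarily close to $\sigma$. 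At such $\sigma_0$, $E_{n-1}$ is $\sigma_0$-stable by the inductive hypothesis, $E_n$ is $\sigma_0$-semistable with Jordan--H\"older factors $E_{n-1}$ and $C_n^{\oplus r_n}$, and Lemma \ref{dual of 5.9} applies verbatim. This is exactly what the paper does when it insists that each $\sigma_i$ be chosen on the walls where the remaining $B_j$'s (and $E_i$) have equal phase.

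Once $\sigma_0$ is on the wall, the ``main obstacle'' you raise evaporates: the lemma needs the subobject to be stable \emph{at the base point}, and $E_{n-1}$ \emph{is} $\sigma_0$-stable, hence simple in the relevant slice. There is no need to re-prove the lemma. Moreover your proposed workaround would not work as written: you invoke Schur's lemma in a $\sigma'$-slice to conclude that a nonzero map $E_{n-1} \to Q$ is an isomorphism, but $E_{n-1}$ and a $\sigma'$-destabilizing stable quotient $Q$ of $E_n$ need not have equal $\sigma'$-phase, so they do not live in the same $\sigma'$-slice and Schur's lemma does not constrain morphisms between them. (Also, the step ``ruled out by $\Hom(C_n, Q)=0$'' is not quite the right reason; the paper's Lemma \ref{dual of 5.9} rules out a zero composition via the phase comparison $\phi'(Q) < \phi'(E) < \phi'(C)$ and stability of $C$ and $Q$, not from a Hom-vanishing on $Q$.)
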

\begin{proof}
By Lemma \ref{dual of 5.9}, we see that $\sigma$ is in the closure of $\{\sigma'  |  E_1 \text{ is } \sigma' \text{-stable} \}$.
In particular, this gives a stability condition $\sigma_1$, close to $\sigma$, such that $E_1$ is $\sigma_1$-stable.

Moreover, as all the classes $[B_i]$ are linearly independent,
the equations of the walls $\phi(B_i)=\phi(B_{i+1})$ are also linearly independent.
Hence, the stability condition $\sigma_1$ may be chosen so that it lies on the walls $\phi(B_1)=\phi(B_2)= \cdots=\phi(B_n)$.

We may then apply Lemma \ref{dual of 5.9} to $\sigma_1$ and $E_1 \hookrightarrow E_2 \twoheadrightarrow B_2$ again and obtain a stability condition $\sigma_2$, close to $\sigma_1$, lying on the walls $\phi(B_2)=\phi(B_3)= \cdots=\phi(B_n)$.

Repeating this process, we reach a stability condition $\sigma_n$, sufficiently close to $\sigma$, such that $E_n$ is $\sigma$-stable,
which completes the proof.
\end{proof}

\begin{lemma}\label{geochamber}
For any $0<\epsilon\leq1$, the stability condition $\sigma_\epsilon$ lies on the closure of the geometric chamber in $\Stab(\widetilde{X})$, i.e. $\{\sigma' \in \Stab(\widetilde{X})  |  \mathcal{O}_x \text{ is } \sigma' \text{-stable},\forall x \in \widetilde{X} \}$.
\end{lemma}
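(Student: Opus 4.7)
The proof splits according to whether $x \in \Pi$. For $x \notin \Pi$, Lemma \ref{Oxsimple} tells us $\mathcal{O}_x$ is simple in $\mathcal{B}^0$, and hence $\sigma_\epsilon$-stable, so nothing is required. I focus on $x \in \Pi$, where $\mathcal{O}_x$ is strictly $\sigma_\epsilon$-semistable of phase $1$. The plan is to build a Jordan--Hölder filtration of $\mathcal{O}_x$ in $\mathcal{B}^0$ fitting the hypotheses of Proposition \ref{5.9iterative}. First, I exhibit $\mathcal{O}_\Pi$ as a subobject of $\mathcal{O}_x$ in $\mathcal{B}^0$: the structural surjection $\mathcal{O}_\Pi \twoheadrightarrow \mathcal{O}_x$ in $\Coh(\widetilde{X})$ has kernel $K := \mathcal{I}_{x,\Pi}$, and applying $\mathbf{R}\pi_\ast$ to $0 \to K \to \mathcal{O}_\Pi \to \mathcal{O}_x \to 0$ and using $\mathbf{R}\pi_\ast\mathcal{O}_\Pi = \mathcal{O}_{x_0} = \mathbf{R}\pi_\ast\mathcal{O}_x$, the induced map on pushforwards is a nonzero endomorphism of $\mathcal{O}_{x_0}$, hence an isomorphism. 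Therefore $\mathbf{R}\pi_\ast K = 0$, so $K \in \mathcal{F}_P = \mathcal{F}_{\mathcal{B}^0}$, and the exact triangle becomes a short exact sequence
\[ 0 \longrightarrow \mathcal{O}_\Pi \longrightarrow \mathcal{O}_x \longrightarrow K[1] \longrightarrow 0\]
in $\mathcal{B}^0$.

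By Lemma \ref{generatorofker}, $K$ is an iterated extension of the sheaves $\mathcal{O}_{C_i}(-1)$ in $\Coh(\widetilde{X})$, so $K[1]$ is an iterated extension in $\mathcal{B}^0$ of the simples $\mathcal{O}_{C_i}(-1)[1]$; the multiplicities $a_i \geq 0$ with which each $\mathcal{O}_{C_i}(-1)[1]$ occurs are determined by $[\mathcal{O}_\Pi] - [\mathcal{O}_x] \in \ker\pi_\ast$ and are in particular independent of $x \in \Pi$. I would then refine this filtration so that copies of the same simple are contiguous, producing blocks $B_1, \ldots, B_m$, each an iterated self-extension of a distinct simple $C_j := \mathcal{O}_{C_{i_j}}(-1)[1]$; this grouping proceeds inductively in the finite-length abelian category $\mathcal{P}_{\sigma_\epsilon}(1)$ by extracting a maximal isotypic socle component of the residual quotient at each stage. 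Setting $A := \mathcal{O}_\Pi$, the resulting filtration $A \subset E_1 \subset \cdots \subset E_m = \mathcal{O}_x$ with $E_j/E_{j-1} = B_j$ has the shape demanded by Proposition \ref{5.9iterative}.

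The two hypotheses of Proposition \ref{5.9iterative} are now verified as follows. The classes $[A], [B_1], \ldots, [B_m]$ are linearly independent in ${\rm K}_{\num}(\widetilde{X})$ because $\pi_\ast[A] = [\mathcal{O}_{x_0}] \neq 0$ while each $[B_j] \in \ker\pi_\ast$ is a positive multiple of $[\mathcal{O}_{C_{i_j}}(-1)]$, and by Lemma \ref{kernel} the classes $[\mathcal{O}_{C_i}(-1)]$ form a basis of $\ker\pi_\ast$. For the Hom vanishing, any morphism $C_j \to E_j$ composes with the monomorphism $E_j \hookrightarrow \mathcal{O}_x$ to land in $\Hom(\mathcal{O}_{C_{i_j}}(-1)[1], \mathcal{O}_x) = \Ext^{-1}(\mathcal{O}_{C_{i_j}}(-1), \mathcal{O}_x) = 0$, so the original morphism vanishes. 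Proposition \ref{5.9iterative} then places $\sigma_\epsilon$ in the closure of $\{\sigma' \mid \mathcal{O}_x \text{ is } \sigma'\text{-stable}\}$, and since the filtration data and all relevant classes depend on $x \in \Pi$ only through the constant $[\mathcal{O}_x]$, a single generic perturbation of $\sigma_\epsilon$ will satisfy simultaneously the finitely many strict phase inequalities $\phi'(E_{k-1}) < \phi'(B_k)$ arising from every possible filtration pattern, producing a $\sigma'$ under which every $\mathcal{O}_x$ with $x \in \widetilde{X}$ is stable.

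The main obstacle will be the grouping step: when some $a_i > 1$ (as happens for non-trivial fundamental cycles in $D$ and $E$ type), one must verify that there is no obstruction to extracting an isotypic block from the socle of the residual quotient at each stage. Here I expect the key input to be the vanishing $\Ext^1(\mathcal{O}_{C_i}(-1), \mathcal{O}_{C_i}(-1)) = 0$, obtained from $K_{\widetilde{X}} \cdot C_i = 0$, Serre duality, and Hirzebruch--Riemann--Roch on the $(-2)$-curve $C_i$, which forces same-index simples to interact only through different-index ones and should be enough to push the socle induction through using the specific sheaf-theoretic structure of $K = \mathcal{I}_{x, \Pi}$.
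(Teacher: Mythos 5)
Your overall strategy is the same as the paper's: exhibit $\mathcal{O}_\Pi \hookrightarrow \mathcal{O}_x$ in $\mathcal{B}^0$, write the cokernel $K[1]$ as an iterated extension of the simples $\mathcal{O}_{C_i}(-1)[1]$ via Lemma \ref{generatorofker}, organize this into a Jordan--H\"older filtration with isotypic blocks, and feed it into Proposition \ref{5.9iterative}. The starting short exact sequence and both verifications (linear independence via Lemma \ref{kernel}, Hom-vanishing via the monomorphism into $\mathcal{O}_x$) are all correct and match the paper.

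The genuine gap is exactly where you flag it: the grouping step. The paper does not attempt an abstract regrouping at all. Instead it builds the filtration \emph{geometrically}: it picks an exceptional curve $C_{a_n}$ through $x$ and then removes curves $C_{a_1}, C_{a_2}, \dots$ one at a time, subject to the constraint that the scheme-theoretic closure $\overline{\Pi \setminus C_{a_1}\cup\cdots\cup C_{a_k}}$ remain connected and still contain $x$. Each kernel $K_k$ of $\mathcal{O}_{\overline{\Pi\setminus C_{a_1}\cup\cdots\cup C_{a_{k-1}}}} \twoheadrightarrow \mathcal{O}_{\overline{\Pi\setminus C_{a_1}\cup\cdots\cup C_{a_k}}}$ is then automatically supported on the single curve $C_{a_k}$ (with whatever multiplicity $C_{a_k}$ carries in $\Pi$), hence is \emph{forced} to be an iterated self-extension of $\mathcal{O}_{C_{a_k}}(-1)$, with distinct curves appearing in distinct steps. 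No regrouping argument is needed.

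Your proposed socle induction, by contrast, is not justified and does not obviously work. The first extraction is fine: if $B_1$ is the maximal $S_1$-isotypic subobject of $K[1]$, then $\Hom(S_1, K[1]/B_1) = 0$, because an $S_1 \hookrightarrow K[1]/B_1$ would pull back to an extension $0\to B_1\to B_1'\to S_1\to 0$ which splits since $\Ext^1(S_1,B_1)=\Ext^1(S_1,S_1)^{\oplus k}=0$, contradicting maximality. But after you next extract $B_2$, the maximal $S_2$-isotypic subobject of $K[1]/B_1$, an $S_1$ can reappear in the socle of $(K[1]/B_1)/B_2$: the preimage of such an $S_1$ in $K[1]/B_1$ is an extension $0\to B_2\to P\to S_1\to 0$, and $\Ext^1(S_1, B_2) = \Ext^1(S_1,S_2)^{\oplus \ell}$ is generically nonzero when $C_1$ and $C_2$ intersect, so there is no contradiction with having already extracted all of $S_1$. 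Thus the isotypic blocks are not guaranteed to exhaust each simple before moving on, and the hypothesis of Proposition \ref{5.9iterative} that the classes $[B_1],\dots,[B_m]$ be linearly independent (which forces the $C_j$ to be pairwise distinct) may fail for the filtration your induction actually produces.

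There is also a secondary omission. For $x \notin \Pi$, it is not enough to observe that $\mathcal{O}_x$ is $\sigma_\epsilon$-stable: you need it to remain stable under the perturbation $\sigma'$ you construct, uniformly over the (positive-dimensional) family of $x \notin \Pi$. The paper handles this with local finiteness of walls with respect to the fixed class $[\mathcal{O}_x]$, which gives a uniform open neighborhood of $\sigma_\epsilon$ inside $\bigcap_{x\notin\Pi} U_x$; your write-up skips this.
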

\begin{proof}
For $x \in \widetilde{X}$, we set $U_x:=\{\sigma' \in \Stab(\widetilde{X})  |  \mathcal{O}_x \text{ is } \sigma' \text{-stable} \}$.

Given $\mathcal{O}_x$ with $x \in \Pi$,
we first pick an exceptional curve $C_{a_n}$ containing $x$.
Then for each $k=1, \cdots, n-1$,
we choose an exceptional curve $C_{a_k}$ such that $C_{a_k}$ is not $C_{a_n}$, that $x \in \overline{\Pi \setminus C_{a_1}\cup\cdots \cup C_{a_k}}$, and that $\overline{\Pi \setminus C_{a_1}\cup\cdots \cup C_{a_k}}$ is connected.

This gives us the following commutative diagram:
\[
\begin{tikzcd}
K_1:=\ker \phi_1 \arrow[d]                                              & K_2:=\ker \phi_2 \arrow[d]                                                                         & K_3:=\ker \phi_3 \arrow[d]                                                                                     &                                           & K_n:=\ker \phi_n \arrow[d]                                                                                   \\
\mathcal{O}_{\Pi} \arrow[r, "\phi_1", two heads] \arrow[rrd, two heads] & \mathcal{O}_{\overline{\Pi\setminus C_{a_1}}} \arrow[r, "\phi_2", two heads] \arrow[rd, two heads] & \mathcal{O}_{\overline{\Pi\setminus C_{a_1}\cup C_{a_2} }} \arrow[r, "\phi_3", two heads] \arrow[d, two heads] & \cdots \arrow[r, "\phi_{n-1}", two heads] & \mathcal{O}_{\overline{\Pi\setminus C_{a_1}\cup C_{a_2}\cup\cdots C_{a_{n-1}}}} \arrow[lld, "\phi_n", two heads] \\
   &               & \mathcal{O}_x               &       &            
\end{tikzcd}
\]

Since the pushforward of $\mathcal{O}_\Pi$, $\mathcal{O}_{\overline{\Pi\setminus C_{a_1}}}, \cdots \mathcal{O}_{\overline{\Pi\setminus C_{a_1}\cup C_{a_2}\cup\cdots C_{a_{n-1}}}}$ are all $\mathcal{O}_{x_0}$,
then $K_1, 
 \cdots, 
K_n$ are all in the kernel of pushforward $\ker \mathbf{R}\pi_\ast$.
By Lemma \ref{generatorofker}, these sheaves are all extensions of $\mathcal{O}_{C_i}(-1)$.

We start with the surjection $\mathcal{O}_{\Pi} \twoheadrightarrow \mathcal{O}_x$,
which factors through $\mathcal{O}_{\overline{\Pi\setminus C_{a_1}}}$.
As the kernel $K_1$ is supported on the curve $C_{a_1}$,
it can only be an iterated self-extension of $\mathcal{O}_{C_{a_1}}(-1)$.
This gives a short exact sequence in $\mathcal{B}^0$
\[0\longrightarrow \mathcal{O}_{\Pi} \longrightarrow \mathcal{O}_{\overline{\Pi\setminus C_{a_1}}}\longrightarrow K_1[1] \longrightarrow 0.\]

Similarly, the surjection $\mathcal{O}_{\overline{\Pi\setminus C_{a_1}}} \twoheadrightarrow \mathcal{O}_x$ factors through $\mathcal{O}_{\overline{\Pi\setminus C_{a_1}\cup C_{a_2}}}$,
and as the kernel $K_2$ is supported on the curve $C_{a_2}$, 
it can only be an iterated self-extension of $\mathcal{O}_{C_{a_2}}(-1)$.
Hence, there exists a short exact sequence in
$\mathcal{B}^0$
\[0\longrightarrow \mathcal{O}_{\overline{\Pi\setminus C_{a_1}}}\longrightarrow \mathcal{O}_{\overline{\Pi\setminus C_{a_1}\cup C_{a_2}}} \longrightarrow K_2[1] \longrightarrow 0.\]

Repeating this process, we finally reach a surjection $\mathcal{O}_{\overline{\Pi\setminus C_{a_1}\cup C_{a_2}\cup\cdots C_{a_{n-1}}}} \twoheadrightarrow \mathcal{O}_x$ of coherent sheaves,
with kernel an iterated self-extension of $\mathcal{O}_{C_{a_n}}(-1)$.

In summary, we now obtain a $\sigma_\epsilon$-Jordan--Hölder filtration of $\mathcal{O}_x$ of the form
\[
\begin{tikzcd}
0 \arrow[r, hook] & \mathcal{O}_{\Pi} \arrow[r, hook] & \mathcal{O}_{\overline{\Pi\setminus C_{a_1}}} \arrow[r, hook] \arrow[d, two heads] & \mathcal{O}_{\overline{\Pi\setminus C_{a_1}\cup C_{a_2}}} \arrow[r, hook] \arrow[d, two heads] & \cdots \arrow[r, hook] & \mathcal{O}_x \arrow[d, two heads] \\
&     & K_1[1]          & K_2[1]                       &     & K_n[1]             
\end{tikzcd}
\]
with each $K_i[1]$ an iterated self-extension of $\mathcal{O}_{C_{a_i}}(-1)[1]$.

Note that $[\mathcal{O}_{\Pi}],[\mathcal{O}_{C_i}(-1)[1]]$ are linearly independent in ${\rm K}_{\num}(\widetilde{X})$,
and that we have $\Hom(\mathcal{O}_{C_{a_1}}(-1)[1],\mathcal{O}_{\overline{\Pi\setminus C_{a_1}}})= \cdots =\Hom(\mathcal{O}_{C_{a_n}}(-1)[1],\mathcal{O}_x)= 0$.

In particular, if we set $Z_{\epsilon,\eta}:= Z_\epsilon +i (\eta \beta \cdot \ch_1)$ where $0<\eta\ll1$,
then by Bridgeland's deformation theorem (\cite[Theorem 1.2]{Bri07}) we obtain a family of stability conditions $\sigma_{\epsilon,\eta}=(Z_{\epsilon,\eta},\mathcal{A}_\eta)$ close to $\sigma_\epsilon$.

Note that we have as $\phi_{\epsilon,\eta}(\mathcal{O}_\Pi)<\phi_{\epsilon,\eta}(\mathcal{O}_{\overline{\Pi\setminus C_{a_1}}})<\cdots<\phi_{\epsilon,\eta}(\mathcal{O}_x)=1<\phi_{\epsilon,\eta}(\mathcal{O}_{C_i}(-1)[1])$ for all $i$ by our choiche of $\beta$.
Therefore, all the assumptions of Proposition \ref{5.9iterative} hold, 
and we see that $\sigma_\epsilon \in \overline{\{\sigma_{\epsilon,\eta}\}}\subseteq \overline{U_x}$.

Now for $x \notin \Pi$, by Lemma \ref{Oxsimple}, $\mathcal{O}_x$ is simple in $\mathcal{B}^0$, and so it is $\sigma_\epsilon$-stable.
To complete the proof, by the local finiteness of walls (with respect to the class $[\mathcal{O}_x]$) we can find an open neighborhood $U$ of $\sigma_\epsilon$ so that $U\subset \bigcap_{x\notin\Pi}U_x$.

Then for each $\eta$ sufficiently small, $\sigma_{\epsilon,\eta}$ lies in $U$,
so the set $\{\sigma_{\epsilon,\eta}\}$ is  contained in the geometric chamber $\bigcap_{x\in\widetilde{X}}U_x$,
and hence $\sigma_\epsilon \in \overline{\{\sigma_{\epsilon,\eta}\}} \subseteq \overline{\bigcap_{x\in\widetilde{X}}U_x}$.
\end{proof}

\begin{corollary}\label{Bepsilon}
For every class $\widetilde{v} \in {\rm K}_{\num}(\widetilde{X})$, the moduli space $\mathcal{M}_{\sigma_\epsilon}(\widetilde{v})$ is bounded.
\end{corollary}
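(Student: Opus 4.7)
The strategy is to deform $\sigma_\epsilon$ slightly into a stability condition $\sigma'$ lying inside the geometric chamber, where boundedness reduces to the classical boundedness of slope-semistable sheaves, and then to transfer the boundedness back to $\sigma_\epsilon$ via a short wall-crossing.

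First, by Lemma \ref{geochamber} and its proof, the family $\sigma_{\epsilon,\eta} = (Z_{\epsilon,\eta},\mathcal{A}_\eta)$ obtained by perturbing $Z_\epsilon$ to $Z_\epsilon + i\eta(\beta \cdot \ch_1)$ lies inside the geometric chamber $\bigcap_{x \in \widetilde{X}} U_x$ for $0 < \eta \ll 1$, and $\sigma_\epsilon$ is in its closure. By the support property established in Theorem \ref{mainSect5}, the set of walls for the class $\widetilde{v}$ is locally finite, so I may shrink $\eta$ so that the only wall between $\sigma' := \sigma_{\epsilon,\eta}$ and $\sigma_\epsilon$ (if any) occurs at $\sigma_\epsilon$ itself. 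Consequently, the set of possible numerical classes of $\sigma'$-Jordan--Hölder factors of $\sigma_\epsilon$-semistable objects of class $\widetilde{v}$ is finite.

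Next, I would verify boundedness of $\mathcal{M}_{\sigma'}(\widetilde{v}')$ for every class $\widetilde{v}'$. Since $\mathcal{O}_x$ is $\sigma'$-stable for every $x \in \widetilde{X}$, an argument analogous to Lemma \ref{6.5'} shows that a $\sigma'$-semistable object in $\mathcal{B}^0$ of fixed class is, up to shift, an extension whose outer pieces are of one of three types: a $\mu_{\pi^\ast H}$-semistable torsion-free sheaf, a torsion sheaf supported on a curve or finite set of points, or a torsion sheaf in $\ker \mathbf{R}^0 \pi_\ast$ (hence a successive extension of $\mathcal{O}_{C_i}(-1)$ by Lemma \ref{generatorofker}). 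Each of these three types forms a bounded family with fixed numerical invariants on the smooth projective surface $\widetilde{X}$ by classical Gieseker-type boundedness, and the $\Ext^1$ groups controlling the assembly of the pieces are of bounded dimension, so the total family is of finite type.

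Finally, for a $\sigma_\epsilon$-semistable object $E$ of class $\widetilde{v}$, I take its $\sigma'$-Jordan--Hölder filtration in $\mathcal{B}^0$. The factors have numerical classes in the finite set identified in the first step, and each factor is $\sigma'$-semistable, hence lies in a bounded family by the second step. Using that $\mathcal{B}^0$ is Noetherian (Corollary \ref{Noeth}) together with the bounded dimension of the relevant $\Ext^1$-groups, the set of extensions assembling such factors into an object of class $\widetilde{v}$ is itself bounded, and this proves the boundedness of $\mathcal{M}_{\sigma_\epsilon}(\widetilde{v})$. The main obstacle will be the middle step: making the description of $\sigma'$-semistable objects in the geometric chamber sufficiently explicit to invoke classical boundedness, while correctly handling the contributions from $\mathcal{F}_0$ and the tilting involved in defining $\mathcal{B}^0$.
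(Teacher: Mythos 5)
Your overall strategy --- deform into the geometric chamber using Lemma \ref{geochamber}, establish boundedness there, and transfer it back across at most finitely many walls --- is exactly the paper's approach. However, your middle step has a genuine gap. You claim that ``an argument analogous to Lemma \ref{6.5'} shows'' a structural description of $\sigma'$-semistable objects in $\mathcal{B}^0$ for a \emph{fixed} geometric $\sigma' = \sigma_{\epsilon,\eta}$. This is not true: Lemma \ref{6.5'} classifies objects in the set $\mathcal{D}$, i.e.\ objects that are $Z_{\pi^\ast H,\beta,sz}$-semistable for $s$ \emph{sufficiently large}, and its proof relies crucially on letting $s\to\infty$ so that the phase of any object with $\ch_0 > 0$ tends to $0$ and the phases of objects supported on $\Pi$ stay at $1$. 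For a fixed $\sigma'$ inside the geometric chamber, a $\sigma'$-semistable object of a given class can be an essentially arbitrary two-term complex with both $H^{-1}$ and $H^0$ of positive rank, and it need not decompose into the three types you describe. So the reduction to classical Gieseker-type boundedness does not go through as stated. The paper sidesteps this entirely by citing the general result \cite[Proposition 4.11]{Tod08}, which gives boundedness for \emph{any} geometric stability condition on a smooth projective surface by a different argument (via large-volume-limit comparison applied across a chamber decomposition, not a direct classification), and then citing the general fact that boundedness and openness are preserved under deformation of stability conditions (\cite[Sect.~3]{Tod08}, or \cite[Theorem~22.2]{BLM+}). Your final ``reassembly via bounded $\Ext^1$'' step is likewise handled in the literature by those general deformation results rather than by an ad hoc extension count; the point is not that your idea is wrong in spirit, but that it requires the machinery you are trying to reprove, and the direct structural classification you lean on does not hold in the geometric chamber.
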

\begin{proof}
Note first that by \cite[Proposition 4.11]{Tod08}, the boundedness holds for geometric stability conditions on any smooth projective surface.

Then by Toda's arguments in \cite[Sect. 3]{Tod08}, boundedness and openness are preserved under deformation of stability (see, for example, \cite[Theorem 22.2]{BLM+} for a precise reference).
In particular, the assertion follows from Lemma \ref{geochamber}.
\end{proof}


We are then ready to prove the main theorem of this section.
\begin{theorem}\label{Final}
Let $\widetilde{v} \in {\rm K}_{\num}(\widetilde{X})$ and $v \in {\rm K}_{\num}(X)$. Then $\mathcal{M}_{\sigma_\epsilon}(\widetilde{v})$, $\mathcal{M}_{\sigma_{\widetilde{X}}}(\widetilde{v})$, and $\mathcal{M}_{\sigma_X}(v)$ are all Artin stacks of finite type over $\mathbb{C}$.
\end{theorem}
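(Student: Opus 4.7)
The plan is to handle the three moduli stacks in order, leveraging the generic flatness of Proposition \ref{genericflat}, the boundedness of Corollary \ref{Bepsilon}, the surjection of Theorem \ref{surjmoduli}, and Toda's framework quoted in the section.

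For $\mathcal{M}_{\sigma_\epsilon}(\widetilde{v})$, Corollary \ref{Bepsilon} gives boundedness for every class, and Proposition \ref{genericflat} gives generic flatness for the heart $\mathcal{B}^0$. Item (iii) from Toda's framework then yields openness, and items (i) and (ii) combine to give that the moduli stack is Artin of finite type over $\mathbb{C}$.

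For $\mathcal{M}_{\sigma_{\widetilde{X}}}(\widetilde{v})$, the heart $\mathcal{B}^0$ is the same as that of $\sigma_\epsilon$, so generic flatness still holds. For boundedness, I would fix the class $\widetilde{v}$ and let $E$ be any $\sigma_{\widetilde{X}}$-semistable object of class $\widetilde{v}$. Choose $\epsilon$ sufficiently small, depending only on $\widetilde{v}$, so that the finitely many walls for class $\widetilde{v}$ have stabilized; then if $E$ is not $\sigma_\epsilon$-semistable, pass to its $\sigma_\epsilon$-Harder--Narasimhan filtration. The factors are $\sigma_\epsilon$-semistable with classes $\widetilde{v}_i$ summing to $\widetilde{v}$, and the possible $\widetilde{v}_i$ are constrained to a finite set using the discreteness of $\Im Z_{\widetilde{X}}$ on $\mathcal{B}^0$ and the behavior of $Z_\epsilon$ near $\sigma_{\widetilde{X}}$. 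Each $\mathcal{M}_{\sigma_\epsilon}(\widetilde{v}_i)$ is bounded by Corollary \ref{Bepsilon}, and since extensions of bounded families are bounded, $\mathcal{M}_{\sigma_{\widetilde{X}}}(\widetilde{v})$ is bounded. For openness, the same adaptation of Toda's theorem to the weak-stability setting with a Noetherian heart and generic flatness applies.

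For $\mathcal{M}_{\sigma_X}(v)$, boundedness follows from Theorem \ref{surjmoduli}, as noted in the remark immediately after that theorem: the surjection $\pi_\ast \colon \mathcal{M}_{\sigma_{\widetilde{X}}}(\widetilde{v}) \to \mathcal{M}_{\sigma_X}(v)$ transfers boundedness from the desingularisation to the singular surface. For openness, one first establishes generic flatness for the heart $\Coh^0_H(X)$ by an argument parallel to that of Proposition \ref{genericflat}, and then invokes the Toda framework once more to conclude. The main obstacle is verifying that Toda's openness theorem (originally phrased for Bridgeland stability conditions with a Noetherian heart on a smooth variety) adapts to the weak stability condition $\sigma_{\widetilde{X}}$ and to $\sigma_X$ on the singular surface; since the essential inputs---Noetherianity and generic flatness of the relevant hearts---are in place, this adaptation should amount to a careful reworking of Toda's original argument rather than any substantively new idea.
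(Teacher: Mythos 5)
Your handling of $\mathcal{M}_{\sigma_\epsilon}(\widetilde{v})$ and $\mathcal{M}_{\sigma_X}(v)$ matches the paper: boundedness from Corollary~\ref{Bepsilon}, generic flatness from Proposition~\ref{genericflat}, Toda's framework for the first; Theorem~\ref{surjmoduli} for boundedness and generic flatness of $\Coh^0_H(X)$ (which the paper justifies via Polishchuk and Toda's argument extending to singular varieties) for the third.

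For $\mathcal{M}_{\sigma_{\widetilde{X}}}(\widetilde{v})$, however, your argument has a genuine gap. You write: ``Choose $\epsilon$ sufficiently small, depending only on $\widetilde{v}$, so that the finitely many walls for class $\widetilde{v}$ have stabilized.'' This is precisely the nontrivial claim that needs proof, and you cannot take it for granted. A priori, walls for $\widetilde{v}$ could accumulate at $\epsilon=0$, because $\sigma_{\widetilde{X}}$ lies on the \emph{boundary} of the stability manifold (it is only a weak stability condition, with massless objects $\mathcal{O}_{C_i}(-1)$). The local finiteness of walls in a punctured neighborhood of $\sigma_{\widetilde X}$ does not follow from the support property of each individual $\sigma_\epsilon$ (Theorem~\ref{mainSect5}), since those quadratic forms $Q_{A,B}$ degenerate as $\epsilon \to 0$: the quadratic form must be negative definite on $\ker Z_{\widetilde{X}}$, which is strictly larger than $\ker Z_\epsilon$. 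The paper addresses this by observing (from the proofs of Lemma~\ref{mathcalD} and Theorem~\ref{mainSect5}) that when $\Im Z_\epsilon(\widetilde v) > 0$ the single quadratic form $Q_{A_0,0}$ works uniformly for all $\sigma_\epsilon$, $0 \le \epsilon \le 1$, and is negative definite on $\ker Z_{\widetilde{X}}$; this uniform support property lets one invoke Bayer's argument from \cite[Sect.~5]{Bay19} to conclude that the whole path, including its endpoint, behaves as if it were inside the stability manifold, crossing only finitely many walls. (The case $\Im Z_\epsilon(\widetilde v) = 0$ is dispatched separately, since then $\mathcal{M}_{\sigma_{\widetilde X}}(\widetilde v) = \mathcal{M}_{\sigma_\epsilon}(\widetilde v)$ verbatim, as $\Im Z_{\widetilde X} = \Im Z_\epsilon$.)

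Once this uniform support property is in hand, your HN-filtration strategy (constraining the HN factors to finitely many classes by discreteness of $\Im Z$ plus the quadratic form, then bounding iterated extensions) becomes a legitimate route to boundedness; it is a more ``hands-on'' alternative to the paper's appeal to preservation of boundedness and openness under deformation (\cite[Theorem~22.2]{BLM+}). But your ``constrained to a finite set using the discreteness of $\Im Z_{\widetilde X}$'' is also too weak as stated --- discreteness of the imaginary part alone does not bound $\ch_0$ or $\ch_1$ of the factors; you must invoke the (uniform) quadratic form to control these. Supply the $Q_{A_0,0}$ ingredient and the rest of your outline goes through.
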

\begin{proof}
By \cite[Theorem 4.2.1]{Lie06} and \cite[Lemma 3.4]{Tod08}, it suffices to prove all these groupoids satisfy boundedness and openness.
First, for any class $\widetilde{v} \in {\rm K}_{\num}(\widetilde{X})$,
the groupoid $\mathcal{M}_{\sigma_\epsilon}(\widetilde{v})$ is bounded by Corollary \ref{Bepsilon} and the generic flatness holds for $\mathcal{B}^0$ by Proposition \ref{genericflat}.

We know that $\sigma_{\widetilde{X}}$ is the deformation of $\sigma_\epsilon$ where $\epsilon$ tends to $0$,
and that $Z_\epsilon$ and $Z_{\widetilde{X}}=Z_0$ have the same imaginary part.
Then, for a class $\widetilde{v}$ with $\Im Z_\epsilon(\widetilde{v})=0$, the groupoid $\mathcal{M}_{\sigma_0}(\widetilde{v})$ is exactly the same as $\mathcal{M}_{\sigma_\epsilon}(\widetilde{v})$ so it satisfies boundedness and openness.

We may then assume that $\Im Z_\epsilon(\widetilde{v})>0$.
The proof of Lemma \ref{mathcalD} and Theorem \ref{mainSect5} implies that, for semistable objects in $\mathcal{P}_\epsilon(0,1)=\mathcal{P}_{\widetilde{X}}(0,1)$, instead of $Q_{A,B}$,
the quadratic form $Q_{A_0,0}$ (both defined in Section 5) works for all $\sigma_\epsilon,0 \leq \epsilon \leq 1$.

Moreover, the same arguments in Lemma \ref{negdef} shows that $Q_{A_0,0}$ is also negative definite on $\ker Z_{\widetilde{X}} \subseteq \widetilde{\Lambda}\otimes \mathbb{R}$.
Therefore, by the arguments in \cite[Sect. 5]{Bay19}, we can treat the path $\sigma_\epsilon,0\leq \epsilon \leq 1$,
as a path in the stability manifold just as in \cite{Bay19},
that is, it only crosses finitely many walls.

Then, as boundedness and openness are preserved by deformation of stability conditions (again, see \cite[Theorem 22.2]{BLM+}),
we see that $\mathcal{M}_{\sigma_{\widetilde{X}}}(\widetilde{v})$ satisfies boundedness and openness.

Note that by Theorem \ref{surjmoduli}, this also implies that  $\mathcal{M}_{\sigma_X}(v)$ satisfies boundedness for every class $v\in {\rm K}_{\num}(X)$.
More precisely, for a given $v \in {\rm K}_{\num}(X)$, by Theorem \ref{surjmoduli},
we can pick a class $\widetilde{v} \in {\rm K}_{\num}(\widetilde{X})$ so that for any $\sigma_X$-semistable object $E$ of class $v$,
there exists some $\sigma_{\widetilde{X}}$-semistable $\widetilde{E}$ of class $\widetilde{v}$ such that $\mathbf{R}\pi_\ast\widetilde{E}=E$.

As we have proved that $\mathcal{M}_{\sigma_{\widetilde X}}(\widetilde v)$ is bounded for every $\widetilde{v}$,
there is a scheme $S$ of finite type over $\mathbb{C}$ and a family $\widetilde{\mathcal{E}} \in \mathrm{D}^b(\widetilde{X} \times S)$ such that for every $\widetilde{E} \in \mathcal{M}_{\sigma_{\widetilde X}}(\widetilde v)(\mathbb{C})$,
$\widetilde{E} \simeq \mathcal{E}_s$ for some $s \in S$.
Then we have $E \simeq \mathbf{R}\pi_\ast\widetilde{E} \simeq \mathbf{R}\pi_\ast \widetilde{\mathcal{E}}_s \simeq (\mathbf{R}(\pi \times 1_S)_\ast \widetilde{\mathcal{E}})_s$.
That is, $\mathcal{M}_{\sigma_X}(v)$ satisfies boundedness with respect to the scheme $S$ and the family $\mathbf{R}(\pi \times 1_S)_\ast \widetilde{\mathcal{E}}$ on $\mathrm{D}^b(X \times S)$.

Finally, by \cite{Pol07}, Toda's argument in \cite[Sect. 3.2]{Tod08} can be generalized to singular varieties, 
and hence \cite[Lemma 4.7]{Tod08} implies that the generic flatness holds for $\Coh^0(X)$.
This shows that $\mathcal{M}_{\sigma_X}(v)$ is an Artin stack of finite type over $\mathbb{C}$.
\end{proof}

\bibliographystyle{alpha}
\bibliography{ref}
\end{document}